\newtheorem{definition}{Definition}[section]
\newtheorem{theorem}[definition]{Theorem}
\newtheorem{lemma}[definition]{Lemma}
\newtheorem{corollary}[definition]{Corollary}
\newtheorem{example}[definition]{Example}
\newtheorem{problem}[definition]{Problem}
\newtheorem{note}[definition]{Note}
\newtheorem{assumption}[definition]{Assumption}
\newtheorem{proposition}[definition]{Proposition}
\begin{document} 

\title{\bf
Tridiagonal pairs of $q$-Racah type \\
and the
$q$-tetrahedron algebra
}
\author{
Paul Terwilliger}
\date{}

\maketitle
\begin{abstract}
Let $\mathbb F$ denote a field, and let $V$ denote a vector space over $\mathbb F$ with finite positive dimension.
We consider an ordered pair of $\mathbb F$-linear maps $A: V \to V$ and $A^*:V\to V$ such that
(i) each of $A,A^*$ is diagonalizable;
(ii) there exists an ordering $\lbrace V_i\rbrace_{i=0}^d$ of the  
eigenspaces of $A$ such that 
$A^* V_i \subseteq V_{i-1} + V_i+ V_{i+1}$ for $0 \leq i \leq d$, 
where $V_{-1} = 0$ and $V_{d+1}= 0$;
(iii) there exists an ordering $\lbrace V^*_i\rbrace_{i=0}^{\delta}$ of
the  
eigenspaces of $A^*$ such that
$A V^*_i \subseteq V^*_{i-1} + V^*_i+ V^*_{i+1} $ for
$0 \leq i \leq \delta $, 
where $V^*_{-1} = 0$ and $V^*_{\delta+1}= 0$;
(iv) there does not exist a subspace $U$ of $V$ such  that $AU\subseteq U$,
$A^*U \subseteq U$, $U\not=0$, $U\not=V$.
We call such a pair a tridiagonal pair on $V$. We assume that $A, A^*$ belongs to a family
of tridiagonal pairs said to have  $q$-Racah type.
There is an infinite-dimensional algebra $\boxtimes_q$
 called the $q$-tetrahedron algebra;
  it is generated by four copies
of $U_q(\mathfrak{sl}_2)$ that are related in a certain way. Using $A, A^*$ we construct two  $\boxtimes_q$-module structures on $V$.
 In this construction the two  main ingredients are the
double lowering map $\psi:V\to V$ due to Sarah Bockting-Conrad, and a certain invertible map $W:V\to V$
motivated by the spin model concept due to V. F. R. Jones.
\bigskip

\noindent
{\bf Keywords}. Tridiagonal pair; $q$-tetrahedron algebra; double lowering map; spin model; distance-regular graph;
spin Leonard pair; Leonard triple.
\hfil\break
\noindent {\bf 2020 Mathematics Subject Classification}.
Primary: 17B37;
Secondary: 15A21.
 \end{abstract}
 
\section{Introduction}
\noindent  This paper is about a linear-algebraic object called a tridiagonal pair, and its
relationship to a certain infinite-dimensional algebra $\boxtimes_q$ called the $q$-tetrahedron algebra.
Before we explain our purpose in detail, we first define a tridiagonal pair. 
We will use the following terms.
Let $\mathbb F$ denote a field, and 
let $V$ denote a vector space over $\mathbb F$ with finite
positive dimension. 
Let ${\rm End}(V)$ denote the algebra consisting of the $\mathbb F$-linear maps from $V$ to $V$.
For $A \in {\rm End}(V)$
and a
subspace $U \subseteq V$,
we call $U$ an
 {\it eigenspace} of $A$ whenever 
 $U\not=0$ and there exists $\theta \in \mathbb F$ such that 
$U=\lbrace v \in V \vert Av = \theta v\rbrace$;
in this case $\theta$ is the {\it eigenvalue} of
$A$ associated with $U$.
We say that $A$ is {\it diagonalizable} whenever
$V$ is spanned by the eigenspaces of $A$.

\begin{definition}  \rm
(See \cite[Definition~1.1]{TD00}.)
\label{def:tdp}
Let $V$ denote a vector space over $\mathbb F$ with finite
positive dimension. 
By a {\it tridiagonal pair} (or {\it $TD$ pair})
on $V$,
we mean an ordered pair $A, A^*$ of elements in ${\rm End}(V)$ 
that satisfy the following four conditions.
\begin{enumerate}
\item Each of $A,A^*$ is diagonalizable.
\item There exists an ordering $\lbrace V_i\rbrace_{i=0}^d$ of the  
eigenspaces of $A$ such that 
\begin{equation}
A^* V_i \subseteq V_{i-1} + V_i+ V_{i+1} \qquad \qquad (0 \leq i \leq d),
\label{eq:t1}
\end{equation}
where $V_{-1} = 0$ and $V_{d+1}= 0$.
\item There exists an ordering $\lbrace V^*_i\rbrace_{i=0}^{\delta}$ of
the  
eigenspaces of $A^*$ such that 
\begin{equation}
A V^*_i \subseteq V^*_{i-1} + V^*_i+ V^*_{i+1} 
\qquad \qquad (0 \leq i \leq \delta),
\label{eq:t2}
\end{equation}
where $V^*_{-1} = 0$ and $V^*_{\delta+1}= 0$.
\item There does not exist a subspace $U$ of $V$ such  that $AU\subseteq U$,
$A^*U \subseteq U$, $U\not=0$, $U\not=V$.
\end{enumerate}
The TD pair $A,A^*$ is said to be {\it over $\mathbb F$}.
We call $V$ the 
{\it underlying
 vector space}.
\end{definition}

\begin{note}
\label{lem:convention}
\rm
According to a common notational convention, $A^*$ denotes 
the conjugate-transpose of $A$. We are not using this convention.
In a TD pair $A,A^*$ the linear maps  $A$ and $A^*$
are arbitrary subject to (i)--(iv) above.
\end{note}

\noindent We refer the reader to \cite{totBip} for background information on TD pairs.
In that article, the introduction summarizes the origin of the TD pair concept in algebraic graph theory, 
and Section 19 gives a comprehensive discussion of the current  state of the art.
\medskip

\noindent 
In order to motivate our results, we recall some basic facts about TD pairs.
Let $A,A^*$ denote a TD pair
on $V$, as in Definition 
\ref{def:tdp}. By 
\cite[Lemma 4.5]{TD00}
the integers $d$ and $\delta$ from
(ii), (iii) are equal; we call this
common value the {\it diameter} of the
pair. For $0 \leq i \leq d$ let $\theta_i$ (resp. $\theta^*_i$) denote
the eigenvalue of $A$ (resp. $A^*$) for the eigenspace $V_i$
(resp. $V^*_i$). By \cite[Theorem~11.1]{TD00} the scalars
\begin{align*}
\frac{\theta_{i-2}-\theta_{i+1}}{\theta_{i-1}-\theta_i},
\qquad \qquad
\frac{\theta^*_{i-2}-\theta^*_{i+1}}{\theta^*_{i-1}-\theta^*_i}
\end{align*}
are equal and independent of $i$ for $2 \leq i \leq d-1$. For
this recurrence the solutions can be given in closed form 
\cite[Theorem~11.2]{TD00}.
The ``most general'' solution is called $q$-Racah, and will
be described shortly. 
\medskip

\noindent By construction, the vector space $V$
has a direct sum decomposition into the eigenspaces 
$\lbrace V_i\rbrace_{i=0}^d$ 
of $A$
and the eigenspaces $\lbrace V^*_i\rbrace_{i=0}^d$ of $A^*$.
The vector space $V$ has
two more direct sum decompositions of interest,
called the first split decomposition
$\lbrace U_i\rbrace_{i=0}^d$ and
second split decomposition
$\lbrace U^\Downarrow_i\rbrace_{i=0}^d$.
By 
\cite[Theorem~4.6]{TD00}
the first split decomposition satisfies
\begin{align*}
&U_0 + U_1+ \cdots + U_i = V^*_0 + V^*_1 + \cdots + V^*_i,
\\
&U_i + U_{i+1}+ \cdots + U_d = V_i + V_{i+1} + \cdots + V_d
\end{align*}
for $0 \leq i \leq d$.
By \cite[Theorem~4.6]{TD00}
the second split decomposition satisfies
\begin{align*}
&U^\Downarrow_0 + U^\Downarrow_1+ \cdots + U^\Downarrow_i = V^*_0 + V^*_1 + \cdots + V^*_i,
\\
&U^\Downarrow_i + U^\Downarrow_{i+1}+ \cdots + U^\Downarrow_d =
V_0 + V_{1} + \cdots + V_{d-i}
\end{align*}
for $0 \leq i \leq d$.
By \cite[Theorem~4.6]{TD00},
\begin{align*}
(A-\theta_i I)U_i \subseteq U_{i+1},
\qquad \qquad (A^*-\theta^*_i I) U_i \subseteq U_{i-1},
\\
(A-\theta_{d-i} I)U^\Downarrow_i \subseteq U^\Downarrow_{i+1},
\qquad \qquad (A^*-\theta^*_i I) U^\Downarrow_i \subseteq U^\Downarrow_{i-1}
\end{align*}
for $0 \leq i \leq d$, where $U_{-1}=0$, $U_{d+1}=0$ 
and $U^\Downarrow_{-1}=0$, $U^\Downarrow_{d+1}=0$.
\medskip\

\noindent
We now describe the $q$-Racah case. The TD pair $A, A^*$ is said to have
{\it $q$-Racah type} whenever there exist nonzero $a,b,q \in \mathbb F$ such that $q^4 \not=1$ and
\begin{align*}
\theta_i = a q^{d-2i}+ a^{-1} q^{2i-d}, \qquad \qquad
\theta^*_i = b q^{d-2i}+ b^{-1} q^{2i-d}
\end{align*}
for $0 \leq i \leq d$. For the rest of this section, assume that $A, A^*$ has $q$-Racah type with $d\geq 1$.
\medskip

\noindent  In this paper, our purpose is to turn the vector space $V$ into a module for the $q$-tetrahedron algebra $\boxtimes_q$.
Over  the next few paragraphs, we will describe the maps that get used and explain what $\boxtimes_q$ is all about.
\medskip

\noindent 
We introduce an invertible $W \in {\rm End}(V)$  such that for $0 \leq i \leq d$, $V_i$ is an eigenspace of $W$ with
eigenvalue $(-1)^i a^i q^{i(d-i)}$. We remark that the idea behind $W$ goes back to the spin model concept introduced by V.~F.~R. Jones 
\cite{jones1}. In the interval since then, the idea was developed in the context of
association schemes \cite{bbj, j1, j2, anAlg},
distance-regular graphs \cite{bb,curtNom},
the subconstituent algebra \cite{caugh, curtThin},
spin Leonard pairs \cite{spinLP},
and  Leonard triples \cite{curtMod, totBip, terPseudo}. See \cite{spinModelNT} for a comprehensive description of $W$ in the context of 
spin models, distance-regular graphs, and spin Leonard pairs. We also remark that $W^2$ is closely related to 
the Lusztig automorphism of the $q$-Onsager algebra \cite{BK,terLusztig}; indeed $W^2=H$ where $H$ is from \cite[Section~3]{diagram}.
 In the present paper, we will obtain a number of identities involving $W^{\pm 1}$; for example
 \begin{align*}
 W&=  \sum_{i=0}^{d} \frac{ (-1)^i q^{i^2}(A-\theta_0 I )( A-\theta_{1}I ) \cdots (A-\theta_{i-1}I)}{(q^2;q^2)_i(a^{-1} q^{1-d}; q^2)_i},
 \\
W^{-1}&=  \sum_{i=0}^{d} \frac{ (-1)^i q^{-i^2}(A-\theta_0 I )( A-\theta_{1}I ) \cdots (A-\theta_{i-1}I)}{(q^{-2};q^{-2})_i(a q^{d-1}; q^{-2})_i}.
\end{align*}
For the above sums, the denominator notation is explained in Section 6.
\medskip

\noindent Next we recall the maps $K, B$. Following \cite[Section~1.1]{augmented},
we define $K, B \in {\rm End}(V)$ 
such that for $0 \leq i \leq d$, $U_i$ (resp. $U^\Downarrow_i$) is an eigenspace of
$K$ (resp. $B$) with eigenvalue $q^{d-2i}$. 
The maps $K, B$ are invertible.
By \cite[Section~1.1]{augmented},
\begin{align*}
&\frac{q K A - q^{-1} A K}{q-q^{-1}} = a K^2+ a^{-1} I,\qquad \qquad 
\frac{q B A - q^{-1} A B}{q-q^{-1}} =a^{-1} B^2+ a I.
\end{align*}
\noindent By \cite[Theorem~9.9]{bockting},
\begin{align*}
& aK^2 - \frac{a^{-1} q - a q^{-1}}{q-q^{-1}} K B - \frac{a q - a^{-1} q^{-1}}{q-q^{-1}} BK + a^{-1} B^2 = 0.
\end{align*}
We will show that
\begin{align*}
&A = a W^{-1} K W + a^{-1} W K^{-1} W^{-1},
\\
&\frac{q W^{-1} K W K^{-1} - q^{-1}  K^{-1} W^{-1} K W}{q-q^{-1} } = I,
\\
&\frac{q W^{-2} K W^2 K^{-1} - q^{-1}  K^{-1} W^{-2} KW^2}{q-q^{-1} } = I
\end{align*}
and also
\begin{align*}
&A = a^{-1} W^{-1} B W + a W B^{-1} W^{-1},
\\
&\frac{q W^{-1} B W B^{-1} - q^{-1}  B^{-1} W^{-1} B W}{q-q^{-1} } = I,
\\
&\frac{q W^{-2} B W^2 B^{-1} - q^{-1}  B^{-1} W^{-2} BW^2}{q-q^{-1} } = I.
\end{align*}

\noindent Next we discuss the maps  $M, N, Q$.  Following 
\cite[Section~6]{bocktingQexp} and \cite[Section~7]{diagram}
we define
\begin{align*}
M = \frac{a K - a^{-1} B}{a-a^{-1}},
\qquad \qquad
N = \frac{a^{-1} K^{-1} - a B^{-1} }{a^{-1} -a}.
\end{align*}
We will show that $W^{-1} M W = W N W^{-1}$; denote this common value by $Q$.
We will show that $Q$ is diagonalizable with eigenvalues $\lbrace q^{d-2i} \rbrace_{i=0}^d$; in particular $Q$ is invertible.
\medskip

\noindent Next we recall the double lowering map $\psi$, due to Sarah Bockting-Conrad \cite{bockting1}.
By \cite[Lemma~11.2, Corollary~15.3]{bockting1},
\begin{align*} 
\psi U_i \subseteq U_{i-1}, \qquad \qquad 
\psi U^\Downarrow_i \subseteq U^\Downarrow_{i-1} \qquad \qquad (0 \leq i \leq d).
\end{align*}
By \cite[Theorem~9.8]{bockting}, $\psi$ is equal to each of
\begin{align*}
&
\frac{ I - BK^{-1}}{q(a I - a^{-1} B K^{-1})}, 
\qquad \qquad 
\frac{ I - KB^{-1}}{q(a^{-1} I - a K B^{-1})},
\\
&
\frac{ q(I-K^{-1}B)}{a I - a^{-1} K^{-1}B}, \qquad \qquad \quad \,
\frac{q(I-B^{-1}K)}{a^{-1} I - a B^{-1} K}.
\end{align*}
Expanding on \cite[Lemma~6.8]{bocktingQexp}, we will show that
\begin{align*}
\psi + \frac{q  A M^{-1} - q^{-1} M^{-1} A}{q^2-q^{-2}} &= \frac{a+a^{-1}}{q+q^{-1}} I,
\\
\psi + \frac{q N^{-1} A - q^{-1} AN^{-1} }{q^2-q^{-2}}  &= \frac{a+a^{-1}}{q+q^{-1}} I.
\end{align*}
\noindent We will also show that $A$ commutes with $\psi-Q^{-1}$, and that
\begin{align*}
&W \psi W^{-1} + Q^{-1} = \psi + M^{-1},
\qquad \qquad
W^{-1} \psi W + Q^{-1} = \psi + N^{-1}.
\end{align*}

\noindent Next we recall the Casimir element $\Lambda$. Following \cite[Lemma~7.2]{bockting} we define
\begin{align*}
 \Lambda=  \psi (A - a K - a^{-1} K^{-1}) + q^{-1} K + q K^{-1}.
\end{align*}
It is shown in  \cite[Lemmas~7.3, 8.3, 9.1]{bockting} 
 that $\Lambda$ commutes with each of
$A, K, B, \psi$. By this and the construction, $\Lambda$ commutes with
$W, M,N,Q$.
We will show that
\begin{align*}
M^{-1} + \frac{q \psi A - q^{-1} A \psi}{q^2-q^{-2}}  &= \frac{\Lambda}{q+q^{-1}},
\\
N^{-1}+ \frac{q  A\psi - q^{-1}  \psi A}{q^2-q^{-2}} &= \frac{\Lambda}{q+q^{-1}}.
\end{align*}
We will also show that
\begin{align*}
(\psi - Q^{-1})((q+q^{-1})I-A) = (a+a^{-1})I - \Lambda.
\end{align*}

\noindent Next we recall the $q$-tetrahedron algebra $\boxtimes_q$. This infinite-dimensional algebra was introduced in
\cite{qtet}, and used 
to study the TD pairs of $q$-geometric type. See 
\cite{qinv, ItoTerwilliger1, evalTetq, miki, pospart, yy}
for subsequent work. The algebra $\boxtimes_q$ is defined by generators and relations. To describe the
generators, let $\mathbb Z_4 = \mathbb Z / 4 \mathbb Z$ denote the cyclic group of order 4. 
The algebra $\boxtimes_q$ has eight generators
\begin{align*}
\lbrace x_{ij}\;|\;i,j \in 
\mathbb Z_4,
\;\;j-i=1 \;\mbox{\rm {or}}\;
j-i=2\rbrace.
\end{align*}
The defining relations for $\boxtimes_q$ are given in Definition \ref{def:qtet}
below. We introduce a type of $\boxtimes_q$-module, said to be $t$-segregated; here $t$ is a nonzero scalar parameter.  We will show that
on a $t$-segregated $\boxtimes_q$-module, the following four elements coincide and commute with everything in $\boxtimes_q$:
\begin{align*}
&
 t(x_{01}x_{23}-1)+qx_{30}+q^{-1}x_{12},
 \qquad 
  t^{-1}(x_{12}x_{30}-1)+qx_{01}+q^{-1}x_{23},
\\
&
 t(x_{23}x_{01}-1)+qx_{12}+q^{-1}x_{30},
 \qquad 
 t^{-1}(x_{30}x_{12}-1)+qx_{23}+q^{-1}x_{01}.
\end{align*}
Let $\Upsilon$ denote the common value of the above four elements.
\medskip

\noindent We now describe our two main results. In this description, we refer to the above TD pair $A,A^*$ on $V$ that has $q$-Racah type.
In our first main result, we show  that $V$
becomes an $a$-segregated
 $\boxtimes_q$-module on which the $\boxtimes_q$-generators act as follows:
\bigskip

\centerline{
\begin{tabular}[t]{c|cccc}
   {\rm generator} & $x_{01}$ & $x_{12}$ & $x_{23}$ & $x_{30}$
\\
\hline
{\rm action on $V$}  & 
$W^{-1} K W$ & $W K^{-1} W^{-1} $ & $Q^{-1}+W \psi W^{-1} $
&$Q^{-1}+ W^{-1} \psi W $
\\
\end{tabular}}
\bigskip

\centerline{
\begin{tabular}[t]{c|cccc}
   {\rm generator} & $x_{02}$ & $x_{13}$ & $x_{20}$ & $x_{31}$
\\
\hline
{\rm action on $V$}  & 
 $Q$ & $K^{-1}$ & $Q^{-1}$ & $K$
\\
\end{tabular}}
\bigskip

\noindent Moreover $\Upsilon=\Lambda$ on $V$. In our second main result, we show that $V$ becomes an $a^{-1}$-segregated
 $\boxtimes_q$-module on which the $\boxtimes_q$-generators act as follows:
\bigskip

\centerline{
\begin{tabular}[t]{c|cccc}
   {\rm generator} & $x_{01}$ & $x_{12}$ & $x_{23}$ & $x_{30}$
\\
\hline
{\rm action on $V$}  & 
$W^{-1} B W$ & $W B^{-1} W^{-1} $ & $Q^{-1} + W \psi W^{-1} $
&$Q^{-1} + W^{-1} \psi W $
\\
\end{tabular}}
\bigskip

\centerline{
\begin{tabular}[t]{c|cccc}
   {\rm generator} & $x_{02}$ & $x_{13}$ & $x_{20}$ & $x_{31}$
\\
\hline
{\rm action on $V$}  & 
 $Q$ & $B^{-1}$ & $Q^{-1}$ & $B$
\\
\end{tabular}}
\bigskip

\noindent Moreover $\Upsilon=\Lambda$ on $V$.
\medskip

\noindent This paper is organized as follows. In Section 2 we recall the notion of a tridiagonal system.
In Section 3 we recall the $q$-Dolan/Grady relations and discuss their basic properties.
In Section 4 we introduce a certain map that makes it easier to discuss elements in ${\rm End}(V)$ that commute with $A$.
In Section 5 we introduce the element $W$, and 
in Section 6 we display some identities involving $W^{\pm 1}$.
In Section 7 we discuss the elements $K, B$ and in
Section 8 we discuss the elements $M,N, Q$.
In Section 9 we describe how $W, K, B, Q$ are related using the concept of an equitable triple.
In Section 10 we discuss the double lowering map $\psi$,
and in Section 11 we discuss the Casimir element $\Lambda$.
In Section 12 we describe the element $\psi-Q$ in some detail.
In Section 13 we discuss how $W, K$ are related and how $W, B$ are related.
In Section 14 we recall the algebra $U_q(\mathfrak{sl}_2)$ in its equitable presentation.
In Section 15 we recall the $q$-tetrahedron algebra $\boxtimes_q$,
and in Sections 16, 17 we discuss the $t$-segregated $\boxtimes_q$-modules.
In Section 18 we give our main results, which are Theorems
\ref{thm:one},
\ref{thm:two}.
In Section 19 we give some suggestions for future research.

\section{Tridiagonal systems}

\noindent
We now begin our formal argument. When working with a TD pair, it is often convenient to consider
a closely related object called a TD system. We will review this
notion after some notational comments.
Recall the natural numbers $\mathbb N = \lbrace 0,1,2,\ldots\rbrace$ and integers
$\mathbb Z = \lbrace 0, \pm 1, \pm 2,\ldots \rbrace$.
Recall the field $\mathbb F$. Every vector space discussed in this paper is over $\mathbb F$.
 Every algebra discussed in this paper is associative, over $\mathbb F$, and has a multiplicative identity.
 A subalgebra has the same multiplicative identity as the parent algebra.
For the rest of this paper, $V$ denotes a vector space  with finite
positive dimension.
Recall the algebra  ${\rm End}(V)$ from above Definition \ref{def:tdp}.
Let $A$ denote a diagonalizable element in ${\rm End}(V)$.
Let $\lbrace V_i\rbrace_{i=0}^d$ denote an ordering of the eigenspaces of $A$.
For $0\leq i \leq d$ let $\theta_i$ denote the eigenvalue of $A$ for $V_i$.
Define $E_i \in 
{\rm End}(V)$ 
such that $(E_i-I)V_i=0$ and $E_iV_j=0$ for $j \neq i$ $(0 \leq j \leq d)$.
Thus $E_i$ is the projection from $V$ onto $V_i$.
We call $E_i$ the {\it primitive idempotent} of $A$ corresponding to $V_i$
(or $\theta_i$).
Observe that
(i) $V_i = E_iV$ $(0 \leq i \leq d)$;
(ii) $E_iE_j=\delta_{i,j}E_i$ $(0 \leq i,j \leq d)$;
(iii) $I=\sum_{i=0}^d E_i$;
(iv) $A=\sum_{i=0}^d \theta_i E_i$;
(v) $AE_i = \theta_i E_i = E_i A$ $(0 \leq i \leq d)$.
Moreover
\begin{equation}         \label{eq:defEi}
  E_i=\prod_{\stackrel{0 \leq j \leq d}{j \neq i}}
          \frac{A-\theta_jI}{\theta_i-\theta_j} \qquad \qquad (0 \leq i \leq d).
\end{equation}
Let $\mathcal D$ denote the subalgebra of ${\rm End}(V)$ generated by $A$.
Note that 
$\lbrace A^i\rbrace_{i=0}^d$
is a basis for the vector space $\mathcal D$, 
and  $\prod_{i=0}^d(A-\theta_iI)=0$. Moreover
$\lbrace E_i\rbrace_{i=0}^d$ is a basis for the vector space $\mathcal D$.
Now let $A,A^*$ denote a TD pair on $V$, as in Definition 
\ref{def:tdp}. An ordering of the eigenspaces of $A$ (resp. $A^*$)
is said to be {\it standard} whenever it satisfies 
(\ref{eq:t1})
 (resp. (\ref{eq:t2})). 
We comment on the uniqueness of the standard ordering.
Let $\lbrace V_i\rbrace_{i=0}^d$ denote a standard ordering of the eigenspaces of $A$.
By \cite[Lemma~2.4]{TD00}, 
 the ordering $\lbrace V_{d-i}\rbrace_{i=0}^d$ is also standard and no further
 ordering
is standard.
A similar result holds for the eigenspaces of $A^*$.
An ordering of the primitive idempotents 
 of $A$ (resp. $A^*$)
is said to be {\it standard} whenever
the corresponding ordering of the eigenspaces of $A$ (resp. $A^*$)
is standard.

\begin{definition} \rm (See  \cite[Definition~2.1]{TD00}.)
 \label{def:TDsystem} 
By a {\it tridiagonal system} (or {\it  $TD$ system}) on $V$ we mean a sequence
\begin{align*}
 \Phi=(A;\{E_i\}_{i=0}^d;A^*;\{E^*_i\}_{i=0}^d)
\end{align*}
that satisfies (i)--(iii) below:
\begin{enumerate}
\item[\rm (i)]
$A,A^*$ is a TD pair on $V$;
\item[\rm (ii)]
$\lbrace E_i\rbrace_{i=0}^d$ is a standard ordering
of the primitive idempotents of $A$;
\item[\rm (iii)]
$\lbrace E^*_i\rbrace _{i=0}^d$ is a standard ordering
of the primitive idempotents of $A^*$.
\end{enumerate}
The TD system $\Phi$ is said to be {\it over} $\mathbb F$.
We call $V$ the {\it underlying vector space}.
\end{definition}


\noindent 
Let $\Phi=(A;\lbrace E_i\rbrace_{i=0}^d;A^*; \lbrace E^*_i\rbrace_{i=0}^d)$ 
denote a TD system on $V$. Then
the following is a TD system on $V$:
\begin{align*}
\Phi^\Downarrow&=(A;\lbrace E_{d-i}\rbrace_{i=0}^d;A^*; \lbrace E^*_i\rbrace_{i=0}^d).
\end{align*}
\noindent For any object $f$ attached to $\Phi$,
let $f^\Downarrow$ denote the corresponding object attached to
 $\Phi^{\Downarrow}$.

\begin{definition}        \label{def}
\rm
Let $\Phi=(A;\lbrace E_i\rbrace_{i=0}^d;A^*; \lbrace E^*_i\rbrace_{i=0}^d)$ 
denote a TD system on $V$.
For $0 \leq i \leq d$ let $\theta_i$ (resp. $\theta^*_i$)
denote the eigenvalue of $A$ (resp. $A^*$)
for the eigenspace $E_iV$ (resp. $E^*_iV$).
We call $\lbrace \theta_i\rbrace_{i=0}^d$ (resp. $\lbrace \theta^*_i\rbrace_{i=0}^d$)
the {\it eigenvalue sequence}
(resp. {\it dual eigenvalue sequence}) of $\Phi$.
\end{definition}
\noindent Referring to Definition
\ref{def}, 
  we emphasize that $\lbrace \theta_i\rbrace _{i=0}^d$ are mutually distinct, and 
$\lbrace \theta^*_i\rbrace_{i=0}^d$ are
mutually distinct. By 
\cite[Theorem~11.1]{TD00}  the expressions
\begin{align*}
\frac{\theta_{i-2}-\theta_{i+1}}{\theta_{i-1}-\theta_i},\qquad \qquad
\frac{\theta^*_{i-2}-\theta^*_{i+1}}{\theta^*_{i-1}-\theta^*_i}
\end{align*}
are equal and independent of $i$ for $2 \leq i \leq d-1$. For this recurrence the 
solutions can be expressed in closed form
\cite[Theorem~11.2]{TD00}.
 The ``most general'' solution is called $q$-Racah, and described below.

\begin{definition}
\label{def:qrac} \rm
Let $\Phi$ denote a TD system on $V$, with eigenvalue sequence
$\lbrace \theta_i \rbrace_{i=0}^d$ and dual eigenvalue sequence
$\lbrace \theta^*_i\rbrace_{i=0}^d $. Then $\Phi$ is said to have
{\it $q$-Racah type} whenever there exist nonzero $a, b, q \in \mathbb F$ such that $q^4 \not=1$ and
\begin{align}
\theta_i = a q^{d-2i} + a^{-1} q^{2i-d}, \qquad \qquad 
\theta^*_i = b q^{d-2i} + b^{-1} q^{2i-d}
\label{eq:eigform}
\end{align}
for $0 \leq i \leq d$.
\end{definition}
\noindent From now until the end of Section 13, we fix a TD system 
$\Phi=(A;\lbrace E_i\rbrace_{i=0}^d;A^*; \lbrace E^*_i\rbrace_{i=0}^d)$ on $V$
that has $q$-Racah type, with eigenvalue sequence $\lbrace \theta_i \rbrace_{i=0}^d$ and dual eigenvalue
sequence 
 $\lbrace \theta^*_i \rbrace_{i=0}^d$
as in Definition
\ref{def:qrac}. To avoid trivialities we assume that $d\geq 1$. Let $\mathcal D$ denote the subalgebra
of ${\rm End}(V)$ generated by $A$.
\medskip

\noindent
We mention some basic results for later use.

\begin{lemma} The following hold:
\begin{enumerate}
\item[\rm (i)] $q^{2i}\not=1$ for $1 \leq i \leq d$;
\item[\rm (ii)] neither of $a^2, b^2$ is among $q^{2d-2}, q^{2d-4}, \ldots, q^{2-2d}$.
\end{enumerate}
\end{lemma}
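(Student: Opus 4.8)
The plan is to obtain both parts from the mutual distinctness of the eigenvalue sequence $\lbrace \theta_i\rbrace_{i=0}^d$ and the dual eigenvalue sequence $\lbrace \theta^*_i\rbrace_{i=0}^d$ (emphasized just below Definition \ref{def}), combined with the closed form \eqref{eq:eigform}. The mechanism is a factorization of each difference $\theta_i-\theta_j$ into a ``$q$-part'' and an ``$a$-part''; nonvanishing of the difference then separates precisely into the two conditions (i) and (ii).

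For part (i) I would argue by contradiction. Suppose $q^{2i}=1$ for some $i$ with $1\leq i\leq d$. Then also $q^{-2i}=1$, so $q^{d-2i}=q^d$ and $q^{2i-d}=q^{-d}$; substituting into \eqref{eq:eigform} gives $\theta_i=aq^d+a^{-1}q^{-d}=\theta_0$. Since $i\geq 1$ this contradicts the distinctness of $\lbrace \theta_i\rbrace_{i=0}^d$, so no such $i$ exists.

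For part (ii) I would set $u_i=q^{d-2i}$, so that $\theta_i=au_i+a^{-1}u_i^{-1}$, and record the factorization
\[
\theta_i-\theta_j=(u_i-u_j)\,\frac{aq^{2d-2i-2j}-a^{-1}}{q^{2d-2i-2j}}.
\]
By part (i), for $0\leq i<j\leq d$ the factor $u_i-u_j=q^{d-2i}-q^{d-2j}$ is nonzero, since $1\leq j-i\leq d$ forces $q^{2(j-i)}\neq 1$. As $\theta_i\neq\theta_j$ for $i\neq j$, the remaining factor cannot vanish, giving $a^2\neq q^{2i+2j-2d}$ for all $0\leq i<j\leq d$. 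The one point needing care is the bookkeeping: as $(i,j)$ ranges over these pairs the sum $i+j$ takes every integer value from $1$ to $2d-1$, so the exponent $2i+2j-2d$ runs through exactly the even integers from $2-2d$ to $2d-2$, which are precisely the exponents occurring in $q^{2d-2},q^{2d-4},\ldots,q^{2-2d}$. This establishes the claim for $a^2$, and the same argument applied verbatim to $\lbrace \theta^*_i\rbrace_{i=0}^d$, with $b$ in place of $a$, establishes it for $b^2$. The main (and only mild) obstacle is checking that this index range matches the displayed list exactly, including the boundary case $d=1$.
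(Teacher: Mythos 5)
Your proof is correct and takes essentially the same approach as the paper: the paper's one-line proof simply invokes the mutual distinctness of $\lbrace \theta_i\rbrace_{i=0}^d$ and $\lbrace \theta^*_i\rbrace_{i=0}^d$ together with the closed form (\ref{eq:eigform}), and your argument (the contradiction for (i) and the factorization $\theta_i-\theta_j=(u_i-u_j)(aq^{2d-2i-2j}-a^{-1})q^{2i+2j-2d}$ with the index bookkeeping for (ii)) is precisely the detailed working-out of that reference.
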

\begin{proof}
Use the  sentence below Definition \ref{def}, along with Definition \ref{def:qrac}.
\end{proof}

\begin{lemma}
\label{lem:onefactor}
For $1 \leq i \leq d$,
\begin{align*}
\frac{ q \theta_{i-1} - q^{-1} \theta_{i}}{q^2-q^{-2}} = a q^{d-2i+1},
\qquad \qquad 
\frac{ q \theta_i- q^{-1} \theta_{i-1}}{q^2-q^{-2}} = a^{-1} q^{2i-d-1}.
\end{align*}
\end{lemma}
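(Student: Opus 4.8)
The plan is to verify both identities by direct substitution of the closed form for the eigenvalues. Recall from Definition~\ref{def:qrac} that $\theta_i = a q^{d-2i} + a^{-1} q^{2i-d}$ for $0 \leq i \leq d$, so in particular $\theta_{i-1} = a q^{d-2i+2} + a^{-1} q^{2i-d-2}$. The whole argument is an elementary manipulation of powers of $q$, and the key structural feature is that in each numerator one of the two groups of terms cancels identically, leaving a single monomial times $(q^4-1)$ that matches the denominator cleanly.

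First I would treat the left-hand identity. Substituting the formulas into the numerator $q\theta_{i-1} - q^{-1}\theta_i$ and expanding, I expect the two terms carrying $a^{-1}$ to cancel, since $q\cdot q^{2i-d-2} = q^{2i-d-1}$ and $q^{-1}\cdot q^{2i-d} = q^{2i-d-1}$ share the same exponent. The two terms carrying $a$ then combine as $a q^{d-2i+3} - a q^{d-2i-1} = a q^{d-2i-1}(q^4 - 1)$, so the numerator equals $a q^{d-2i-1}(q^4-1)$.

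Next I would handle the denominator by writing $q^2 - q^{-2} = q^{-2}(q^4-1)$. This factorization is legitimate because $q$-Racah type requires $q^4 \neq 1$ (see Lemma above, part~(i), or directly Definition~\ref{def:qrac}), so $q^2 - q^{-2}\neq 0$ and the division is valid. Cancelling the common factor $(q^4-1)$ gives $a q^{d-2i-1}\cdot q^2 = a q^{d-2i+1}$, which is the first claim.

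The right-hand identity is established by the same computation with the roles of $\theta_{i-1}$ and $\theta_i$ interchanged: in the numerator $q\theta_i - q^{-1}\theta_{i-1}$ the terms carrying $a$ now cancel (both equal $a q^{d-2i+1}$), the terms carrying $a^{-1}$ combine to $a^{-1}q^{2i-d-3}(q^4-1)$, and dividing by $q^{-2}(q^4-1)$ yields $a^{-1}q^{2i-d-1}$. There is no real obstacle here; the only point requiring any care is confirming that the correct group of terms cancels in each case, which follows immediately from matching the $q$-exponents as indicated above.
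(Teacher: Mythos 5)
Your proof is correct and takes the same route as the paper: the paper's entire proof is the remark ``by the form of the eigenvalue expressions in (\ref{eq:eigform}),'' i.e.\ direct substitution of $\theta_i = a q^{d-2i} + a^{-1} q^{2i-d}$, which is exactly the computation you carry out in detail (including the correct cancellation pattern and the use of $q^4 \neq 1$ to justify dividing by $q^2 - q^{-2}$).
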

\begin{proof}  By the form of the eigenvalue expressions in (\ref{eq:eigform}).
\end{proof}

\begin{lemma}
\label{lem:2factors}
For $0 \leq i,j\leq d$ such that $\vert i-j \vert = 1$,
\begin{align}
\frac{ q \theta_i - q^{-1} \theta_j}{q^2-q^{-2}} \,
\frac{ q \theta_j - q^{-1} \theta_i}{q^2-q^{-2}} = 1.
\label{eq:2factors}
\end{align}
\end{lemma}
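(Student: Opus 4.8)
The plan is to reduce everything to Lemma \ref{lem:onefactor}. First I would observe that the left-hand side of \eqref{eq:2factors} is unchanged under interchanging $i$ and $j$, since swapping the two indices merely swaps the two factors in the product. Hence there is no loss of generality in assuming $j=i-1$ with $1\le i\le d$; the case $j=i+1$ then follows immediately by this symmetry.

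With $j=i-1$ fixed, I would invoke the two identities of Lemma \ref{lem:onefactor} directly. The first factor $\frac{q\theta_i-q^{-1}\theta_{i-1}}{q^2-q^{-2}}$ equals $a^{-1}q^{2i-d-1}$, while the second factor $\frac{q\theta_{i-1}-q^{-1}\theta_i}{q^2-q^{-2}}$ equals $aq^{d-2i+1}$. Multiplying these, the factors of $a$ cancel and the exponents of $q$ sum to $(2i-d-1)+(d-2i+1)=0$, so the product collapses to $q^0=1$, which is the claim.

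I expect no genuine obstacle here: the result is an immediate corollary of Lemma \ref{lem:onefactor}, and the only things to watch are the bookkeeping of the $q$-exponents and the symmetry observation that lets me fix the ordering of the two consecutive indices. As an alternative that avoids the symmetry reduction, one could instead substitute the closed form \eqref{eq:eigform} for $\theta_i$ and $\theta_j$ directly into both numerators and expand; the cross terms cancel for the same underlying reason, but this route is more computation-heavy and less transparent than simply quoting the one-factor lemma twice, so I would present the latter.
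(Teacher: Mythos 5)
Your proof is correct and is exactly the paper's argument: the paper proves this lemma simply by citing Lemma \ref{lem:onefactor}, which is precisely your computation of the two factors as $a^{-1}q^{2i-d-1}$ and $aq^{d-2i+1}$ whose product is $1$. The symmetry reduction to $j=i-1$ is a harmless bit of explicitness that the paper leaves implicit.
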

\begin{proof} By Lemma
\ref{lem:onefactor}.
\end{proof}
 
 \noindent Let $X \in {\rm End}(V)$.  Using $I=E_0+ \cdots + E_d$ we have
 \begin{align*}
 X = I X  I = \sum_{i=0}^d \sum_{j=0}^d E_i X E_j.
 \end{align*}
 With this result we routinely obtain the following three lemmas.

 \begin{lemma}
 \label{lem:Zero}
 For $X \in {\rm End}(V)$ the following are equivalent:
 \begin{enumerate}
 \item[\rm (i)] $E_i  X E_j = 0 $ for $0 \leq i,j\leq d$;
 \item[\rm (ii)] $X=0$.
 \end{enumerate}
 \end{lemma}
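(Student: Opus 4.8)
The statement to prove is Lemma~\ref{lem:Zero}: for $X \in {\rm End}(V)$, the condition that $E_i X E_j = 0$ for all $0 \leq i,j \leq d$ is equivalent to $X = 0$.

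The plan is to prove the two implications separately, exploiting the decomposition $X = \sum_{i=0}^d \sum_{j=0}^d E_i X E_j$ displayed immediately before the statement. The direction (ii)$\Rightarrow$(i) is immediate: if $X = 0$, then each summand $E_i X E_j = E_i 0 E_j = 0$. The substance is in (i)$\Rightarrow$(ii). First I would invoke the displayed identity $X = \sum_{i=0}^d \sum_{j=0}^d E_i X E_j$, which follows from inserting $I = E_0 + \cdots + E_d$ on both sides of $X = IXI$. Under hypothesis (i), every term $E_i X E_j$ vanishes, so the double sum collapses to $0$, giving $X = 0$ directly.

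The key structural facts being used are the primitive idempotent properties listed earlier in Section~2: the resolution of the identity $I = \sum_{i=0}^d E_i$ and the orthogonality relations $E_i E_j = \delta_{i,j} E_i$. The resolution of the identity is what lets us write $X = IXI$ and expand; no further manipulation is needed once the hypothesis kills every summand. I would present this as a two-line argument: cite the displayed formula preceding the lemma, apply (i) termwise, and conclude.

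There is essentially no obstacle here; the lemma is a routine consequence of the completeness of the idempotent system, which is precisely why the authors flag the three lemmas (Lemmas~\ref{lem:Zero} and the two that follow) as being obtained ``routinely'' from the displayed expansion. The only point requiring the slightest care is making sure one does not need the $E_i$ to be \emph{simultaneously} diagonalizing for both $A$ and $A^*$; in fact the argument uses only the single resolution of the identity coming from the idempotents of $A$, so the proof is insensitive to any interaction between the two families. I would therefore keep the write-up to the observation that (ii)$\Rightarrow$(i) is trivial and that (i)$\Rightarrow$(ii) follows from the expansion $X = \sum_{i,j} E_i X E_j$ together with the hypothesis.
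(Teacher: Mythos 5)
Your proof is correct and matches the paper's intended argument exactly: the paper states that this lemma is obtained ``routinely'' from the displayed expansion $X = IXI = \sum_{i=0}^d \sum_{j=0}^d E_i X E_j$, which is precisely the identity you use for (i)$\Rightarrow$(ii), with (ii)$\Rightarrow$(i) being trivial.
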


 \begin{lemma}
 \label{lem:One}
 For $X \in {\rm End}(V)$ the following are equivalent:
 \begin{enumerate} 
 \item[\rm (i)] $E_i  X E_j = 0 $ if $i\not=j$ $(0 \leq i,j\leq d)$;
 \item[\rm (ii)] $X E_iV \subseteq E_iV$ for $0 \leq i \leq d$;
  \item[\rm (iii)] $A$ commutes with $X$.
 \end{enumerate}
 \end{lemma}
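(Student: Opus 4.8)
The plan is to establish the two equivalences (i) $\Leftrightarrow$ (ii) and (ii) $\Leftrightarrow$ (iii) separately, rather than a single cyclic chain, since this makes the role of each hypothesis most transparent. The only tools I expect to need are the properties of the primitive idempotents listed below Definition \ref{def:TDsystem}: the orthogonality $E_iE_j=\delta_{i,j}E_i$, the resolution of the identity $I=\sum_{i=0}^d E_i$, the spectral form $A=\sum_{i=0}^d \theta_i E_i$, the relation $AE_i=\theta_i E_i=E_iA$, the identification $E_iV=V_i$, and crucially the mutual distinctness of $\lbrace \theta_i\rbrace_{i=0}^d$ emphasized below Definition \ref{def}.

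For (i) $\Leftrightarrow$ (ii) I would use the block decomposition $X=\sum_{i,j}E_iXE_j$ recorded just before Lemma \ref{lem:Zero}. Fix $j$ and take any $v\in V$; then $XE_jv=\sum_{i=0}^d E_i(XE_jv)$, and since $\lbrace E_iV\rbrace_{i=0}^d=\lbrace V_i\rbrace_{i=0}^d$ is the eigenspace decomposition of $V$, the component of $XE_jv$ lying in $V_i$ is exactly $E_iXE_jv$. Hence $XE_jv\in V_j=E_jV$ for all $v$ precisely when $E_iXE_j=0$ for every $i\neq j$. Letting $j$ range over $0\leq j\leq d$ converts statement (ii) into statement (i), giving the equivalence.

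For (ii) $\Leftrightarrow$ (iii) I would compare $AXE_j$ and $XAE_j$ for each $j$. Assuming (iii), from $AX=XA$ and $AE_j=\theta_jE_j$ I get $A(XE_jv)=XAE_jv=\theta_j\,XE_jv$, so $XE_jv$ is a $\theta_j$-eigenvector of $A$; because the $\theta_j$ are distinct this eigenspace is exactly $V_j=E_jV$, which is (ii). Conversely, assuming (ii) the vector $XE_jv$ lies in $V_j$, so $A(XE_jv)=\theta_j\,XE_jv=XAE_jv$, whence $AXE_j=XAE_j$ for every $j$; summing over $j$ and using $I=\sum_j E_j$ yields $AX=XA$, which is (iii).

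I do not anticipate a genuine obstacle, as the argument is direct bookkeeping with the idempotents. The one step demanding care is where distinctness of the eigenvalues is invoked: in the direction (iii) $\Rightarrow$ (ii) one must know that a $\theta_j$-eigenvector of $A$ lies in $V_j$ rather than in some larger eigenspace, and distinctness of $\lbrace \theta_i\rbrace_{i=0}^d$ is precisely what guarantees this. Everything else follows formally from the resolution of the identity and the orthogonality relations.
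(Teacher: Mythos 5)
Your proof is correct and takes essentially the approach the paper intends: the paper declares this lemma to follow ``routinely'' from the block decomposition $X=\sum_{i=0}^d\sum_{j=0}^d E_iXE_j$, and your argument is exactly that bookkeeping, combined with the standard facts $AE_j=\theta_jE_j$, $I=\sum_j E_j$, and the distinctness of the eigenvalues $\lbrace\theta_i\rbrace_{i=0}^d$ (correctly flagged as the one point where a hypothesis beyond formal idempotent algebra is used). No gaps.
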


\begin{lemma}
\label{lem:four1}
For $X \in {\rm End}(V)$ the following are equivalent:
\begin{enumerate}
\item[\rm (i)] $E_i X E_j = 0$ if $\vert i-j \vert > 1$ $(0 \leq i,j\leq d)$;
\item[\rm (ii)] for $0 \leq i \leq d$,
\begin{align*}
X E_iV \subseteq E_{i-1}V + E_iV + E_{i+1}V,
\end{align*}
where $E_{-1}=0$ and $E_{d+1}=0$.
\end{enumerate}
\end{lemma}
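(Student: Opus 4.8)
The plan is to exploit the decomposition $X = \sum_{i=0}^d \sum_{j=0}^d E_i X E_j$ recorded just above Lemma \ref{lem:Zero}, together with the idempotent properties $E_iE_j=\delta_{i,j}E_i$ and the identification $E_iV=V_i$. Both implications reduce to a single observation: for a vector $w\in E_jV$ one has $E_k w = \delta_{k,j}\,w$, so applying a primitive idempotent $E_k$ detects exactly the $E_kV$-component of a vector. This is the routine bookkeeping promised in the sentence preceding the lemma.

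For (i) $\Rightarrow$ (ii), I would fix an index $i$ and a vector $v\in E_iV$, so that $E_iv=v$. Inserting $I=\sum_{k=0}^d E_k$ on the left gives $Xv = \sum_{k=0}^d E_k X E_i v$. By hypothesis $E_kXE_i=0$ whenever $\vert k-i\vert>1$, so only the terms with $k\in\lbrace i-1,i,i+1\rbrace$ survive, the boundary being handled by the conventions $E_{-1}=0$ and $E_{d+1}=0$. Each surviving summand $E_kXE_iv$ lies in $E_kV$, whence $Xv\in E_{i-1}V+E_iV+E_{i+1}V$, which is exactly (ii).

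For the converse (ii) $\Rightarrow$ (i), I would start from an arbitrary $v\in V$ and an index $j$, so that $E_jv\in E_jV$ and hence $XE_jv\in E_{j-1}V+E_jV+E_{j+1}V$ by (ii). Applying $E_k$ with $\vert k-j\vert>1$ annihilates every term on the right, since $E_k$ vanishes on $E_mV$ for $m\neq k$ and the indices $j-1,j,j+1$ are all distinct from $k$. Thus $E_kXE_jv=0$ for every $v\in V$, giving $E_kXE_j=0$, which is (i).

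I expect no genuine obstacle here; the content is entirely the orthogonality and completeness of the primitive idempotents. The only point demanding a little care is the uniform treatment of the boundary indices $i=0$ and $i=d$ (and likewise $j=0$, $j=d$), which the conventions $E_{-1}=0$, $E_{d+1}=0$ already in force dispose of automatically.
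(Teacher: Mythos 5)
Your proof is correct and is exactly the "routine" argument the paper intends: the paper gives no explicit proof, stating only that the lemma follows routinely from the decomposition $X=\sum_{i=0}^d\sum_{j=0}^d E_iXE_j$ and the idempotent properties, which is precisely what you carry out. Both directions and the boundary conventions are handled properly.
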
 

\begin{definition}\label{def:tridiag}\rm
Referring to Lemma \ref{lem:four1}, we say that {\it $X$ acts on the eigenspaces of $A$ in a tridiagonal fashion} whenever
the equivalent conditions (i), (ii) hold.
\end{definition}

\begin{example} \rm The elements $I, A, A^*$ act on the eigenspaces of $A$ in a tridiagonal fashion. 
\end{example}

 \section{The $q$-Dolan/Grady relations}
 
 We continue to discuss the TD system $\Phi=(A;\lbrace E_i\rbrace_{i=0}^d;A^*; \lbrace E^*_i\rbrace_{i=0}^d)$ on $V$ that has $q$-Racah type.
 In this section we consider how $A$ and $A^*$ are related.
  Recall the notation
\begin{align*}
\lbrack n \rbrack_q = \frac{q^n-q^{-n}}{q-q^{-1}} \qquad \qquad n \in \mathbb Z.
\end{align*}
For elements $X$, $Y$ in any algebra, their
commutator and $q$-commutator are given by
\begin{align*}
\lbrack X, Y\rbrack = XY-YX, \qquad \qquad
\lbrack X, Y\rbrack_q = q XY - q^{-1} YX.
\end{align*}
Note that
\begin{align*}
\lbrack X, \lbrack X, Y\rbrack_q \rbrack_{q^{-1}}  &=  X^2 Y - (q^2+q^{-2})  X Y X +  Y X^2,
\\
\lbrack X, \lbrack X, \lbrack X, Y\rbrack_q \rbrack_{q^{-1}} \rbrack &=  X^3 Y - \lbrack 3 \rbrack_q X^2 Y X + \lbrack 3 \rbrack_q X Y X^2 - Y X^3.
\end{align*}
\begin{lemma}
\label{def:qOns}   {\rm (See \cite[Theorem~10.1]{TD00}.)}
 Referring to the TD system $\Phi$,
\begin{align}
\label{eq:DG1}
&
\lbrack A, \lbrack A, \lbrack A, A^*\rbrack_q \rbrack_{q^{-1}}\rbrack = (q^2-q^{-2})^2 \lbrack A^*, A\rbrack,
\\
\label{eq:DG2}
& 
\lbrack A^*, \lbrack A^*, \lbrack A^*, A \rbrack_q \rbrack_{q^{-1}} \rbrack = (q^2-q^{-2})^2 \lbrack A, A^*\rbrack.
\end{align}
\end{lemma}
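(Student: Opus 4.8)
The plan is to establish the two $q$-Dolan/Grady relations \eqref{eq:DG1} and \eqref{eq:DG2} by the method that has become standard for TD systems, namely reducing each relation to a collection of scalar identities indexed by the eigenspaces of $A$ (resp.\ $A^*$). By symmetry it suffices to prove \eqref{eq:DG1}, since \eqref{eq:DG2} follows from it by interchanging the roles of $A$ and $A^*$ (equivalently, by applying the result to the TD system obtained by swapping the two maps). So I would focus entirely on \eqref{eq:DG1}.

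First I would write $G = \lbrack A, \lbrack A, \lbrack A, A^*\rbrack_q \rbrack_{q^{-1}}\rbrack - (q^2-q^{-2})^2 \lbrack A^*, A\rbrack$ and aim to show $G=0$ via Lemma~\ref{lem:Zero}, i.e.\ by checking $E_i G E_j = 0$ for all $0 \leq i,j \leq d$. The key structural input is the tridiagonal action of $A^*$ on the eigenspaces of $A$: by condition (ii) of Definition~\ref{def:tdp} we have $E_i A^* E_j = 0$ whenever $\lvert i-j\rvert > 1$ (Lemma~\ref{lem:four1}). Since $A = \sum_i \theta_i E_i$ commutes with each $E_i$ up to the scalar $\theta_i$, sandwiching the triple-commutator expression between $E_i$ and $E_j$ and pushing every copy of $A$ past the idempotents converts the left-hand operator expression into $E_i A^* E_j$ times a polynomial in $\theta_i,\theta_j$. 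Concretely, one computes that for $\lvert i-j\rvert \leq 1$,
\begin{align*}
E_i G E_j = E_i A^* E_j \,\bigl(P(\theta_i,\theta_j) - (q^2-q^{-2})^2(\theta_j-\theta_i)\bigr),
\end{align*}
where $P(\theta_i,\theta_j) = \theta_i^3 - \lbrack 3\rbrack_q \theta_i^2 \theta_j + \lbrack 3\rbrack_q \theta_i \theta_j^2 - \theta_j^3$ arises from the explicit expansion of the triple commutator recorded just before the lemma statement. For $\lvert i-j\rvert > 1$ the factor $E_i A^* E_j$ already vanishes, so those cases are automatic.

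It then remains to verify the scalar identity $P(\theta_i,\theta_j) = (q^2-q^{-2})^2(\theta_j - \theta_i)$ for all pairs $(i,j)$ with $\lvert i-j\rvert \leq 1$ (the case $i=j$ being trivial as both sides vanish). Here I would substitute the $q$-Racah closed forms $\theta_i = aq^{d-2i}+a^{-1}q^{2i-d}$ from \eqref{eq:eigform}. The crucial simplification is that the identity depends only on the \emph{difference} $\theta_i - \theta_j$ in a controlled way when $\lvert i-j\rvert = 1$; in fact Lemma~\ref{lem:onefactor} and Lemma~\ref{lem:2factors} are precisely the auxiliary scalar facts tailored to the $\lvert i-j\rvert =1$ case, packaging the relevant products of linear combinations of consecutive eigenvalues. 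I would factor $P$ as a product of three linear factors in the eigenvalues and invoke these lemmas to reduce $P(\theta_i,\theta_j)$ to the target.

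The main obstacle I anticipate is the scalar verification: the polynomial identity must hold for the specific $q$-Racah eigenvalues but should \emph{not} require any assumption beyond $q^4 \neq 1$ and the genericity conditions of Lemma~2.5, so one must confirm that the proof goes through for all admissible $a,b,q$ and all consecutive index pairs without degenerate cancellation. The cleanest route is to observe that $P(x,y)$ factors and that each factor matches the one-factor expressions of Lemma~\ref{lem:onefactor}; the $q$-Racah parametrization makes $\theta_{i-1}$ and $\theta_i$ sit in geometric progression in $q^2$, so the products telescope exactly to $(q^2-q^{-2})^2$. Once the scalar identity is confirmed for $\lvert i-j\rvert = 1$, Lemma~\ref{lem:Zero} gives $G=0$, and the symmetric argument yields \eqref{eq:DG2}.
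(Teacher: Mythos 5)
Your proof is correct, but it follows a genuinely different route from the paper: the paper offers no argument for this lemma at all, simply citing \cite[Theorem~10.1]{TD00} (a result valid for arbitrary TD pairs, stated there in a form not restricted to $q$-Racah type). Your eigenspace-sandwich computation checks out in every detail: writing the triple commutator as $A^3A^*-\lbrack 3\rbrack_q A^2A^*A+\lbrack 3\rbrack_q AA^*A^2-A^*A^3$ and compressing with $E_i(\cdot)E_j$ gives $P(\theta_i,\theta_j)\,E_iA^*E_j$ with $P(x,y)=(x-y)(qx-q^{-1}y)(q^{-1}x-qy)$; for $\vert i-j\vert=1$ Lemma \ref{lem:2factors} turns $P(\theta_i,\theta_j)$ into $(q^2-q^{-2})^2(\theta_j-\theta_i)$, matching the right-hand side, while the cases $i=j$ and $\vert i-j\vert>1$ are trivial, and the $A\leftrightarrow A^*$, $a\leftrightarrow b$ swap legitimately yields \eqref{eq:DG2} because the TD-system axioms and the $q$-Racah eigenvalue form are symmetric under it. Two remarks. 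First, your computation is essentially the one the paper performs immediately \emph{after} this lemma, in the proof of Proposition \ref{lem:four}, where the scalar $c_{ij}$ is shown to vanish for $\vert i-j\vert =1$ by the same Lemma \ref{lem:2factors}; in fact \eqref{eq:DG1} is an instant corollary of Lemma \ref{lem:four2} plus Proposition \ref{lem:four} applied to $X=A^*$, so your argument could be compressed to that deduction (those results appear after the lemma, but their proofs do not use it, so there is no circularity). Second, what the citation route buys is the general, non-$q$-Racah version of the tridiagonal relations, whereas your proof is tied to the $q$-Racah eigenvalues through Lemma \ref{lem:2factors}; also, your passing appeal to ``the genericity conditions of Lemma 2.5'' is both mis-numbered (that number belongs to Lemma \ref{lem:onefactor}; the genericity lemma is the one before it) and unnecessary, since all your computation needs is $q^2-q^{-2}\neq 0$, which is guaranteed by $q^4\neq 1$.
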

\noindent 
The relations {\rm (\ref{eq:DG1})}, {\rm (\ref{eq:DG2})} are called the {\it $q$-Dolan/Grady relations}.
In the following results we explore their meaning.

\begin{lemma} \label{lem:four2}
For $X \in {\rm End}(V)$ the following are equivalent:
\begin{enumerate}
\item[\rm (i)] 
$\lbrack A, \lbrack A, \lbrack A,X \rbrack_q \rbrack_{q^{-1}} \rbrack = (q^2-q^{-2})^2 \lbrack X, A \rbrack$;
\item[\rm (ii)] $A$ commutes with
\begin{align}
X + \frac{\lbrack A, \lbrack A, X \rbrack_q \rbrack_{q^{-1}}}{(q^2-q^{-2})^2}.
\label{eq:Acom}
\end{align}
\end{enumerate}
\end{lemma}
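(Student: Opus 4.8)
The plan is to prove the equivalence by a direct computation, exploiting that $\lbrack A, -\rbrack$ is $\mathbb F$-linear and that $\lbrack A, X\rbrack = -\lbrack X, A\rbrack$. First I would abbreviate the expression appearing in (\ref{eq:Acom}) as
\begin{align*}
Y = X + \frac{\lbrack A, \lbrack A, X \rbrack_q \rbrack_{q^{-1}}}{(q^2-q^{-2})^2},
\end{align*}
so that condition (ii) reads simply $\lbrack A, Y\rbrack = 0$. Since $q^4 \neq 1$, the scalar $(q^2-q^{-2})^2$ is nonzero, so $\lbrack A, Y\rbrack = 0$ is equivalent to $(q^2-q^{-2})^2\lbrack A, Y\rbrack = 0$.

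Next I would apply $\lbrack A, -\rbrack$ to $Y$ term by term. Using linearity of the commutator and the fact that $\lbrack A, -\rbrack$ commutes with multiplication by the scalar $(q^2-q^{-2})^{-2}$, I obtain
\begin{align*}
(q^2-q^{-2})^2 \lbrack A, Y\rbrack = (q^2-q^{-2})^2 \lbrack A, X\rbrack + \lbrack A, \lbrack A, \lbrack A, X\rbrack_q \rbrack_{q^{-1}} \rbrack.
\end{align*}
I would then rewrite the first summand on the right using $\lbrack A, X\rbrack = -\lbrack X, A\rbrack$, giving
\begin{align*}
(q^2-q^{-2})^2 \lbrack A, Y\rbrack = \lbrack A, \lbrack A, \lbrack A, X\rbrack_q \rbrack_{q^{-1}} \rbrack - (q^2-q^{-2})^2 \lbrack X, A\rbrack.
\end{align*}
The right-hand side vanishes precisely when condition (i) holds, while the left-hand side vanishes precisely when $\lbrack A, Y\rbrack = 0$, i.e.\ when condition (ii) holds. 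This yields (i) $\Leftrightarrow$ (ii).

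Because every step is either linearity of the commutator or the single sign identity $\lbrack A, X\rbrack = -\lbrack X, A\rbrack$, there is no genuine obstacle in the argument; the proof is a one-line rearrangement once the bracket $Y$ is fixed. The only point requiring care is the bookkeeping of the nested $q$- and $q^{-1}$-commutators so that the inner expression $\lbrack A, \lbrack A, X\rbrack_q\rbrack_{q^{-1}}$ inside $Y$ matches the inner part of the triple bracket in (i) upon applying one further $\lbrack A, -\rbrack$, together with tracking the sign introduced when passing from $\lbrack A, X\rbrack$ to $\lbrack X, A\rbrack$.
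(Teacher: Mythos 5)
Your proof is correct and is exactly the argument the paper intends: the paper's own proof is the one-line remark ``by the definition of the commutator map,'' which amounts to the same rearrangement you carried out, namely applying $\lbrack A,-\rbrack$ to the expression in (\ref{eq:Acom}), using linearity, the nonvanishing of $(q^2-q^{-2})^2$ (which follows from $q^4\neq 1$), and the sign flip $\lbrack A,X\rbrack=-\lbrack X,A\rbrack$. Nothing further is needed.
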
 
\begin{proof} By the definition of the commutator map.
 \end{proof}

\begin{proposition}
\label{lem:four}
Let  $X$ denote an element of ${\rm End}(V)$ that acts on the eigenspaces of $A$ in a tridiagonal fashion.
Then $X$ satisfies the equivalent conditions {\rm (i), (ii)} in Lemma
\ref{lem:four2}.
\end{proposition}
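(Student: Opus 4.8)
The plan is to verify condition (i) of Lemma \ref{lem:four2} directly; condition (ii) then comes for free from the equivalence proved in that lemma. Set
\[
\Omega = \lbrack A, \lbrack A, \lbrack A, X\rbrack_q \rbrack_{q^{-1}}\rbrack - (q^2-q^{-2})^2 \lbrack X, A\rbrack,
\]
so that condition (i) is the assertion $\Omega = 0$. By Lemma \ref{lem:Zero} it suffices to show $E_i \Omega E_j = 0$ for $0 \le i,j \le d$, so the whole argument is a sandwiching computation between primitive idempotents.

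The key device is the relation $A E_i = \theta_i E_i = E_i A$, which lets each $A$ inside a commutator be pulled past an adjacent idempotent as a scalar. First I would record, for arbitrary $Y \in {\rm End}(V)$, the elementary rule
\[
E_i \lbrack A, Y\rbrack_q E_j = (q\theta_i - q^{-1}\theta_j)\, E_i Y E_j,
\]
together with its analogues for $\lbrack A, Y\rbrack_{q^{-1}}$ and $\lbrack A,Y\rbrack$, where the scalar factor is replaced by $q^{-1}\theta_i - q\theta_j$ and $\theta_i - \theta_j$ respectively. Applying these three rules successively to the nested commutator in $\Omega$, and using $E_i \lbrack X, A\rbrack E_j = (\theta_j - \theta_i)E_iXE_j$ for the second term, collapses everything to
\[
E_i \Omega E_j = (\theta_i - \theta_j)\bigl\lbrack (q^{-1}\theta_i - q\theta_j)(q\theta_i - q^{-1}\theta_j) + (q^2-q^{-2})^2\bigr\rbrack\, E_i X E_j .
\]

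It then remains to check that this scalar coefficient annihilates $E_iXE_j$ in every case. I would split on $|i-j|$. If $|i-j|>1$ then $E_iXE_j = 0$, since $X$ acts on the eigenspaces of $A$ in a tridiagonal fashion (Definition \ref{def:tridiag}). If $i=j$ then the leading factor $\theta_i - \theta_j$ vanishes. The only substantive case is $|i-j|=1$, and here I would invoke Lemma \ref{lem:2factors}: rewriting $q\theta_j - q^{-1}\theta_i = -(q^{-1}\theta_i - q\theta_j)$, that lemma gives $(q^{-1}\theta_i - q\theta_j)(q\theta_i - q^{-1}\theta_j) = -(q^2-q^{-2})^2$, so the bracketed factor is zero. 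In all three cases $E_i\Omega E_j = 0$, whence $\Omega = 0$ by Lemma \ref{lem:Zero}, establishing (i). The main obstacle is purely clerical: correctly tracking the scalar contributed by each of the three commutators and, in the $|i-j|=1$ case, matching the two-factor product against Lemma \ref{lem:2factors} while keeping the sign from rewriting $q\theta_j - q^{-1}\theta_i$ straight; once the reduction to the per-$(i,j)$ scalar identity is in hand, the case analysis forces the conclusion.
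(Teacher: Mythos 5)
Your proposal is correct, and it is essentially the same argument as the paper's: sandwich between primitive idempotents, kill the $|i-j|>1$ blocks with the tridiagonal hypothesis, and kill the $|i-j|=1$ blocks with the scalar identity of Lemma \ref{lem:2factors} (with the same sign flip $q^{-1}\theta_i - q\theta_j = -(q\theta_j - q^{-1}\theta_i)$). The only difference is cosmetic: you verify condition (i) of Lemma \ref{lem:four2} via Lemma \ref{lem:Zero}, which forces the extra outer commutator and the $(\theta_i-\theta_j)$ factor handling $i=j$, whereas the paper verifies condition (ii) via Lemma \ref{lem:One} and so only needs the off-diagonal blocks $i\neq j$.
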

\begin{proof} 
We show that $X$ satisfies Lemma \ref{lem:four2}(ii). Let $\Delta$ denote the expression in
(\ref{eq:Acom}). To show that $A$ commutes with $\Delta$, by Lemma \ref{lem:One}
 it suffices to show that $E_i \Delta E_j=0$ if $i\not=j$ $(0 \leq i,j\leq d)$.
  Let $i,j$ be given with $i \not=j$.
We have $E_i \Delta E_j = E_i X E_j c_{ij}$ where
 \begin{align*}
 c_{ij} = 1 + \frac{ q \theta_i - q^{-1} \theta_j}{q^2-q^{-2}} \,\frac{q^{-1} \theta_i - q \theta_j}{q^2-q^{-2}}.
 \end{align*}
 If $\vert i-j\vert >1$ then $E_i X E_j=0$.
 If $\vert i-j\vert = 1$ then $c_{ij}=0$ by 
 (\ref{eq:2factors}).
 In any case $E_i \Delta E_j=0$. The result follows.
\end{proof}


\noindent Later in the paper, we will encounter  pairs of elements
in ${\rm End}(V)$ that are related to each other in the following way.

\begin{proposition}
\label{lem:xyWf}
For $X, Y \in {\rm End}(V)$ the following are equivalent:
\begin{enumerate}
\item[\rm (i)] $X$ acts on the eigenspaces of $A$ in a tridiagonal fashion, and
 $A$ commutes with 
\begin{align}
Y+ \frac{ q X A- q^{-1} A X}{q^2-q^{-2}}.
\label{eq:XY}
\end{align}
\item[\rm (ii)] $Y$ acts on the eigenspaces of $A$ in a tridiagonal fashion, and
 $A$ commutes with 
\begin{align}
X+ \frac{q A Y - q^{-1} Y A}{q^2-q^{-2}}.
\label{eq:YX}
\end{align}
\end{enumerate}
\end{proposition}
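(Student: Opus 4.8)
The plan is to sandwich both conditions between the primitive idempotents $E_i,E_j$ of $A$ and to check that they reduce to one and the same system of equations in the ``matrix entries'' $x_{ij}=E_iXE_j$ and $y_{ij}=E_iYE_j$. To set this up I would abbreviate
\[
\alpha_{ij}=\frac{q\theta_i-q^{-1}\theta_j}{q^2-q^{-2}}\qquad(0\le i,j\le d),
\]
and use $E_iA=\theta_iE_i$ together with $AE_j=\theta_jE_j$ to record the elementary identities
\[
E_i\,(qXA-q^{-1}AX)\,E_j=(q^2-q^{-2})\,\alpha_{ji}\,x_{ij},
\]
\[
E_i\,(qAY-q^{-1}YA)\,E_j=(q^2-q^{-2})\,\alpha_{ij}\,y_{ij}.
\]

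By Lemma \ref{lem:One}, the commutation requirement in (i) is then equivalent to $y_{ij}+\alpha_{ji}x_{ij}=0$ for all $i\neq j$, and the commutation requirement in (ii) is equivalent to $x_{ij}+\alpha_{ij}y_{ij}=0$ for all $i\neq j$; by Lemma \ref{lem:four1}, the tridiagonality of $X$ (resp. of $Y$) says $x_{ij}=0$ (resp. $y_{ij}=0$) whenever $\vert i-j\vert>1$. I would then compare these entrywise, splitting on $\vert i-j\vert$. When $\vert i-j\vert>1$, condition (i) gives $x_{ij}=0$ and hence $y_{ij}=0$ from $y_{ij}+\alpha_{ji}x_{ij}=0$, while condition (ii) gives $y_{ij}=0$ and hence $x_{ij}=0$; so off the three central diagonals both conditions amount to $x_{ij}=y_{ij}=0$. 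When $i=j$ neither condition imposes anything.

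The heart of the matter is the case $\vert i-j\vert=1$, where no tridiagonality constraint applies and the two conditions become $y_{ij}=-\alpha_{ji}x_{ij}$ and $x_{ij}=-\alpha_{ij}y_{ij}$ respectively. Here I would invoke Lemma \ref{lem:2factors}, which yields $\alpha_{ij}\alpha_{ji}=1$; in particular $\alpha_{ij}$ and $\alpha_{ji}$ are nonzero, so multiplying the first relation through by $-\alpha_{ij}$ converts it into the second and conversely. Thus the two relations are genuinely equivalent, not merely compatible, and (i) and (ii) impose identical conditions on the entries $x_{ij},y_{ij}$, completing the proof.

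I expect the only subtlety to be exactly this $\vert i-j\vert=1$ step: one must verify that the single off-diagonal relation prescribed by (i) is interchangeable with the one prescribed by (ii), which is precisely where the product identity of Lemma \ref{lem:2factors} is indispensable. The remaining bookkeeping is routine given Lemmas \ref{lem:One} and \ref{lem:four1}.
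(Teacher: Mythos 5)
Your proof is correct, but it takes a genuinely different route from the paper's. You sandwich both conditions between the primitive idempotents and reduce each to a system of scalar equations in the blocks $E_iXE_j$, $E_iYE_j$: your computations $E_i(qXA-q^{-1}AX)E_j=(q\theta_j-q^{-1}\theta_i)E_iXE_j$ and $E_i(qAY-q^{-1}YA)E_j=(q\theta_i-q^{-1}\theta_j)E_iYE_j$ are right, Lemma \ref{lem:One} correctly converts the two commutation hypotheses into the off-diagonal relations you state, and Lemma \ref{lem:2factors} (the identity $\alpha_{ij}\alpha_{ji}=1$ for $\vert i-j\vert=1$) makes the two adjacent-index relations interchangeable, so conditions (i) and (ii) impose literally the same constraints on the blocks. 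The paper instead stays at the operator level: it records the identities (\ref{eq:CD1}), (\ref{eq:CD2}), which express $X+\lbrack A,\lbrack A,X\rbrack_q\rbrack_{q^{-1}}/(q^2-q^{-2})^2$ (and the analogous expression in $Y$) in terms of the two displayed operators $C$ and $D$, and then invokes Proposition \ref{lem:four} to conclude that $A$ commutes with $D$ given (i), and with $C$ given (ii); the entrywise use of Lemma \ref{lem:2factors} is thereby confined to the proof of Proposition \ref{lem:four} rather than repeated here. Your argument is more elementary and self-contained, and it makes the symmetry of the equivalence completely transparent — it even shows that under either condition the off-diagonal blocks of $Y$ are determined by those of $X$ and vice versa. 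The paper's argument buys economy and structural clarity: it reuses Proposition \ref{lem:four} as a black box and exhibits the relationship between the two conditions through a pair of operator identities, which is the style carried through the rest of Sections 3--5.
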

\begin{proof} 
Let $C$ (resp. $D$) denote the expression in
(\ref{eq:XY}) (resp. (\ref{eq:YX})).
Note that
\begin{align}
X + \frac{ \lbrack A, \lbrack A, X \rbrack_q \rbrack_{q^{-1}}}{(q^2-q^{-2})^2} &= D - \frac{\lbrack A, C\rbrack_{q}}{q^2-q^{-2}}, \label{eq:CD1}
\\
Y + \frac{ \lbrack A, \lbrack A, Y \rbrack_q \rbrack_{q^{-1}}}{(q^2-q^{-2})^2} &= C - \frac{\lbrack D, A\rbrack_{q}}{q^2-q^{-2}}. \label{eq:CD2}
\end{align}
\noindent  ${{\rm (i)} \Rightarrow {\rm (ii)}}$
From the form of  $C$, we see that $Y$ acts on the eigenspaces of $A$ in
a tridiagonal fashion. Next we show that $A$ commutes with $D$. 
By assumption,  $X$ acts on the eigenspaces of $A$ in a tridiagonal fashion.
By this and Proposition
\ref{lem:four}, $A$ commutes with the expression on the left in
(\ref{eq:CD1}). By assumption $A$ commutes with $C$, so $A$ commutes with $\lbrack A, C\rbrack_q$. By these comments and
(\ref{eq:CD1}), $A$ commutes with $D$.
\\
\noindent ${{\rm (ii)} \Rightarrow {\rm (i)}}$ From the form of  $D$, we see that $X$ acts on the eigenspaces of $A$ in
a tridiagonal fashion. Next we show that $A$ commutes with $C$. 
By assumption, $Y$ acts on the eigenspaces of $A$ in a tridiagonal fashion.
By this and Proposition
\ref{lem:four}, $A$ commutes with the expression on the left in
(\ref{eq:CD2}). By assumption $A$ commutes with $D$, so $A$ commutes with $\lbrack D, A\rbrack_q$. By these comments and
(\ref{eq:CD2}), $A$ commutes with $C$.
\end{proof}

\section{The map $X \mapsto X^\vee$}
We continue to discuss the TD system $\Phi=(A;\lbrace E_i\rbrace_{i=0}^d;A^*; \lbrace E^*_i\rbrace_{i=0}^d)$ on $V$ that has $q$-Racah type.
 In this section, we introduce a map that will make it easier to discuss the elements in ${\rm End}(V)$ that commute with $A$.

\begin{definition}\label{def:vee}
\rm
For $X \in {\rm End}(V)$ define
\begin{align}
X^\vee = \sum_{i=0}^d E_i X E_i.
\label{eq:xv}
\end{align} 
\end{definition}

\noindent Note that the map ${\rm End}(V) \to {\rm End}(V)$, $X \to X^\vee$ is $\mathbb F$-linear. Note also that
$A$ commutes with $X^\vee $ for all $X \in {\rm End}(V)$.  Next comes a stronger statement.

\begin{lemma}\label{lem:Axv} For $X \in {\rm End}(V)$ the following are equivalent:
\begin{enumerate}
\item[\rm(i)]  $A$ commutes with $X$;
\item[\rm (ii)] $X = X^\vee$.
\end{enumerate}
\end{lemma}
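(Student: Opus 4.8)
The plan is to reduce both conditions (i) and (ii) to the vanishing of the off-diagonal blocks $E_i X E_j$ with $i \neq j$, and then to connect these two reductions. The whole argument rests on the idempotent identities $E_i E_j = \delta_{i,j} E_i$ together with the decomposition $X = \sum_{i=0}^d \sum_{j=0}^d E_i X E_j$ recorded above Lemma \ref{lem:Zero}.

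First I would isolate the off-diagonal part of $X$. Subtracting $X^\vee = \sum_i E_i X E_i$ from the decomposition of $X$ gives
\[
X - X^\vee = \sum_{\substack{0 \leq i,j \leq d \\ i \neq j}} E_i X E_j .
\]
So condition (ii), namely $X = X^\vee$, is the assertion that this sum vanishes.

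Next I would show that $X - X^\vee = 0$ is equivalent to $E_i X E_j = 0$ for all $i \neq j$. For this I compute the blocks $E_k(X - X^\vee)E_l$. Using $E_k E_i = \delta_{k,i} E_i$ and $E_i E_l = \delta_{i,l} E_i$, one finds $E_k X^\vee E_l = \delta_{k,l} E_k X E_k$; hence $E_k(X - X^\vee)E_l = E_k X E_l$ when $k \neq l$, and $E_k(X - X^\vee)E_l = 0$ when $k = l$. Applying Lemma \ref{lem:Zero} to the element $X - X^\vee$, the condition $X = X^\vee$ holds if and only if every block $E_k(X-X^\vee)E_l$ vanishes, which by the preceding computation is exactly the condition that $E_k X E_l = 0$ for all $k \neq l$.

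Finally I would invoke Lemma \ref{lem:One}, whose equivalence (i)$\Leftrightarrow$(iii) states that $E_k X E_l = 0$ for all $k \neq l$ if and only if $A$ commutes with $X$. Chaining this with the previous step yields $X = X^\vee$ if and only if $A$ commutes with $X$, which is the desired equivalence. There is no real obstacle here; the only point requiring care is the bookkeeping of idempotent orthogonality in the block computation $E_k X^\vee E_l = \delta_{k,l} E_k X E_k$, everything else being immediate from Lemmas \ref{lem:Zero} and \ref{lem:One}.
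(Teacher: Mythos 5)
Your proof is correct and takes essentially the same route as the paper, whose entire proof is the citation ``By Lemma \ref{lem:One}'': the implicit argument there is precisely your reduction of both conditions to the vanishing of the off-diagonal blocks $E_iXE_j$ ($i\neq j$). You have simply made the block bookkeeping explicit, with Lemma \ref{lem:Zero} supplying the rigor.
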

\begin{proof} By Lemma \ref{lem:One}.
\end{proof}

\begin{lemma}
\label{lem:veeLinear}
For $X \in {\rm End}(V)$ the following {\rm (i)--(v)} hold:
\begin{enumerate}
\item[\rm (i)] $(AX)^\vee = A X^\vee$;
\item[\rm (ii)]  $(XA)^\vee= AX^\vee$;
\item[\rm (iii)] $\lbrack A, X \rbrack^\vee = 0$;
\item[\rm (iv)] $(\lbrack A, X \rbrack_q)^\vee = (q-q^{-1}) A X^\vee$;
\item[\rm (v)] $(\lbrack A, X \rbrack_{q^{-1}})^\vee = - (q-q^{-1}) A  X^\vee$.
 \end{enumerate}
 \end{lemma}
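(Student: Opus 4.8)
The plan is to prove parts (i) and (ii) by a direct computation from the definition of $X^\vee$, exploiting the single structural fact that $A$ commutes with each primitive idempotent $E_i$, and then to obtain (iii)--(v) as purely formal consequences of (i), (ii), the $\mathbb{F}$-linearity of the map $X \mapsto X^\vee$ (noted below Definition \ref{def:vee}), and the definitions of the commutator and $q$-commutators.

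First I would establish (i). Starting from $(AX)^\vee = \sum_{i=0}^d E_i A X E_i$, I would apply the relation $E_i A = A E_i = \theta_i E_i$ (property (v) recorded just above equation (\ref{eq:defEi})) to rewrite each summand as $\theta_i E_i X E_i$, so that $(AX)^\vee = \sum_{i=0}^d \theta_i E_i X E_i$. Applying the same relation to $A X^\vee = A \sum_{j=0}^d E_j X E_j = \sum_{j=0}^d \theta_j E_j X E_j$ shows the two expressions agree. Part (ii) is the mirror computation: from $(XA)^\vee = \sum_{i=0}^d E_i X A E_i$, pushing $A E_i = \theta_i E_i$ past $X$ on the right again yields $\sum_{i=0}^d \theta_i E_i X E_i = A X^\vee$. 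The crux of the whole lemma, such as it is, lies exactly here: because $A$ commutes with every $E_i$, the factor $A$ can be pulled out of the sandwich $E_i(\cdot)E_i$ from either side, and so both $(AX)^\vee$ and $(XA)^\vee$ collapse to the common value $A X^\vee$.

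With (i) and (ii) in hand, the remaining parts are immediate. For (iii) I would write $\lbrack A, X\rbrack^\vee = (AX - XA)^\vee = (AX)^\vee - (XA)^\vee = A X^\vee - A X^\vee = 0$, using linearity of $\vee$ for the middle step. For (iv) I would expand $\lbrack A, X\rbrack_q = q A X - q^{-1} X A$ and again use linearity together with (i), (ii) to get $(\lbrack A, X\rbrack_q)^\vee = q (AX)^\vee - q^{-1}(XA)^\vee = (q - q^{-1}) A X^\vee$. Part (v) is identical after replacing $q$ by $q^{-1}$ in the $q$-commutator, giving $(\lbrack A, X\rbrack_{q^{-1}})^\vee = q^{-1}(AX)^\vee - q(XA)^\vee = (q^{-1}-q) A X^\vee = -(q-q^{-1}) A X^\vee$.

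I do not anticipate any genuine obstacle: the argument is a short idempotent calculation followed by linear bookkeeping. The only point requiring care is to cite the correct commutation relation $A E_i = E_i A = \theta_i E_i$ and to be explicit that (i) and (ii) produce the \emph{same} expression $A X^\vee$ (and not, say, $X^\vee A$), since that coincidence is what drives the vanishing in (iii) and the clean coefficients in (iv), (v).
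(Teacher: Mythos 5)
Your proof is correct and follows essentially the same route as the paper: the paper's proof of (i), (ii) consists of observing that each side equals $\sum_{i=0}^d \theta_i E_i X E_i$, which is precisely your idempotent computation, and it then derives (iii)--(v) from (i), (ii) by linearity exactly as you do. The only difference is that you spell out the steps the paper leaves implicit.
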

\begin{proof} (i), (ii) For  the given equation each side is equal to $\sum_{i=0}^d \theta_i E_i X E_i$.
\\
\noindent (iii)--(v) By (i), (ii) above.
\end{proof}

\begin{lemma} \label{lem:refxv} Let $X$ denote an element of  ${\rm End}(V)$ that acts on the eigenspaces of $A$ in a tridiagonal fashion.
Then
\begin{align}
\label{eq:aax}
X + \frac{ \lbrack A, \lbrack A, X \rbrack_q \rbrack_{q^{-1}} }{(q^2-q^{-2})^2} = \biggl( I - \frac{A^2}{(q+q^{-1})^2}\biggr) X^\vee.
\end{align}
\end{lemma}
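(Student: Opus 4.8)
The plan is to prove the operator identity (\ref{eq:aax}) by sandwiching both sides between primitive idempotents of $A$ and appealing to Lemma \ref{lem:Zero}. Write $L$ for the left-hand side of (\ref{eq:aax}) and $R$ for its right-hand side. By Lemma \ref{lem:Zero} it suffices to show $E_i L E_j = E_i R E_j$ for all $0 \leq i,j \leq d$, and the whole argument reduces to comparing scalar coefficients of the single ``cell'' $E_i X E_j$ in each case.

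First I would compute $E_i L E_j$. Using the expansion recorded in Section 3, namely $\lbrack A, \lbrack A, X\rbrack_q \rbrack_{q^{-1}} = A^2 X - (q^2+q^{-2}) A X A + X A^2$, together with $A E_i = \theta_i E_i = E_i A$, every occurrence of $A$ on the left of the cell becomes $\theta_i$ and every occurrence on the right becomes $\theta_j$. This yields
\begin{align*}
E_i L E_j = \Bigl( 1 + \frac{\theta_i^2 - (q^2+q^{-2})\theta_i\theta_j + \theta_j^2}{(q^2-q^{-2})^2}\Bigr) E_i X E_j.
\end{align*}
For the right-hand side, recall from the remark below Definition \ref{def:vee} that $A$ commutes with $X^\vee$; since $A^2$ also commutes with $A$, the element $R = \bigl(I - A^2/(q+q^{-1})^2\bigr) X^\vee$ commutes with $A$. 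Hence $E_i R E_j = 0$ whenever $i \neq j$, while $E_i R E_i = \bigl(1 - \theta_i^2/(q+q^{-1})^2\bigr) E_i X E_i$, using $E_i X^\vee E_i = E_i X E_i$ and $A^2 E_i = \theta_i^2 E_i$.

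It then remains to match coefficients in three cases. When $\vert i-j\vert > 1$ both cells vanish because $X$ acts on the eigenspaces of $A$ in a tridiagonal fashion, so $E_i X E_j = 0$. When $i = j$ the claim is the scalar identity $1 + (2 - (q^2+q^{-2}))\theta_i^2/(q^2-q^{-2})^2 = 1 - \theta_i^2/(q+q^{-1})^2$, which follows from $2 - (q^2+q^{-2}) = -(q-q^{-1})^2$ and $(q^2-q^{-2})^2 = (q-q^{-1})^2(q+q^{-1})^2$. The only genuinely case-specific step—and the one I expect to carry the weight of the proof—is $\vert i-j\vert = 1$, where I must show the bracketed coefficient of $E_i L E_j$ is zero to match $E_i R E_j = 0$; this is exactly Lemma \ref{lem:2factors}, since expanding $(q\theta_i - q^{-1}\theta_j)(q\theta_j - q^{-1}\theta_i) = (q^2-q^{-2})^2$ gives $\theta_i^2 + \theta_j^2 - (q^2+q^{-2})\theta_i\theta_j = -(q^2-q^{-2})^2$, forcing the coefficient to be $1 - 1 = 0$. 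With all cells matched, Lemma \ref{lem:Zero} gives $L = R$, completing the proof.
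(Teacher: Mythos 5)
Your proof is correct, but it takes a genuinely more direct route than the paper. The paper's proof is shorter and more modular: it observes that the left-hand side $\Delta$ commutes with $A$ by Proposition \ref{lem:four} (whose own proof already contains your $\vert i-j\vert =1$ computation via Lemma \ref{lem:2factors}), concludes $\Delta=\Delta^\vee$ by Lemma \ref{lem:Axv}, and then evaluates $\Delta^\vee$ using the $\vee$-calculus of Lemma \ref{lem:veeLinear}, namely $(\lbrack A,X\rbrack_q)^\vee=(q-q^{-1})AX^\vee$ and $(\lbrack A,Y\rbrack_{q^{-1}})^\vee=-(q-q^{-1})AY^\vee$, which gives the right-hand side at once since $(q-q^{-1})^2/(q^2-q^{-2})^2=(q+q^{-1})^{-2}$. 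Your argument instead verifies everything cell-by-cell between idempotents: your off-diagonal cases reproduce inline the content of Proposition \ref{lem:four}, and your diagonal case $i=j$ does by hand what Lemma \ref{lem:veeLinear} does formally. What the paper's route buys is reuse of machinery that appears repeatedly elsewhere (e.g.\ in Propositions \ref{lem:xvW} and \ref{prop:Alam}); what your route buys is self-containedness and transparency --- it needs only Lemmas \ref{lem:Zero} and \ref{lem:2factors} plus elementary algebra, and it makes completely explicit where the tridiagonal hypothesis and the eigenvalue recurrence (\ref{eq:2factors}) enter. All of your scalar verifications check out, including the key identity $\theta_i^2+\theta_j^2-(q^2+q^{-2})\theta_i\theta_j=-(q^2-q^{-2})^2$ for $\vert i-j\vert=1$.
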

\begin{proof}  Let $\Delta$ denote the expression on the left in
(\ref{eq:aax}). The element $A$ commutes with $\Delta $ by Proposition \ref{lem:four}, so $\Delta=\Delta^\vee$ by Lemma
\ref{lem:Axv}.
By Lemma \ref{lem:veeLinear},  $\Delta^\vee$ is equal to the expression on the right in
(\ref{eq:aax}). The result follows.
\end{proof}

\section{The element $W$}
 We continue to discuss the TD system $\Phi=(A;\lbrace E_i\rbrace_{i=0}^d;A^*; \lbrace E^*_i\rbrace_{i=0}^d)$ on $V$ that has $q$-Racah type.
 In this section we introduce a certain invertible  $W \in {\rm End}(V)$, and discuss how $W$ is related to the map $X \to X^\vee$ from Section 4.
 \begin{definition}
 \label{def:ti}
 \rm
 Define 
 \begin{align*}
 t_i = (-1)^i a^i q^{i(d-i)} \qquad \qquad (0 \leq i \leq d).
 \end{align*}
 Note that $t_0=1$ and $t_d = (-1)^d a^d$. 
 \end{definition}
 
 
 \begin{note}\rm We caution the reader that the scalar $t_i$ in 
 Definition \ref{def:ti} is a square root of the scalar $t_i$ in
 \cite[Lemma~3.15]{diagram}.
 \end{note}
 
 \begin{lemma} \label{lem:tdiv}
 We have $t_i\not=0$ for $0 \leq i \leq d$. Moreover
 \begin{align*}
 t_i /t_{i-1} = -a q^{d-2i+1} \qquad \qquad (1 \leq i \leq d).
 \end{align*}
 \end{lemma}
 \begin{proof} By Definition \ref{def:ti}.
 \end{proof}
 
 \begin{lemma} 
 \label{lem:thetat}
 For $0 \leq i,j\leq d$ such that $\vert i - j\vert  = 1$,
 \begin{align}
 \frac{t_j}{t_i}+ \frac{q \theta_i - q^{-1} \theta_j}{q^2-q^{-2}}  = 0.
 \label{eq:thetat}
 \end{align}
 \end{lemma}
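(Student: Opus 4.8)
The plan is to verify the identity by a direct case analysis on the two ways the hypothesis $\vert i-j\vert=1$ can be realized, namely $j=i-1$ and $j=i+1$, combining the ratio formula for the $t_i$ from Lemma \ref{lem:tdiv} with the closed forms for the eigenvalue fractions from Lemma \ref{lem:onefactor}.

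First I would treat the case $j=i-1$; here $1\le i\le d$, so both preceding lemmas apply at index $i$. From Lemma \ref{lem:tdiv} we have $t_i/t_{i-1}=-aq^{d-2i+1}$, hence $t_j/t_i=t_{i-1}/t_i=-a^{-1}q^{2i-d-1}$. On the other hand, the second equation of Lemma \ref{lem:onefactor} gives $\frac{q\theta_i-q^{-1}\theta_{i-1}}{q^2-q^{-2}}=a^{-1}q^{2i-d-1}$. Adding these two quantities produces $0$, which is exactly (\ref{eq:thetat}) in this case.

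Next I would treat the case $j=i+1$; here $0\le i\le d-1$, so I apply the two lemmas at index $i+1$. From Lemma \ref{lem:tdiv}, read at index $i+1$, we get $t_{i+1}/t_i=-aq^{d-2i-1}$, so $t_j/t_i=-aq^{d-2i-1}$; and the first equation of Lemma \ref{lem:onefactor}, read at index $i+1$, gives $\frac{q\theta_i-q^{-1}\theta_{i+1}}{q^2-q^{-2}}=aq^{d-2i-1}$. Again the sum is $0$, establishing (\ref{eq:thetat}).

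Since $\vert i-j\vert=1$ forces exactly one of these two cases, the identity holds in all instances, finishing the argument. The only point requiring care is index bookkeeping: Lemma \ref{lem:onefactor} is stated with a fixed convention on the subscripts of $\theta$, so I must invoke it at index $i$ in the first case and at index $i+1$ in the second in order that the subscripts appearing there match those in (\ref{eq:thetat}). Beyond aligning these indices there is no genuine obstacle, since the whole statement reduces to matching the explicit powers of $q$ and $a$ delivered by the two preceding lemmas.
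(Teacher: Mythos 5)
Your proof is correct and follows exactly the paper's own route: the paper's proof is the one-line citation ``Use Lemmas \ref{lem:onefactor}, \ref{lem:tdiv},'' and your two-case analysis ($j=i-1$ and $j=i+1$) simply makes explicit the index bookkeeping that the paper leaves to the reader. Both cases check out, so nothing further is needed.
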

 \begin{proof} Use Lemmas
 \ref{lem:onefactor}, \ref{lem:tdiv}.
 \end{proof}
 
\begin{definition}\label{def:W}
\rm
Define
\begin{align*}
W = \sum_{i=0}^d t_i E_i,
\end{align*}
where the scalars $\lbrace t_i \rbrace_{i=0}^d$ are from Definition
\ref{def:ti}.
\end{definition}
\begin{lemma} \label{lem:wi}
The element $W$ is invertible. Moreover
\begin{align*}
W^{-1} = \sum_{i=0}^d t^{-1}_i E_i.
\end{align*}
\end{lemma}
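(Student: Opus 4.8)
The plan is to verify the claimed formula directly by exhibiting a two-sided inverse. Set $W' = \sum_{i=0}^d t_i^{-1} E_i$; this expression makes sense because, by Lemma \ref{lem:tdiv}, we have $t_i \neq 0$ for $0 \leq i \leq d$, so each $t_i^{-1}$ is a well-defined scalar in $\mathbb F$. The goal is then to show $WW' = I$ and $W'W = I$, which establishes both invertibility of $W$ and the identity $W^{-1}=W'$ simultaneously.

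First I would compute the product $WW'$ by expanding and using the orthogonality of the primitive idempotents. Recall from the properties listed in Section 2 that $E_iE_j=\delta_{i,j}E_i$ for $0\leq i,j\leq d$ and that $I=\sum_{i=0}^d E_i$. Thus
\begin{align*}
WW' = \Bigl(\sum_{i=0}^d t_i E_i\Bigr)\Bigl(\sum_{j=0}^d t_j^{-1} E_j\Bigr)
= \sum_{i=0}^d \sum_{j=0}^d t_i t_j^{-1}\, E_iE_j
= \sum_{i=0}^d E_i = I,
\end{align*}
where the cross terms with $i\neq j$ vanish by orthogonality and the surviving diagonal terms give $t_i t_i^{-1} E_i = E_i$. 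The same computation with the roles of $W$ and $W'$ reversed yields $W'W = I$ by the identical cancellation, since the scalars commute and $E_iE_j=\delta_{i,j}E_i$ is symmetric in $i,j$.

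Having both one-sided products equal to $I$, I would conclude that $W$ is invertible and that its inverse is exactly $W' = \sum_{i=0}^d t_i^{-1} E_i$, as claimed. There is no genuine obstacle in this argument: the entire content reduces to the orthogonality and completeness of $\{E_i\}_{i=0}^d$ together with the nonvanishing of the scalars $t_i$. The only input that is not purely formal is the fact that each $t_i\neq 0$, which is precisely what Lemma \ref{lem:tdiv} supplies; without it the sum defining $W'$ would be meaningless. Accordingly, the proof is a short routine calculation rather than a step requiring any real effort.
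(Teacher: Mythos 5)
Your proof is correct and follows the same route as the paper: the paper's proof simply cites Lemma \ref{lem:tdiv} (nonvanishing of the $t_i$) and Definition \ref{def:W}, with the orthogonality computation $E_iE_j=\delta_{i,j}E_i$, $I=\sum_{i=0}^d E_i$ left implicit. You have merely written out that routine verification in full.
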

\begin{proof}  By Lemma
\ref{lem:tdiv} and Definition \ref{def:W}.
\end{proof}


\begin{note}\rm We acknowledge that the element $W$  appeared earlier in the context of 
 spin models \cite[Definition~14.2]{spinModelNT};
spin Leonard pairs \cite[Theorem~1.18]{spinLP}, 
\cite[Lemma~6.16]{spinModelNT}; 
and 
Leonard triples
\cite[Lemma~2.10]{curtMod},
 \cite[Definition~16.3]{totBip},
\cite[Definition~8.1]{terPseudo}.
In the context of TD pairs, we have $W^2=H$ where $H$ is from \cite[Section~3]{diagram}.
\end{note}
\begin{lemma} \label{lem:WA}
We have $W^{\pm 1} \in \mathcal D $.
\end{lemma}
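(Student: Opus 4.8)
The claim to prove is Lemma~\ref{lem:WA}, namely that $W^{\pm 1} \in \mathcal D$, where $\mathcal D$ is the subalgebra of ${\rm End}(V)$ generated by $A$.

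The plan is to exploit the description of $\mathcal D$ given earlier in the excerpt: the primitive idempotents $\{E_i\}_{i=0}^d$ of $A$ form a basis for $\mathcal D$, and each $E_i$ lies in $\mathcal D$ by the explicit Lagrange-interpolation formula (\ref{eq:defEi}). Since $W = \sum_{i=0}^d t_i E_i$ by Definition~\ref{def:W} and $W^{-1} = \sum_{i=0}^d t_i^{-1} E_i$ by Lemma~\ref{lem:wi}, both $W$ and $W^{-1}$ are $\mathbb F$-linear combinations of the $E_i$. First I would recall that each $E_i \in \mathcal D$. Then, because $\mathcal D$ is a subspace of ${\rm End}(V)$ closed under $\mathbb F$-linear combinations, the two displayed sums for $W$ and $W^{-1}$ visibly lie in $\mathcal D$. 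This gives the result in one line.

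There is essentially no obstacle here: the substantive content was already established in Section~2, where it is observed that $\{E_i\}_{i=0}^d$ is a basis for the vector space $\mathcal D$ and that each idempotent is a polynomial in $A$ via (\ref{eq:defEi}). The only thing to be careful about is making sure the invertibility input is in place so that $W^{-1}$ is even defined as the stated sum; this is exactly Lemma~\ref{lem:wi}, which also supplies the explicit coefficients $t_i^{-1}$ (well-defined since $t_i \neq 0$ by Lemma~\ref{lem:tdiv}). Thus the proof I would write simply cites Definition~\ref{def:W} and Lemma~\ref{lem:wi} to express $W^{\pm 1}$ as linear combinations of the $E_i$, and then invokes the fact that $\{E_i\}_{i=0}^d \subseteq \mathcal D$. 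Formally:

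\begin{proof}
Recall from Section~2 that $E_i \in \mathcal D$ for $0 \leq i \leq d$, by (\ref{eq:defEi}). By Definition~\ref{def:W} the element $W$ is an $\mathbb F$-linear combination of $\{E_i\}_{i=0}^d$, and by Lemma~\ref{lem:wi} so is $W^{-1}$. Since $\mathcal D$ is a subspace of ${\rm End}(V)$, we conclude $W^{\pm 1} \in \mathcal D$.
\end{proof}
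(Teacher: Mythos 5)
Your proof is correct and follows essentially the same route as the paper, which cites (\ref{eq:defEi}) together with Definition~\ref{def:W}; your additional citation of Lemma~\ref{lem:wi} for the expression of $W^{-1}$ makes the argument slightly more explicit but is the same idea.
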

\begin{proof} By
(\ref{eq:defEi}) and
Definition \ref{def:W}.
\end{proof}

\noindent By Lemma \ref{lem:WA}
we see that $W^{\pm 1}$ 
 are polynomials in $A$. These polynomials
will be made explicit in Section 6.

\begin{lemma} \label{lem:Wvee}
For $X \in {\rm End}(V)$,
\begin{align*}
(W^{-1} X W )^\vee = X^\vee =  (W X W^{-1})^\vee.
\end{align*}
\end{lemma}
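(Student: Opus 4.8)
The plan is to exploit the fact, recorded in Lemma~\ref{lem:WA}, that $W^{\pm 1}$ lie in $\mathcal D$ and are therefore polynomials in $A$; consequently they commute with each primitive idempotent $E_i$. More usefully, from Definition~\ref{def:W} together with the orthogonality relations $E_iE_j=\delta_{i,j}E_i$, one reads off directly that
\begin{align*}
E_i W = t_i E_i = W E_i,
\qquad \qquad
E_i W^{-1} = t_i^{-1} E_i = W^{-1} E_i
\qquad (0 \le i \le d).
\end{align*}
In other words, $W$ and $W^{-1}$ act as the scalars $t_i$ and $t_i^{-1}$ respectively on the subspace $E_iV$. This is the only structural input needed.

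With these relations in hand, I would verify each of the two claimed equalities by expanding the right-hand side via Definition~\ref{def:vee} and evaluating the sum over diagonal blocks term by term. For the first equality,
\begin{align*}
(W^{-1} X W)^\vee
= \sum_{i=0}^d E_i W^{-1} X W E_i
= \sum_{i=0}^d t_i^{-1} t_i \, E_i X E_i
= \sum_{i=0}^d E_i X E_i
= X^\vee,
\end{align*}
where the second step pulls $W^{-1}$ and $W$ past the outer idempotents using the displayed relations, after which the scalars $t_i^{-1}$ and $t_i$ cancel in each block. The equality $X^\vee = (W X W^{-1})^\vee$ is obtained in exactly the same manner, with the roles of $t_i$ and $t_i^{-1}$ interchanged.

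I do not expect any genuine obstacle here. The content of the lemma is precisely the observation that $W$ is diagonal with respect to the eigenspace decomposition of $A$: conjugation by $W$ (or $W^{-1}$) rescales an off-diagonal block $E_iXE_j$ by $t_j/t_i$ but fixes every diagonal block $E_iXE_i$, and it is exactly the diagonal blocks that are retained by the map $X\mapsto X^\vee$. The single point requiring justification is the per-block cancellation $t_i^{-1}t_i=1$, which is immediate since each $t_i$ is nonzero by Lemma~\ref{lem:tdiv}.
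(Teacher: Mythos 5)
Your proof is correct and is essentially the paper's argument: the paper disposes of this lemma by citing Lemma~\ref{lem:veeLinear}(i),(ii) together with $W^{\pm 1}\in\mathcal D$, which amounts to exactly the block computation you carry out, since $W^{\pm 1}\in\mathcal D$ is precisely what gives $E_iW^{\pm 1}=t_i^{\pm 1}E_i=W^{\pm 1}E_i$ and hence the cancellation $t_i^{-1}t_i=1$ on each diagonal block. The only difference is packaging — the paper factors the computation through an earlier lemma, while you expand $\sum_i E_i W^{-1}XW E_i$ directly — so no gap and no genuinely different route.
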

\begin{proof} By Lemma
\ref{lem:veeLinear}(i),(ii)
and since $W^{\pm 1} \in \mathcal D$.
\end{proof}

\begin{proposition}
\label{lem:xvW}
Let $X$ denote an element of ${\rm End}(V)$ that acts on the eigenspaces of $A$ in a tridiagonal fashion. Then both
\begin{align}
\label{eq:wxv}
W^{-1} X W + \frac{ q A X - q^{-1} X A}{q^2-q^{-2}}   = \biggl(I + \frac{A}{q+q^{-1}}\biggr) X^\vee,
\\
W X W^{-1} + \frac{ q X A  - q^{-1} A X }{q^2-q^{-2}}  = 
\biggl(I + \frac{A}{q+q^{-1}}\biggr) X^\vee.
\label{eq:wixv}
\end{align}
\end{proposition}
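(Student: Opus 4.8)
The plan is to prove the two displayed identities \eqref{eq:wxv} and \eqref{eq:wixv} by expanding everything in the basis $\lbrace E_iXE_j\rbrace$, which is the natural move given that both $W$ and $A$ are diagonal with respect to the eigenspace decomposition of $A$. First I would write $W^{-1}XW = \sum_{i,j} t_i^{-1}t_j\, E_iXE_j$, using $W=\sum_i t_iE_i$ and $W^{-1}=\sum_i t_i^{-1}E_i$ from Definition \ref{def:W} and Lemma \ref{lem:wi}, together with $E_iE_k=\delta_{ik}E_i$. Similarly the $q$-commutator term expands as
\begin{align*}
\frac{qAX-q^{-1}XA}{q^2-q^{-2}} = \sum_{i,j} \frac{q\theta_i - q^{-1}\theta_j}{q^2-q^{-2}}\,E_iXE_j,
\end{align*}
since $AE_i=\theta_iE_i=E_iA$. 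So the left-hand side of \eqref{eq:wxv} is $\sum_{i,j} c_{ij}\,E_iXE_j$ with
\begin{align*}
c_{ij} = \frac{t_j}{t_i} + \frac{q\theta_i - q^{-1}\theta_j}{q^2-q^{-2}}.
\end{align*}

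Next I would compute the right-hand side. By Definition \ref{def:vee}, $X^\vee=\sum_k E_kXE_k$, and $\bigl(I + \tfrac{A}{q+q^{-1}}\bigr)X^\vee = \sum_k \bigl(1+\tfrac{\theta_k}{q+q^{-1}}\bigr)E_kXE_k$, a sum supported only on the diagonal $i=j$. So the proof reduces to matching coefficients of $E_iXE_j$ for each pair $(i,j)$. The key structural input is that $X$ acts on the eigenspaces of $A$ in a tridiagonal fashion, so by Lemma \ref{lem:four1} we have $E_iXE_j=0$ whenever $|i-j|>1$; thus only the cases $i=j$ and $|i-j|=1$ contribute.

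The verification then splits into three cases. When $i=j$, we have $c_{ii} = t_i/t_i + (q\theta_i-q^{-1}\theta_i)/(q^2-q^{-2}) = 1 + \theta_i/(q+q^{-1})$, which matches the right-hand coefficient exactly. When $|i-j|=1$, Lemma \ref{lem:thetat} gives precisely $c_{ij}=0$, so those off-diagonal terms vanish on the left, consistent with their absence on the right. When $|i-j|>1$ the term is killed by the tridiagonal hypothesis regardless of $c_{ij}$. This establishes \eqref{eq:wxv}. For \eqref{eq:wixv} the argument is identical with the roles of $t_i/t_j$ reversed: $WXW^{-1}=\sum_{i,j}t_it_j^{-1}E_iXE_j$ and the relevant coefficient becomes $t_i/t_j + (q\theta_j-q^{-1}\theta_i)/(q^2-q^{-2})$, which vanishes for $|i-j|=1$ again by Lemma \ref{lem:thetat} (applied with $i,j$ interchanged) and equals $1+\theta_i/(q+q^{-1})$ on the diagonal.

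I do not expect a genuine obstacle here; the whole proposition is an exercise in expanding in the $E_iXE_j$ basis and invoking Lemma \ref{lem:thetat} for the off-diagonal cancellation. The one point requiring care is bookkeeping the asymmetry between the two identities: in \eqref{eq:wxv} the conjugation $W^{-1}XW$ produces the ratio $t_j/t_i$ while the $q$-commutator is $qAX-q^{-1}XA$, whereas in \eqref{eq:wixv} both are reflected, and one must check that Lemma \ref{lem:thetat} — which is symmetric in the sense that it covers all pairs with $|i-j|=1$ — indeed supplies the needed cancellation in both orderings. Once that matching is confirmed, the result follows from Lemma \ref{lem:Zero} applied to the difference of the two sides.
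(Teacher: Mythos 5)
Your proposal is correct and is essentially the paper's own argument: both expand the left-hand side in the components $E_iXE_j$, observe that the coefficient $c_{ij}=t_j/t_i+\frac{q\theta_i-q^{-1}\theta_j}{q^2-q^{-2}}$ vanishes for $\vert i-j\vert=1$ by Lemma \ref{lem:thetat}, and use the tridiagonal hypothesis to kill the terms with $\vert i-j\vert>1$. The only cosmetic difference is at the end: you verify the diagonal coefficient $1+\theta_i/(q+q^{-1})$ by direct computation and conclude via Lemma \ref{lem:Zero}, whereas the paper deduces from the off-diagonal vanishing that $A$ commutes with the left-hand side and then identifies it with the right-hand side through the $\vee$-calculus (Lemmas \ref{lem:Axv}, \ref{lem:veeLinear}, \ref{lem:Wvee}).
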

\begin{proof} We first verify
(\ref{eq:wxv}).  Let $\Delta$ denote the expression on the left in
(\ref{eq:wxv}).
We first show that $A$ commutes with $\Delta$. By Lemma
\ref{lem:One} it suffices to show that $E_i \Delta E_j = 0$ if $i \not=j$ $(0 \leq i,j\leq d)$.
Let $i,j$ be given with $i \not= j$. Using the expression on the left in
(\ref{eq:wxv}) we have $E_i \Delta E_j = E_i X E_j c_{ij}$, where
\begin{align*}
c_{ij} = \frac{t_j}{t_i} + \frac{ q \theta_i - q^{-1} \theta_j}{q^2-q^{-2}}.
\end{align*}
If $\vert i-j\vert > 1 $ then $E_i X E_j = 0$.
If $\vert i - j \vert = 1$ then $c_{ij}=0$ by 
(\ref{eq:thetat}). 
In any case $E_i \Delta E_j=0$. Therefore  $A$ commutes with $\Delta$,
so $\Delta= \Delta^\vee$ by Lemma
\ref{lem:Axv}.
Using Lemmas
\ref{lem:veeLinear}(iv), 
\ref{lem:Wvee}
we see that $\Delta^\vee$ is equal to the expression on the right in
(\ref{eq:wxv}).
We have verified 
(\ref{eq:wxv}). One similarly verifies
(\ref{eq:wixv}).
\end{proof}

\begin{corollary}\label{cor:wixw}
Let $X$ denote an element of ${\rm End}(V)$ that  acts on the eigenspaces of $A$ in a tridiagonal fashion. Then
\begin{align}
W X W^{-1} - W^{-1} X W = \frac{\lbrack A, X\rbrack }{q-q^{-1}}.
\label{eq:WXW}
\end{align}
\end{corollary}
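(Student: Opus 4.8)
The plan is to derive (\ref{eq:WXW}) directly, by subtracting the two identities furnished by Proposition \ref{lem:xvW}. Since $X$ acts on the eigenspaces of $A$ in a tridiagonal fashion, Proposition \ref{lem:xvW} applies and gives both (\ref{eq:wxv}) and (\ref{eq:wixv}). The decisive feature is that these two displays share the identical right-hand side $\bigl(I + A/(q+q^{-1})\bigr)X^\vee$, so subtracting one from the other eliminates the right-hand side entirely and leaves a relation purely among the $W$-conjugates of $X$ and the elements $A, X$.

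Concretely, I would subtract (\ref{eq:wxv}) from (\ref{eq:wixv}). The conjugation terms combine to give $W X W^{-1} - W^{-1} X W$, while the two $q$-commutator terms combine over the common denominator $q^2-q^{-2}$ with numerator
\begin{align*}
(q X A - q^{-1} A X) - (q A X - q^{-1} X A) = (q+q^{-1})(X A - A X) = -(q+q^{-1}) \lbrack A, X\rbrack.
\end{align*}
Transposing this term to the right-hand side (which flips its sign) yields
\begin{align*}
W X W^{-1} - W^{-1} X W = \frac{(q+q^{-1})\,\lbrack A, X\rbrack}{q^2-q^{-2}}.
\end{align*}

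Finally I would invoke the factorization $q^2 - q^{-2} = (q-q^{-1})(q+q^{-1})$ to cancel the factor $q+q^{-1}$, arriving at the claimed identity $W X W^{-1} - W^{-1} X W = \lbrack A, X\rbrack / (q-q^{-1})$. There is no real obstacle here: the entire argument is a one-line algebraic consequence of Proposition \ref{lem:xvW}, and the only point demanding any care is the sign bookkeeping when the two $q$-commutators are combined, namely recognizing that the numerator difference equals $-(q+q^{-1})\lbrack A,X\rbrack$ rather than its negative.
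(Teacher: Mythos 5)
Your proposal is correct and is exactly the paper's proof: the paper's argument for Corollary \ref{cor:wixw} is the single line ``Subtract (\ref{eq:wxv}) from (\ref{eq:wixv}),'' and your calculation simply carries out the sign bookkeeping and the factorization $q^2-q^{-2}=(q-q^{-1})(q+q^{-1})$ that the paper leaves implicit. Nothing further is needed.
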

\begin{proof} Subtract
(\ref{eq:wxv}) from
(\ref{eq:wixv}).
\end{proof}

\noindent The following is a variation on \cite[Corollary~2.3]{terLusztig}.
\begin{corollary} \label{cor:W2XW2} 
Let $X$ denote an element of ${\rm End}(V)$ that acts on the eigenspaces of $A$ in a tridiagonal fashion.
Then
\begin{align}
W^{-2} X W^2 &=  X + \frac{\lbrack A, \lbrack A, X\rbrack_q \rbrack}{(q-q^{-1})(q^2-q^{-2})},
\label{eq:WWXWW1}
\\
W^{2} X W^{-2} &= X + \frac{\lbrack A, \lbrack A, X\rbrack_{q^{-1}} \rbrack}{(q-q^{-1})(q^2-q^{-2})}.
\label{eq:WWXWW2}
\end{align}
\end{corollary}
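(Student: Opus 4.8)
The plan is to derive both identities by applying the Section~5 machinery twice: once to $X$ itself and once to the ordinary commutator $[A,X]$. The two structural facts that make this work are that the property of acting on the eigenspaces of $A$ in a tridiagonal fashion is inherited both under conjugation by $W^{\pm 1}$ and under the operation $X \mapsto [A,X]$, and that $[A,X]^\vee = 0$ by Lemma~\ref{lem:veeLinear}(iii). The first fact is immediate from Definition~\ref{def:W} and Lemma~\ref{lem:wi}: since $E_k (W^{-1}XW) E_l = t_k^{-1} t_l\, E_k X E_l$ and $E_i [A,X] E_j = (\theta_i - \theta_j) E_i X E_j$, each of $W^{-1}XW$, $WXW^{-1}$, and $[A,X]$ vanishes between $E_k V$ and $E_l V$ exactly when $X$ does. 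The second fact is what causes the formula to collapse to a single double-commutator term.

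To prove (\ref{eq:WWXWW1}), I would first apply Corollary~\ref{cor:wixw} with $X$ replaced by the tridiagonally-acting element $W^{-1}XW$. Using $WW^{-1}=I$ this gives
\[
X - W^{-2}XW^2 = \frac{[A,\,W^{-1}XW]}{q-q^{-1}}.
\]
Since $W^{\pm 1}\in\mathcal D$ by Lemma~\ref{lem:WA}, the element $A$ commutes with $W$, so $[A,W^{-1}XW] = W^{-1}[A,X]W$, and hence $W^{-2}XW^2 = X - (q-q^{-1})^{-1} W^{-1}[A,X]W$. Next I would evaluate $W^{-1}[A,X]W$ by applying (\ref{eq:wxv}) of Proposition~\ref{lem:xvW} with $X$ replaced by $[A,X]$; the right-hand side carries the factor $[A,X]^\vee$, which is $0$ by Lemma~\ref{lem:veeLinear}(iii), leaving
\[
W^{-1}[A,X]W = -\,\frac{[A,\,[A,X]]_q}{q^2-q^{-2}}.
\]
Substituting back yields $W^{-2}XW^2 = X + \dfrac{[A,[A,X]]_q}{(q-q^{-1})(q^2-q^{-2})}$.

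To finish, I would record the elementary commutator identity $[A,[A,X]]_q = [A,[A,X]_q]$, valid because both sides equal $qA^2X - (q+q^{-1})AXA + q^{-1}XA^2$; this rewrites the numerator into the form appearing in (\ref{eq:WWXWW1}). The second identity (\ref{eq:WWXWW2}) follows by the mirror-image argument: apply Corollary~\ref{cor:wixw} to $WXW^{-1}$ to obtain $W^2XW^{-2} - X = (q-q^{-1})^{-1} W[A,X]W^{-1}$, then apply (\ref{eq:wixv}) to $[A,X]$ (again using $[A,X]^\vee=0$) to get $W[A,X]W^{-1} = (q^2-q^{-2})^{-1}[A,[A,X]]_{q^{-1}}$, and finally invoke $[A,[A,X]]_{q^{-1}} = [A,[A,X]_{q^{-1}}]$.

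There is no serious obstacle here; the computation is a short chain of substitutions into results already in hand. The only point demanding care is the bookkeeping: one must confirm at each stage that the intermediate elements $W^{-1}XW$, $WXW^{-1}$, and $[A,X]$ still act on the eigenspaces of $A$ in a tridiagonal fashion so that Corollary~\ref{cor:wixw} and Proposition~\ref{lem:xvW} legitimately apply, and one must notice that it is exactly the vanishing $[A,X]^\vee = 0$ that eliminates the $(I + A/(q+q^{-1}))X^\vee$ contribution and reduces the second application to the lone double-commutator term.
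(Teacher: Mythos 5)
Your proof is correct and takes essentially the same approach as the paper: both arguments combine Corollary \ref{cor:wixw} (to obtain $X - W^{-2}XW^{2} = (q-q^{-1})^{-1}\lbrack A, W^{-1}XW\rbrack$) with Proposition \ref{lem:xvW} (to show this commutator equals $-(q^{2}-q^{-2})^{-1}\lbrack A,\lbrack A,X\rbrack_{q}\rbrack$), and the mirror argument for the second identity. The only cosmetic difference is that you apply (\ref{eq:wxv}) to the element $\lbrack A,X\rbrack$ and invoke $\lbrack A,X\rbrack^{\vee}=0$, whereas the paper applies $\lbrack A,\,\cdot\,\rbrack$ to (\ref{eq:wxv}) for $X$ itself and uses that $A$ commutes with the right-hand side; these yield the identical intermediate identity, since $\lbrack A,W^{-1}XW\rbrack = W^{-1}\lbrack A,X\rbrack W$ and $\lbrack A,\lbrack A,X\rbrack\rbrack_{q} = \lbrack A,\lbrack A,X\rbrack_{q}\rbrack$.
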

\begin{proof} We first obtain (\ref{eq:WWXWW1}). Recall that $A$ commutes with $W$. In (\ref{eq:WXW}), multiply each side on the left
by $W^{-1}$ and the right by $W$. This yields
\begin{align}
X- W^{-2} X W^2 = \frac{\lbrack A, W^{-1} X W \rbrack}{ q-q^{-1}}.
\label{eq:WWXstep1}
\end{align}
By 
Proposition \ref{lem:xvW},
\begin{align}
\label{eq:WWXstep2}
\lbrack A, W^{-1} X W \rbrack+
 \frac{\lbrack A, \lbrack A, X \rbrack_q \rbrack}{q^2-q^{-2}}=0.
\end{align}
Combining (\ref{eq:WWXstep1}), (\ref{eq:WWXstep2}) we obtain (\ref{eq:WWXWW1}). We similarly obtain
(\ref{eq:WWXWW2}).
\end{proof}

\begin{proposition}
\label{lem:xyW} Let $X, Y$ denote elements of ${\rm End}(V)$ that satisfy
the equivalent conditions {\rm (i), (ii)} in Proposition
\ref{lem:xyWf}.  Then
\begin{align*}
W X W^{-1}- Y  = X- W^{-1} Y W,
\end{align*}
and this common value commutes with $A$.
\end{proposition}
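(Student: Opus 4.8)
The plan is to express both sides of the claimed identity in terms of the single quantity $W X W^{-1} - W^{-1} X W$, using the two auxiliary expressions $C$ and $D$ from the proof of Proposition \ref{lem:xyWf}, and then to invoke Proposition \ref{lem:xvW} to rewrite the $W$-conjugates. Recall that $C$ denotes the expression in (\ref{eq:XY}), namely $Y + (qXA - q^{-1}AX)/(q^2-q^{-2})$, and $D$ denotes the expression in (\ref{eq:YX}), namely $X + (qAY - q^{-1}YA)/(q^2-q^{-2})$. By hypothesis $A$ commutes with both $C$ and $D$, and each of $X,Y$ acts on the eigenspaces of $A$ in a tridiagonal fashion.

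First I would establish that $WXW^{-1} - Y$ and $X - W^{-1}YW$ are equal by showing each one equals a common $A$-commuting expression. The natural bridge is Proposition \ref{lem:xvW}: since $X$ acts tridiagonally, (\ref{eq:wixv}) gives $WXW^{-1} = (I + A/(q+q^{-1}))X^\vee - (qXA - q^{-1}AX)/(q^2-q^{-2})$, so that
\begin{align*}
WXW^{-1} - Y = \Bigl(I + \frac{A}{q+q^{-1}}\Bigr)X^\vee - C.
\end{align*}
Since $Y$ also acts tridiagonally, (\ref{eq:wxv}) applied to $Y$ gives $W^{-1}YW = (I + A/(q+q^{-1}))Y^\vee - (qAY - q^{-1}YA)/(q^2-q^{-2})$, so that
\begin{align*}
X - W^{-1}YW = X + \frac{qAY - q^{-1}YA}{q^2-q^{-2}} - \Bigl(I + \frac{A}{q+q^{-1}}\Bigr)Y^\vee = D - \Bigl(I + \frac{A}{q+q^{-1}}\Bigr)Y^\vee.
\end{align*}
Thus the claimed equality $WXW^{-1} - Y = X - W^{-1}YW$ reduces to showing $(I + A/(q+q^{-1}))X^\vee - C = D - (I + A/(q+q^{-1}))Y^\vee$, i.e.\ that $(I + A/(q+q^{-1}))(X^\vee + Y^\vee) = C + D$.

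The crux is therefore to verify this last identity, and I expect this to be the main obstacle. My approach would be to apply the map $X \mapsto X^\vee$ to the equation $C + D = (I + A/(q+q^{-1}))(X^\vee + Y^\vee)$, exploiting that the right-hand side already commutes with $A$ (being built from $A$, $X^\vee$, $Y^\vee$, all of which commute with $A$). Using Lemma \ref{lem:veeLinear}(iv),(v) one computes $C^\vee = Y^\vee + (q-q^{-1})^{-1}\cdot\text{(something in }A X^\vee\text{)}$; more precisely, since $(\lbrack A,X\rbrack_q)^\vee = (q-q^{-1})AX^\vee$, one finds that the $\vee$-image of the off-diagonal correction terms in $C$ and $D$ produces exactly the factor $A/(q+q^{-1})$ acting on $X^\vee$ and $Y^\vee$. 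Once I confirm that $(C+D)^\vee$ equals $(I + A/(q+q^{-1}))(X^\vee+Y^\vee)$, it remains to observe that $C + D$ itself commutes with $A$ (being a sum of two $A$-commuting elements by hypothesis), so $C + D = (C+D)^\vee$ by Lemma \ref{lem:Axv}, closing the argument.

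Finally, the assertion that the common value commutes with $A$ is immediate from either of the two derived expressions above: $WXW^{-1} - Y = (I + A/(q+q^{-1}))X^\vee - C$ is a difference of an element manifestly commuting with $A$ (built from $A$ and $X^\vee$) and $C$, which commutes with $A$ by hypothesis. I would conclude with this remark. The only delicate bookkeeping is tracking the $q$-commutator normalizations through the $\vee$-map, which is routine given Lemma \ref{lem:veeLinear}, so I anticipate no conceptual difficulty beyond that computation.
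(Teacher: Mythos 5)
Your proof is correct, but it takes a genuinely different route from the paper's. You invoke both halves of Proposition \ref{lem:xvW} --- (\ref{eq:wixv}) for $X$ and (\ref{eq:wxv}) for $Y$ --- to rewrite the two sides explicitly as $\bigl(I + A/(q+q^{-1})\bigr)X^\vee - C$ and $D - \bigl(I + A/(q+q^{-1})\bigr)Y^\vee$, reduce the claim to the identity $C+D = \bigl(I + A/(q+q^{-1})\bigr)(X^\vee + Y^\vee)$, and prove that identity by applying the $\vee$-map; the bookkeeping does check out, since $(XA)^\vee = (AX)^\vee = AX^\vee$ by Lemma \ref{lem:veeLinear}(i),(ii) gives $C^\vee = Y^\vee + AX^\vee/(q+q^{-1})$ and $D^\vee = X^\vee + AY^\vee/(q+q^{-1})$, while $C+D=(C+D)^\vee$ follows from Lemma \ref{lem:Axv} because $A$ commutes with both $C$ and $D$ by hypothesis. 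The paper's proof avoids this computation entirely: from (\ref{eq:wixv}), $A$ commutes with $WXW^{-1} + \frac{qXA - q^{-1}AX}{q^2-q^{-2}}$; subtracting the expression (\ref{eq:XY}) (which commutes with $A$ by hypothesis) shows that $A$ commutes with $WXW^{-1} - Y$; since $W$ is a polynomial in $A$ (Lemma \ref{lem:WA}), conjugation by $W^{-1}$ fixes $WXW^{-1} - Y$, and $W^{-1}(WXW^{-1} - Y)W = X - W^{-1}YW$ yields the equality in one stroke. Note that the paper's argument consumes only condition (i) of Proposition \ref{lem:xyWf}, whereas yours uses both conditions and more of the $\vee$-machinery; what your computation buys in exchange is an explicit closed form for the common value, namely $\bigl(I + A/(q+q^{-1})\bigr)X^\vee - C$, which the conjugation trick does not produce.
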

\begin{proof}
By (\ref{eq:wixv}), $A$ commutes with
\begin{align}
W X W^{-1} + \frac{q X A  - q^{-1} A X }{q^2-q^{-2}}.
\label{eq:AxW}
\end{align}
Combining (\ref{eq:XY}), (\ref{eq:AxW}) we see that $A$ commutes with $W X W^{-1}-Y$. We mentioned earlier that $W$ is
a polynomial in $A$. So $W$ commutes with $W X W^{-1}-Y$. Consequently
\begin{align*}
W X W^{-1}-Y = W^{-1}(W X W^{-1}-Y)W = X - W^{-1} Y W.
\end{align*}
\end{proof}

\noindent We have a comment.
\begin{lemma} \label{lem:WDown} We have
\begin{enumerate}
\item[\rm (i)]  $t^\Downarrow_i = t_{d-i}/t_d$ for $0 \leq i \leq d$;
\item[\rm (ii)]  $W^\Downarrow = t^{-1}_d W$.
\end{enumerate}
\end{lemma}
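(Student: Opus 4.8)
The plan is to unwind the definitions of $\Phi^\Downarrow$ and the scalars $t_i^\Downarrow$, $W^\Downarrow$, and then verify both identities by a direct comparison of coefficients against the primitive idempotents. Recall that $\Phi^\Downarrow=(A;\lbrace E_{d-i}\rbrace_{i=0}^d;A^*;\lbrace E^*_i\rbrace_{i=0}^d)$, so the ``down'' version reverses only the ordering of the eigenspaces of $A$, leaving $A$ itself (and $A^*$) unchanged. The key observation is that the eigenvalue sequence of $\Phi^\Downarrow$ is $\theta_i^\Downarrow=\theta_{d-i}$, and inspecting the $q$-Racah form $\theta_i=aq^{d-2i}+a^{-1}q^{2i-d}$ one sees that $\theta_{d-i}=aq^{2i-d}+a^{-1}q^{d-2i}$, which is exactly the expression obtained from $\theta_i$ by replacing $a$ with $a^{-1}$. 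Hence $\Phi^\Downarrow$ again has $q$-Racah type, now with parameter $a^\Downarrow=a^{-1}$ in place of $a$ (and the same $b,q$).

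For part (i), I would substitute this into the definition $t_i=(-1)^i a^i q^{i(d-i)}$ from Definition \ref{def:ti}. Applying the substitution $a\mapsto a^{-1}$ gives
\begin{align*}
t_i^\Downarrow=(-1)^i a^{-i} q^{i(d-i)}.
\end{align*}
On the other hand, using $t_{d-i}=(-1)^{d-i}a^{d-i}q^{(d-i)i}$ and $t_d=(-1)^d a^d$, a short computation yields
\begin{align*}
\frac{t_{d-i}}{t_d}=(-1)^{-i}a^{-i}q^{i(d-i)}=(-1)^i a^{-i}q^{i(d-i)},
\end{align*}
where I have used $(-1)^{-i}=(-1)^i$ and $q^{(d-i)i}=q^{i(d-i)}$. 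This matches $t_i^\Downarrow$, establishing (i).

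For part (ii), I would start from $W^\Downarrow=\sum_{i=0}^d t_i^\Downarrow E_i^\Downarrow$, where $E_i^\Downarrow=E_{d-i}$ by the definition of $\Phi^\Downarrow$. Using part (i), $W^\Downarrow=\sum_{i=0}^d (t_{d-i}/t_d)E_{d-i}$; reindexing $j=d-i$ turns this into $t_d^{-1}\sum_{j=0}^d t_j E_j=t_d^{-1}W$, which is the claimed identity. I do not expect any real obstacle here: the entire statement is bookkeeping, and the only point requiring a little care is confirming that the parameter change $a\mapsto a^{-1}$ is the correct way to package the reversal $\theta_i\mapsto\theta_{d-i}$, together with tracking the sign factor $(-1)^{-i}=(-1)^i$ when dividing $t_{d-i}$ by $t_d$. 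Everything else follows by linearity of $W$ in the idempotents and the reindexing of the sum.
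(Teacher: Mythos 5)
Your proposal is correct and follows essentially the same route as the paper: both parts reduce to computing that each side of (i) equals $(-1)^i a^{-i} q^{i(d-i)}$ (via the observation that $\Phi^\Downarrow$ has $q$-Racah type with parameter $a^{-1}$), and then (ii) follows by applying Definition \ref{def:W} to $\Phi^\Downarrow$ and reindexing the sum over the idempotents $E_i^\Downarrow = E_{d-i}$. Your write-up merely makes explicit the parameter substitution $a \mapsto a^{-1}$ that the paper's one-line proof of (i) leaves implicit.
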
 
\begin{proof} (i) Each side is equal to $(-1)^ia^{-i} q^{i(d-i)}$.
\\
\noindent (ii) By Definition \ref{def:W}
and the construction,
\begin{align*}
W^\Downarrow = \sum_{i=0}^d t^\Downarrow_i E_{d-i} = t^{-1}_d \sum_{i=0}^d t_{d-i} E_{d-i} = t^{-1}_d W.
\end{align*}
\end{proof}

\section{Some identities involving $W^{\pm 1}$}
We continue to discuss the TD system $\Phi=(A;\lbrace E_i\rbrace_{i=0}^d;A^*; \lbrace E^*_i\rbrace_{i=0}^d)$ on $V$ that has $q$-Racah type.
Recall the element $W$ from Definition
\ref{def:W}.
 In this section we obtain some identities involving $W^{\pm 1}$. Using these identities we express $W^{\pm 1}$ as a polynomial in $A$.
\medskip

\noindent We recall some notation. For $c,z\in \mathbb F$ define
 \begin{align*}
 (c;z)_n = (1-c)(1-cz) \cdots (1-cz^{n-1})
 \qquad \qquad (n \in \mathbb N).
 \end{align*}
 We will be discussing basic hypergeometric series, using the notation of
 \cite{gr,koekoek}.

 \begin{lemma}
 \label{lem:cvsum}
 For $0 \leq r\leq s \leq d$,
 \begin{align}
 \label{eq:cv}
 \frac{t_s}{t_r} &= \sum_{i=0}^{s-r} \frac{(-1)^i  q^{i^2} (\theta_s - \theta_r)(\theta_s - \theta_{r+1} )\cdots (\theta_s - \theta_{r+i-1})}{(q^2;q^2)_i(a^{-1} q^{2r+1-d};q^2)_i},
 \\
 \frac{t_r}{t_s} &= \sum_{i=0}^{s-r} \frac{ (-1)^i  q^{-i^2} (\theta_s - \theta_r)(\theta_s - \theta_{r+1} )\cdots (\theta_s - \theta_{r+i-1})}{(q^{-2};q^{-2})_i(a q^{d-2r-1};q^{-2})_i}.
\label{eq:cvinv}
 \end{align}
 \end{lemma}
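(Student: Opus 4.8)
The plan is to evaluate both sums in closed form by recognizing each as a terminating basic hypergeometric series of type ${}_2\phi_1$ in base $q^2$, and then invoking a $q$-analogue of the Chu--Vandermonde summation. I will carry out (\ref{eq:cv}) in detail and obtain (\ref{eq:cvinv}) from it by a symmetry argument. First I would record the left-hand side in closed form: writing $n=s-r$ and using Definition \ref{def:ti}, a direct computation of $s(d-s)-r(d-r)=(s-r)(d-s-r)$ gives
\begin{align*}
\frac{t_s}{t_r} = (-1)^{n} a^{n} q^{n(d-s-r)}.
\end{align*}

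Next I would analyze the right-hand side of (\ref{eq:cv}). Abbreviating the $i$-th summand by $T_i$, one checks $T_0=1$, and using the factorization
\begin{align*}
\theta_s-\theta_{r+i} = -a\,q^{d-2r-2i}\bigl(1-q^{2(r+i-s)}\bigr)\bigl(1-a^{-2}q^{2(s+r+i-d)}\bigr)
\end{align*}
one finds that the ratio of consecutive terms is
\begin{align*}
\frac{T_{i+1}}{T_i} = \frac{a\,q^{d-2r+1}\,\bigl(1-q^{2r-2s}q^{2i}\bigr)\bigl(1-a^{-2}q^{2s+2r-2d}q^{2i}\bigr)}{\bigl(1-q^{2}q^{2i}\bigr)\bigl(1-a^{-1}q^{2r+1-d}q^{2i}\bigr)}.
\end{align*}
This is exactly the term ratio, in base $q^2$, of the series ${}_2\phi_1(q^{-2n},\,\beta;\,\gamma;\,q^2,\,a q^{d-2r+1})$ with $\beta=a^{-2}q^{2s+2r-2d}$ and $\gamma=a^{-1}q^{2r+1-d}$; since $T_0=1$, the sum in (\ref{eq:cv}) equals this ${}_2\phi_1$. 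The top parameter $q^{-2n}$ terminates the series, matching the range $0\le i\le n$.

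The key step is to observe that the argument $a q^{d-2r+1}$ coincides with $\gamma q^{2n}/\beta$. Hence the second $q$-Chu--Vandermonde evaluation ${}_2\phi_1(q^{-2n},\beta;\gamma;q^2,\gamma q^{2n}/\beta)=(\gamma/\beta;q^2)_n/(\gamma;q^2)_n$ applies and gives
\begin{align*}
\frac{\bigl(a q^{d+1-2s};q^2\bigr)_n}{\bigl(a^{-1}q^{2r+1-d};q^2\bigr)_n}.
\end{align*}
Pairing the $k$-th factor of the numerator with the $(n-1-k)$-th factor of the denominator, each pair simplifies through $(1-w)/(1-w^{-1})=-w$ with $w=a q^{d+1-2s+2k}$; collecting the resulting powers of $q$ produces $(-1)^{n}a^{n}q^{n(d-s-r)}$, which is precisely $t_s/t_r$. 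At this point I would note that the factors $1-a^{-1}q^{2r+1-d+2k}$ for $0\le k\le n-1$ are all nonzero by the first lemma of Section 2 (which forbids $a^2$ from lying among $q^{2d-2},q^{2d-4},\dots,q^{2-2d}$), so every term is well defined.

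Finally, for (\ref{eq:cvinv}) I would apply the substitution $a\mapsto a^{-1}$, $q\mapsto q^{-1}$. Each $\theta_i$ is invariant under this substitution, while $t_i\mapsto t_i^{-1}$, so $t_s/t_r\mapsto t_r/t_s$; applied to (\ref{eq:cv}), the substitution sends $q^{i^2}\mapsto q^{-i^2}$, $(q^2;q^2)_i\mapsto(q^{-2};q^{-2})_i$, and $(a^{-1}q^{2r+1-d};q^2)_i\mapsto(a q^{d-2r-1};q^{-2})_i$, turning the right-hand side termwise into the sum in (\ref{eq:cvinv}). I expect the main obstacle to be the correct bookkeeping in identifying the ${}_2\phi_1$ parameters and in verifying that the argument equals $\gamma q^{2n}/\beta$, so that the applicable summation formula is the right one; the ensuing Pochhammer simplification and the symmetry step are then routine.
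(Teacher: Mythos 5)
Your proof is correct and takes essentially the same approach as the paper: evaluate $t_s/t_r$ in closed form, recognize the right-hand side as a terminating ${}_2\phi_1$, apply the $q$-Chu--Vandermonde summation, and obtain the second identity by the substitution $q\mapsto q^{-1}$, $a\mapsto a^{-1}$. The only difference is cosmetic: the paper packages the sum in base $q^{-2}$ with argument $q^{-2}$ (the companion form of the $q$-Chu--Vandermonde summation), whereas you use base $q^{2}$ with argument $\gamma q^{2n}/\beta$, and you additionally spell out the term ratios, the Pochhammer telescoping, and the nonvanishing of the denominators.
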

 \begin{proof} To verify
 (\ref{eq:cv}), 
 evaluate the left-hand side using 
 Definition \ref{def:ti}
 and the right-hand side using 
 (\ref{eq:eigform}).
 The result becomes  a special case of the basic
  Chu/Vandermonde summation formula \cite[p.~354]{gr}:
  \begin{align*}
  (-1)^{s-r} a^{s-r} q^{(s-r)(d-r-s)} = 
 {}_2\phi_1 \biggl(
 \genfrac{}{}{0pt}{}
 {q^{2s-2r}, a^2 q^{2d-2r-2s}}
  {a q^{d-2r-1} }
  \,\bigg\vert \, q^{-2};  q^{-2} \biggr).
\end{align*}
We have verified 
(\ref{eq:cv}). To obtain (\ref{eq:cvinv}) from (\ref{eq:cv}), replace $q\mapsto q^{-1}$ and $a\mapsto a^{-1}$.
 \end{proof}
 
 \begin{proposition} 
 \label{prop:cvP}
 For $0 \leq r \leq d$ the following holds on $E_r V + E_{r+1}V + \cdots + E_d V$:
 \begin{align}
 \label{eq:wExpand}
 W &= t_r \sum_{i=0}^{d-r} \frac{ (-1)^i  q^{i^2}(A-\theta_r I )( A-\theta_{r+1}I ) \cdots (A-\theta_{r+i-1}I)}{(q^2;q^2)_i(a^{-1} q^{2r+1-d};q^2)_i},
 \\
  \label{eq:wiExpand}
 W^{-1} &= t^{-1}_r \sum_{i=0}^{d-r} \frac{ (-1)^i  q^{-i^2}(A-\theta_r I )( A-\theta_{r+1}I ) \cdots (A-\theta_{r+i-1}I)}{(q^{-2};q^{-2})_i(aq^{d-2r-1};q^{-2})_i}.
 \end{align}
 \end{proposition}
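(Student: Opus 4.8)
The plan is to verify the operator identity (\ref{eq:wExpand}) by checking that both sides act identically on each eigenspace $E_sV$ for $r \le s \le d$, since the subspace $E_rV + E_{r+1}V + \cdots + E_dV$ is the direct sum of these eigenspaces. The left-hand side is immediate: by Definition \ref{def:W} we have $W = \sum_{j=0}^d t_j E_j$, so $W$ acts as the scalar $t_s$ on $E_sV$.

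For the right-hand side, I would observe that it is a polynomial in $A$, and that on $E_sV$ the map $A$ acts as $\theta_s I$. Hence, restricted to $E_sV$, the right-hand side of (\ref{eq:wExpand}) reduces to the scalar obtained by the substitution $A \mapsto \theta_s$, namely
\[
t_r \sum_{i=0}^{d-r} \frac{(-1)^i q^{i^2}(\theta_s-\theta_r)(\theta_s-\theta_{r+1})\cdots(\theta_s-\theta_{r+i-1})}{(q^2;q^2)_i(a^{-1}q^{2r+1-d};q^2)_i}.
\]
The key point is that the product in the numerator acquires the factor $\theta_s - \theta_s = 0$ as soon as $i \ge s-r+1$, so every term with $i > s-r$ vanishes and the sum truncates at $i = s-r$. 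The surviving partial sum is exactly the right-hand side of (\ref{eq:cv}) in Lemma \ref{lem:cvsum}, and therefore equals $t_s/t_r$. Multiplying through by $t_r$ yields $t_s$, which matches the action of $W$ on $E_sV$.

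Having matched both sides on each $E_sV$ for $r \le s \le d$, I would conclude that (\ref{eq:wExpand}) holds on $E_rV + E_{r+1}V + \cdots + E_dV$. The identity (\ref{eq:wiExpand}) for $W^{-1}$ follows by the same argument, using $W^{-1} = \sum_{j=0}^d t_j^{-1} E_j$ from Lemma \ref{lem:wi} and invoking (\ref{eq:cvinv}) in place of (\ref{eq:cv}).

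I anticipate no serious obstacle in this argument, since the substance has been isolated into the scalar evaluation of Lemma \ref{lem:cvsum}. The only point requiring care is the truncation bookkeeping: one must confirm that the numerator product acquires the vanishing factor $\theta_s - \theta_s$ precisely when $i$ exceeds $s-r$, which is exactly what allows the full sum up to $d-r$ in the proposition to be replaced by the partial sum up to $s-r$ appearing in the lemma. Beyond this, the proof is a routine ``check on eigenspaces'' reduction from an operator identity to the scalar identities already established.
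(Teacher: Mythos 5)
Your proposal is correct and is essentially the paper's own proof: both verify the identities by comparing the $E_sV$-eigenvalues of the two sides for $r \le s \le d$, using Definition \ref{def:W} (resp. Lemma \ref{lem:wi}) for the left side and Lemma \ref{lem:cvsum} for the right side. Your explicit remark about the truncation of the sum at $i = s-r$ (via the vanishing factor $\theta_s - \theta_s$) is a detail the paper leaves implicit, but it is the same argument.
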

 \begin{proof} To verify (\ref{eq:wExpand}), use
 Definition \ref{def:W} 
 and
 (\ref{eq:cv})
 to see that 
 for $r \leq s \leq d$ the $E_sV$-eigenvalue for either side of
 (\ref{eq:wExpand}) is equal to $t_s$. We have verified 
 (\ref{eq:wExpand}).
 To verify (\ref{eq:wiExpand}), use
 Lemma \ref{lem:wi} and
  (\ref{eq:cvinv})
 to see that 
 for $r \leq s \leq d$ the $E_sV$-eigenvalue for either side of
 (\ref{eq:wiExpand}) is equal to $t^{-1}_s$. We have verified 
 (\ref{eq:wiExpand}).
 \end{proof}
 
 \begin{proposition} 
 \label{cor:cvP}
 The following holds on $V$:
 \begin{align*}
 W&=  \sum_{i=0}^{d} \frac{ (-1)^i q^{i^2}(A-\theta_0 I )( A-\theta_{1}I ) \cdots (A-\theta_{i-1}I)}{(q^2;q^2)_i(a^{-1} q^{1-d}; q^2)_i},
 \\
W^{-1}&=  \sum_{i=0}^{d} \frac{ (-1)^i q^{-i^2}(A-\theta_0 I )( A-\theta_{1}I ) \cdots (A-\theta_{i-1}I)}{(q^{-2};q^{-2})_i(a q^{d-1}; q^{-2})_i}.
\end{align*}
  \end{proposition}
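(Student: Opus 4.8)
The plan is to recognize that this proposition is simply the $r=0$ specialization of Proposition \ref{prop:cvP}, so almost no new work is required. First I would set $r=0$ in both displays of Proposition \ref{prop:cvP}. The region on which those identities hold becomes $E_0 V + E_1 V + \cdots + E_d V$, which equals all of $V$ since $I = \sum_{i=0}^d E_i$; thus the specialized identities hold on the entire underlying space, as claimed.

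Next I would track the constants. By Definition \ref{def:ti} we have $t_0 = (-1)^0 a^0 q^{0} = 1$, so the prefactors $t_r$ and $t_r^{-1}$ in Proposition \ref{prop:cvP} both become $1$ and drop out. For the denominator parameters, substituting $r=0$ gives $a^{-1} q^{2r+1-d} = a^{-1} q^{1-d}$ in the expansion of $W$ and $a q^{d-2r-1} = a q^{d-1}$ in the expansion of $W^{-1}$, matching the Pochhammer symbols $(a^{-1} q^{1-d};q^2)_i$ and $(a q^{d-1};q^{-2})_i$ in the statement. The products $(A-\theta_r I)\cdots(A-\theta_{r+i-1}I)$ become $(A-\theta_0 I)\cdots(A-\theta_{i-1}I)$ and the summation range $0 \leq i \leq d-r$ becomes $0 \leq i \leq d$.

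There is no real obstacle here: the content is entirely contained in Proposition \ref{prop:cvP}, and this corollary merely records the most useful case $r=0$, in which the geometric factor $t_r$ disappears and the identities hold globally on $V$. The only thing to verify is the bookkeeping of exponents, which is immediate. Accordingly, a one-line proof suffices: \emph{Set $r=0$ in Proposition \ref{prop:cvP}, and use $t_0=1$ together with $E_0 V + E_1 V + \cdots + E_d V = V$.}
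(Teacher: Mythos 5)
Your proposal is correct and is exactly the paper's proof: the paper simply says ``Set $r=0$ in Proposition \ref{prop:cvP},'' and the bookkeeping you spell out ($t_0=1$ from Definition \ref{def:ti}, and $E_0V+E_1V+\cdots+E_dV=V$ since $I=\sum_{i=0}^d E_i$) is precisely what makes that one-line reduction valid.
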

 \begin{proof} Set $r=0$ in Proposition  \ref{prop:cvP}.
 \end{proof}

 \noindent We mention a variation on Lemma 
 \ref{lem:cvsum}
 and 
 Propositions  \ref{prop:cvP}, \ref{cor:cvP}.

 \begin{lemma}
 \label{lem:cvsumVar}
 For $0 \leq r\leq s \leq d$,
 \begin{align}
 \label{eq:cvVar}
 \frac{t_r}{t_s} &= \sum_{i=0}^{s-r} \frac{(-1)^i  q^{i^2} (\theta_r - \theta_s)(\theta_r - \theta_{s-1} )\cdots (\theta_r - \theta_{s-i+1})}{(q^2;q^2)_i(a q^{d-2s+1};q^2)_i}
 \\
 \frac{t_s}{t_r} &= \sum_{i=0}^{s-r} \frac{(-1)^i  q^{-i^2} (\theta_r - \theta_s)(\theta_r - \theta_{s-1} )\cdots (\theta_r - \theta_{s-i+1})}{(q^{-2};q^{-2})_i(a^{-1} q^{2s-d-1};q^{-2})_i}
\label{eq:cvinvVar}
 \end{align}
 \end{lemma}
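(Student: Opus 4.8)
The plan is to derive both identities from Lemma~\ref{lem:cvsum}, applied not to $\Phi$ itself but to the reversed system $\Phi^\Downarrow=(A;\{E_{d-i}\}_{i=0}^d;A^*;\{E^*_i\}_{i=0}^d)$. The first step is to record that $\Phi^\Downarrow$ again has $q$-Racah type. Its eigenvalue sequence is $\theta^\Downarrow_i=\theta_{d-i}$, and substituting (\ref{eq:eigform}) gives $\theta^\Downarrow_i = a q^{2i-d}+a^{-1}q^{d-2i}=a^{-1}q^{d-2i}+(a^{-1})^{-1}q^{2i-d}$; hence $\Phi^\Downarrow$ has $q$-Racah type with the same $q$ but with $a$ replaced by $a^{-1}$ (the dual eigenvalue sequence, and thus $b$, is unchanged). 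Correspondingly, Definition~\ref{def:ti} applied to $\Phi^\Downarrow$ produces scalars $t^\Downarrow_i$, which by Lemma~\ref{lem:WDown}(i) equal $t_{d-i}/t_d$. Since Lemma~\ref{lem:cvsum} holds for any $q$-Racah TD system, it holds for $\Phi^\Downarrow$ with $a^{-1}$ in place of $a$.

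Next I would apply (\ref{eq:cv}) to $\Phi^\Downarrow$ and reindex by $r\mapsto d-s$ and $s\mapsto d-r$; since $0\le r\le s\le d$ forces $0\le d-s\le d-r\le d$, this is a legitimate instance, and the summation range $s-r$ is preserved. On the left-hand side, $t^\Downarrow_{d-r}/t^\Downarrow_{d-s}=(t_r/t_d)/(t_s/t_d)=t_r/t_s$. In the numerator, each factor $\theta^\Downarrow_{d-r}-\theta^\Downarrow_{(d-s)+k}$ collapses to $\theta_r-\theta_{s-k}$, so the product becomes $(\theta_r-\theta_s)(\theta_r-\theta_{s-1})\cdots(\theta_r-\theta_{s-i+1})$. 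In the denominator, the shifted-factorial argument $(a^{-1})^{-1}q^{2(d-s)+1-d}=a\,q^{d-2s+1}$ appears, yielding exactly (\ref{eq:cvVar}). The identity (\ref{eq:cvinvVar}) is obtained the same way from (\ref{eq:cvinv}) under the identical reindexing: the left-hand side becomes $t^\Downarrow_{d-s}/t^\Downarrow_{d-r}=t_s/t_r$, the numerator is the same descending product, and the denominator argument is $a^{-1}q^{2(d-s)+1-d-2}$-type bookkeeping, namely $a^{-1}q^{2s-d-1}$.

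The only delicate point, and the one place where an arithmetic slip is easy, is this bookkeeping: one must confirm that reversing the index in the product of eigenvalue differences produces precisely the descending product $(\theta_r-\theta_s)\cdots(\theta_r-\theta_{s-i+1})$, and that the combined substitution $a\mapsto a^{-1}$, $r\mapsto d-s$ carries $(a^{-1}q^{2r+1-d};q^2)_i$ to $(a\,q^{d-2s+1};q^2)_i$ and $(a\,q^{d-2r-1};q^{-2})_i$ to $(a^{-1}q^{2s-d-1};q^{-2})_i$. Should one prefer to bypass the reversed system, the fallback is to mimic the proof of Lemma~\ref{lem:cvsum} verbatim: evaluate the left-hand sides of (\ref{eq:cvVar}), (\ref{eq:cvinvVar}) via Definition~\ref{def:ti} and the right-hand sides via (\ref{eq:eigform}), whereupon each identity reduces to a single instance of the $q$-Chu/Vandermonde ${}_2\phi_1$ summation. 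I expect the reversed-system route to be the shorter of the two.
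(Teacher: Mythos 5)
Your proof is correct, but it takes a genuinely different route from the paper's. The paper proves Lemma \ref{lem:cvsumVar} exactly the way it proves Lemma \ref{lem:cvsum}: evaluate the left side of (\ref{eq:cvVar}) via Definition \ref{def:ti} and the right side via (\ref{eq:eigform}), recognize a special case of the basic Chu/Vandermonde ${}_2\phi_1$ summation, and then deduce (\ref{eq:cvinvVar}) from (\ref{eq:cvVar}) by the substitution $q\mapsto q^{-1}$, $a\mapsto a^{-1}$ (which fixes each $\theta_i$ and inverts each $t_i$); your "fallback" plan is precisely this argument. You instead deduce both identities from the already-established Lemma \ref{lem:cvsum} applied to the reversed system $\Phi^\Downarrow$, and this is legitimate: $\Phi^\Downarrow$ is a TD system of $q$-Racah type with parameters $(a^{-1},b,q)$; Lemma \ref{lem:cvsum} uses only Definition \ref{def:ti} and (\ref{eq:eigform}), so it applies verbatim to $\Phi^\Downarrow$; the identification $t^\Downarrow_i=t_{d-i}/t_d$ is Lemma \ref{lem:WDown}(i), which appears earlier in the paper, so there is no circularity; and the index change $r\mapsto d-s$, $s\mapsto d-r$ combined with $a\mapsto a^{-1}$ carries (\ref{eq:cv}), (\ref{eq:cvinv}) onto (\ref{eq:cvVar}), (\ref{eq:cvinvVar}) exactly as you computed (left sides $t_r/t_s$ and $t_s/t_r$, descending numerator products, denominator arguments $aq^{d-2s+1}$ and $a^{-1}q^{2s-d-1}$). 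What your route buys is that no second ${}_2\phi_1$ evaluation is needed, and it uses the same $\Downarrow$-symmetry device the paper itself employs elsewhere (e.g. Proposition \ref{thm:WB}, Corollary \ref{cor:combine2}, Theorem \ref{thm:two}); what the paper's route buys is independence from Lemma \ref{lem:WDown} and from any convention about which $q$-Racah parameter is attached to $\Phi^\Downarrow$. One cosmetic slip: your intermediate exponent ``$2(d-s)+1-d-2$'' in the second denominator is garbled (it should be $d-2(d-s)-1$), though the final form $a^{-1}q^{2s-d-1}$ you state is correct.
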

 \begin{proof} 
 To verify
 (\ref{eq:cvVar}), 
 evaluate the left-hand side using 
 Definition \ref{def:ti}
 and the right-hand side using 
 (\ref{eq:eigform}).
  The result becomes  a special case of the basic
  Chu/Vandermonde summation formula \cite[p.~354]{gr}:

  \begin{align*}
  (-1)^{s-r}a^{r-s}q^{(r-s)(d-r-s)} = 
 {}_2\phi_1 \biggl(
 \genfrac{}{}{0pt}{}
 {q^{2s-2r}, a^{-2} q^{2r+2s-2d}}
  {a^{-1}q^{2s-d-1 }}
  \,\bigg\vert \, q^{-2};  q^{-2} \biggr).
\end{align*}
We have verified 
(\ref{eq:cvVar}). To obtain (\ref{eq:cvinvVar}) from (\ref{eq:cvVar}), replace $q\mapsto q^{-1}$ and $a\mapsto a^{-1}$.
 \end{proof}

 \begin{proposition} 
 \label{prop:cvPVar}
 For $0 \leq s \leq d$ the following holds on $E_0V + E_1V + \cdots +  E_sV$:
 \begin{align*}
 W &= t_s \sum_{i=0}^{s} \frac{ (-1)^i  q^{i^2}(A-\theta_s I )( A-\theta_{s-1}I ) \cdots (A-\theta_{s-i+1}I)}{(q^2;q^2)_i(a q^{d-2s+1};q^2)_i},
 \\
 W^{-1} &= t^{-1}_s \sum_{i=0}^{s} \frac{ (-1)^i  q^{-i^2}(A-\theta_s I )( A-\theta_{s-1}I ) \cdots (A-\theta_{s-i+1}I)}{(q^{-2};q^{-2})_i(a^{-1} q^{2s-d-1};q^{-2})_i}.
 \end{align*}
 \end{proposition}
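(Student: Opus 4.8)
The plan is to prove each identity by comparing eigenvalues on the individual summands $E_rV$ with $0 \le r \le s$, following the same template as the proof of Proposition \ref{prop:cvP} but now invoking the ``Var'' summation identities of Lemma \ref{lem:cvsumVar}. First I would recall from Definition \ref{def:W} and Lemma \ref{lem:wi} that $W = \sum_{i=0}^d t_i E_i$ and $W^{-1} = \sum_{i=0}^d t^{-1}_i E_i$, so on $E_rV$ the map $W$ (resp.\ $W^{-1}$) acts as the scalar $t_r$ (resp.\ $t^{-1}_r$). The goal is then to show that the right-hand side of each displayed identity has the same $E_rV$-eigenvalue.

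To compute that eigenvalue, I would fix $r$ with $0 \le r \le s$ and use that $A$ acts on $E_rV$ as $\theta_r$. Hence each factor $A-\theta_{s-k}I$ contributes $\theta_r-\theta_{s-k}$, and the product $(A-\theta_s I)(A-\theta_{s-1}I)\cdots(A-\theta_{s-i+1}I)$ has $E_rV$-eigenvalue $(\theta_r-\theta_s)(\theta_r-\theta_{s-1})\cdots(\theta_r-\theta_{s-i+1})$. The decisive observation is the truncation: as the descending index reaches the value $s-(s-r)=r$, the factor $\theta_r-\theta_r=0$ enters, so every term with $i \ge s-r+1$ vanishes on $E_rV$. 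Consequently the sum $\sum_{i=0}^{s}$ collapses to $\sum_{i=0}^{s-r}$, which is exactly the expression appearing on the right in (\ref{eq:cvVar}) for $W$ and in (\ref{eq:cvinvVar}) for $W^{-1}$.

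Applying Lemma \ref{lem:cvsumVar}, the truncated sum for $W$ equals $t_r/t_s$, so the $E_rV$-eigenvalue of the right-hand side is $t_s\cdot(t_r/t_s)=t_r$, matching $W$; likewise for $W^{-1}$ the truncated sum equals $t_s/t_r$, giving $t^{-1}_s\cdot(t_s/t_r)=t^{-1}_r$, matching $W^{-1}$. Since the two sides of each identity have equal eigenvalues on $E_rV$ for every $0 \le r \le s$, they agree on $E_0V+E_1V+\cdots+E_sV$, as claimed.

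The step requiring the most care is the index bookkeeping in the truncation, namely confirming that the zero factor $\theta_r-\theta_r$ first appears at precisely $i=s-r+1$, so that the surviving terms align with the range $\sum_{i=0}^{s-r}$ used in Lemma \ref{lem:cvsumVar}; one should also note that the mutual distinctness of $\lbrace \theta_i\rbrace_{i=0}^d$ guarantees no spurious earlier cancellation. Beyond this indexing check the argument is routine and runs parallel to the proof of Proposition \ref{prop:cvP}.
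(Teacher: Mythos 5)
Your proof is correct and is essentially the paper's own argument: the paper proves Proposition \ref{prop:cvPVar} by declaring it ``similar to the proof of Proposition \ref{prop:cvP},'' which is precisely the eigenvalue comparison you carry out, matching the $E_rV$-eigenvalue $t_r$ (resp.\ $t_r^{-1}$) of $W$ (resp.\ $W^{-1}$) against the truncated sum evaluated by Lemma \ref{lem:cvsumVar}. Your explicit treatment of the truncation at $i = s-r+1$ is the only detail the paper leaves implicit, and it is handled correctly.
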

 \begin{proof} Similar to the proof of
 Proposition \ref{prop:cvP}.
 \end{proof}

 \begin{proposition} 
 \label{cor:cvPVar}
 The following holds on $V$:
 \begin{align*}
 W &=t_d \sum_{i=0}^{d} \frac{ (-1)^i  q^{i^2}(A-\theta_d I )( A-\theta_{d-1}I ) \cdots (A-\theta_{d-i+1}I)}{(q^2;q^2)_i(a q^{1-d};q^2)_i},
 \\
 W^{-1} &= t^{-1}_d \sum_{i=0}^{d} \frac{ (-1)^i  q^{-i^2}(A-\theta_d I )( A-\theta_{d-1}I ) \cdots (A-\theta_{d-i+1}I)}{(q^{-2};q^{-2})_i(a^{-1} q^{d-1};q^{-2})_i}.
 \end{align*}
 \end{proposition}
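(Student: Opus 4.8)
The plan is to recognize this proposition as the specialization $s=d$ of Proposition \ref{prop:cvPVar}, in exact parallel to how Proposition \ref{cor:cvP} was obtained from Proposition \ref{prop:cvP} by setting $r=0$. The key observation is that when $s=d$, the subspace $E_0V + E_1V + \cdots + E_sV$ on which Proposition \ref{prop:cvPVar} holds becomes $\sum_{i=0}^d E_iV = V$ (using $I = \sum_{i=0}^d E_i$), so both identities are asserted on all of $V$, exactly as desired. Thus no new computation is needed beyond specializing the parameter.

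First I would substitute $s=d$ into the formula for $W$ in Proposition \ref{prop:cvPVar}. The upper limit of the sum becomes $d$; the prefactor $t_s$ becomes $t_d$; and the product $(A-\theta_sI)(A-\theta_{s-1}I)\cdots(A-\theta_{s-i+1}I)$ becomes $(A-\theta_dI)(A-\theta_{d-1}I)\cdots(A-\theta_{d-i+1}I)$. The only step requiring any care is the $q$-shifted factorial in the denominator: the factor $(aq^{d-2s+1};q^2)_i$ collapses to $(aq^{d-2d+1};q^2)_i = (aq^{1-d};q^2)_i$, matching the stated denominator. The identical substitution into the formula for $W^{-1}$ turns $(a^{-1}q^{2s-d-1};q^{-2})_i$ into $(a^{-1}q^{2d-d-1};q^{-2})_i = (a^{-1}q^{d-1};q^{-2})_i$, again as claimed.

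Since this is an immediate corollary once the exponent arithmetic is checked, there is no genuine obstacle; the only thing to confirm is that these parameter exponents simplify correctly under $s\mapsto d$. Once that routine verification is carried out, the proposition follows directly from Proposition \ref{prop:cvPVar}.
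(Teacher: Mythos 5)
Your proposal is correct and matches the paper's proof exactly: the paper also obtains this proposition by setting $s=d$ in Proposition \ref{prop:cvPVar}, and your verification of the exponent simplifications ($d-2s+1 \mapsto 1-d$ and $2s-d-1 \mapsto d-1$) together with the observation that $E_0V+\cdots+E_dV=V$ is the entire content of the argument.
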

 \begin{proof} Set $s=d$ in 
 Proposition \ref{prop:cvPVar}.
 \end{proof}


\section{The elements $K$, $B$}
We continue to discuss the TD system $\Phi=(A;\lbrace E_i\rbrace_{i=0}^d;A^*; \lbrace E^*_i\rbrace_{i=0}^d)$ on $V$ that has $q$-Racah type.
In this section we recall the elements $ K, B \in {\rm End}(V)$ and discuss their basic properties.

\begin{definition}\label{def:decomp}
\rm By a {\it decomposition of $V$} we mean
a sequence $\lbrace V_i\rbrace_{i=0}^d$ of nonzero subspaces whose direct sum is $V$.
\end{definition}

\noindent For $0 \leq i \leq d$ define
\begin{align*} 
U_i = (E^*_0V + E^*_1V + \cdots + E^*_iV)\cap 
(E_iV + E_{i+1}V + \cdots + E_dV).
\end{align*}
By \cite[Theorem~4.6]{TD00}  the sequence $\lbrace U_i\rbrace_{i=0}^d$ is a decomposition of $V$. We call 
$\lbrace U_i\rbrace_{i=0}^d$ the {\it first split decomposition} of $V$. By \cite[Theorem~4.6]{TD00} the
following hold for $0 \leq i \leq d$:
\begin{align}
E^*_0V + E^*_1V+\cdots + E^*_iV &= U_0 + U_1 + \cdots + U_i,
\label{eq:s1}
\\
E_iV + E_{i+1}V + \cdots + E_dV &= U_i + U_{i+1} + \cdots + U_d.
\label{eq:s2}
\end{align}
Also by \cite[Theorem~4.6]{TD00},
\begin{align}
&(A-\theta_i I )U_i \subseteq U_{i+1} \qquad (0 \leq i \leq d-1), \quad (A-\theta_d I )U_d=0,
\label{eq:AUW}
\\
&(A^*-\theta^*_i I )U_i \subseteq U_{i-1} \qquad (1 \leq i \leq d), \quad (A^*-\theta^*_0 I )U_0=0.
\label{eq:AsUW}
\end{align}
\noindent By these remarks and our comments above Definition
\ref{def}, we obtain the following.
For $0 \leq i \leq d$,
\begin{align*} 
U^\Downarrow_i = (E^*_0V + E^*_1V + \cdots + E^*_iV)\cap 
(E_0V + E_1V + \cdots + E_{d-i}V).
\end{align*}
The sequence $\lbrace U^\Downarrow_i\rbrace_{i=0}^d$ is a decomposition of $V$. We call
$\lbrace U^\Downarrow_i\rbrace_{i=0}^d$   the {\it second split decomposition} of $V$.
For $0 \leq i \leq d$,
\begin{align}
E^*_0V + E^*_1V+\cdots + E^*_iV &= U^\Downarrow_0 + U^\Downarrow_1 + \cdots + U^\Downarrow_i,
\label{eq:s3}
\\
E_0V + E_1V + \cdots + E_{d-i}V &= U^\Downarrow_i + U^\Downarrow_{i+1} + \cdots + U^\Downarrow_d.
\label{eq:s4}
\end{align}
We have
\begin{align*}
&(A-\theta_{d-i} I )U^\Downarrow_i \subseteq U^\Downarrow_{i+1} \qquad (0 \leq i \leq d-1), \quad (A-\theta_0I )U^\Downarrow_d=0,
\\
&(A^*-\theta^*_{i} I )U^\Downarrow_i \subseteq U^\Downarrow_{i-1} \qquad (1 \leq i \leq d), \quad (A^*-\theta^*_0I )U^\Downarrow_0=0.
\end{align*}

\begin{definition}
\label{def:K}\rm (See \cite[Section~1.1]{augmented}.)
Define $K\in {\rm End}(V)$ such that for $0 \leq i \leq d$, $U_i$  is an eigenspace for $K$ with
eigenvalue $q^{d-2i}$. Define $B=K^\Downarrow$.
So for $0 \leq i \leq d$, $U^\Downarrow_i$ is an eigenspace for $B$ with
eigenvalue $q^{d-2i}$. 
\end{definition}
\noindent  By construction $K$, $B$ are invertible. The elements $A$, $K$, $B$ are related as follows.
By \cite[Section~1.1]{augmented},
\begin{align}
&\frac{q K A - q^{-1} A K}{q-q^{-1}} = a K^2+ a^{-1} I,\qquad \qquad 
\frac{q B A - q^{-1} A B}{q-q^{-1}} =a^{-1} B^2+ a I.
\label{eq:kA}
\end{align}
\noindent By \cite[Theorem~9.9]{bockting},
\begin{align}
& aK^2 - \frac{a^{-1} q - a q^{-1}}{q-q^{-1}} K B - \frac{a q - a^{-1} q^{-1}}{q-q^{-1}} BK + a^{-1} B^2 = 0.
\label{eq:kb}
\end{align}
The  equations 
(\ref{eq:kA}),
(\ref{eq:kb})
can be reformulated as follows. By
\cite[Lemma~12.12]{bocktingTer},
\begin{align}
&\frac{q A K^{-1}  - q^{-1}K^{-1}A }{q-q^{-1}} = a^{-1} K^{-2}+ aI,\qquad \quad 
\frac{q A B^{-1} - q^{-1} B^{-1} A }{q-q^{-1}} =a B^{-2}+ a^{-1} I.
\label{eq:kAa}
\end{align}
\noindent By \cite[Theorem~9.10]{bockting},
\begin{align}
& a^{-1}K^{-2} - \frac{a^{-1} q - a q^{-1}}{q-q^{-1}} K^{-1} B^{-1} - \frac{a q - a^{-1} q^{-1}}{q-q^{-1}} B^{-1}K^{-1} + a B^{-2} = 0.
\label{eq:kba}
\end{align}

\noindent We now bring in $W$.
\begin{lemma} \label{lem:KBW} {\rm (See \cite[Proposition~6.1]{diagram}.)}
We have
\begin{align}
\label{eq:W2KW2}
&W^{-2} K W^2 = a^{-1} A - a^{-2} K^{-1}, \qquad \qquad
W^{-2} B W^2 = a A - a^{2} B^{-1},
\\
& W^2 K^{-1} W^{-2} = a A - a^2 K, \qquad \qquad
W^2 B^{-1} W^{-2} = a^{-1} A - a^{-2} B.
\label{eq:W2KW2a}
\end{align}
\end{lemma}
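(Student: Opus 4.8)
The four identities are linked by two symmetries of the $q$-Racah setup, so the plan is to prove only the first one, $W^{-2}KW^2=a^{-1}A-a^{-2}K^{-1}$, and then transport it. Since this identity would hold for \emph{every} TD system of $q$-Racah type, I may apply it to reparametrized data. Replacing $(a,q)$ by $(a^{-1},q^{-1})$ leaves $A$ and the eigenspaces $E_iV$ unchanged (the $\theta_i$ in (\ref{eq:eigform}) are invariant) while inverting the defining eigenvalues of $K$ and of $W$, so that $K\mapsto K^{-1}$ and $W\mapsto W^{-1}$ by Definition \ref{def:K} and Definition \ref{def:W}; this turns the first identity into $W^2K^{-1}W^{-2}=aA-a^2K$, the first identity of (\ref{eq:W2KW2a}). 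Passing from $\Phi$ to $\Phi^\Downarrow$ sends $A\mapsto A$, $a\mapsto a^{-1}$, $K\mapsto B$ and $W\mapsto t_d^{-1}W$ (Lemma \ref{lem:WDown}), which converts the two $K$-identities into the two $B$-identities. Thus everything reduces to $W^{-2}KW^2=a^{-1}A-a^{-2}K^{-1}$.

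To prove this, set $\kappa=W^{-2}KW^2$ and $\mu=a^{-1}A-a^{-2}K^{-1}$, and compare them block by block relative to $V=\sum_{i=0}^d E_iV$. By (\ref{eq:s2}) and Definition \ref{def:K}, $K$ preserves $E_iV+\cdots+E_dV$ for each $i$, so $E_jKE_i=0$ whenever $j<i$, and the same holds for $K^{-1}$; hence $K$, $K^{-1}$, and $\mu$ all act on the $E_iV$ in an upper-triangular fashion. Because $W^{\pm2}=\sum_i t_i^{\pm2}E_i$ is a polynomial in $A$ (Lemma \ref{lem:WA}), we have $E_j\kappa E_i=(t_i/t_j)^{2}E_jKE_i$, so $\kappa$ is upper-triangular too and $E_i\kappa E_i=E_iKE_i$. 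The $(i,i)$-block of (\ref{eq:kA}) gives $a(E_iKE_i)^2-\theta_i(E_iKE_i)+a^{-1}I=0$ on $E_iV$, and $E_iK^{-1}E_i=(E_iKE_i)^{-1}$ there (expand $E_i=E_iKK^{-1}E_i$ and use upper-triangularity); these yield $E_i\mu E_i=a^{-1}\theta_iE_i-a^{-2}E_iK^{-1}E_i=E_iKE_i$. So $\kappa$ and $\mu$ share the same diagonal blocks.

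The structural heart is that both $\kappa$ and $\mu$ satisfy the same augmented relation as $K$, namely $\frac{qXA-q^{-1}AX}{q-q^{-1}}=aX^2+a^{-1}I$. For $\kappa$ this is immediate upon conjugating (\ref{eq:kA}) by $W^{\pm2}$, which commutes with $A$. For $\mu$ it is a short computation: substituting $\mu=a^{-1}A-a^{-2}K^{-1}$ and simplifying, the relation collapses to $\frac{qK^{-1}A-q^{-1}AK^{-1}}{q-q^{-1}}=AK^{-1}+K^{-1}A-a^{-1}K^{-2}-aI$, which is equivalent to (\ref{eq:kAa}).

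Finally I would conclude $\kappa=\mu$ from the principle that an upper-triangular solution of the augmented relation is pinned down by its diagonal blocks. Putting $D=\kappa-\mu$ (strictly upper-triangular) and subtracting the two augmented relations gives $qDA-q^{-1}AD=(q-q^{-1})a(D\kappa+\mu D)$; taking a nonzero block $E_iDE_j$ of minimal range $j-i$ and extracting the $(i,j)$-block kills all longer-range terms and leaves $(q\theta_j-q^{-1}\theta_i)E_iDE_j=(q-q^{-1})a\big((E_iKE_i)E_iDE_j+E_iDE_j(E_jKE_j)\big)$. The main obstacle is to show the associated Sylvester operator $Y\mapsto(q\theta_j-q^{-1}\theta_i)Y-(q-q^{-1})a\big((E_iKE_i)Y+Y(E_jKE_j)\big)$ is invertible for $i<j$: its eigenvalues are $(q\theta_j-q^{-1}\theta_i)-(q-q^{-1})a(\lambda+\rho)$, with $\lambda,\rho$ in the spectra of $E_iKE_i,E_jKE_j$, which by the quadratic above lie in $\{q^{d-2i},a^{-2}q^{2i-d}\}$ and $\{q^{d-2j},a^{-2}q^{2j-d}\}$, and their nonvanishing must be read off from Lemma \ref{lem:onefactor} together with the restrictions on $a,q$ from the start of Section 2 (that $q^{2i}\neq1$ for $1\le i\le d$ and that $a^2$ avoids $q^{2d-2},\dots,q^{2-2d}$). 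This spectral bookkeeping is delicate, and the cleanest route to finish is in fact to invoke the global statement \cite[Proposition~6.1]{diagram} via the identification $W^2=H$ noted above Lemma \ref{lem:WA}; the symmetry reduction of the first paragraph then delivers all four identities at once.
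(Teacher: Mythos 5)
Your reduction of the four identities to the first one is sound: replacing $(a,q)$ by $(a^{-1},q^{-1})$ fixes $A$, the $E_i$ and the $U_i$ while sending $K\mapsto K^{-1}$, $W\mapsto W^{-1}$, and passing to $\Phi^{\Downarrow}$ handles $B$ via Lemma \ref{lem:WDown}. Your diagonal-block computation and your check that $\mu=a^{-1}A-a^{-2}K^{-1}$ satisfies the augmented relation $\frac{qXA-q^{-1}AX}{q-q^{-1}}=aX^{2}+a^{-1}I$ are also correct. But the finishing step --- that a triangular solution of this relation is determined by its diagonal blocks --- is false, and it fails exactly where you need it. For an adjacent pair of blocks, Lemma \ref{lem:onefactor} gives
\begin{align*}
q\theta_{i-1}-q^{-1}\theta_{i}
=(q^{2}-q^{-2})\,a\,q^{d-2i+1}
=(q-q^{-1})\,a\,\bigl(q^{d-2i}+q^{d-2(i-1)}\bigr),
\end{align*}
so for the eigenvalue combination $\lambda=q^{d-2i}$, $\rho=q^{d-2(i-1)}$ your Sylvester scalar vanishes identically; no genericity condition from Section 2 can save it, because this is an identity in $a,q$, not an accidental coincidence. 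Worse, the uniqueness principle itself cannot be repaired: $K$, $\kappa=W^{-2}KW^{2}$ and $\mu$ are three triangular solutions of the same relation with identical diagonal blocks, and $K\neq\mu$ --- otherwise $A=aK+a^{-1}K^{-1}$, which forces $A$ to act as $\theta_{i}I$ on each $U_{i}$, making $U_{0}+\cdots+U_{j}$ (for $0\le j<d$) invariant under both $A$ and $A^{*}$, contradicting Definition \ref{def:tdp}(iv). So triangularity plus the relation plus the diagonal blocks genuinely do not pin down the element; the information distinguishing $\kappa$ from $K$ lives precisely on the super-diagonal blocks your argument cannot control.

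Your final fallback --- invoking \cite[Proposition~6.1]{diagram} through the identification $W^{2}=H$ --- is in fact all the paper does: Lemma \ref{lem:KBW} is stated there as a quotation of that result, with no proof given. So as a citation your proposal coincides with the paper's treatment, but your attempted self-contained derivation does not go through, and the reduction apparatus built on top of it proves nothing by itself. If you want a genuine internal proof, the model to imitate is the paper's proof of Proposition \ref{thm:WK}: work on the split decomposition $\lbrace U_{i}\rbrace_{i=0}^{d}$, use the expansion of $W$ as an explicit polynomial in $A$ from Proposition \ref{prop:cvP} together with the raising map $R=A-aK-a^{-1}K^{-1}$ (which satisfies $RK=q^{2}KR$ and $RU_{i}\subseteq U_{i+1}$), and verify the identity by direct summation on each $U_{r}$; an abstract uniqueness argument based on the relation alone cannot work.
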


\begin{proposition} \label{cor:WKB}
We have
\begin{align}
\label{eq:wwk}
\frac{ q W^{-2}K W^2 K^{-1} - q^{-1}  K^{-1} W^{-2} KW^2} {q-q^{-1}} &= I, 
\\
\label{eq:wwkalt}
\frac{ q K W^2 K^{-1}W^{-2} - q^{-1}  W^2 K^{-1} W^{-2} K} {q-q^{-1}} &= I
\end{align}
\noindent and
\begin{align}
\frac{ q  W^{-2} B W^2 B^{-1}- q^{-1}  B^{-1} W^{-2} BW^2} {q-q^{-1}} &= I,
\label{eq:wwb}
\\
\frac{ q B W^2 B^{-1}W^{-2} - q^{-1}  W^2 B^{-1} W^{-2} B} {q-q^{-1}} &= I.
\label{eq:wwbalt}
\end{align}
\end{proposition}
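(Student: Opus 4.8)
The plan is to prove the four identities by direct substitution using Lemma~\ref{lem:KBW}, which expresses each conjugate $W^{-2}KW^2$, $W^2K^{-1}W^{-2}$, $W^{-2}BW^2$, $W^2B^{-1}W^{-2}$ as a linear combination of $A$ and an inverse of $K$ or $B$. Concretely, to verify (\ref{eq:wwk}) I would substitute $W^{-2}KW^2 = a^{-1}A - a^{-2}K^{-1}$ from (\ref{eq:W2KW2}) into both terms of the numerator. The first term becomes $q(a^{-1}A - a^{-2}K^{-1})K^{-1}$ and the second becomes $q^{-1}K^{-1}(a^{-1}A - a^{-2}K^{-1})$. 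Expanding, the $K^{-2}$ contributions are $-a^{-2}q K^{-2}$ and $-a^{-2}q^{-1}K^{-2}$ respectively; these do not immediately cancel, so the real work is to combine the surviving terms into the relation (\ref{eq:kAa}).

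The key step is to recognize that after substitution the numerator collapses, up to the factor $q-q^{-1}$, precisely to the reformulated relation (\ref{eq:kAa}). Writing out (\ref{eq:wwk}), the numerator equals
\begin{align*}
a^{-1}\frac{qAK^{-1} - q^{-1}K^{-1}A}{1} - a^{-2}(qK^{-2} - q^{-1}K^{-2}),
\end{align*}
and here the point is that the $K^{-2}$ terms recombine as $-a^{-2}(q-q^{-1})K^{-2}$ while the $A$-and-$K^{-1}$ terms are $a^{-1}$ times the left numerator of the first equation in (\ref{eq:kAa}). Dividing by $q-q^{-1}$ and invoking (\ref{eq:kAa}) gives $a^{-1}(a^{-1}K^{-2} + aI) - a^{-2}K^{-2} = I$, as desired. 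So the identity (\ref{eq:wwk}) is really (\ref{eq:kAa}) conjugated into the $W$-picture. The analogous computation for (\ref{eq:wwb}) uses the second equation of (\ref{eq:W2KW2}) together with the second relation in (\ref{eq:kAa}), with the roles of $a$ and $a^{-1}$ interchanged.

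For the two "alt" identities (\ref{eq:wwkalt}) and (\ref{eq:wwbalt}), I would proceed symmetrically, substituting $W^2K^{-1}W^{-2} = aA - a^2K$ and $W^2B^{-1}W^{-2} = a^{-1}A - a^{-2}B$ from (\ref{eq:W2KW2a}). Here the surviving quadratic terms are multiples of $K^2$ (resp.\ $B^2$), and the $A$-linear terms reproduce the \emph{original} relations (\ref{eq:kA}) rather than (\ref{eq:kAa}). For instance, in (\ref{eq:wwkalt}) the numerator reduces to $a(qKA - q^{-1}AK) - a^2(q-q^{-1})K^2$, and dividing by $q-q^{-1}$ and using the first equation of (\ref{eq:kA}) yields $a(aK^2 + a^{-1}I) - a^2K^2 = I$.

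The main obstacle is bookkeeping rather than conceptual: one must carefully track which of the four structural relations---(\ref{eq:kA}) or its reformulation (\ref{eq:kAa})---is the right one to invoke after each substitution, and keep the powers of $a$ straight, since the $K$-identities and $B$-identities differ by the interchange $a \leftrightarrow a^{-1}$. There is no hard analytic or combinatorial content; the proof is a finite verification, so in the write-up I would state that each identity follows by substituting the appropriate equation from Lemma~\ref{lem:KBW} and simplifying using (\ref{eq:kA}) or (\ref{eq:kAa}), and give the computation for (\ref{eq:wwk}) in full with the remaining three left as similar.
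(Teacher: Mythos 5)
Your proposal is correct, and for (\ref{eq:wwk}) and (\ref{eq:wwb}) it is exactly the paper's argument: eliminate $W^{-2}KW^2$ (resp.\ $W^{-2}BW^2$) via Lemma \ref{lem:KBW} and simplify using (\ref{eq:kAa}); your expansion, in which the quadratic terms recombine with the factor $q-q^{-1}$ and cancel, checks out. The only divergence is in the two ``alt'' identities: the paper obtains (\ref{eq:wwkalt}) at no cost by multiplying each side of (\ref{eq:wwk}) on the left by $W^2$ and on the right by $W^{-2}$ (and likewise for (\ref{eq:wwbalt})), whereas you redo the substitution from scratch using (\ref{eq:W2KW2a}) together with the original relations (\ref{eq:kA}); both routes are valid, the paper's conjugation trick being shorter, while your symmetric computation makes explicit that the alt identities rest on (\ref{eq:kA}) in the same way the first pair rests on (\ref{eq:kAa}).
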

\begin{proof}  To verify 
(\ref{eq:wwk}), eliminate $W^{-2} K W^2$ using
 the equation on the left in
(\ref{eq:W2KW2}), and evaluate the result using the equation on the left in (\ref{eq:kAa}).
To obtain (\ref{eq:wwkalt}), multiply each side of 
(\ref{eq:wwk}) on the left by $W^2$ and the right by $W^{-2}$.
The equations
(\ref{eq:wwb}), 
(\ref{eq:wwbalt}) 
are similary verified.
\end{proof}

\begin{proposition} 
\label{prop:wkw}
 We have
\begin{enumerate}
\item[\rm (i)] $A = a W^{-1} K W + a^{-1} W K^{-1} W^{-1}$;
\item[\rm (ii)] $A = a^{-1} W^{-1} B W + a W B^{-1} W^{-1}$.
\end{enumerate}
\end{proposition}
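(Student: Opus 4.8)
The plan is to derive both identities directly from Lemma~\ref{lem:KBW} by conjugating with $W$, using the crucial fact from Lemma~\ref{lem:WA} that $W^{\pm 1}$ lie in $\mathcal D$ and are therefore polynomials in $A$; in particular $W$ commutes with $A$, so $W A W^{-1} = A$. This single commutation relation is what makes the whole computation go through.

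For part (i), I would start from the left-hand equation in (\ref{eq:W2KW2}), namely $W^{-2} K W^2 = a^{-1} A - a^{-2} K^{-1}$. Multiplying this on the left by $W$ and on the right by $W^{-1}$ turns the left side into $W^{-1} K W$, while the right side becomes $a^{-1} W A W^{-1} - a^{-2} W K^{-1} W^{-1} = a^{-1} A - a^{-2} W K^{-1} W^{-1}$, where I have used $W A W^{-1} = A$. This yields $W^{-1} K W = a^{-1} A - a^{-2} W K^{-1} W^{-1}$. Multiplying through by $a$ and rearranging gives $A = a W^{-1} K W + a^{-1} W K^{-1} W^{-1}$, which is exactly (i).

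For part (ii), I would run the identical argument on the right-hand equation in (\ref{eq:W2KW2}), namely $W^{-2} B W^2 = a A - a^{2} B^{-1}$. The same conjugation (left by $W$, right by $W^{-1}$) produces $W^{-1} B W = a A - a^{2} W B^{-1} W^{-1}$; multiplying by $a^{-1}$ and rearranging gives $A = a^{-1} W^{-1} B W + a W B^{-1} W^{-1}$, which is (ii). As a consistency check, one could instead conjugate the equations in (\ref{eq:W2KW2a}) by $W^{-1}$ on the left and $W$ on the right, which leads to the same two conclusions.

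I do not expect any genuine obstacle here: the entire content is the commutation $W A W^{-1} = A$ supplied by Lemma~\ref{lem:WA}, which lets the conjugation pass through the $A$-term untouched while converting a $W^{-2}(\cdot)W^{2}$ expression into a $W^{-1}(\cdot)W$ expression. The only points requiring care are the direction of conjugation and the bookkeeping of the scalar factors $a^{\pm 1}$ and $a^{\pm 2}$ attached to each term, so that the final coefficients come out as $a$ and $a^{-1}$ (respectively $a^{-1}$ and $a$) in front of the two conjugates.
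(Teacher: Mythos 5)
Your proposal is correct and follows exactly the paper's own proof: conjugate the two equations in (\ref{eq:W2KW2}) by $W$ (left) and $W^{-1}$ (right), using the fact from Lemma \ref{lem:WA} that $W$ commutes with $A$, then rescale by $a^{\pm 1}$ and rearrange. The scalar bookkeeping in both parts checks out, so there is nothing to add.
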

\begin{proof} (i) In the equation $W^{-2} K W^2 = a^{-1} A - a^{-2} K^{-1}$,
multiply each side on the left by $W$ and the right by $W^{-1}$. Evaluate the result using the
fact that $A, W$ commute.
\\
\noindent (ii) In the equation $W^{-2} B W^2 = a A - a^{2} B^{-1}$,
multiply each side on the left by $W$ and the right by $W^{-1}$. Evaluate the result using the
fact that $A, W$ commute.
\end{proof}

\begin{corollary}
\label{cor:WMW} We have
\begin{align}
\label{eq:Qpre}
W^{-1} \frac{ a K - a^{-1} B}{ a - a^{-1}} W = W \frac{ a^{-1} K^{-1} - a B^{-1}}{a^{-1}-a} W^{-1}.
\end{align}
\end{corollary}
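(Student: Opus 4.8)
The plan is to read off \eqref{eq:Qpre} from the two expressions for $A$ supplied by Proposition \ref{prop:wkw}, namely
\begin{align*}
A &= a\,W^{-1} K W + a^{-1} W K^{-1} W^{-1}, \\
A &= a^{-1} W^{-1} B W + a\,W B^{-1} W^{-1}.
\end{align*}
Since the two right-hand sides are equal, I would move the conjugates of the form $W^{-1}(\,\cdot\,)W$ to one side and those of the form $W(\,\cdot\,)W^{-1}$ to the other. After factoring, this gives
\begin{align*}
W^{-1}\bigl(a K - a^{-1} B\bigr) W = W\bigl(a B^{-1} - a^{-1} K^{-1}\bigr) W^{-1}.
\end{align*}

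From here the remaining work is purely cosmetic. I would divide both sides by $a - a^{-1}$, which is nonzero since it occurs in the denominators displayed in \eqref{eq:Qpre}. The left-hand side then becomes exactly $W^{-1}\frac{aK - a^{-1}B}{a - a^{-1}}W$, the left-hand side of \eqref{eq:Qpre}. For the right-hand side I would observe that multiplying the numerator and denominator by $-1$ yields
\begin{align*}
\frac{a B^{-1} - a^{-1} K^{-1}}{a - a^{-1}} = \frac{a^{-1} K^{-1} - a B^{-1}}{a^{-1} - a},
\end{align*}
so the right-hand side is precisely $W\frac{a^{-1}K^{-1}-aB^{-1}}{a^{-1}-a}W^{-1}$, as required.

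There is essentially no obstacle: the whole point is that Proposition \ref{prop:wkw}(i),(ii) express the same operator $A$ in two ways, so subtracting them produces \eqref{eq:Qpre} up to a scalar. The only step needing attention is the final sign bookkeeping, where one must recognize that the combination $(aB^{-1}-a^{-1}K^{-1})/(a-a^{-1})$ agrees with the stated right-hand combination $(a^{-1}K^{-1}-aB^{-1})/(a^{-1}-a)$. As an alternative route, \eqref{eq:Qpre} is equivalent, after multiplying both sides on the left and right by $W$, to the identity $W^{-2}\frac{aK-a^{-1}B}{a-a^{-1}}W^2 = \frac{a^{-1}K^{-1}-aB^{-1}}{a^{-1}-a}$; this can be checked directly by substituting the formulas for $W^{-2}KW^2$ and $W^{-2}BW^2$ from Lemma \ref{lem:KBW}, whereupon the $A$-terms cancel and the surviving $K^{-1}$, $B^{-1}$ terms reassemble into the right-hand side.
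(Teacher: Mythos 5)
Your proposal is correct and matches the paper's own proof, which is precisely to compare the two expressions for $A$ in Proposition \ref{prop:wkw}(i),(ii); you have simply written out the rearrangement, the division by $a-a^{-1}$, and the sign bookkeeping explicitly. The alternative route via Lemma \ref{lem:KBW} is also valid, but it amounts to the same computation, since Proposition \ref{prop:wkw} is itself derived from that lemma.
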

\begin{proof} Compare the two equations in
Proposition
\ref{prop:wkw}.
\end{proof}

\section{The elements $M$, $N$, $Q$}
We continue to discuss the TD system $\Phi=(A;\lbrace E_i\rbrace_{i=0}^d;A^*; \lbrace E^*_i\rbrace_{i=0}^d)$ on $V$ that has $q$-Racah type.
Recall the maps $K, B$ from Definition \ref{def:K}. In this section we use $K,B$ to define some elements
$M, N, Q \in {\rm End}(V)$ that will play a role in our theory.
\medskip

\noindent Following 
\cite[Section~6]{bocktingQexp} and \cite[Section~7]{diagram}
we define
\begin{align}
\label{eq:MNdef}
M = \frac{a K - a^{-1} B}{a-a^{-1}},
\qquad \qquad
N = \frac{a^{-1} K^{-1} - a B^{-1} }{a^{-1} -a}.
\end{align}
By \cite[Lemma~8.1]{bocktingQexp},
 $M$ is diagonalizable with eigenvalues $\lbrace q^{d-2i} \rbrace_{i=0}^d$. The same holds for $N$ by
 Corollary \ref{cor:WMW}.  The elements $M$, $N$ 
are invertible. By construction $M^\Downarrow = M$ and $N^\Downarrow = N$. Also by construction,
\begin{align}
 K N B = M = B N K.
\label{eq:KNB}
\end{align}

\begin{lemma} \label{lem:MN}
Each of $M^{-1}$, $N^{-1}$ acts on the eigenspaces of $A$ in a tridiagonal fashion.
\end{lemma}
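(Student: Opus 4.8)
The plan is to show that $M^{-1}$ and $N^{-1}$ each act on the eigenspaces of $A$ in a tridiagonal fashion, which by Definition~\ref{def:tridiag} and Lemma~\ref{lem:four1} means verifying that $E_i M^{-1} E_j = 0$ and $E_i N^{-1} E_j = 0$ whenever $\vert i-j\vert > 1$. The difficulty is that $M^{-1}$ and $N^{-1}$ are defined only implicitly as inverses, so a direct computation of matrix entries is awkward. The key idea is to avoid inverting $M$ and $N$ explicitly, and instead work with the relations in Proposition~\ref{cor:WKB}, which already express tridiagonality-type identities for the conjugates of $K^{\pm 1}$ and $B^{\pm 1}$ by $W^{\pm 2}$.

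First I would establish that $K, B, K^{-1}, B^{-1}$ each act on the eigenspaces of $A$ in a tridiagonal fashion. Indeed, from the relation on the left in (\ref{eq:kA}), namely $\lbrack A, K\rbrack_q/(q-q^{-1}) = aK^2 + a^{-1}I$ rewritten, one sees that $\lbrack A, K\rbrack_q$ is a polynomial in $K$ and $A$; more directly, I would use Corollary~\ref{cor:W2XW2} in reverse together with Lemma~\ref{lem:KBW}. The cleanest route is to note that Lemma~\ref{lem:KBW} gives $W^{-2}KW^2 = a^{-1}A - a^{-2}K^{-1}$ and $W^2 K^{-1}W^{-2} = aA - a^2 K$. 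If $K$ acts tridiagonally on the eigenspaces of $A$, then by Corollary~\ref{cor:W2XW2} so does $W^{-2}KW^2$, hence (since $A$ is tridiagonal trivially and the class of tridiagonal-acting maps is closed under $\mathbb F$-linear combination) so does $K^{-1}$; and symmetrically for $B, B^{-1}$. So I would first argue that $K$ itself acts tridiagonally, for instance directly from (\ref{eq:kA}): the relation shows $E_i K E_j (q\theta_i - q^{-1}\theta_j) = (q-q^{-1})E_i(aK^2+a^{-1}I)E_j$, and bootstrapping on $\vert i-j\vert$ forces $E_i K E_j = 0$ for $\vert i-j\vert > 1$.

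Given that $K, B, K^{-1}, B^{-1}$ all act tridiagonally, the linear combinations $M = (aK - a^{-1}B)/(a-a^{-1})$ and $N = (a^{-1}K^{-1} - aB^{-1})/(a^{-1}-a)$ from (\ref{eq:MNdef}) also act tridiagonally. The remaining and main obstacle is to pass from $M$ to $M^{-1}$ (and $N$ to $N^{-1}$), since tridiagonal action is \emph{not} in general preserved under taking inverses. Here I would exploit Proposition~\ref{cor:WKB}. The strategy is to produce for $M^{-1}$ a relation of the same shape as those used for $K, B$. Concretely, since $M$ is diagonalizable with eigenvalues $\lbrace q^{d-2i}\rbrace_{i=0}^d$ (stated just below (\ref{eq:MNdef})), one can hope to show that $M$ and $A$ satisfy a $q$-Serre-type or $q$-commutator relation analogous to (\ref{eq:kA}) in which $M^{-1}$ appears, and then apply the bootstrapping argument of the previous paragraph to $M^{-1}$ directly. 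The natural source of such a relation is Corollary~\ref{cor:WMW} together with Proposition~\ref{prop:wkw}, which relate $W^{\pm 1}MW^{\mp 1}$ and $W^{\pm 1}NW^{\mp 1}$ to $A$ and to $M^{-1}, N^{-1}$; combining these should yield an identity expressing $\lbrack A, M^{-1}\rbrack_q$ (or the relevant $q$-commutator) as a low-degree polynomial in $A$ and $M^{-1}$, from which tridiagonality of $M^{-1}$ follows by the same entrywise induction on $\vert i-j\vert$. The analogous computation handles $N^{-1}$, completing the proof.
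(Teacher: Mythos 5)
Your proposal has a genuine gap, and it is fatal to the whole strategy: the opening claim that $K$, $B$, $K^{-1}$, $B^{-1}$ act on the eigenspaces of $A$ in a tridiagonal fashion is false. What is true is only a one-sided, triangular statement: by (\ref{eq:s2}) the element $K$ preserves each subspace $E_iV+E_{i+1}V+\cdots+E_dV$, so $E_iKE_j=0$ for $i<j$, and by (\ref{eq:s4}) the element $B$ preserves each $E_0V+\cdots+E_kV$, so $E_iBE_j=0$ for $i>j$; but the entries on the other side of the diagonal do not vanish. For a concrete counterexample, take $d=2$ with each $U_i$ one-dimensional (a Leonard pair of $q$-Racah type) and pick $0\ne u_i\in U_i$. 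By Definition \ref{def:K} and (\ref{eq:AUW}), in the basis $u_0,u_1,u_2$ we have $K=\mathrm{diag}(q^2,1,q^{-2})$ while $A$ is lower bidiagonal with diagonal $\theta_0,\theta_1,\theta_2$ and subdiagonal entries $r_1,r_2$, which are nonzero (otherwise (\ref{eq:AsUW}) would give a proper nonzero subspace invariant under $A$ and $A^*$, contradicting Definition \ref{def:tdp}(iv)). Then $E_0V$ is spanned by $v_0=u_0+\tfrac{r_1}{\theta_0-\theta_1}u_1+\tfrac{r_1r_2}{(\theta_0-\theta_1)(\theta_0-\theta_2)}u_2$, and a direct computation using (\ref{eq:eigform}) gives
\begin{align*}
E_2Kv_0 \;=\; \frac{r_1r_2\,a(q^2-1)}{(\theta_0-\theta_1)(a-a^{-1})(a-a^{-1}q^2)}\,u_2 \;\ne\; 0,
\end{align*}
the scalar being nonzero because $q^2\ne 1$ and $a^2\notin\lbrace 1,q^2\rbrace$ (the lemma following Definition \ref{def:qrac}). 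So $K$ is not tridiagonal on the eigenspaces of $A$. Your proposed bootstrap from (\ref{eq:kA}) cannot repair this: sandwiching gives $(q\theta_j-q^{-1}\theta_i)E_iKE_j=(q-q^{-1})\bigl(aE_iK^2E_j+a^{-1}\delta_{ij}E_i\bigr)$, and $E_iK^2E_j=\sum_k E_iKE_kKE_j$ is quadratic in the unknown blocks, so no induction on $\vert i-j\vert$ closes — indeed the example above satisfies (\ref{eq:kA}) while $E_2KE_0\ne 0$. Worse, since $E_iBE_j=0$ for $i>j$, one gets $E_iME_j=\tfrac{a}{a-a^{-1}}E_iKE_j$ for $i>j+1$, which is nonzero in the example; so $M$ itself does not act tridiagonally. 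The content of Lemma \ref{lem:MN} is thus genuinely about the inverse: $M^{-1}$ is tridiagonal while $M$ is not, so any plan of the form ``prove it for $M$, then pass to $M^{-1}$'' is doubly doomed — the intermediate claim is false, and (as you yourself note) inversion does not preserve the property anyway.

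The second half of your proposal is, by its own admission, a program rather than a proof: the passage to $M^{-1}$ is exactly the hard point, and you offer only the hope that Corollary \ref{cor:WMW} and Proposition \ref{prop:wkw} ``should yield an identity''; none is derived. For comparison, the paper's proof takes a different route entirely and is two lines long: tridiagonality of $M^{-1}$ is quoted from \cite[Lemma~10.3]{bocktingQexp}, where it comes out of the calculus of the double lowering map $\psi$ (compare Lemmas \ref{lem:M4} and \ref{lem:psi3} here: $M^{-1}=K^{-1}(I-a^{-1}q\psi)$ and $\psi$ is tridiagonal) rather than from $K$ and $B$; then $N^{-1}$ is handled by conjugation, since Corollary \ref{cor:WMW} gives $N^{-1}=W^{-2}M^{-1}W^{2}$, and $W^{\pm 1}\in\mathcal D$ (Lemma \ref{lem:WA}) commutes with every $E_i$, whence $E_iN^{-1}E_j=t_i^{-2}t_j^{2}\,E_iM^{-1}E_j$. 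The one sound ingredient in your proposal is this last mechanism — conjugation by polynomials in $A$ preserves tridiagonal action — but what is missing is any valid way to establish the statement for $M^{-1}$ itself, and the route through $K$ and $B$ is closed.
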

\begin{proof} This holds for $M^{-1}$ by 
\cite[Lemma~10.3]{bocktingQexp}. 
It holds for $N^{-1}$,  by Corollary
\ref{cor:WMW}
 and since
$A$ commutes with $W$.
\end{proof}


\begin{definition}
\label{def:Q}
\rm
By Corollary
\ref{cor:WMW}
 we have 
\begin{align}
\label{eq:Q}
W^{-1} M W = W N W^{-1};
\end{align}
this  common value will be denoted by $Q$.
\end{definition}

\begin{lemma}  The element $Q$ is diagonalizable, with eigenvalues $\lbrace q^{d-2i}\rbrace_{i=0}^d$. Moreover $Q$
is invertible.
\end{lemma}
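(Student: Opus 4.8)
The plan is to observe that $Q$ is similar to $M$ and that similarity preserves both diagonalizability and the spectrum. By Definition~\ref{def:Q} we have $Q = W^{-1} M W$, and $W$ is invertible by Lemma~\ref{lem:wi}; hence this relation exhibits $Q$ as a conjugate of $M$ by the invertible map $W$. Everything will then follow formally from this single fact, together with the already-established spectral data for $M$.

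First I would recall that $M$ is diagonalizable with eigenvalues $\lbrace q^{d-2i}\rbrace_{i=0}^d$; this is the cited result \cite[Lemma~8.1]{bocktingQexp} quoted just above. Writing the corresponding spectral resolution $M = \sum_{i=0}^d q^{d-2i} F_i$, where $\lbrace F_i\rbrace_{i=0}^d$ are the primitive idempotents of $M$, I would use that conjugation $X \mapsto W^{-1} X W$ is an algebra automorphism of ${\rm End}(V)$. It therefore sends this resolution to $Q = \sum_{i=0}^d q^{d-2i}\,(W^{-1} F_i W)$, in which the maps $W^{-1} F_i W$ are again mutually orthogonal idempotents summing to $I$. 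This forces $Q$ to be diagonalizable with the same eigenvalues $\lbrace q^{d-2i}\rbrace_{i=0}^d$ as $M$. Equivalently, one may simply invoke the standard fact that conjugate maps have identical diagonalizability status and identical spectrum.

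Finally I would deduce invertibility. Since $q\neq 0$, every eigenvalue $q^{d-2i}$ of $Q$ is nonzero, so $Q$ is invertible; alternatively $Q = W^{-1} M W$ is a product of the invertible maps $W^{-1}$, $M$, $W$. There is no genuine obstacle here: all the substance lies in the cited determination of the spectrum of $M$, after which the statement for $Q$ is immediate from the conjugation relation $Q = W^{-1} M W$.
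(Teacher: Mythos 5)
Your proposal is correct and follows essentially the same route as the paper: the paper's proof simply cites Definition~\ref{def:Q} (which gives $Q = W^{-1}MW$) together with the remarks above Lemma~\ref{lem:MN} (that $M$ is diagonalizable with eigenvalues $\lbrace q^{d-2i}\rbrace_{i=0}^d$ and invertible), so the content is exactly your conjugation argument. Your write-up just makes explicit the standard fact that similarity preserves diagonalizability and spectrum, which the paper leaves implicit.
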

\begin{proof} By Definition \ref{def:Q} and the comments above Lemma
\ref{lem:MN}.
\end{proof}

\begin{lemma} \label{lem:QDown}
We have $Q^\Downarrow = Q$.
\end{lemma}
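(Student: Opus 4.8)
The plan is to show that $Q^\Downarrow = Q$ by tracking how each ingredient in the definition of $Q$ transforms under the $\Downarrow$ operation. Recall from Definition \ref{def:Q} that $Q = W^{-1} M W = W N W^{-1}$. I will compute $Q^\Downarrow$ from the formula $Q^\Downarrow = (W^{-1} M W)^\Downarrow = (W^\Downarrow)^{-1} M^\Downarrow W^\Downarrow$, using that the $\Downarrow$ operation is a ring homomorphism on the relevant elements (it corresponds to passing from $\Phi$ to $\Phi^\Downarrow$, which only reorders the idempotents $E_i \mapsto E_{d-i}$ and leaves $A, A^*$ fixed).

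First I would recall from the comments above Lemma \ref{lem:MN} that $M^\Downarrow = M$ and $N^\Downarrow = N$, since $M$ and $N$ are built from $K$ and $B$ via (\ref{eq:MNdef}), and by construction $M, N$ are already invariant under $\Downarrow$ (indeed $K^\Downarrow = B$ and $B^\Downarrow = K$, so $M^\Downarrow = (aB - a^{-1}K)/(a-a^{-1})$, and one checks this equals $M$ by the form of the definition, or simply quotes the stated fact $M^\Downarrow = M$). Next I would use Lemma \ref{lem:WDown}(ii), which gives $W^\Downarrow = t_d^{-1} W$, and hence $(W^\Downarrow)^{\pm 1} = t_d^{\mp 1} W^{\pm 1}$. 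The key point is that the scalar factors $t_d^{-1}$ and $t_d$ cancel in the conjugation.

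Putting these together:
\begin{align*}
Q^\Downarrow = (W^{-1} M W)^\Downarrow = (W^\Downarrow)^{-1} M^\Downarrow\, W^\Downarrow = (t_d^{-1} W)^{-1}\, M\, (t_d^{-1} W) = t_d\, W^{-1}\, M\, t_d^{-1} W = W^{-1} M W = Q.
\end{align*}
The scalars commute with everything and cancel, so the result drops out. One could equally well verify this through the other expression $Q = W N W^{-1}$ using $N^\Downarrow = N$, giving the same answer as a consistency check.

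I do not expect any genuine obstacle here; the proof is a short computation. The only point requiring a little care is justifying that $\Downarrow$ acts as an algebra homomorphism on products like $W^{-1} M W$, i.e. that $(XY)^\Downarrow = X^\Downarrow Y^\Downarrow$ and $(X^{-1})^\Downarrow = (X^\Downarrow)^{-1}$ for the elements in play. This is immediate because $\Downarrow$ is induced by the transition from the TD system $\Phi$ to $\Phi^\Downarrow$, under which $A$ and $A^*$ are unchanged, so any element expressed in terms of $A, A^*$ (and their functional calculus) is transformed consistently; in particular $W, M, N$ and their inverses all lie in algebras generated by such data, and the invertibility is preserved. With that understood, the cancellation of $t_d^{\pm 1}$ finishes the argument.
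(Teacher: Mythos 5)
Your proof is correct and takes essentially the same route as the paper: the paper's own proof likewise combines Lemma \ref{lem:WDown}(ii), Definition \ref{def:Q}, and the invariance $M^\Downarrow = M$, $N^\Downarrow = N$ stated just above Lemma \ref{lem:MN}, with the scalars $t_d^{\pm 1}$ cancelling under conjugation.

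One caution about your parenthetical attempt to verify $M^\Downarrow = M$ by hand: passing from $\Phi$ to $\Phi^\Downarrow$ reverses the eigenvalue sequence, so the parameter $a$ is replaced by $a^{-1}$ along with the swap $K \leftrightarrow B$. The correct computation is $M^\Downarrow = \frac{a^{-1}B - aK}{a^{-1}-a} = \frac{aK - a^{-1}B}{a-a^{-1}} = M$; your expression $\frac{aB - a^{-1}K}{a-a^{-1}}$, obtained by swapping $K$ and $B$ while keeping $a$ fixed, does not equal $M$ in general. Since your main argument quotes the paper's stated fact $M^\Downarrow = M$ rather than relying on that side computation, the proof stands as written.
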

\begin{proof}  By Lemma \ref{lem:WDown}(ii) and Definition \ref{def:Q}, along with the last sentence before
Lemma \ref{lem:MN}.
\end{proof}

\begin{lemma} \label{lem:MNQ}
The element $Q^{-1}$ acts on the eigenspaces of $A$ in a tridiagonal fashion.
\end{lemma}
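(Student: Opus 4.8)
The plan is to express $Q^{-1}$ as a conjugate of $M^{-1}$ by $W$ and to exploit the fact that $W$ is diagonal with respect to the eigenspace decomposition of $A$. First I would take the inverse of the defining relation $Q = W^{-1} M W$ from Definition \ref{def:Q}, obtaining $Q^{-1} = W^{-1} M^{-1} W$. By Lemma \ref{lem:MN} the element $M^{-1}$ acts on the eigenspaces of $A$ in a tridiagonal fashion, so by Lemma \ref{lem:four1} we have $E_i M^{-1} E_j = 0$ whenever $\vert i-j\vert > 1$.

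Next I would use $W = \sum_{k=0}^d t_k E_k$ from Definition \ref{def:W} together with $W^{-1} = \sum_{k=0}^d t_k^{-1} E_k$ from Lemma \ref{lem:wi}. Since $E_i E_k = \delta_{i,k} E_i$, for $0 \leq i,j \leq d$ this gives
\[
E_i Q^{-1} E_j = E_i W^{-1} M^{-1} W E_j = t_i^{-1} t_j\, E_i M^{-1} E_j .
\]
Because each $t_k$ is nonzero by Lemma \ref{lem:tdiv}, the scalar factor $t_i^{-1} t_j$ does not affect which blocks vanish. Hence $E_i Q^{-1} E_j = 0$ whenever $\vert i-j\vert > 1$, which by Lemma \ref{lem:four1} is precisely the assertion that $Q^{-1}$ acts on the eigenspaces of $A$ in a tridiagonal fashion.

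There is no serious obstacle here: the entire point is that $W^{\pm 1}$ lie in $\mathcal D$ by Lemma \ref{lem:WA}, so conjugating by them merely rescales each block $E_i X E_j$ by the nonzero factor $t_j/t_i$ and therefore preserves the banded (tridiagonal) pattern of an operator relative to the eigenspaces of $A$. If a more symmetric presentation is desired, the identical computation applied to the alternative expression $Q^{-1} = W N^{-1} W^{-1}$, using the tridiagonal action of $N^{-1}$ (also from Lemma \ref{lem:MN}), yields the same conclusion; the two routes are interchangeable.
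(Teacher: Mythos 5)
Your proposal is correct and follows essentially the same route as the paper: the paper's proof invokes Lemma \ref{lem:MN}, Definition \ref{def:Q}, and the fact that $A$ commutes with $W$, which is exactly the observation you make explicit by computing $E_i Q^{-1} E_j = t_i^{-1} t_j\, E_i M^{-1} E_j$. Your block computation simply spells out why conjugation by the $\mathcal D$-element $W$ preserves the tridiagonal pattern.
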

\begin{proof}  By Lemma
\ref{lem:MN},  Definition \ref{def:Q}, and since $A$ commutes with $W$.
\end{proof}

\section{Equitable triples}
We continue to discuss the TD system $\Phi=(A;\lbrace E_i\rbrace_{i=0}^d;A^*; \lbrace E^*_i\rbrace_{i=0}^d)$ on $V$ that has $q$-Racah type.
 The two equations in
Proposition \ref{prop:wkw} each express $A$ as a sum of two terms. In this section
we describe how these terms are related to the element $Q$ from Definition
\ref{def:Q}. To facilitate this description, we will use the notion of an equitable triple.

\begin{definition} \rm (See \cite[Definition~7.3]{diagram}.)
\label{def:etr} An {\it equitable triple on $V$} is 
a $3$-tuple $X,Y,Z$ of invertible elements in ${\rm End}(V)$ such that
\begin{align*}
\frac{qXY-q^{-1} YX}{q-q^{-1} } = I,
\qquad \quad 
\frac{qYZ-q^{-1} ZY}{q-q^{-1} } = I,
\qquad \quad 
\frac{qZX-q^{-1} XZ}{q-q^{-1} } = I.
\end{align*}
\end{definition}
\noindent   Equitable triples are related to the quantum group $U_q(\mathfrak{sl}_2)$; see Section 14 below and
\cite{equit}, \cite{uawEquit},
\cite{fduq}. 

\begin{lemma}
\label{lem:4equit} {\rm (See \cite[Proposition~7.4]{diagram}.)}
Each of the following {\rm (i)--(iv)} is an equitable triple:
\begin{enumerate}
\item[\rm (i)] $a A - a^2K, M^{-1}, K$;
\item[\rm (ii)] $a^{-1} A - a^{-2}B, M^{-1}, B$;
\item[\rm (iii)] $K^{-1}, N^{-1}, a^{-1}  A - a^{-2}K^{-1}$;
\item[\rm (iv)] $B^{-1}, N^{-1}, a  A - a^2B^{-1}$.
\end{enumerate}
\end{lemma}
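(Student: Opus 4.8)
The plan is to verify each of the four equitable triples directly against Definition~\ref{def:etr}, using the structural equations assembled in Sections~7 and~8. For a triple $X,Y,Z$ I must check three $q$-commutator relations, namely $\frac{qXY-q^{-1}YX}{q-q^{-1}}=I$, $\frac{qYZ-q^{-1}ZY}{q-q^{-1}}=I$, and $\frac{qZX-q^{-1}XZ}{q-q^{-1}}=I$, together with the invertibility of each of $X,Y,Z$. The invertibility is the easy part: $K,B$ are invertible by construction (Definition~\ref{def:K}), $M,N$ are invertible as recorded below (\ref{eq:MNdef}), and the elements of the form $aA-a^2K$, $a^{-1}A-a^{-2}K^{-1}$, etc., are invertible because they equal conjugates of $K^{-1}$ or $B^{-1}$ by $W^{\pm2}$ via Lemma~\ref{lem:KBW}. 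Indeed, from (\ref{eq:W2KW2a}) we have $aA-a^2K=W^2K^{-1}W^{-2}$ and $a^{-1}A-a^{-2}B=W^2B^{-1}W^{-2}$, while from (\ref{eq:W2KW2}) we have $a^{-1}A-a^{-2}K^{-1}=W^{-2}KW^2$ and $aA-a^2B^{-1}=W^{-2}BW^2$, each manifestly invertible.

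The key observation that makes all three relations fall out is that two of the three in each triple are always accounted for by results already in hand. First, the relation involving $Y$ paired with the ``$A$-linear'' generator is exactly one of the equations in Proposition~\ref{cor:WKB}: for instance in triple~(i), the pair $(aA-a^2K=W^2K^{-1}W^{-2},\,M^{-1})$ should reduce to (\ref{eq:wwkalt}) after recognizing $M^{-1}$ in terms of the $W$-conjugation structure, and for the $N$-based triples (iii),(iv) one uses (\ref{eq:wwk}),(\ref{eq:wwb}). Second, the relation pairing the $A$-linear generator with $K$ (or $B$) is precisely a rewriting of (\ref{eq:kA}) or (\ref{eq:kAa}); for triple~(i), the relation $\frac{q\,K(aA-a^2K)-q^{-1}(aA-a^2K)K}{q-q^{-1}}=I$ expands to $a\,\frac{qKA-q^{-1}AK}{q-q^{-1}}-a^2(q-q^{-1})^{-1}(q-q^{-1})K^2$, and substituting the left equation of (\ref{eq:kA}), namely $\frac{qKA-q^{-1}AK}{q-q^{-1}}=aK^2+a^{-1}I$, collapses the $K^2$ terms and leaves $I$. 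The remaining relation, pairing $M^{-1}$ with $K$, should come from the defining expression $M=\frac{aK-a^{-1}B}{a-a^{-1}}$ together with the commutation data of $K,B$; here I expect to invoke the relation (\ref{eq:KNB}) $KNB=M=BNK$ to control how $N^{-1}$ or $M^{-1}$ interacts with $K,B$.

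In practice the cleanest route is probably to cite Proposition~7.4 of \cite{diagram} as the authors have flagged, and then indicate the mechanism: each of the three equitable-triple relations for a given triple matches one of (\ref{eq:kA}), (\ref{eq:kAa}), (\ref{eq:kb}), (\ref{eq:kba}), or one of the $W$-conjugation identities (\ref{eq:wwk})--(\ref{eq:wwbalt}), after substituting the definitions $M=\frac{aK-a^{-1}B}{a-a^{-1}}$ and $N=\frac{a^{-1}K^{-1}-aB^{-1}}{a^{-1}-a}$ from (\ref{eq:MNdef}) and the identifications from Lemma~\ref{lem:KBW}. The passage between triples (i)$\leftrightarrow$(ii) and (iii)$\leftrightarrow$(iv) is governed by the $K\leftrightarrow B$, $a\leftrightarrow a^{-1}$ symmetry visible throughout Section~7, and the passage (i),(ii)$\leftrightarrow$(iii),(iv) reflects the inversion symmetry sending each structural equation to its ``$a$'' counterpart in (\ref{eq:kAa}),(\ref{eq:kba}).

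The main obstacle I anticipate is the third relation in each triple, the one coupling $M^{-1}$ (or $N^{-1}$) with $K$ (or $B$), since this is the only relation not directly supplied by an equation already displayed. To handle it, I would rewrite $M^{-1}$ using $M^{-1}=BN^{-1}K^{-1}$ or $K^{-1}N^{-1}B^{-1}$ obtained by inverting (\ref{eq:KNB}), so that the $q$-commutator of $M^{-1}$ with $K$ becomes a statement purely about $K,B$; I then expect the two-variable relations (\ref{eq:kb}),(\ref{eq:kba}) governing $K,B$ to close the computation. Establishing this cleanly—keeping track of the factors $a,a^{-1}$ and the signs in the denominators $a-a^{-1}$ versus $a^{-1}-a$—is where the real bookkeeping lies, and it is exactly the content that Proposition~7.4 of \cite{diagram} certifies.
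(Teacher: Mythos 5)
The paper offers no proof of this lemma at all---it is quoted from \cite[Proposition~7.4]{diagram}---so your fallback of simply citing that reference matches the paper exactly. The problem is with the direct verification you sketch, which contains a concrete misattribution. In triple (i), write $X=aA-a^2K$, $Y=M^{-1}$, $Z=K$. You claim the relation pairing $X$ with $Y=M^{-1}$ ``reduces to (\ref{eq:wwkalt})''. It does not: under the identification $aA-a^2K=W^2K^{-1}W^{-2}$ from Lemma \ref{lem:KBW}, equation (\ref{eq:wwkalt}) reads $\frac{qKX-q^{-1}XK}{q-q^{-1}}=I$, which is the $Z,X$ relation---the very relation you already derived from (\ref{eq:kA}). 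Equation (\ref{eq:wwkalt}) involves only $K$ and $W$; there is no way to ``recognize $M^{-1}$'' in it, since $M^{-1}$ is not expressible in $K,W$ alone. So your plan proves one of the three relations twice and the $(X,M^{-1})$ relation not at all; the same double-counting occurs in triples (ii)--(iv) with (\ref{eq:wwk}), (\ref{eq:wwb}).

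The two relations involving $M^{-1}$ (resp.\ $N^{-1}$) genuinely require (\ref{eq:kb}) (resp.\ (\ref{eq:kba})), and your proposed mechanism---inverting (\ref{eq:KNB})---does not deliver them: $KNB=M$ inverts to $M^{-1}=B^{-1}N^{-1}K^{-1}$ (not $BN^{-1}K^{-1}$ as you wrote), and in any case this trades $M^{-1}$ for $N^{-1}$, which is no closer to a statement ``purely about $K,B$''. The workable elementary route is to clear $M^{-1}$ by conjugation: since $M$ is invertible, $\frac{qM^{-1}K-q^{-1}KM^{-1}}{q-q^{-1}}=I$ is equivalent to $qKM-q^{-1}MK=(q-q^{-1})M^2$, and $\frac{qXM^{-1}-q^{-1}M^{-1}X}{q-q^{-1}}=I$ is equivalent to $qMX-q^{-1}XM=(q-q^{-1})M^2$. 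Substituting $M=\frac{aK-a^{-1}B}{a-a^{-1}}$ from (\ref{eq:MNdef}) into the first identity, it reduces exactly to (\ref{eq:kb}); substituting into the second and eliminating the $KA$, $BA$ terms via (\ref{eq:kA}), it again reduces exactly to (\ref{eq:kb}). The analogous computations with $K^{-1},B^{-1},N$ and (\ref{eq:kAa}), (\ref{eq:kba}) settle triples (iii), (iv), and the $a\leftrightarrow a^{-1}$, $K\leftrightarrow B$ symmetry settles (ii). With these corrections your outline closes into a complete proof; as written, two of the three relations per triple are left unestablished.
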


\begin{proposition}
\label{prop:twoET}
Each of the following {\rm (i), (ii)} is an equitable triple:
\begin{enumerate}
\item[\rm (i)] $W K^{-1} W^{-1}, Q^{-1}, W^{-1} K W$;
\item[\rm (ii)] $W B^{-1} W^{-1}, Q^{-1}, W^{-1} B W$.
\end{enumerate}
\end{proposition}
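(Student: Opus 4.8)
The plan is to obtain each triple by conjugating one of the equitable triples from Lemma \ref{lem:4equit} by the invertible element $W$. The key general observation is that conjugation by an invertible element of ${\rm End}(V)$ is an algebra automorphism, and hence carries equitable triples to equitable triples: if $X,Y,Z$ satisfy the three defining relations of Definition \ref{def:etr}, then applying $g(\,\cdot\,)g^{-1}$ to each relation and using $gIg^{-1}=I$ shows that $gXg^{-1}, gYg^{-1}, gZg^{-1}$ satisfy them as well. Moreover the conjugate of an invertible element is invertible, so the conjugated triple again consists of invertible elements. This reduces the problem to matching the conjugated entries against the desired closed forms.

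For part (i), I would start from the equitable triple $K^{-1}, N^{-1}, a^{-1}A - a^{-2}K^{-1}$ of Lemma \ref{lem:4equit}(iii) and conjugate it by $W$. The first entry becomes $WK^{-1}W^{-1}$ directly. For the second entry, Definition \ref{def:Q} gives $Q = WNW^{-1}$, so $WN^{-1}W^{-1} = (WNW^{-1})^{-1} = Q^{-1}$. For the third entry, the left-hand equation in (\ref{eq:W2KW2}) of Lemma \ref{lem:KBW} reads $W^{-2}KW^2 = a^{-1}A - a^{-2}K^{-1}$, whence $W(a^{-1}A - a^{-2}K^{-1})W^{-1} = W(W^{-2}KW^2)W^{-1} = W^{-1}KW$. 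Thus conjugation by $W$ carries the triple of Lemma \ref{lem:4equit}(iii) exactly onto $WK^{-1}W^{-1}, Q^{-1}, W^{-1}KW$, proving (i).

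Part (ii) is handled the same way, now conjugating the equitable triple $B^{-1}, N^{-1}, aA - a^2B^{-1}$ of Lemma \ref{lem:4equit}(iv) by $W$. The first entry becomes $WB^{-1}W^{-1}$, the second becomes $Q^{-1}$ as before, and the third is handled using the right-hand equation in (\ref{eq:W2KW2}), namely $W^{-2}BW^2 = aA - a^2B^{-1}$, giving $W(aA - a^2B^{-1})W^{-1} = W^{-1}BW$. This yields the triple $WB^{-1}W^{-1}, Q^{-1}, W^{-1}BW$ of (ii).

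I do not expect a serious obstacle here; the essential content is the recognition that Lemma \ref{lem:KBW} provides precisely the identities needed to rewrite the conjugated third entries $W(a^{-1}A - a^{-2}K^{-1})W^{-1}$ and $W(aA - a^2B^{-1})W^{-1}$ in the desired closed forms $W^{-1}KW$ and $W^{-1}BW$, while Definition \ref{def:Q} disposes of the middle entry. The only point requiring a moment of care is bookkeeping the order of the three entries under conjugation, but since conjugation acts entrywise this is automatic.
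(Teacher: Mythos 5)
Your proof is correct and takes essentially the same route as the paper: both arguments observe that conjugation preserves equitable triples, apply this to one of the triples in Lemma \ref{lem:4equit}, identify the middle entry as $Q^{-1}$ via Definition \ref{def:Q}, and identify the remaining entry via a known $W$-conjugation identity. The only cosmetic difference is that the paper conjugates the triples of Lemma \ref{lem:4equit}(i),(ii) by $W^{-1}$ and cites Proposition \ref{prop:wkw}, whereas you conjugate the triples of Lemma \ref{lem:4equit}(iii),(iv) by $W$ and cite Lemma \ref{lem:KBW}; these bookkeeping choices are equivalent, since Proposition \ref{prop:wkw} is itself obtained from Lemma \ref{lem:KBW} by conjugating by $W$.
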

\begin{proof}(i) Define $X=a A-a^2 K$, $Y=M^{-1}$, $Z=K$. By Lemma \ref{lem:4equit}(i)
the three-tuple $X,Y,Z$  is an equitable triple.
Therefore the three-tuple $W^{-1}X W$, $W^{-1}YW$, $W^{-1}ZW$ is an equitable triple.
Using Proposition
\ref{prop:wkw}(i) we obtain $W^{-1}XW=W K^{-1} W^{-1}$. By construction $W^{-1}Y W = Q^{-1}$ and $W^{-1} ZW=W^{-1}K W$.
The result follows.
\\
\noindent (ii) Similar to the proof of (i) above, using the equitable triple from Lemma \ref{lem:4equit}(ii).
\end{proof}

\section{The double lowering map $\psi$}
We continue to discuss the TD system $\Phi=(A;\lbrace E_i\rbrace_{i=0}^d;A^*; \lbrace E^*_i\rbrace_{i=0}^d)$ on $V$ that has $q$-Racah type.
In this section we recall the double lowering map $\psi$ and discuss its basic properties.
\medskip

\noindent Recall the maps $K, B$ from Definition \ref{def:K}. By \cite[Lemma~9.7]{bockting}, each of the following is invertible:
\begin{align*}
&a I - a^{-1} B K^{-1}, \qquad \qquad a^{-1} I - a K B^{-1},
\\
& a I - a^{-1} K^{-1} B, \qquad \qquad a^{-1} I  - a B^{-1} K.
\end{align*}

\begin{lemma}\label{lem:psi4} {\rm (See \cite[Theorem~9.8]{bockting}.)}
The following coincide:
\begin{align*}
&
\frac{ I - BK^{-1}}{q(a I - a^{-1} B K^{-1})}, 
\qquad \qquad 
\frac{ I - KB^{-1}}{q(a^{-1} I - a K B^{-1})},
\\
&
\frac{ q(I-K^{-1}B)}{a I - a^{-1} K^{-1}B}, \qquad \qquad \quad \,
\frac{q(I-B^{-1}K)}{a^{-1} I - a B^{-1} K}.
\end{align*}
\end{lemma}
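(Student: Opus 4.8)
The plan is to label the four expressions $\psi_1,\psi_2,\psi_3,\psi_4$ in reading order and prove $\psi_1=\psi_2$, $\psi_3=\psi_4$, and $\psi_1=\psi_3$; the first two coincidences are purely formal, while the third is where the relation (\ref{eq:kb}) enters. First observe that in each fraction the numerator and denominator are polynomials in a single element (namely $BK^{-1}$, $KB^{-1}$, $K^{-1}B$, $B^{-1}K$ respectively), so each fraction is an unambiguous element of ${\rm End}(V)$ once the denominator is invertible, which is exactly the content of the four invertibility statements recalled just above. To get $\psi_1=\psi_2$ I would clear the inverse by multiplying numerator and denominator of $\psi_2$ on the right by $B$, turning $I-KB^{-1}$ into $-(K-B)$ and $a^{-1}I-aKB^{-1}$ into $-(aK-a^{-1}B)$; this brings both $\psi_1$ and $\psi_2$ into the common normal form $\tfrac1q (K-B)(aK-a^{-1}B)^{-1}$. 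Extracting instead a left factor $K^{-1}$ from the numerator and denominator of $\psi_3$ (and $B^{-1}$ from $\psi_4$) gives $\psi_3=qK^{-1}(K-B)(aK-a^{-1}B)^{-1}K$ and $\psi_4=qB^{-1}(K-B)(aK-a^{-1}B)^{-1}B$. Since $X:=(K-B)(aK-a^{-1}B)^{-1}$ equals $(I-u)(aI-a^{-1}u)^{-1}$ with $u:=BK^{-1}$, it is a function of $u$ and hence commutes with $u$; this forces $K^{-1}XK=B^{-1}XB$, whence $\psi_3=\psi_4$. None of this uses (\ref{eq:kb}).

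The substantive step is $\psi_1=\psi_3$. In the normal forms above this reads $\tfrac1q X=qK^{-1}XK$, i.e. $KXK^{-1}=q^2X$. Writing $X=f(u)$ with $f(z)=\tfrac{1-z}{a-a^{-1}z}$ and $u=BK^{-1}$, and using that conjugation by $K$ is an algebra automorphism, we have $KXK^{-1}=f(KuK^{-1})$, so the claim becomes the operator identity $f(KuK^{-1})=q^2f(u)$. The key is to compute $KuK^{-1}$, and here the relation (\ref{eq:kb}) does the work: substituting $B=uK$ into (\ref{eq:kb}), every term acquires a right factor $K^2$, and after cancelling $K^2$ one is left with $aI-\alpha(KuK^{-1})-\beta u+a^{-1}u(KuK^{-1})=0$, where $\alpha=\tfrac{a^{-1}q-aq^{-1}}{q-q^{-1}}$ and $\beta=\tfrac{aq-a^{-1}q^{-1}}{q-q^{-1}}$. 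Solving this \emph{linear} equation for $KuK^{-1}$ yields the linear-fractional conjugation rule $KuK^{-1}=g(u)$ with $g(z)=\tfrac{\beta z-a}{a^{-1}z-\alpha}$. This is the crucial simplification: the quadratic relation (\ref{eq:kb}) is exactly what turns the conjugate $KuK^{-1}$ into a rational function of $u$, so that $\psi_1$ and $\psi_3$ both become functions of the single commuting element $u$.

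With $KuK^{-1}=g(u)$ in hand, the identity $f(KuK^{-1})=q^2f(u)$ reduces to the purely scalar rational-function identity $f(g(z))=q^2f(z)$, which I would verify by direct substitution. Using the simplifications $a^{-1}-\beta=\tfrac{q(a^{-1}-a)}{q-q^{-1}}$, $a-\alpha=\tfrac{q(a-a^{-1})}{q-q^{-1}}$, $1-a^{-1}\beta=\tfrac{q^{-1}(a^{-2}-1)}{q-q^{-1}}$, $1-a\alpha=\tfrac{q^{-1}(a^2-1)}{q-q^{-1}}$, the cross-multiplied identity loses a common factor $\tfrac{q}{q-q^{-1}}(1-z)$ and collapses to $(a-a^{-1})(a-a^{-1}z)=(a^2-1)+(a^{-2}-1)z$, an elementary rearrangement. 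I expect the main obstacle to be precisely the recognition in the second paragraph that (\ref{eq:kb}) linearizes the conjugation action of $K$ on $u=BK^{-1}$; once the problem is reduced to a single commuting variable the remainder is routine. (Alternatively one could run the same argument with $w:=K^{-1}B$, deriving $u=g(w)$ from (\ref{eq:kb}) and again reducing to $f(g(z))=q^2f(z)$.)
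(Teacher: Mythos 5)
Your overall strategy is sound, and most of it checks out: the identities $\psi_1=\psi_2$ and $\psi_3=\psi_4$ are correct purely formal manipulations; substituting $B=uK$ into (\ref{eq:kb}) really does give
\[
aI-\alpha\,\tilde u-\beta\,u+a^{-1}u\tilde u=0,\qquad \tilde u:=KuK^{-1},\quad
\alpha=\tfrac{a^{-1}q-aq^{-1}}{q-q^{-1}},\quad \beta=\tfrac{aq-a^{-1}q^{-1}}{q-q^{-1}};
\]
and your scalar identity $f(g(z))=q^2f(z)$ is verified correctly. (Note that the paper itself gives no proof of this lemma; it is quoted from Bockting-Conrad. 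So any self-contained argument is by necessity a different route; yours effectively runs the cited literature in reverse, deriving the coincidence of the four expressions from the $K,B$-relation (\ref{eq:kb}), which is legitimate inside this paper because (\ref{eq:kb}) is stated independently in Section 7.) There is, however, one genuine gap: the step ``solving this linear equation for $KuK^{-1}$'' requires $a^{-1}u-\alpha I$ to be invertible, i.e.\ that $a\alpha$ is not an eigenvalue of $u=BK^{-1}$. The invertibility facts quoted above the lemma only exclude the eigenvalue $a^{2}$ of $u$ (all four quoted statements are conjugates/reciprocals of that one fact), and $a\alpha=\frac{q-a^{2}q^{-1}}{q-q^{-1}}$ equals $a^{2}$ only when $a^{2}=1$; so the invertibility you need is a genuinely different claim, and you give no argument for it. Without it, $\tilde u=g(u)$ is not established, and your whole third paragraph — which rests on $\tilde u$ being a rational function of $u$ and hence commuting with $u$ — is unsupported.

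The gap can be closed in two ways. The cleanest repair avoids solving for $\tilde u$ altogether: the target $\psi_1=\psi_3$, i.e.\ $KXK^{-1}=q^{2}X$ with $X=(I-u)(aI-a^{-1}u)^{-1}$, is equivalent — right-multiply by the invertible $aI-a^{-1}\tilde u=K(aI-a^{-1}u)K^{-1}$, left-multiply by the invertible $aI-a^{-1}u$, and use that $I-u$ commutes with $aI-a^{-1}u$ — to the polynomial identity
\[
(aI-a^{-1}u)(I-\tilde u)\;=\;q^{2}(I-u)(aI-a^{-1}\tilde u).
\]
Expanding both sides and dividing by $1-q^{2}\neq 0$ turns this into exactly $aI-\alpha\tilde u-\beta u+a^{-1}u\tilde u=0$, which you already derived from (\ref{eq:kb}); no inversion of $a^{-1}u-\alpha I$ and no scalar-function computation is needed, and every step is reversible, so this also exhibits the lemma as equivalent to (\ref{eq:kb}) given the quoted invertibility facts. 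Alternatively, the missing invertibility can be proved directly: if $w^{T}(a^{-1}u-\alpha I)=0$ for a nonzero row vector $w^{T}$, then applying $w^{T}$ to your relation gives $w^{T}(\beta u-aI)=0$; if $\beta=0$ this forces $w^{T}=0$, a contradiction, and otherwise $w^{T}u=a\alpha\,w^{T}$ and $w^{T}u=(a/\beta)\,w^{T}$ force $\alpha\beta=1$, i.e.\ $(a-a^{-1})^{2}=0$, contradicting $a^{2}\neq 1$ (which holds by part (ii) of the first lemma in Section 2, since $d\geq 1$ puts $q^{0}=1$ in the forbidden list). Either patch completes your proof.
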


\begin{definition}\label{def:psi} \rm (See \cite[Definition~4.3]{bocktingQexp}.)
 Define $\psi \in {\rm End}(V)$ to be the common value
of the four expressions in Lemma
\ref{lem:psi4}.
\end{definition}

\begin{lemma} {\rm (See \cite[Lemma~5.4]{bockting}.)} Both
\begin{align*}
K \psi = q^2 \psi K, \qquad \qquad B \psi = q^2 \psi B.
\end{align*}
\end{lemma}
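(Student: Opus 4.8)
The plan is to verify the two commutation relations $K\psi = q^2\psi K$ and $B\psi = q^2\psi B$ by exploiting the four equivalent expressions for $\psi$ furnished by Lemma~\ref{lem:psi4}. The key idea is that I am free to choose, for each relation, whichever of the four formulas makes the algebra cleanest. To prove $K\psi = q^2\psi K$, I would select the expression
\begin{align*}
\psi = \frac{I - BK^{-1}}{q(aI - a^{-1}BK^{-1})},
\end{align*}
since this one is built entirely from the single combination $BK^{-1}$. Writing $S = BK^{-1}$, both the numerator $I - S$ and the denominator $q(aI - a^{-1}S)$ are polynomials in $S$, so $\psi$ is a rational expression in $S$ alone. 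Hence the whole problem reduces to understanding how $K$ interacts with $S = BK^{-1}$.

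The first step is therefore to determine the conjugation rule $K S K^{-1}$, equivalently $K B K^{-1}$. This is where the structural input enters: I would extract the needed $q$-commutation between $K$ and $B$ from the relation (\ref{eq:kb}) together with the defining eigenvalue data for $K$ and $B$ on the two split decompositions. Concretely, both $K$ and $B$ have the same spectrum $\{q^{d-2i}\}$, and I expect the relation (\ref{eq:kb}) to translate into a clean statement of the form $KB = q^{\pm 2} \cdots$ after isolating the cross terms; more directly, one anticipates $K(BK^{-1})K^{-1} = q^{-2}BK^{-1}$, i.e. $KSK^{-1} = q^{-2}S$, so that $K S = q^{-2} S K$. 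Granting such a rule, conjugation by $K$ sends $S \mapsto q^{-2}S$, and since $\psi = f(S)$ for the rational function $f(s) = (1-s)/(q(a - a^{-1}s))$, I get $K\psi K^{-1} = f(q^{-2}S)$. The main computational task is then the identity $f(q^{-2}s) = q^2 f(s)$, which is the scalar statement
\begin{align*}
\frac{1 - q^{-2}s}{q(a - a^{-1}q^{-2}s)} = q^2\,\frac{1-s}{q(a-a^{-1}s)},
\end{align*}
a routine cross-multiplication that should hold identically in $s$. This yields $K\psi K^{-1} = q^2\psi$, which is exactly $K\psi = q^2\psi K$.

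For the companion relation $B\psi = q^2\psi B$, I would switch to the expression
\begin{align*}
\psi = \frac{I - KB^{-1}}{q(a^{-1}I - aKB^{-1})},
\end{align*}
which is a rational function of the single element $T = KB^{-1}$. By the symmetry between $K$ and $B$ in (\ref{eq:kb}) (note the $a \leftrightarrow a^{-1}$ interchange visible in the two parts of (\ref{eq:kA})), I expect the analogous conjugation rule $BTB^{-1} = q^{-2}T$, and then an identical scalar check completes the argument. The main obstacle is pinning down the precise conjugation rule $KBK^{-1} = q^{-2}B$ (and its $B$-analogue) cleanly from the available relations; the cleanest route is probably to read it off directly from how $K$ and $B$ act relative to the raising/lowering structure of the two split decompositions, rather than wrestling algebraically with (\ref{eq:kb}). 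Once that rule is in hand, everything else is the verification of a single rational identity in one variable, done twice.
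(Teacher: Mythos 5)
Your plan has two independent fatal flaws, either of which kills it. First, the conjugation rule it rests on, $KBK^{-1}=q^{-2}B$ (equivalently $KSK^{-1}=q^{-2}S$ for $S=BK^{-1}$), is false: $K$ and $B$ do not $q$-commute, and the relation (\ref{eq:kb}) is genuinely quadratic, not a disguised $q$-commutation. Concretely, taking determinants in $KB=q^{-2}BK$ forces $q^{2\dim V}=1$, so the rule fails whenever $q$ is not a root of unity; and a $d=1$ example ($K=\mathrm{diag}(q,q^{-1})$, $B$ upper triangular with the same diagonal) shows the diagonal entries of $KB$ and $q^{-2}BK$ already disagree unless $q^2=1$, which is excluded. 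Second, even granting your rule, the ``routine cross-multiplication'' $f(q^{-2}s)=q^2f(s)$ with $f(s)=\frac{1-s}{q(a-a^{-1}s)}$ is false: at $s=0$ it reads $\frac{1}{qa}=\frac{q^2}{qa}$, i.e.\ $q^2=1$. The conceptual error is that $S=BK^{-1}$ is not the object that scales under conjugation; writing $S=I+u$, it is the ``lowering part'' $u$ that could scale, and since the denominator of $f$ also involves $u$, no rational function of $S$ conjugates to a scalar multiple of itself in the way you need. The only operator here with a clean scaling property is $\psi$ itself --- which is exactly the content of the lemma, so it cannot be fed in as an ingredient.

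The repair is the route you wave at in your final sentence but never execute, and it is essentially the proof in Bockting-Conrad \cite{bockting} (note the paper itself offers no proof; it only cites that reference). By Lemma \ref{lem:dd}, $\psi U_i\subseteq U_{i-1}$ and $\psi U^\Downarrow_i\subseteq U^\Downarrow_{i-1}$, and by Definition \ref{def:K}, $K$ acts on $U_i$ as $q^{d-2i}$ and $B$ acts on $U^\Downarrow_i$ as $q^{d-2i}$. Hence for $v\in U_i$,
\begin{align*}
K\psi v = q^{d-2(i-1)}\psi v = q^2\,q^{d-2i}\psi v = q^2 \psi K v,
\end{align*}
and summing over the decomposition $V=U_0+\cdots+U_d$ gives $K\psi=q^2\psi K$; replacing $U_i$ by $U^\Downarrow_i$ and $K$ by $B$ gives $B\psi=q^2\psi B$. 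No relation between $K$ and $B$, and no formula from Lemma \ref{lem:psi4}, is needed at all.
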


\begin{lemma} \label{lem:psiDown}
 {\rm (See \cite[Corollary~15.2]{bockting1}.)}  We have $\psi^\Downarrow = \psi$.
\end{lemma}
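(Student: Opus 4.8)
The plan is to read off $\psi^\Downarrow$ directly from the defining expressions in Lemma~\ref{lem:psi4} by tracking how each ingredient transforms under the passage $\Phi \mapsto \Phi^\Downarrow$. Since $\psi$ is defined (Definition~\ref{def:psi}) as the common value of four rational expressions in $K$, $B$, $a$, $q$, it suffices to apply $\Downarrow$ to one of these expressions and recognize the outcome as another of the four, which then equals $\psi$ again by Lemma~\ref{lem:psi4}.

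First I would record the transformation rules. The operation $\Downarrow$ reverses the ordering of the eigenspaces of $A$ and is an involution on TD systems. By Definition~\ref{def:K} we have $B = K^\Downarrow$, so applying $\Downarrow$ once more gives $K^\Downarrow = B$ and $B^\Downarrow = K$. For the scalar $a$, the eigenvalue sequence of $\Phi^\Downarrow$ is $\theta_i^\Downarrow = \theta_{d-i}$, and rewriting $\theta_{d-i} = a^{-1}q^{d-2i} + a q^{2i-d}$ via (\ref{eq:eigform}) shows that $\Phi^\Downarrow$ has $q$-Racah type with the same $q$ and with $a$ replaced by $a^{-1}$; that is, $a^\Downarrow = a^{-1}$ and $q^\Downarrow = q$. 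This convention is consistent with the computation $t_i^\Downarrow = (-1)^i a^{-i} q^{i(d-i)}$ underlying the proof of Lemma~\ref{lem:WDown}(i).

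With these rules in hand I would apply $\Downarrow$ to the first expression for $\psi$ in Lemma~\ref{lem:psi4}. Substituting $K^\Downarrow = B$, $B^\Downarrow = K$, $a^\Downarrow = a^{-1}$, and $q^\Downarrow = q$ turns
\begin{align*}
\psi = \frac{I - BK^{-1}}{q(aI - a^{-1}BK^{-1})}
\qquad \text{into} \qquad
\psi^\Downarrow = \frac{I - KB^{-1}}{q(a^{-1}I - aKB^{-1})},
\end{align*}
which is precisely the second of the four expressions in Lemma~\ref{lem:psi4}. Since all four expressions coincide with $\psi$, we conclude $\psi^\Downarrow = \psi$. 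As a consistency check, the same substitution carries the third listed expression to the fourth, so $\Downarrow$ merely permutes the four forms in pairs.

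The only genuinely delicate point is fixing the action of $\Downarrow$ on the parameters, namely resolving the ambiguity between $a \mapsto a^{-1}$ (with $q$ fixed) and $a \mapsto a$ (with $q \mapsto q^{-1}$), both of which reproduce the reversed eigenvalue sequence and are therefore equally admissible $q$-Racah parametrizations. I would pin this down using the paper's standing convention --- equivalently, by matching against the established value of $t_i^\Downarrow$ from Lemma~\ref{lem:WDown} --- after which the argument is immediate.
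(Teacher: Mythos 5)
Your proof is correct, but it follows a genuinely different route from the paper: the paper offers no argument at all for this lemma, simply citing \cite[Corollary~15.2]{bockting1}, where the invariance $\psi^\Downarrow=\psi$ is established for the original construction of $\psi$ as a lowering map on the split decompositions. Your argument instead works entirely inside the present paper's formulation. Since Definition~\ref{def:psi} takes $\psi$ to be the common value of the four expressions in Lemma~\ref{lem:psi4}, and since that lemma applies equally to $\Phi^\Downarrow$ (which is again of $q$-Racah type, with parameters $a^{-1},b,q$), the question reduces to bookkeeping: $K^\Downarrow=B$ by Definition~\ref{def:K}, $B^\Downarrow=K$ because $\Downarrow$ is an involution on TD systems, and $a^\Downarrow=a^{-1}$, $q^\Downarrow=q$; applying $\Downarrow$ to the first expression then yields the second, whence $\psi^\Downarrow=\psi$. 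Your treatment of the one delicate point --- the reparametrization ambiguity $(a,q)\mapsto(a^{-1},q)$ versus $(a,q^{-1})$ --- is also sound, and in fact the convention is forced even before appealing to Lemma~\ref{lem:WDown}: the statement $B=K^\Downarrow$ in Definition~\ref{def:K} already presumes $q^\Downarrow=q$ (with $q^\Downarrow=q^{-1}$ one would instead get $K^\Downarrow=B^{-1}$, since the eigenvalue of the corresponding object on $U^\Downarrow_i$ would be $q^{2i-d}$), and then $\theta_i^\Downarrow=\theta_{d-i}$ forces $a^\Downarrow=a^{-1}$. What your route buys is a proof that is self-contained modulo Lemma~\ref{lem:psi4} (itself quoted from \cite{bockting}), rather than a pointer to a reference in which $\psi$ is defined by different means; what the paper's citation buys is independence from the four-expression characterization.
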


\begin{lemma}\label{lem:dd} 
{\rm (See \cite[Lemma~11.2, Corollary~15.3]{bockting1}.)}  
We have
\begin{align*} 
\psi U_i \subseteq U_{i-1}, \qquad \qquad 
\psi U^\Downarrow_i \subseteq U^\Downarrow_{i-1} \qquad \qquad (0 \leq i \leq d),
\end{align*}
where $U_{-1}=0$ and $U^\Downarrow_{-1} = 0$.
\end{lemma}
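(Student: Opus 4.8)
The plan is to read off the action of $\psi$ from the $q$-commutation relations $K\psi=q^2\psi K$ and $B\psi=q^2\psi B$, after identifying the split decompositions with the eigenspace decompositions of $K$ and $B$, and then to dispose of the top component by a direct computation with the explicit formula for $\psi$.

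First I would record that $\{U_i\}_{i=0}^d$ is exactly the eigenspace decomposition of $K$: by Definition~\ref{def:K} the subspace $U_i$ is the $q^{d-2i}$-eigenspace of $K$, and the scalars $q^{d-2i}$ $(0\le i\le d)$ are mutually distinct since $q^{2m}\ne 1$ for $1\le m\le d$. Likewise $\{U_i^\Downarrow\}_{i=0}^d$ is the eigenspace decomposition of $B$. Now fix $1\le i\le d$ and $v\in U_i$, so that $Kv=q^{d-2i}v$. The relation $K\psi=q^2\psi K$ gives $K\psi v=q^2\psi Kv=q^{d-2(i-1)}\psi v$, which forces $\psi v\in U_{i-1}$ (the full $q^{d-2(i-1)}$-eigenspace of $K$, noting $0\le i-1\le d-1$). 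Hence $\psi U_i\subseteq U_{i-1}$ for $1\le i\le d$.

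The remaining case is $i=0$, where I must show $\psi U_0=0$ (since $U_{-1}=0$). This is the one spot where the $q$-commutation relation does not settle the matter on its own: it would only place $\psi U_0$ in the $q^{d+2}$-eigenspace of $K$, and since we cannot exclude $q^{2(d+1)}=1$ a priori, a direct argument is needed. The key point is that $U_0=E^*_0V=U_0^\Downarrow$, so $K$ and $B$ both act as $q^dI$ on $U_0$; consequently $BK^{-1}$ acts as the identity on $U_0$, i.e. $(I-BK^{-1})U_0=0$. Using the first expression for $\psi$ in Lemma~\ref{lem:psi4} and noting that $I-BK^{-1}$ and $aI-a^{-1}BK^{-1}$ commute (both being polynomials in $BK^{-1}$), I factor the invertible denominator to the left to get $\psi=q^{-1}(aI-a^{-1}BK^{-1})^{-1}(I-BK^{-1})$, whence $\psi U_0=0$.

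Finally, the inclusions $\psi U_i^\Downarrow\subseteq U_{i-1}^\Downarrow$ follow by the identical argument with $(K,U_i)$ replaced by $(B,U_i^\Downarrow)$ and $K\psi=q^2\psi K$ replaced by $B\psi=q^2\psi B$; the case $i=0$ is the same computation because $U_0^\Downarrow=U_0$. (Alternatively, one applies the first family of inclusions to the TD system $\Phi^\Downarrow$ and invokes $\psi^\Downarrow=\psi$ from Lemma~\ref{lem:psiDown} together with $(U_i)^\Downarrow=U_i^\Downarrow$.) I expect the main obstacle to be precisely the boundary case $i=0$: the commutation relation handles the interior cleanly but is silent at the top, and rigor there rests on the collapse $BK^{-1}|_{U_0}=I$ coming from $U_0=U_0^\Downarrow$, which makes $\psi$ annihilate its numerator $I-BK^{-1}$.
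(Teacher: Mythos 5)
Your proof is correct, and it is worth noting that the paper itself gives no argument for this lemma at all: it simply cites Bockting-Conrad \cite[Lemma~11.2, Corollary~15.3]{bockting1}, where $\psi$ is \emph{constructed} so as to lower the split decompositions, and the formulas of Lemma \ref{lem:psi4} are derived afterwards. Within the present paper the logical order is reversed --- $\psi$ is \emph{defined} by those formulas (Definition \ref{def:psi}) --- so the lowering property genuinely requires an argument, and you supply one using only ingredients the paper has already put on record: Definition \ref{def:K} (which identifies $\lbrace U_i\rbrace_{i=0}^d$ and $\lbrace U^\Downarrow_i\rbrace_{i=0}^d$ as the eigenspace decompositions of $K$ and $B$, the eigenvalues $q^{d-2i}$ being distinct since $q^{2m}\neq 1$ for $1\leq m\leq d$), the quoted relations $K\psi=q^2\psi K$, $B\psi=q^2\psi B$, the invertibility of $aI-a^{-1}BK^{-1}$, and the first expression in Lemma \ref{lem:psi4}. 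Your handling of the boundary case $i=0$ is the essential point and is done correctly: the commutation relation alone would only place $\psi U_0$ in a putative $q^{d+2}$-eigenspace, and since the standing hypotheses do not exclude $q^{2(d+1)}=1$ (in which case $q^{d+2}=q^{-d}$ and one would only get $\psi U_0\subseteq U_d$), the direct computation via $U_0=E^*_0V=U^\Downarrow_0$, which forces $BK^{-1}=I$ on $U_0$ and hence kills the numerator $I-BK^{-1}$, is exactly what is needed; the commutativity of the numerator with the (invertible) denominator, both being polynomials in $BK^{-1}$, justifies factoring the inverse to the left. Your two alternatives for the $\Downarrow$ family (rerunning the argument with $B$, or applying the result to $\Phi^\Downarrow$ and invoking $\psi^\Downarrow=\psi$ from Lemma \ref{lem:psiDown}) are both valid. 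In short: where the paper defers to the literature, you give a self-contained derivation consistent with the paper's own definitions, which is a genuine gain in completeness.
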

\noindent Motivated by Lemma \ref{lem:dd}, the map $\psi$ is often called the {\it double lowering map} for $\Phi$.

\begin{lemma}
\label{lem:psi3} {\rm (See \cite[Corollary~15.4]{bockting1}.)}
The element $\psi$ acts on the eigenspaces of $A$ in a tridiagonal fashion.
\end{lemma}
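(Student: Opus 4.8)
The plan is to show that $\psi$ acts on the eigenspaces of $A$ in a tridiagonal fashion by verifying condition (ii) of Lemma~\ref{lem:four1}, namely that $\psi E_iV \subseteq E_{i-1}V + E_iV + E_{i+1}V$ for $0 \leq i \leq d$. The natural bridge is the first split decomposition $\lbrace U_i\rbrace_{i=0}^d$: by Lemma~\ref{lem:dd} we already know $\psi U_i \subseteq U_{i-1}$, so the content of the lemma is really a statement comparing the $A$-eigenspace filtration with the split filtration. First I would recall from (\ref{eq:s2}) that the partial sums $U_i + U_{i+1} + \cdots + U_d$ coincide with the $A$-eigenspace partial sums $E_iV + E_{i+1}V + \cdots + E_dV$, and that $(A - \theta_i I)U_i \subseteq U_{i+1}$ by (\ref{eq:AUW}).

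The key step is to describe $E_iV$ in terms of the $U_j$. Since $K$ is diagonalizable with $U_i$ its eigenspace for $q^{d-2i}$ (Definition~\ref{def:K}), and since $\psi$ lowers the split decomposition by one step, a clean route is to work inside the subalgebra picture: I would express the primitive idempotent $E_i$ through the raising action of $A$ on the split decomposition. Concretely, from (\ref{eq:AUW}) the map $A$ shifts $U_i$ into $U_i + U_{i+1}$ (the $\theta_i$-eigenvalue part staying in $U_i$, the excess landing in $U_{i+1}$), so $A$ acts on the split decomposition in an \emph{upper bidiagonal} fashion relative to the ordering $U_0, U_1, \ldots, U_d$. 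Dually, one checks how $E_iV$ sits relative to the $U_j$; the upshot I expect is that a vector in $E_iV$ is a combination of components in $U_0, \ldots, U_i$ with a specific leading behavior, and conversely.

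The cleanest formulation avoids tracking all the idempotents individually: I would instead verify directly that $[\psi, A]$ and the relevant double commutators keep $E_iV$ inside the tridiagonal band. Since $\psi U_i \subseteq U_{i-1}$ and $A$ raises the split decomposition by at most one, the composite maps $\psi A$ and $A\psi$ each move $U_i$ into $U_{i-1} + U_i$, and iterating shows $\psi$ maps $E_iV$ into a span controlled by nearby split components, which by (\ref{eq:s2}) translate back into $E_{i-1}V + E_iV + E_{i+1}V$. The main obstacle will be making the translation between the split filtration and the $A$-eigenspace filtration fully rigorous without a tedious direct computation: the inclusion $\psi U_i \subseteq U_{i-1}$ controls $\psi$ on the ascending split flag, but one must carefully convert this into a banded action on the $E_iV$, where the matching of indices and the precise width of the band (exactly $\pm 1$) require the interplay of (\ref{eq:s1}) and (\ref{eq:s2}). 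I expect this conversion, rather than any hard identity, to be the crux, and the proof in the paper likely cites a prior result where this split-to-eigenspace comparison was already carried out.
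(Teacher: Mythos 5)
Your proposal has a genuine gap, and it sits exactly where you predicted the "crux" would be. Everything you actually invoke — $\psi U_i \subseteq U_{i-1}$ from Lemma~\ref{lem:dd} and the identity (\ref{eq:s2}) — controls only one side of the band. Indeed, since $E_iV + \cdots + E_dV = U_i + \cdots + U_d$, the inclusion $\psi U_j \subseteq U_{j-1}$ gives
\begin{align*}
\psi\,(E_iV + \cdots + E_dV) \subseteq E_{i-1}V + \cdots + E_dV,
\end{align*}
i.e.\ $E_j \psi E_i = 0$ for $j \leq i-2$; but nothing in the first split decomposition prevents $E_j \psi E_i \neq 0$ for $j \geq i+2$. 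The reason is structural: the \emph{ascending} partial sums $U_0 + \cdots + U_i$ match the $E^*$-filtration via (\ref{eq:s1}), not the $E$-filtration, so the first split decomposition simply carries no information about how far $\psi$ can push $E_iV$ upward among the eigenspaces of $A$. Your proposed workaround — iterating the observations that $\psi A$ and $A\psi$ move $U_i$ into $U_{i-1}+U_i$ — stays entirely inside that same decomposition and therefore cannot close this hole; it never produces the bound $E_j\psi E_i=0$ for $j\geq i+2$.

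The missing ingredient is the \emph{second} split decomposition, i.e.\ the other half of Lemma~\ref{lem:dd}, $\psi U^\Downarrow_i \subseteq U^\Downarrow_{i-1}$, combined with (\ref{eq:s4}): $E_0V + \cdots + E_{d-i}V = U^\Downarrow_i + \cdots + U^\Downarrow_d$. This yields the complementary one-sided control
\begin{align*}
\psi\,(E_0V + \cdots + E_jV) \subseteq E_0V + \cdots + E_{j+1}V \qquad (0 \leq j \leq d),
\end{align*}
i.e.\ $E_j\psi E_i = 0$ for $j \geq i+2$. Intersecting the two controls (the sum $V = \sum_j E_jV$ is direct) gives
\begin{align*}
\psi E_iV \subseteq (E_{i-1}V + \cdots + E_dV) \cap (E_0V + \cdots + E_{i+1}V) = E_{i-1}V + E_iV + E_{i+1}V,
\end{align*}
which is condition (ii) of Lemma~\ref{lem:four1}. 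This two-decomposition squeeze is the argument behind \cite[Corollary~15.4]{bockting1}; note that the paper itself gives no proof of this lemma and simply cites that result, so your closing guess about the paper's treatment was correct, even though your own sketch does not yet constitute a proof.
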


\begin{lemma} \label{lem:M4} 
{\rm (See \cite[Lemma~6.4]{bocktingQexp}.)}
The element $M^{-1}$ is equal to each of the following:
\begin{align*}
&K^{-1} (I - a^{-1} q \psi), \qquad \qquad (I-a^{-1} q^{-1} \psi ) K^{-1},
\\
&B^{-1} (I - a q \psi), \qquad \qquad \quad (I-a q^{-1} \psi ) B^{-1}.
\end{align*}
\end{lemma}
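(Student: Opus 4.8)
The plan is to show that each of the four displayed expressions equals $M^{-1}$ by verifying that $M$ times the expression yields $I$, and the key ingredient is the \emph{implicit} (operator) form of the definition of $\psi$ supplied by Lemma \ref{lem:psi4}. I would begin with the first expression $K^{-1}(I - a^{-1}q\psi)$. From the definition (\ref{eq:MNdef}) we have $(a-a^{-1})M = aK - a^{-1}B$, so right-multiplying by $K^{-1}$ gives $(a-a^{-1})MK^{-1} = aI - a^{-1}BK^{-1}$. Meanwhile, the first expression in Lemma \ref{lem:psi4} says exactly that $q(aI - a^{-1}BK^{-1})\psi = I - BK^{-1}$; note that the numerator $I-BK^{-1}$ and the denominator $aI-a^{-1}BK^{-1}$ commute, being polynomials in $BK^{-1}$, so the quotient is unambiguous. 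Combining these,
\begin{align*}
(a-a^{-1})MK^{-1}(I - a^{-1}q\psi) &= (aI - a^{-1}BK^{-1})(I - a^{-1}q\psi)\\
&= (aI - a^{-1}BK^{-1}) - a^{-1}(I - BK^{-1})\\
&= (a-a^{-1})I,
\end{align*}
which gives $M^{-1} = K^{-1}(I - a^{-1}q\psi)$.

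To pass to the second expression I would use the relation $K\psi = q^2\psi K$, which rearranges to $K^{-1}\psi = q^{-2}\psi K^{-1}$. Substituting this into $K^{-1}(I - a^{-1}q\psi)$ converts the coefficient $q$ into $q^{-1}$ while moving $K^{-1}$ to the right, producing $(I - a^{-1}q^{-1}\psi)K^{-1}$. Hence the first two expressions coincide with $M^{-1}$.

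The two $B$-expressions are handled identically, with $K$ replaced by $B$ and the roles of $a$, $a^{-1}$ interchanged as dictated by the form of $M$. Here I would right-multiply $(a-a^{-1})M = aK - a^{-1}B$ by $B^{-1}$ to obtain $(a-a^{-1})MB^{-1} = aKB^{-1} - a^{-1}I$, and invoke the second expression in Lemma \ref{lem:psi4}, namely $q(a^{-1}I - aKB^{-1})\psi = I - KB^{-1}$, to conclude by the same short cancellation that $M^{-1} = B^{-1}(I - aq\psi)$. The relation $B\psi = q^2\psi B$, giving $B^{-1}\psi = q^{-2}\psi B^{-1}$, then converts this into the fourth expression $(I - aq^{-1}\psi)B^{-1}$.

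The only real subtlety, and the main thing to get right, is the bookkeeping of noncommutativity: $\psi$ commutes with neither $K$ nor $B$, so one must use the identities of Lemma \ref{lem:psi4} in their precise operator form $q(aI - a^{-1}BK^{-1})\psi = I - BK^{-1}$ (resp.\ $q(a^{-1}I - aKB^{-1})\psi = I - KB^{-1}$) rather than naively cancelling a ``fraction,'' and one must use the $q^2$-commutation relations to relocate $K^{-1}$ and $B^{-1}$ across $\psi$. Because the relevant numerators and denominators are polynomials in $BK^{-1}$ (resp.\ $KB^{-1}$) and therefore commute, no ambiguity arises in interpreting the quotients defining $\psi$, and each of the four verifications closes in a single line.
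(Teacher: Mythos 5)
Your proof is correct: every step checks out. The verification $(a-a^{-1})MK^{-1}(I-a^{-1}q\psi)=(aI-a^{-1}BK^{-1})-a^{-1}(I-BK^{-1})=(a-a^{-1})I$ is valid, the analogous computation with $B$ works because $aKB^{-1}-a^{-1}I=-(a^{-1}I-aKB^{-1})$ produces exactly the sign needed to invoke $q(a^{-1}I-aKB^{-1})\psi=I-KB^{-1}$, and the passage between the left and right forms via $K^{-1}\psi=q^{-2}\psi K^{-1}$ and $B^{-1}\psi=q^{-2}\psi B^{-1}$ is a one-line consequence of the relations $K\psi=q^2\psi K$, $B\psi=q^2\psi B$ recorded in Section 10. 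You are also right to flag the interpretation of the fractions in Lemma \ref{lem:psi4}: since numerator and denominator are polynomials in $BK^{-1}$ (resp.\ $KB^{-1}$), they commute and the operator identity $q(aI-a^{-1}BK^{-1})\psi=I-BK^{-1}$ is unambiguous. The one point worth comparing: the paper itself gives no proof of this lemma, merely citing Bockting-Conrad \cite[Lemma~6.4]{bocktingQexp}, so your argument is a self-contained replacement for that citation, built entirely from ingredients already present in the paper (the definition (\ref{eq:MNdef}) of $M$, Lemma \ref{lem:psi4}, the $q^2$-commutation relations, and finite-dimensionality, which makes the right inverse $MX=I$ a two-sided inverse — alternatively, the paper has already noted that $M$ is invertible). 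Note also that the paper derives the companion Lemma \ref{lem:N4} from this one via the identity (\ref{eq:KNB}); your direct treatment of the two $B$-expressions plays the analogous role here and is equally economical.
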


\begin{lemma} \label{lem:N4} The element $N^{-1}$ is equal to each of the following:
\begin{align*}
&K (I - a q^{-1} \psi), \qquad \qquad \quad (I-a q \psi ) K,
\\
&B (I - a^{-1} q^{-1} \psi), \qquad \qquad  (I-a^{-1} q \psi ) B.
\end{align*}
\end{lemma}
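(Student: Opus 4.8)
The plan is to obtain the four expressions for $N^{-1}$ from the already-established expressions for $M^{-1}$ in Lemma \ref{lem:M4}, using the factorization $KNB=M=BNK$ from (\ref{eq:KNB}). First I would invert these two factorizations. Since $K,N,B$ are all invertible, the relation $M=KNB$ gives $M^{-1}=B^{-1}N^{-1}K^{-1}$, hence $N^{-1}=BM^{-1}K$; likewise $M=BNK$ gives $N^{-1}=KM^{-1}B$. This reduces the lemma to substituting the forms of $M^{-1}$ from Lemma \ref{lem:M4} into these two formulas, with the scalar $q$-commutation relations $K\psi=q^2\psi K$ and $B\psi=q^2\psi B$ handling the final rearrangements.

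Concretely, for $N^{-1}=BM^{-1}K$ I would use the form $M^{-1}=B^{-1}(I-aq\psi)$; the outer $B$ cancels the leading $B^{-1}$, yielding directly $N^{-1}=(I-aq\psi)K$. For $N^{-1}=KM^{-1}B$ I would use $M^{-1}=K^{-1}(I-a^{-1}q\psi)$; the outer $K$ cancels the leading $K^{-1}$, giving $N^{-1}=(I-a^{-1}q\psi)B$. These are two of the four claimed expressions, and each cancellation is immediate. The remaining two, $K(I-aq^{-1}\psi)$ and $B(I-a^{-1}q^{-1}\psi)$, I would produce by moving $K$ (resp.\ $B$) past $\psi$: from $\psi K=q^{-2}K\psi$ one gets $(I-aq\psi)K=K-aq^{-1}K\psi=K(I-aq^{-1}\psi)$, and similarly $(I-a^{-1}q\psi)B=B(I-a^{-1}q^{-1}\psi)$ using $B\psi=q^2\psi B$. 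Alternatively, each of the four expressions can be produced in one stroke by feeding the appropriate one of the four forms of $M^{-1}$ into $N^{-1}=BM^{-1}K$ or $N^{-1}=KM^{-1}B$ and conjugating $\psi$ by $B^{\pm1}$ or $K^{\pm1}$ via $B\psi B^{-1}=q^2\psi$ and $K\psi K^{-1}=q^2\psi$.

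I expect no serious obstacle here: the factorization (\ref{eq:KNB}) does the real work, and what remains is careful bookkeeping of the scalars $a^{\pm1}$ and $q^{\pm1}$ as $K$ and $B$ are commuted past $\psi$. The one point that warrants a moment's care is that $K$ and $B$ do \emph{not} commute with each other, so the cancellations must be arranged so that each inverse meets its partner immediately (as in the $BB^{-1}$ and $KK^{-1}$ cancellations above), rather than relying on any commutation of $K$ with $B$. Should one prefer a proof independent of (\ref{eq:KNB}), a parallel route is available: argue directly from the four coincident formulas for $\psi$ in Lemma \ref{lem:psi4} together with the definition (\ref{eq:MNdef}) of $N$, exactly as in the cited proof of Lemma \ref{lem:M4}; but the inversion of (\ref{eq:KNB}) is shorter and entirely self-contained.
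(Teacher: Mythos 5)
Your proposal is correct and is essentially the paper's own proof: the paper also derives Lemma \ref{lem:N4} by inverting the factorizations $KNB=M=BNK$ from (\ref{eq:KNB}) and substituting the expressions for $M^{-1}$ from Lemma \ref{lem:M4}, with the relations $K\psi=q^2\psi K$, $B\psi=q^2\psi B$ supplying the remaining rearrangements. Your cancellations and scalar bookkeeping all check out, so there is nothing to add.
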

\begin{proof} Use (\ref{eq:KNB}) and Lemma \ref{lem:M4}.
\end{proof}

\noindent The equation (\ref{eq:lam1}) below appears in \cite[Lemma~6.8]{bocktingQexp}; we will give a short proof for the
sake of completness.
\begin{proposition} 
\label{lem:lam12}
We have
\begin{align}
\label{eq:lam1}
\psi + \frac{q  A M^{-1} - q^{-1} M^{-1} A}{q^2-q^{-2}} &= \frac{a+a^{-1}}{q+q^{-1}} I,
\\
\psi + \frac{q N^{-1} A - q^{-1} AN^{-1} }{q^2-q^{-2}}  &= \frac{a+a^{-1}}{q+q^{-1}} I.
\label{eq:lam2}
\end{align}
\end{proposition}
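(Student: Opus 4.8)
The plan is to prove the two equations \eqref{eq:lam1}, \eqref{eq:lam2} by combining the structural facts already established for $M^{-1}$, $N^{-1}$, and $\psi$. The most efficient route exploits Proposition \ref{lem:four} together with the $X\mapsto X^\vee$ machinery of Section 4, but I would first look for an even more direct path using the explicit factorizations in Lemmas \ref{lem:M4}, \ref{lem:N4}. My overall strategy is to recognize each left-hand side as an element that commutes with $A$, and then show that this commuting element equals the scalar $\tfrac{a+a^{-1}}{q+q^{-1}}I$ by evaluating it suitably.

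First I would establish that the left-hand side of \eqref{eq:lam1} commutes with $A$. By Lemma \ref{lem:MN} the element $M^{-1}$ acts on the eigenspaces of $A$ in a tridiagonal fashion, and by Lemma \ref{lem:psi3} the same is true of $\psi$. Applying Proposition \ref{lem:xyWf} with a suitable pairing, or more directly computing $E_i\,\Delta\,E_j$ for the relevant $\Delta$, I expect the coefficients $c_{ij}$ to vanish for $|i-j|=1$ by the quadratic identity in Lemma \ref{lem:2factors}, exactly as in the proof of Proposition \ref{lem:four}. This forces $\Delta=\Delta^\vee$ by Lemma \ref{lem:Axv}, so $\Delta$ commutes with $A$ and hence is determined by its diagonal blocks $E_i\,\Delta\,E_i$.

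Next I would pin down the scalar by computing the action of $\Delta$ on a single eigenspace, or by applying $\Delta$ to a convenient vector such as a vector in $U_0$ (equivalently $E^*_0V$). Here I would substitute the factorization $M^{-1}=K^{-1}(I-a^{-1}q\psi)=(I-a^{-1}q^{-1}\psi)K^{-1}$ from Lemma \ref{lem:M4} into $qAM^{-1}-q^{-1}M^{-1}A$, use $K\psi=q^2\psi K$ to push $K^{\pm1}$ past $\psi$, and use the eigenvalue $\theta_0=aq^d+a^{-1}q^{-d}$ of $A$ together with the $K$-eigenvalue $q^d$ on $U_0$. On $U_0$ one has $\psi U_0\subseteq U_{-1}=0$ by Lemma \ref{lem:dd}, so $\psi$ acts as $0$ there and $M^{-1}=K^{-1}$ acts as $q^{-d}$; this should collapse the expression to the claimed constant $\tfrac{a+a^{-1}}{q+q^{-1}}$. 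Since the element is a scalar multiple of $I$ on the whole space (being $A$-diagonal with a single eigenvalue), the identity \eqref{eq:lam1} follows on all of $V$.

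Finally, for \eqref{eq:lam2} I would apply the $\Downarrow$ symmetry rather than repeat the computation: by Lemmas \ref{lem:psiDown} and \ref{lem:QDown} (and $N^\Downarrow=N$, $M^\Downarrow=M$ from Section 8) the hypotheses are $\Downarrow$-invariant, so \eqref{eq:lam2} is the image of \eqref{eq:lam1} under $\Phi\mapsto\Phi^\Downarrow$ once the roles of $M^{-1}$ and $N^{-1}$ are traced through; alternatively Lemma \ref{lem:N4} gives $N^{-1}=K(I-aq^{-1}\psi)$, and the identical evaluation on $U_0$ works. The main obstacle I anticipate is the bookkeeping in verifying that the off-diagonal blocks vanish: one must confirm that the particular $q$-commutator structure $qAM^{-1}-q^{-1}M^{-1}A$ produces precisely the factor governed by Lemma \ref{lem:2factors}, rather than some other combination, so that the cancellation for $|i-j|=1$ is exact. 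Getting the $q$-powers to line up in the coefficient $c_{ij}$ is the delicate step; once that is confirmed, the scalar evaluation on $U_0$ is routine.
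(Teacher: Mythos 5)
Your proposal has genuine gaps, the most serious being the commutativity step. You want to show that $\Delta = \psi + \frac{qAM^{-1}-q^{-1}M^{-1}A}{q^2-q^{-2}}$ commutes with $A$ ``exactly as in the proof of Proposition \ref{lem:four},'' but that argument does not transfer: in Proposition \ref{lem:four} both terms of the expression involve a single operator $X$, so each off-diagonal block factors as $E_i\Delta E_j = E_iXE_j\,c_{ij}$ with a scalar $c_{ij}$ that vanishes by Lemma \ref{lem:2factors}. Here the two terms involve two \emph{different} operators, and one gets
\begin{align*}
E_i\Delta E_j = E_i\psi E_j + \frac{q\theta_i - q^{-1}\theta_j}{q^2-q^{-2}}\,E_iM^{-1}E_j \qquad (i \neq j),
\end{align*}
a combination of blocks of unrelated operators with nothing to factor out. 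Its vanishing for $|i-j|=1$ amounts to a precise proportionality between the off-diagonal blocks of $\psi$ and those of $M^{-1}$, which is essentially the statement being proved; it is the content of Lemma \ref{cor:mp}, whose proof in the paper cites this very proposition, and you cannot get it from Proposition \ref{lem:xyWf} either, since that proposition only asserts an equivalence between two conditions, not that either one holds. The second gap: even granting that $\Delta$ commutes with $A$, this gives only block-diagonality with respect to $\{E_iV\}_{i=0}^d$, not scalarity, and evaluating on $U_0 = E^*_0V$ cannot finish the job. $U_0$ is not $A$-invariant, and the $A$-invariant subspace it generates is in general a proper subspace of $V$: if, say, $\dim E^*_0V = 1$, then $\sum_i A^iE^*_0V$ has dimension at most $d+1$, which is less than $\dim V$ whenever some split-decomposition component $U_i$ has dimension exceeding $1$. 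Your phrase ``being $A$-diagonal with a single eigenvalue'' assumes exactly what is to be shown. Finally, the $\Downarrow$-symmetry shortcut for (\ref{eq:lam2}) is empty: since $A$, $\psi$, $M$, $N$ are all $\Downarrow$-invariant, applying $\Downarrow$ to (\ref{eq:lam1}) returns (\ref{eq:lam1}) itself, not (\ref{eq:lam2}).

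The paper's proof is a three-line algebraic computation using exactly the ingredients you flagged as the ``more direct path'' but never deployed globally: by Lemma \ref{lem:4equit}(i), the pair $X = aA - a^2K$, $Y = M^{-1}$ satisfies the equitable relation $qXY - q^{-1}YX = (q-q^{-1})I$; eliminating the products $KM^{-1} = I - a^{-1}q\psi$ and $M^{-1}K = I - a^{-1}q^{-1}\psi$ via Lemma \ref{lem:M4} and simplifying yields (\ref{eq:lam1}). Equation (\ref{eq:lam2}) follows in the same way from the last two terms of the equitable triple in Lemma \ref{lem:4equit}(iii) together with Lemma \ref{lem:N4}. No spectral or block analysis is needed, and in particular no claim about commutation with $A$ has to be established in advance.
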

\begin{proof} We first obtain
(\ref{eq:lam1}). Abbreviate  $X=aA-a^2K$ and $Y=M^{-1}$. The elements $X, Y$ are the first two terms in
the equitable triple from Lemma
\ref{lem:4equit}(i). So $qXY-q^{-1} Y X = (q-q^{-1})I$. In this equation, eliminate the products
$KM^{-1}$, $M^{-1}K$ 
 using the equations
$KM^{-1} = 1 - a^{-1} q \psi$ and $M^{-1} K = 1 - a^{-1} q^{-1} \psi$ from
Lemma \ref{lem:M4}. This yields  (\ref{eq:lam1}). The equation
(\ref{eq:lam2}) is similarly obtained, using the last two terms in the equitable triple from
Lemma \ref{lem:4equit}(iii).
\end{proof}


\section{The Casimir element $\Lambda$}
We continue to discuss the TD system $\Phi=(A;\lbrace E_i\rbrace_{i=0}^d;A^*; \lbrace E^*_i\rbrace_{i=0}^d)$ on $V$ that has $q$-Racah type.
In this section we recall the Casimir element $\Lambda$, and discuss its basic properties.

\begin{lemma}
\label{lem:cas4} {\rm (See \cite[Lemmas~7.2,~8.2,~9.1]{bockting}.)}
The following coincide:
\begin{align*}
&   \psi (A - a K - a^{-1} K^{-1}) + q^{-1} K + q K^{-1},
\\
&    (A - a K - a^{-1} K^{-1})\psi + q K + q^{-1} K^{-1},
\\
&   \psi (A - a^{-1} B - a B^{-1}) + q^{-1} B + q B^{-1},
\\
&    (A - a^{-1} B - a B^{-1})\psi + q B + q^{-1} B^{-1}.
\end{align*}
\end{lemma}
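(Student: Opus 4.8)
The plan is to rewrite all four expressions in terms of $A$, $\psi$, $M^{-1}$, $N^{-1}$, where almost everything collapses. First I would use Lemma \ref{lem:N4} to solve for $\psi K$, $B\psi$ and their companions $K\psi$, $\psi B$, and Lemma \ref{lem:M4} to solve for $\psi K^{-1}$, $B^{-1}\psi$ and their companions $K^{-1}\psi$, $\psi B^{-1}$. Substituting these into the first expression, the terms $-a\psi K$ and $-a^{-1}\psi K^{-1}$ produce $N^{-1}$, $M^{-1}$ together with multiples of $K$, $K^{-1}$ that cancel against $q^{-1}K + qK^{-1}$, leaving exactly $\psi A + qM^{-1} + q^{-1}N^{-1}$. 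The identical manipulation applied to the third expression (using the $B$-versions of Lemmas \ref{lem:M4}, \ref{lem:N4}) yields the same operator, and in the same way the second and fourth expressions each collapse to $A\psi + q^{-1}M^{-1} + qN^{-1}$. This already shows that the first and third expressions agree and that the second and fourth agree; this step is purely mechanical substitution.

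It then remains to show the two surviving forms coincide, i.e. that
\[
\psi A + qM^{-1} + q^{-1}N^{-1} = A\psi + q^{-1}M^{-1} + qN^{-1},
\]
which is equivalent to the single commutator identity $A\psi - \psi A = (q-q^{-1})(M^{-1} - N^{-1})$. To establish this I would start from the second equation of Proposition \ref{lem:lam12}; after substituting the expressions for $N^{-1}$ from Lemma \ref{lem:N4} and simplifying with the relation on the left in (\ref{eq:kA}) together with the commutation rule $K\psi = q^2\psi K$, one obtains $A\psi K - K\psi A = (q-q^{-1})(I - K^2) - a^{-1}(q^2-q^{-2})\psi$. Right-multiplying this by $K^{-1}$ and eliminating $KAK^{-1}$ via (\ref{eq:kA}) isolates $A\psi - \psi A$ as an explicit combination of $\psi K$, $\psi K^{-1}$, $K$, $K^{-1}$; a final appeal to Lemmas \ref{lem:M4}, \ref{lem:N4} recognizes this combination as $(q-q^{-1})(M^{-1}-N^{-1})$, which closes the argument.

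The hard part will be this commutator identity: unlike the first step it cannot be reduced to term-by-term bookkeeping, and it forces $\psi$ to be passed across both $A$ and $K^{\pm 1}$, where the $q$-powers must be tracked with care. Everything else is routine. As a consistency check, and as a way to avoid re-deriving the $B$-side by hand, I note that the map $\Downarrow$ fixes $A$, $\psi$, $M$, $N$ and interchanges $K$ with $B$ while sending $a \mapsto a^{-1}$; hence it carries the first expression to the third and the second to the fourth, so the equality of the first two expressions is $\Downarrow$-invariant and automatically yields the equality of the last two.
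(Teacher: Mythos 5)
Your proof is correct, and I have checked the computations: with Lemmas \ref{lem:M4}, \ref{lem:N4} the first and third expressions do collapse to $\psi A + qM^{-1} + q^{-1}N^{-1}$ and the second and fourth to $A\psi + q^{-1}M^{-1} + qN^{-1}$; your intermediate identity $A\psi K - K\psi A = (q-q^{-1})(I-K^2) - a^{-1}(q^2-q^{-2})\psi$ does follow from (\ref{eq:lam2}) after substituting $N^{-1}=(I-aq\psi)K$ and $N^{-1}=K(I-aq^{-1}\psi)$, the left equation in (\ref{eq:kA}), and $K\psi=q^2\psi K$; and right-multiplying by $K^{-1}$, eliminating $AK^{-1}$ via (\ref{eq:kAa}), and collecting the $K^{-1}\psi$ and $\psi K$ terms yields exactly
\[
A\psi - \psi A = (q-q^{-1})\bigl(M^{-1}-N^{-1}\bigr),
\]
which is precisely the equivalence of the two surviving forms. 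The comparison with the paper is unusual: the paper offers no internal proof of this lemma, citing \cite[Lemmas~7.2,~8.2,~9.1]{bockting} instead, so your argument is necessarily a different route. What it buys is self-containedness: you derive the lemma entirely from material appearing before Section 11 (Lemmas \ref{lem:M4}, \ref{lem:N4}, Proposition \ref{lem:lam12}, the relations (\ref{eq:kA}), (\ref{eq:kAa}), and $K\psi=q^2\psi K$), and you correctly avoid everything downstream of Definition \ref{def:cas}, such as Lemma \ref{lem:Apsi} and Proposition \ref{lem:MLam}, whose use here would be circular since they presuppose the present lemma. Your closing $\Downarrow$-symmetry observation is also sound and consistent with the paper's own bookkeeping ($\psi^\Downarrow=\psi$ by Lemma \ref{lem:psiDown}, $K^\Downarrow=B$, $M^\Downarrow=M$, $N^\Downarrow=N$, and $a\mapsto a^{-1}$), and it mirrors how the paper deploys $\Downarrow$ elsewhere, e.g.\ in Theorem \ref{thm:two}. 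The only caveat, external to this paper, is that your derivation is non-circular only relative to the paper's presentation: the cited literature behind Lemmas \ref{lem:M4}, \ref{lem:N4} and Proposition \ref{lem:lam12} may itself rest on the result of \cite{bockting}, but within the logical structure of this paper your argument is valid as written.
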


\begin{definition}
\label{def:cas}
\rm Let $\Lambda$ denote the common value of the four expressions in Lemma
\ref{lem:cas4}.
\end{definition}

\begin{lemma} \label{lem:casC} 
The element 
$\Lambda$ commutes with each of
$A, W, K, B, M, N, Q, \psi$.
\end{lemma}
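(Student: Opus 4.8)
The plan is to reduce everything to the four base commutation facts $[\Lambda, A]=[\Lambda,K]=[\Lambda,B]=[\Lambda,\psi]=0$, which are available from \cite[Lemmas~7.3,~8.3,~9.1]{bockting}, and then propagate commutativity through the definitions of $W$, $M$, $N$, $Q$ using only linearity and the structural relations already established. So the first step is simply to record those four facts; they cover four of the eight elements in the statement, and the remaining work is to deduce the other four.

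Next I would dispose of $M$ and $N$. The key elementary observation is that commuting with an invertible element forces commuting with its inverse: from $\Lambda K=K\Lambda$ one gets $\Lambda K^{-1}=K^{-1}\Lambda$, and similarly for $B$. Hence $\Lambda$ commutes with each of $K^{\pm 1}, B^{\pm 1}$. Since $M$ and $N$ are $\mathbb F$-linear combinations of these elements by (\ref{eq:MNdef}), the bilinearity of the commutator gives $[\Lambda,M]=0$ and $[\Lambda,N]=0$ at once. I would then handle $W$: by Lemma \ref{lem:WA} we have $W^{\pm 1}\in\mathcal D$, so $W^{\pm 1}$ is a polynomial in $A$; because $\Lambda$ commutes with $A$ it commutes with every element of $\mathcal D$, in particular with $W$ and $W^{-1}$.

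Finally, $Q$ follows from $W$ and $M$: by Definition \ref{def:Q} we have $Q=W^{-1}MW$, and since $\Lambda$ commutes with each of $W^{-1}$, $M$, $W$, it commutes with their product $Q$. This accounts for all eight elements. I do not anticipate a genuine obstacle: once the four base facts are in hand, each remaining claim is a one-line consequence of linearity together with the facts that $W$ lies in the commutative algebra $\mathcal D$ and that $Q$ is a $W$-conjugate of $M$. The only point needing minor care is the passage from $K,B$ to $M,N$, which uses the inverses $K^{-1},B^{-1}$ occurring in $N$; this is exactly what the inverse remark above supplies.
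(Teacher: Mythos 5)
Your proof is correct and follows essentially the same route as the paper: the paper also cites \cite[Lemmas~7.3, 8.3, 9.1]{bockting} for commutation with $A,K,B,\psi$, and then deduces the cases of $W$, $M$, $N$, $Q$ from Lemma \ref{lem:WA}, line (\ref{eq:MNdef}), and Definition \ref{def:Q}, exactly as you do. Your write-up merely spells out the routine details (inverses, bilinearity of the commutator) that the paper leaves implicit.
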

\begin{proof} It was shown in \cite[Lemma~7.3, 8.3, 9.1]{bockting} 
 that $\Lambda$ commutes with $A, K, B, \psi$. Now 
$\Lambda$ commutes with $W, M, N, Q$ by Lemma
\ref{lem:WA}, line (\ref{eq:MNdef}),  and Definition
\ref{def:Q}.
\end{proof}

\noindent Motivated by Lemma \ref{lem:casC}, we call $\Lambda$ the {\it Casimir element} for $\Phi$.

\begin{lemma} \label{lem:LamDown}
We have $\Lambda^\Downarrow = \Lambda$.
\end{lemma}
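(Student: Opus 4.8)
The plan is to apply the operation $\Downarrow$ directly to the first of the four expressions for $\Lambda$ in Lemma~\ref{lem:cas4} and to recognize the outcome as the third of those expressions. First I would record how the ingredients of $\Lambda$ transform under $\Downarrow$. The element $A$ is unchanged, since $\Downarrow$ only reverses the ordering of its eigenspaces; thus $A^\Downarrow = A$. By Lemma~\ref{lem:psiDown} we have $\psi^\Downarrow = \psi$, and by Definition~\ref{def:K} we have $K^\Downarrow = B$. Finally, the scalar parameter transforms as $a^\Downarrow = a^{-1}$ while $q$ is kept fixed: under the reversal $E_i \mapsto E_{d-i}$ the eigenvalue sequence becomes $\theta^\Downarrow_i = \theta_{d-i} = a q^{2i-d} + a^{-1} q^{d-2i}$, and comparison with (\ref{eq:eigform}) forces $a^\Downarrow = a^{-1}$; this is consistent with $t^\Downarrow_i = (-1)^i a^{-i} q^{i(d-i)}$ from Lemma~\ref{lem:WDown}(i).

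With these transformation laws in hand, the argument is immediate. By definition $\Lambda^\Downarrow$ is the Casimir element of $\Phi^\Downarrow$, so applying the first expression of Lemma~\ref{lem:cas4} to $\Phi^\Downarrow$ gives
\[
\Lambda^\Downarrow = \psi^\Downarrow \bigl(A^\Downarrow - a^\Downarrow K^\Downarrow - (a^\Downarrow)^{-1}(K^\Downarrow)^{-1}\bigr) + q^{-1} K^\Downarrow + q (K^\Downarrow)^{-1}.
\]
Substituting $\psi^\Downarrow = \psi$, $A^\Downarrow = A$, $a^\Downarrow = a^{-1}$, and $K^\Downarrow = B$ yields
\[
\Lambda^\Downarrow = \psi \bigl(A - a^{-1} B - a B^{-1}\bigr) + q^{-1} B + q B^{-1},
\]
which is exactly the third expression in Lemma~\ref{lem:cas4}. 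By Definition~\ref{def:cas} this common value equals $\Lambda$, completing the proof.

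The argument requires essentially no computation; the only point demanding care is the transformation law $a^\Downarrow = a^{-1}$ (with $q$ fixed), which is precisely what turns the first defining expression of $\Lambda$ into the third. As a consistency check, one may apply $\Downarrow$ to the left equation in (\ref{eq:W2KW2}): using $W^\Downarrow = t_d^{-1} W$ from Lemma~\ref{lem:WDown}(ii) together with $K^\Downarrow = B$ and $a^\Downarrow = a^{-1}$, the identity $W^{-2} K W^2 = a^{-1} A - a^{-2} K^{-1}$ transforms into $W^{-2} B W^2 = a A - a^2 B^{-1}$, which is the right equation in (\ref{eq:W2KW2}). I anticipate no genuine obstacle beyond this bookkeeping.
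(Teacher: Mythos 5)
Your proof is correct and follows essentially the same route as the paper: the paper's one-line proof cites Lemma~\ref{lem:psiDown}, Lemma~\ref{lem:cas4}, and $K^\Downarrow = B$, which is exactly your argument of applying $\Downarrow$ to one defining expression for $\Lambda$ and recognizing the result as another. Your explicit justification of the transformation law $a^\Downarrow = a^{-1}$ (with $q$ fixed) is a worthwhile detail that the paper leaves implicit.
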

\begin{proof} By Lemmas
 \ref{lem:psiDown},
\ref{lem:cas4} and $K^\Downarrow = B$.
\end{proof}

\begin{lemma}
\label{lem:Apsi}
We have
\begin{enumerate}
\item[\rm (i)] $A \psi = \Lambda - q N^{-1} - q^{-1} M^{-1}$;
\item[\rm (ii)] $ \psi A  = \Lambda - q^{-1} N^{-1} - q M^{-1}$.
\end{enumerate}
\end{lemma}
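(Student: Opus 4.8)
The plan is to prove both identities by directly expanding the right-hand sides, using the expressions for $M^{-1}$ and $N^{-1}$ from Lemmas \ref{lem:M4} and \ref{lem:N4} together with the $K$-based formulas for $\Lambda$ in Lemma \ref{lem:cas4}. The guiding observation is that each of $M^{-1}$ and $N^{-1}$ is a linear combination of $K^{\pm 1}$ and $K^{\pm 1}\psi$ (or $\psi K^{\pm 1}$), which exactly matches the shape of the two $K$-based expressions for $\Lambda$; hence the combinations $qN^{-1}+q^{-1}M^{-1}$ and $q^{-1}N^{-1}+qM^{-1}$ are engineered to cancel against $\Lambda$ and leave only $A\psi$ and $\psi A$.

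For part (i) I would use the right-multiplication forms $N^{-1} = K(I - aq^{-1}\psi)$ and $M^{-1} = K^{-1}(I - a^{-1}q\psi)$ from Lemmas \ref{lem:N4}, \ref{lem:M4}, which give
\begin{align*}
qN^{-1} + q^{-1}M^{-1} = qK + q^{-1}K^{-1} - (aK + a^{-1}K^{-1})\psi.
\end{align*}
On the other hand, the second expression in Lemma \ref{lem:cas4} reads $\Lambda = (A - aK - a^{-1}K^{-1})\psi + qK + q^{-1}K^{-1} = A\psi - (aK + a^{-1}K^{-1})\psi + qK + q^{-1}K^{-1}$. Subtracting, the terms $qK + q^{-1}K^{-1}$ cancel and the two copies of $(aK + a^{-1}K^{-1})\psi$ cancel, leaving $\Lambda - qN^{-1} - q^{-1}M^{-1} = A\psi$.

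For part (ii) I would instead use the left-multiplication forms $N^{-1} = (I - aq\psi)K$ and $M^{-1} = (I - a^{-1}q^{-1}\psi)K^{-1}$, which give
\begin{align*}
q^{-1}N^{-1} + qM^{-1} = q^{-1}K + qK^{-1} - \psi(aK + a^{-1}K^{-1}).
\end{align*}
Comparing with the first expression $\Lambda = \psi(A - aK - a^{-1}K^{-1}) + q^{-1}K + qK^{-1}$ from Lemma \ref{lem:cas4} and subtracting yields $\psi A$ in exactly the same way. The computation is elementary and there is no serious obstacle; the only point requiring care is that $K$ and $\psi$ do not commute, so one must select the correct one-sided form of $M^{-1}$ and $N^{-1}$ — the right-multiplication forms paired with the $\psi$-on-the-right formula for $\Lambda$ in (i), and the left-multiplication forms paired with the $\psi$-on-the-left formula for $\Lambda$ in (ii) — so that the $\psi$-terms line up on the correct side and cancel exactly.
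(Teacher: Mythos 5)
Your proposal is correct and matches the paper's own proof essentially verbatim: for (i) the paper likewise substitutes $M^{-1}=K^{-1}(I-a^{-1}q\psi)$ and $N^{-1}=K(I-aq^{-1}\psi)$ into $\Lambda=(A-aK-a^{-1}K^{-1})\psi+qK+q^{-1}K^{-1}$, and it dispatches (ii) as "similar," which is exactly your pairing of the left-multiplied forms of $M^{-1}, N^{-1}$ with the $\psi$-on-the-left expression for $\Lambda$. Your closing remark about choosing the one-sided forms so the $\psi$-terms sit on the correct side is precisely the point that makes the cancellation work.
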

\begin{proof} (i) By Definition \ref{def:cas}
we have  $\Lambda = (A-aK-a^{-1}K^{-1})\psi + q K + q^{-1} K^{-1}$; evaluate this equation using
$M^{-1} = K^{-1}(1-a^{-1} q \psi)$ and
$N^{-1} = K(1-a q^{-1} \psi)$.
\\
\noindent (ii) Similar to the proof of (i) above.
\end{proof}

\begin{proposition}\label{lem:MLam}
We have
\begin{align}
M^{-1} + \frac{q \psi A - q^{-1} A \psi}{q^2-q^{-2}}  &= \frac{\Lambda}{q+q^{-1}},
\label{eq:psip1}
\\
N^{-1}+ \frac{q  A\psi - q^{-1}  \psi A}{q^2-q^{-2}} &= \frac{\Lambda}{q+q^{-1}}.
\label{eq:psip2}
\end{align}
\end{proposition}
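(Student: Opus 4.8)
The plan is to derive both identities (\ref{eq:psip1}), (\ref{eq:psip2}) directly from Lemma \ref{lem:Apsi}, which already expresses the products $A\psi$ and $\psi A$ in terms of $\Lambda$, $M^{-1}$, and $N^{-1}$. The key observation is that the numerator $q\psi A - q^{-1}A\psi$ is precisely a $q$-weighted combination of the two expressions in Lemma \ref{lem:Apsi}, so the whole left-hand side should collapse to a scalar multiple of $\Lambda$ after the $M^{-1}$ and $N^{-1}$ terms cancel against each other.

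Concretely, to prove (\ref{eq:psip1}), I would substitute
\begin{align*}
\psi A &= \Lambda - q^{-1}N^{-1} - qM^{-1},
\\
A\psi &= \Lambda - qN^{-1} - q^{-1}M^{-1}
\end{align*}
from Lemma \ref{lem:Apsi} into the numerator $q\psi A - q^{-1}A\psi$. The $\Lambda$ terms contribute $(q-q^{-1})\Lambda$. The $N^{-1}$ terms contribute $-q\cdot q^{-1}N^{-1} + q^{-1}\cdot qN^{-1} = 0$, so $N^{-1}$ drops out entirely. The $M^{-1}$ terms contribute $-q\cdot qM^{-1} + q^{-1}\cdot q^{-1}M^{-1} = -(q^2-q^{-2})M^{-1}$. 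Dividing by $q^2-q^{-2}$ and adding the leading $M^{-1}$ term, the $M^{-1}$ contributions cancel, leaving $(q-q^{-1})\Lambda/(q^2-q^{-2}) = \Lambda/(q+q^{-1})$, which is exactly the right-hand side. The proof of (\ref{eq:psip2}) is the mirror image: substituting the same two expressions into $qA\psi - q^{-1}\psi A$ makes the $M^{-1}$ terms cancel and leaves a $-(q^2-q^{-2})N^{-1}$ contribution that cancels against the leading $N^{-1}$, again yielding $\Lambda/(q+q^{-1})$.

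There is no real obstacle here: the entire argument is a one-line algebraic manipulation once Lemma \ref{lem:Apsi} is in hand, and the only thing to watch is that the asymmetry between the $q$- and $q^{-1}$-coefficients of $N^{-1}$ versus $M^{-1}$ in the two formulas of Lemma \ref{lem:Apsi} is exactly what causes one of the two to cancel in each identity. The mildly delicate point worth double-checking is the sign and coefficient bookkeeping in the factor $q^2-q^{-2} = (q+q^{-1})(q-q^{-1})$, since that factorization is what converts $(q-q^{-1})\Lambda/(q^2-q^{-2})$ into the stated $\Lambda/(q+q^{-1})$. I would present the computation for (\ref{eq:psip1}) in full and then remark that (\ref{eq:psip2}) follows by the same substitution with the roles of $M^{-1}$ and $N^{-1}$ interchanged.
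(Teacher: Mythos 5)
Your proposal is correct and follows exactly the paper's route: the paper's proof of Proposition \ref{lem:MLam} is simply ``Use Lemma \ref{lem:Apsi},'' and your substitution of $A\psi = \Lambda - qN^{-1} - q^{-1}M^{-1}$ and $\psi A = \Lambda - q^{-1}N^{-1} - qM^{-1}$ into the two left-hand sides, with the cancellation bookkeeping you describe, is precisely that computation spelled out.
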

\begin{proof} Use Lemma
\ref{lem:Apsi}.
\end{proof}

\section{The element $\psi- Q^{-1}$}
We continue to discuss the TD system $\Phi=(A;\lbrace E_i\rbrace_{i=0}^d;A^*; \lbrace E^*_i\rbrace_{i=0}^d)$ on $V$ that has $q$-Racah type.
In this section we investigate the element $\psi - Q^{-1}$, where
$Q$ is from
Definition \ref{def:Q}
and
 $\psi$ is from
Definition   \ref{def:psi}.

\begin{lemma} \label{cor:mp}
Each of the following pairs satisfy the equivalent conditions {\rm (i), (ii)} in
Proposition
\ref{lem:xyWf}:
 \begin{enumerate}
 \item[\rm (i)]  $\psi$, $M^{-1}$;
 \item[\rm (ii)] $N^{-1}$, $\psi$.
 \end{enumerate}
\end{lemma}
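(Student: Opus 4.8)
The plan is to invoke the equivalence in Proposition \ref{lem:xyWf} for each of the two pairs, choosing in each case whichever of the two conditions has a ``commuting expression'' that Proposition \ref{lem:lam12} already evaluates to a scalar multiple of $I$. The key observation is that the two left-hand sides in Proposition \ref{lem:lam12} are, term for term, the expressions appearing in the conditions of Proposition \ref{lem:xyWf}, once the roles of $X$ and $Y$ are assigned correctly.

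For part (i) I would set $X=\psi$ and $Y=M^{-1}$ and verify condition (ii) of Proposition \ref{lem:xyWf}. First, $Y=M^{-1}$ acts on the eigenspaces of $A$ in a tridiagonal fashion by Lemma \ref{lem:MN}. Second, I must check that $A$ commutes with
\[
X+ \frac{qAY - q^{-1}YA}{q^2-q^{-2}} = \psi + \frac{qAM^{-1} - q^{-1}M^{-1}A}{q^2-q^{-2}},
\]
but this is precisely the left-hand side of (\ref{eq:lam1}), which equals $\tfrac{a+a^{-1}}{q+q^{-1}}I$. Being a scalar multiple of $I$, it commutes with $A$. Hence condition (ii) holds, and by the equivalence so does condition (i), which is what part (i) asserts.

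For part (ii) I would set $X=N^{-1}$ and $Y=\psi$ and verify condition (i) of Proposition \ref{lem:xyWf}. Here $X=N^{-1}$ acts on the eigenspaces of $A$ in a tridiagonal fashion, again by Lemma \ref{lem:MN}, and the relevant element
\[
Y+ \frac{qXA - q^{-1}AX}{q^2-q^{-2}} = \psi + \frac{qN^{-1}A - q^{-1}AN^{-1}}{q^2-q^{-2}}
\]
is the left-hand side of (\ref{eq:lam2}), hence equals $\tfrac{a+a^{-1}}{q+q^{-1}}I$, which commutes with $A$. Thus condition (i) holds, giving part (ii).

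Since both verifications reduce to reading off the identities already established in Proposition \ref{lem:lam12} together with the tridiagonality from Lemma \ref{lem:MN}, there is no genuine obstacle. The only point demanding care is bookkeeping: correctly assigning the roles of $X$ and $Y$ in the asymmetric conditions of Proposition \ref{lem:xyWf} and pairing each pair with the matching equation (\ref{eq:lam1}) or (\ref{eq:lam2}), so that the noncommutative orderings $AM^{-1}$ versus $M^{-1}A$ (respectively $N^{-1}A$ versus $AN^{-1}$) line up exactly with the $q$- and $q^{-1}$-weighted terms.
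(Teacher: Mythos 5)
Your proposal is correct and takes essentially the same approach as the paper: the key step in both is recognizing that the expressions in Proposition \ref{lem:xyWf} are exactly the left-hand sides of the identities in Propositions \ref{lem:lam12} and \ref{lem:MLam}, whose right-hand sides commute with $A$, combined with the tridiagonality facts. The only (harmless) difference is one of economy: you verify just one of the two equivalent conditions per pair, using only Lemma \ref{lem:MN} and Proposition \ref{lem:lam12}, whereas the paper's proof also invokes Lemma \ref{lem:psi3}, Lemma \ref{lem:casC}, and Proposition \ref{lem:MLam}, i.e.\ it checks both conditions directly.
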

\begin{proof} By 
Lemmas
\ref{lem:MN}, \ref{lem:psi3}, \ref{lem:casC} and
Propositions 
\ref{lem:lam12},
\ref{lem:MLam}.
\end{proof}

\begin{proposition}\label{prop:Acom}
The element $A$ commutes with $\psi-Q^{-1}$.
Moreover,
\begin{align}
W \psi W^{-1} + Q^{-1} = \psi + M^{-1},
\qquad \qquad
W^{-1} \psi W + Q^{-1} = \psi + N^{-1}.
\end{align}
\end{proposition}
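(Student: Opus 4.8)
The plan is to derive the proposition from the machinery of Proposition~\ref{lem:xyW} applied to the two pairs identified in Lemma~\ref{cor:mp}. Recall that Proposition~\ref{lem:xyW} asserts that if $X,Y$ satisfy the equivalent conditions of Proposition~\ref{lem:xyWf}, then $WXW^{-1}-Y = X - W^{-1}YW$, and this common value commutes with $A$. The idea is to feed the two specific pairs from Lemma~\ref{cor:mp} into this result and read off both the commutativity claim and the two displayed identities.

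First I would take the pair $(X,Y)=(\psi,M^{-1})$ from Lemma~\ref{cor:mp}(i). Applying Proposition~\ref{lem:xyW} to this pair gives
\begin{align*}
W\psi W^{-1} - M^{-1} = \psi - W^{-1} M^{-1} W,
\end{align*}
and this common value commutes with $A$. Now recall from Definition~\ref{def:Q} that $Q = W^{-1}MW$, hence $Q^{-1} = W^{-1}M^{-1}W$, so the right-hand side above equals $\psi - Q^{-1}$. Rearranging the left-hand equation gives the first displayed identity $W\psi W^{-1} + Q^{-1} = \psi + M^{-1}$, and since the common value is exactly $\psi - Q^{-1}$, we conclude that $A$ commutes with $\psi - Q^{-1}$.

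Next I would take the pair $(X,Y)=(N^{-1},\psi)$ from Lemma~\ref{cor:mp}(ii). Applying Proposition~\ref{lem:xyW} to this pair yields
\begin{align*}
W N^{-1} W^{-1} - \psi = N^{-1} - W^{-1}\psi W.
\end{align*}
Again using $Q = WNW^{-1}$ from Definition~\ref{def:Q}, so that $Q^{-1} = WN^{-1}W^{-1}$, the left-hand side reads $Q^{-1} - \psi$; rearranging gives $W^{-1}\psi W + Q^{-1} = \psi + N^{-1}$, which is the second displayed identity.

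The argument is essentially a bookkeeping exercise once the correct pairs are in hand, so no single step is a genuine obstacle. The one point that requires care is matching the conjugation directions: Definition~\ref{def:Q} records $Q$ as \emph{both} $W^{-1}MW$ and $WNW^{-1}$, so taking inverses one must keep straight that $Q^{-1}=W^{-1}M^{-1}W=WN^{-1}W^{-1}$, and that the two pairs in Lemma~\ref{cor:mp} are ordered oppositely (in (i) the tridiagonal element $\psi$ is the first slot $X$, while in (ii) it is the second slot $Y$). This sign-and-order matching is precisely what makes the two identities come out with $W\psi W^{-1}$ in the first and $W^{-1}\psi W$ in the second.
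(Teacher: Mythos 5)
Your proposal is correct and follows exactly the paper's own argument: the paper's proof consists precisely of citing Proposition~\ref{lem:xyW}, Definition~\ref{def:Q}, and Lemma~\ref{cor:mp}, and your write-up simply makes explicit the bookkeeping (applying Proposition~\ref{lem:xyW} to the pairs $(\psi,M^{-1})$ and $(N^{-1},\psi)$ and rewriting $Q^{-1}=W^{-1}M^{-1}W=WN^{-1}W^{-1}$) that the paper leaves to the reader. No gaps; the conjugation directions and the ordering of the two pairs are handled correctly.
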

\begin{proof}  By 
Proposition \ref{lem:xyW}, Definition \ref{def:Q}, 
and Lemma \ref{cor:mp}.
\end{proof}

\begin{proposition}\label{prop:Alam} We have
\begin{align*}
(\psi - Q^{-1})((q+q^{-1})I-A) = (a+a^{-1})I - \Lambda.
\end{align*}
\end{proposition}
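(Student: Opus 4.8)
The plan is to collapse the whole identity onto expressions in $A$ and the single element $m:=(M^{-1})^\vee$, exploiting that both $\Lambda$ and $\psi-Q^{-1}$ commute with $A$ and hence are fixed by the map $X\mapsto X^\vee$ (Lemma \ref{lem:Axv}). Throughout I use that $A$ commutes with $m$ (the remark below Definition \ref{def:vee}), so $m$ commutes with every polynomial in $A$.

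First I would obtain a closed form for $\psi-Q^{-1}$. Since $Q^{-1}=W^{-1}M^{-1}W$ (Definition \ref{def:Q}) and $M^{-1}$ acts on the eigenspaces of $A$ in a tridiagonal fashion (Lemma \ref{lem:MN}), Proposition \ref{lem:xvW}, equation \eqref{eq:wxv}, applied to $X=M^{-1}$ gives $Q^{-1}+\frac{qAM^{-1}-q^{-1}M^{-1}A}{q^2-q^{-2}}=\bigl(I+\frac{A}{q+q^{-1}}\bigr)m$. Subtracting \eqref{eq:lam1} of Proposition \ref{lem:lam12} cancels the $q$-commutator term and yields
\[
\psi-Q^{-1}=\frac{a+a^{-1}}{q+q^{-1}}\,I-\Bigl(I+\frac{A}{q+q^{-1}}\Bigr)m.
\]

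Next I would produce a matching closed form for $\Lambda$ by applying the $\vee$ map to the two relevant identities. Applying $\vee$ to \eqref{eq:lam1} and using Lemma \ref{lem:veeLinear}(i),(ii) to evaluate $(qAM^{-1}-q^{-1}M^{-1}A)^\vee=(q-q^{-1})Am$ gives $\psi^\vee=\frac{a+a^{-1}}{q+q^{-1}}I-\frac{Am}{q+q^{-1}}$. Then applying $\vee$ to \eqref{eq:psip1} of Proposition \ref{lem:MLam}, using $\Lambda=\Lambda^\vee$ (valid since $\Lambda$ commutes with $A$ by Lemma \ref{lem:casC}, via Lemma \ref{lem:Axv}) together with the analogous computation $(q\psi A-q^{-1}A\psi)^\vee=(q-q^{-1})A\psi^\vee$, gives $\Lambda=(q+q^{-1})m+A\psi^\vee$. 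Substituting the formula for $\psi^\vee$ then expresses $\Lambda$ purely in $A$ and $m$, namely $\Lambda=(q+q^{-1})m+\frac{(a+a^{-1})A}{q+q^{-1}}-\frac{A^2m}{q+q^{-1}}$.

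Finally I would multiply the closed form for $\psi-Q^{-1}$ on the right by $(q+q^{-1})I-A$; using that $m$ commutes with $A$ together with the elementary identity $\bigl(I+\frac{A}{q+q^{-1}}\bigr)\bigl((q+q^{-1})I-A\bigr)=(q+q^{-1})I-\frac{A^2}{q+q^{-1}}$, the product collapses to $(a+a^{-1})I-\frac{(a+a^{-1})A}{q+q^{-1}}-(q+q^{-1})m+\frac{A^2m}{q+q^{-1}}$, which is visibly $(a+a^{-1})I-\Lambda$ by the previous paragraph. The main obstacle is conceptual rather than computational: recognizing that the entire identity lives in the commutant of $A$ and therefore simplifies drastically under $X\mapsto X^\vee$, with Lemma \ref{lem:veeLinear} doing the real work of turning the $q$-commutators into scalar multiples of $Am$ and $A\psi^\vee$.
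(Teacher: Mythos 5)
Your proof is correct, and its engine is the same as the paper's: both arguments collapse everything into the commutant of $A$ by applying the map $X \mapsto X^\vee$ to \eqref{eq:lam1} and \eqref{eq:psip1} and using Lemma \ref{lem:veeLinear} to reduce the $q$-commutators to $\frac{A (M^{-1})^\vee}{q+q^{-1}}$ and $\frac{A \psi^\vee}{q+q^{-1}}$. Where you genuinely depart from the paper is in how $\psi - Q^{-1}$ gets expressed in commutant terms. The paper invokes Proposition \ref{prop:Acom} (that $A$ commutes with $\psi - Q^{-1}$) together with Lemma \ref{lem:Axv} to write $\psi - Q^{-1} = \psi^\vee - (Q^{-1})^\vee$, then uses Lemma \ref{lem:Wvee} and Definition \ref{def:Q} to replace $(Q^{-1})^\vee$ by $(M^{-1})^\vee$, and finishes by simply subtracting \eqref{eq:vee2} from \eqref{eq:vee1}. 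You instead apply \eqref{eq:wxv} of Proposition \ref{lem:xvW} to $X = M^{-1}$ and subtract \eqref{eq:lam1}, which yields the explicit closed form
\begin{align*}
\psi - Q^{-1} = \frac{a+a^{-1}}{q+q^{-1}}\,I - \Bigl(I + \frac{A}{q+q^{-1}}\Bigr)(M^{-1})^\vee .
\end{align*}
This bypasses Proposition \ref{prop:Acom} entirely (and hence its supporting chain through Proposition \ref{lem:xyW} and Lemma \ref{cor:mp}) as well as Lemma \ref{lem:Wvee}; indeed your closed form is strictly stronger than the commutation statement of Proposition \ref{prop:Acom}, since it exhibits $\psi - Q^{-1}$ as a polynomial in $A$ times an element of the commutant. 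The price is that you must verify the final identity by an explicit multiplication against $(q+q^{-1})I - A$, where the paper's subtract-and-factor step is immediate. In short: your route is more self-contained and yields a reusable explicit formula; the paper's is shorter given the results it has already banked.
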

\begin{proof} The element $A$ commutes with $I$ and $\Lambda$. So by Lemma
\ref{lem:Axv}, $I=I^\vee$ and $\Lambda = \Lambda^\vee$. By Proposition
\ref{prop:Acom}, $A$ commutes with $\psi - Q^{-1}$.  So by Lemma \ref{lem:Axv},
\begin{align}
\label{eq:half}
 \psi - Q^{-1} = \psi^\vee - (Q^{-1})^\vee.
 \end{align} 
 By Lemma \ref{lem:Wvee} and
 Definition
 \ref{def:Q},
 \begin{align}
  (Q^{-1})^\vee = (M^{-1})^\vee.
  \label{eq:QQM}
  \end{align}
For the equation
(\ref{eq:lam1}), apply the map $\vee$ to each side and evaluate the result using Lemma \ref{lem:veeLinear}  along with  $I = I^\vee$; this yields
\begin{align}
\label{eq:vee1}
\psi^{\vee}+ \frac{ A ( M^{-1})^\vee}{q+q^{-1}}  = \frac{a+a^{-1}}{q+q^{-1}} I.
\end{align}
For the equation (\ref{eq:psip1}), apply the map $\vee$ to each side and evaluate the result using Lemma \ref{lem:veeLinear}
along with $\Lambda = \Lambda^\vee $; this yields
\begin{align}
\label{eq:vee2}
(M^{-1})^\vee + \frac{ A \psi^\vee}{q+q^{-1}}  = \frac{\Lambda}{q+q^{-1}}.
\end{align}
To finish the proof,
subtract (\ref{eq:vee2}) from (\ref{eq:vee1}) and evaluate the result using
(\ref{eq:half}), (\ref{eq:QQM}).
\end{proof}

\section{How   $W$, $K$ are related and how $W$, $B$  are related}
We continue to discuss the TD system $\Phi=(A;\lbrace E_i\rbrace_{i=0}^d;A^*; \lbrace E^*_i\rbrace_{i=0}^d)$ on $V$ that has $q$-Racah type.
In Proposition \ref{cor:WKB}
we showed how $W^2$, $K$ are related and how $W^2$, $B$ are related.
 In the present section we show how $W, K$
are related and how $W, B$ are related. We will use an identity from Section 6.

\begin{proposition}\label{thm:WK} We have
\begin{align}
a W^{-1} K  W - q I &= K (a I - q W K^{-1} W^{-1}),
\label{eq:WK1}
\\
a W^{-1} K  W - q^{-1} I &= (a I - q^{-1} W K^{-1} W^{-1})K.
\label{eq:WK2}
\end{align}
\end{proposition}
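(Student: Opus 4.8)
The plan is to prove both identities from the explicit description of $W$ as a polynomial in $A$ obtained in Section 6. First I would record the natural reductions and see why the ``symmetric'' tools fail. Multiplying out the right-hand sides and using Lemma~\ref{lem:KBW} to rewrite $WK^{-1}W^{-1}=aA-a^2W^{-1}KW$, each of (\ref{eq:WK1}), (\ref{eq:WK2}) turns into a relation among $A$, $K$, and $P:=W^{-1}KW$. The trouble is that the only information Lemma~\ref{lem:KBW} and Proposition~\ref{prop:wkw}(i) carry about $P$ is the symmetric identity $A=aP+a^{-1}WK^{-1}W^{-1}$; substituting it back reproduces the same identity and nothing more, i.e.\ yields a tautology. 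So the asymmetry between the two sides must be supplied by the precise shape of $W$.

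The substance of the proof is therefore a direct computation of $P=W^{-1}KW$. I would take the global expansions $W=p(A)$ and $W^{-1}=\tilde p(A)$ from Proposition~\ref{cor:cvP} and push $K$ across the polynomial factors using the $A$--$K$ relation (\ref{eq:kA}) in the form $KA=q^{-2}AK+(1-q^{-2})(aK^2+a^{-1}I)$. To keep the degrees finite and the bookkeeping transparent, I would organize the calculation along the nested subspaces $\mathcal V_r:=E_rV+\cdots+E_dV$. By (\ref{eq:s2}) we have $\mathcal V_r=U_r+\cdots+U_d$, so each $\mathcal V_r$ is $K^{\pm1}$-invariant, and it is $W^{\pm1}$-invariant since $W^{\pm1}\in\mathcal D$ by Lemma~\ref{lem:WA}; thus both identities restrict to each $\mathcal V_r$, where $W$ is the finite sum (\ref{eq:wExpand}). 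The base case $\mathcal V_d=E_dV=U_d$ is immediate, since there $W$, $K$ act as the scalars $t_d$, $q^{-d}$ and both sides collapse to $aq^{-d}-q$ for (\ref{eq:WK1}) and to $aq^{-d}-q^{-1}$ for (\ref{eq:WK2}).

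The step I expect to be the main obstacle is controlling the $K^2$ term that (\ref{eq:kA}) creates each time $K$ is moved past a factor $(A-\theta_jI)$: a priori this generates an ever-growing tower of powers of $K$, so the reduction is not term-by-term. The point should be that the $q$-Pochhammer coefficients appearing in (\ref{eq:wExpand}), (\ref{eq:wiExpand}) are exactly those dictated by the Chu/Vandermonde evaluation of Lemma~\ref{lem:cvsum}, so that after collecting terms the higher-$K$ contributions telescope and only the claimed first-order expression survives. Finally, I would exploit that (\ref{eq:WK1}) and (\ref{eq:WK2}) are not independent: their difference is equivalent to the companion commutation relation $\frac{qKWK^{-1}W^{-1}-q^{-1}WK^{-1}W^{-1}K}{q-q^{-1}}=I$, the $W^1$-analogue of Proposition~\ref{cor:WKB} (line (\ref{eq:wwkalt})). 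Hence it suffices to establish this companion relation together with either one of (\ref{eq:WK1}), (\ref{eq:WK2}), and I expect the same expansion to yield both with essentially the same telescoping argument.
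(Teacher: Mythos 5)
Your preliminary observations are sound: the symmetric identities coming from Lemma \ref{lem:KBW} and Proposition \ref{prop:wkw}(i) cannot by themselves separate (\ref{eq:WK1}) from (\ref{eq:WK2}), and the difference of the two identities is exactly the companion relation (\ref{eq:wkwkalt}). But the computational heart of your plan is missing, and the specific mechanism you propose would stall. Pushing $K$ across the factors $(A-\theta_jI)$ in the expansion of Proposition \ref{cor:cvP} via $KA=q^{-2}AK+(1-q^{-2})(aK^2+a^{-1}I)$ generates, as you yourself note, an ever-growing tower of powers of $K$; you offer only the hope that the Chu/Vandermonde coefficients make these contributions telescope. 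That hope is precisely what needs proof, and nothing in the proposal supplies the cancellation mechanism. Working on the nested $A$-invariant subspaces $\mathcal V_r=E_rV+\cdots+E_dV$ does not help, because $K$ does not act as a scalar on $\mathcal V_r$, so the $K^2$ terms remain just as uncontrolled there as on $V$; and your ``base case'' $\mathcal V_d$ gives no inductive step.

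The missing idea is a change of variables. The paper's proof replaces $A$ by $R=A-aK-a^{-1}K^{-1}$ (see (\ref{eq:R})) and works on the individual split-decomposition summands $U_r$ rather than on the filtration $\mathcal V_r$. Since $K$ acts on $U_i$ as the scalar $q^{d-2i}$ (Definition \ref{def:K}), on $U_r$ one has $aK+a^{-1}K^{-1}=\theta_rI$ and hence $R=A-\theta_rI$ there; moreover $R$ raises the split decomposition, $RU_i\subseteq U_{i+1}$ and $RU_d=0$ by (\ref{eq:AUW}), so it satisfies the clean $q$-commutation $RK=q^2KR$ with no $K^2$ term at all. Consequently, on $U_r$ the expansion (\ref{eq:wExpand}) becomes $W=t_r\sum_i c_iR^i$ with scalar coefficients $c_i$, commuting $K^{\pm1}$ past $R^i$ costs only a power of $q$, and the ``telescoping'' you hoped for reduces to a shift of the summation index in the Pochhammer symbols, yielding the reformulation $(a^{-1}I-qK)(WK^{-1}-K^{-1}W)=RW$ of (\ref{eq:WK1}). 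The paper then gets (\ref{eq:WK2}) by a purely formal manipulation (conjugate the sum of (\ref{eq:WK1}) and Proposition \ref{prop:wkw}(i) by $W$, subtract Proposition \ref{prop:wkw}(i), and factor out $a^{-1}qK^{-1}$), so your reduction via the companion relation, while logically valid, is not needed and would anyway require the same unproved computation. In short, you identified the right raw materials (the Section 6 expansions and the split decomposition) but not the substitution $A\mapsto R$ that makes the computation finite and tractable; as proposed, the key step is a gap.
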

\begin{proof} 
We first obtain (\ref{eq:WK1}). To this end, it is convenient to make a change of variables.
In (\ref{eq:WK1}), eliminate $W^{-1}KW$ using Proposition \ref{prop:wkw}(i), and in the result eliminate $A$ using
\begin{align}
R = A - aK - a^{-1} K^{-1}.
\label{eq:R}
\end{align} 
This yields
\begin{align}
\label{eq:PsiComGoal}
(a^{-1}I - qK)(WK^{-1}-K^{-1}W) = RW.
\end{align}
We will verify (\ref{eq:PsiComGoal}) after a few comments. Recall the first split decomposition
 $\lbrace U_i \rbrace_{i=0}^d$ of $V$.
 By Definition \ref{def:K},
  $K=q^{d-2i}I$ on $U_i$ for $0 \leq i \leq d$.
 So for $0 \leq i \leq d$ the following holds on $U_i$:
\begin{align}
a K+ a^{-1} K^{-1} = \theta_i I .
\label{eq:aKW}
\end{align}
By  (\ref{eq:R}), (\ref{eq:aKW}) we find that for
$0 \leq i \leq d$ the following holds on $U_i$:
\begin{align}
\label{eq:RAW}
R = A - \theta_i I.
\end{align}
By (\ref{eq:AUW})
 and (\ref{eq:RAW}),
\begin{align}
\label{eq:RUW}
R U_i \subseteq U_{i+1} \qquad \quad (0 \leq i \leq d-1), \qquad RU_d=0.
\end{align}
By (\ref{eq:RUW}) and the construction,
\begin{align*}
RK = q^2 KR.
\end{align*}
For $0 \leq r \leq d$ we show that 
(\ref{eq:PsiComGoal}) holds on $U_r$.
Using
(\ref{eq:RAW}), 
(\ref{eq:RUW}) 
we find that for $0 \leq i \leq d-r$ the following holds on $U_r$:
\begin{align}
R^i = (A-\theta_r I ) (A-\theta_{r+1}I) \cdots (A-\theta_{r+i-1}I).
\label{eq:RiW}
\end{align}
Also by 
(\ref{eq:RUW})  we have $R^{d-r+1}=0$ on $U_r$.
By (\ref{eq:s2}),
\begin{align*} 
E_rV + E_{r+1}V + \cdots + E_dV =
U_r + U_{r+1} + \cdots + U_d.
\end{align*}
The above subspace contains $U_r$, so 
by Proposition \ref{prop:cvP}
and (\ref{eq:RiW})
the following holds on $U_r$:
\begin{align*}
W = t_r \sum_{i=0}^{d-r} \frac{(-1)^i q^{i^2} R^i }{(q^2;q^2)_i(a^{-1} q^{2r+1-d};q^2)_i}.
\end{align*}
We may now argue that on $U_r$,
\begin{align*}
(a^{-1}I - qK)(WK^{-1} - K^{-1} W) 
&=(a^{-1} I - qK) 
 t_r \sum_{i=0}^{d-r} \frac{(-1)^i q^{i^2} (R^iK^{-1}-K^{-1} R^i) }{(q^2;q^2)_i(a^{-1} q^{2r+1-d};q^2)_i}
\\ 
&=(a^{-1} I - qK) 
 t_r \sum_{i=0}^{d-r} \frac{(-1)^i q^{i^2} R^iK^{-1}(1-q^{2i}) }{(q^2;q^2)_i(a^{-1} q^{2r+1-d};q^2)_i}
\\ 
&=(a^{-1} I - qK) 
 t_r \sum_{i=1}^{d-r} \frac{(-1)^i q^{i^2} R^iK^{-1}(1-q^{2i}) }{(q^2;q^2)_i(a^{-1} q^{2r+1-d};q^2)_i}
\\ 
&=
 t_r \sum_{i=1}^{d-r} \frac{(-1)^i q^{i^2} R^i(a^{-1} I - q^{1-2i}K) K^{-1}(1-q^{2i}) }{(q^2;q^2)_i(a^{-1} q^{2r+1-d};q^2)_i}
\\ 
&=
-t_r \sum_{i=1}^{d-r} \frac{(-1)^{i} q^{i^2} R^iq^{1-2i}(1-a^{-1}q^{2r-d+2i-1}) (1-q^{2i})}{(q^2;q^2)_i(a^{-1} q^{2r+1-d};q^2)_i}
\\ 
&=
t_r \sum_{i=1}^{d-r} \frac{(-1)^{(i-1)} q^{(i-1)^2} R^i}{(q^2;q^2)_{i-1}(a^{-1} q^{2r+1-d};q^2)_{i-1}}
\\ 
&=
t_r \sum_{i=0}^{d-r-1} \frac{(-1)^{i} q^{i^2} R^{i+1}}{(q^2;q^2)_{i}(a^{-1} q^{2r+1-d};q^2)_{i}}
\\ 
&=
t_r \sum_{i=0}^{d-r} \frac{(-1)^{i} q^{i^2} R^{i+1}}{(q^2;q^2)_{i}(a^{-1} q^{2r+1-d};q^2)_{i}}
\\ 
&= RW.
\end{align*}
We have obtained
(\ref{eq:PsiComGoal}), and  (\ref{eq:WK1})
follows.
Next we obtain (\ref{eq:WK2}). Let $E$ denote the equation obtained by adding (\ref{eq:WK1}) to the equation in Proposition  \ref{prop:wkw}(i).
Then $W^{-1} E W$ minus the equation in Proposition \ref{prop:wkw}(i) is equal to
 (\ref{eq:WK2})  times $a^{-1} q K^{-1}$. This gives (\ref{eq:WK2}).
\end{proof}

\begin{corollary}\label{cor:combine}
We have
\begin{align}
\frac{q W^{-1} K W K^{-1} - q^{-1}  K^{-1} W^{-1} K W}{q-q^{-1} } &= I,
\label{eq:wkwk}
\\
\frac {q K W K^{-1} W^{-1}- q^{-1} W K^{-1} W^{-1} K }{q-q^{-1} } &= I.
\label{eq:wkwkalt}
\end{align}
\end{corollary}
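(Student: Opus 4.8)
The plan is to derive both identities directly from the two equations (\ref{eq:WK1}), (\ref{eq:WK2}) of Proposition \ref{thm:WK}, with essentially no new computation. The guiding observation is that these two equations share the common left-hand term $aW^{-1}KW$, so subtracting one from the other eliminates it and leaves a pure $q$-commutator relation.

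Concretely, I would first expand the right-hand sides of (\ref{eq:WK1}), (\ref{eq:WK2}) to read
\begin{align*}
aW^{-1}KW - qI &= aK - qKWK^{-1}W^{-1},\\
aW^{-1}KW - q^{-1}I &= aK - q^{-1}WK^{-1}W^{-1}K.
\end{align*}
Subtracting the second line from the first, the terms $aW^{-1}KW$ on the left and $aK$ on the right both cancel, leaving $(q^{-1}-q)I = -\,qKWK^{-1}W^{-1} + q^{-1}WK^{-1}W^{-1}K$. Negating this and dividing by $q-q^{-1}$ yields exactly (\ref{eq:wkwkalt}).

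To obtain (\ref{eq:wkwk}) it then suffices to conjugate, since the two displayed identities are related by conjugation by $W$. I would multiply (\ref{eq:wkwkalt}) on the left by $W^{-1}$ and on the right by $W$, and simplify using $W^{-1}W=I$, namely $W^{-1}(KWK^{-1}W^{-1})W = W^{-1}KWK^{-1}$ and $W^{-1}(WK^{-1}W^{-1}K)W = K^{-1}W^{-1}KW$; this transforms (\ref{eq:wkwkalt}) into (\ref{eq:wkwk}). I do not expect any genuine obstacle here: the entire content is carried by Proposition \ref{thm:WK}, and the corollary is just a repackaging of its two equations into $q$-commutator form. The only point requiring a moment's care is the bookkeeping of the conjugation, i.e.\ confirming the two cancellations $W^{-1}W=I$ above, both of which are immediate.
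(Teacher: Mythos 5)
Your proposal is correct and follows essentially the same route as the paper: the paper also obtains (\ref{eq:wkwkalt}) by subtracting (\ref{eq:WK1}) from (\ref{eq:WK2}) (your version subtracts in the opposite order and negates, which is equivalent), and then derives (\ref{eq:wkwk}) by conjugating (\ref{eq:wkwkalt}) on the left by $W^{-1}$ and on the right by $W$. No gaps; the cancellation bookkeeping you spell out is exactly what the paper leaves implicit.
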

\begin{proof} To obtain (\ref{eq:wkwkalt}), 
subtract (\ref{eq:WK1}) from 
(\ref{eq:WK2}). 
To obtain (\ref{eq:wkwk}), multiply each side of
(\ref{eq:wkwkalt}) on the left by $W^{-1}$ and the right by $W$.
\end{proof}

\begin{proposition}\label{thm:WB} We have
\begin{align}
a I  - q W^{-1} B W &= (a W B^{-1} W^{-1} - q I) B,
\label{eq:WB1}
\\
a I  - q^{-1} W^{-1} B W &= B(a W B^{-1} W^{-1} - q^{-1} I) .
\label{eq:WB2}
\end{align}
\end{proposition}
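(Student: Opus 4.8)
The plan is to deduce Proposition \ref{thm:WB} from Proposition \ref{thm:WK} by applying the duality $\Downarrow$, rather than repeating the long computation behind Proposition \ref{thm:WK}. The key observation is that $\Phi^\Downarrow$ is again a TD system of $q$-Racah type: its eigenvalue sequence is $\lbrace \theta_{d-i}\rbrace_{i=0}^d$, and since $\theta_{d-i}=a^{-1}q^{d-2i}+aq^{2i-d}$, the system $\Phi^\Downarrow$ has $q$-Racah type with the parameter $a$ replaced by $a^{-1}$ (and $q$ unchanged). Consequently Proposition \ref{thm:WK}, being a general statement about $q$-Racah TD systems, applies to $\Phi^\Downarrow$ once every object is replaced by its $\Downarrow$-counterpart.

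First I would record how the relevant objects transform. By Definition \ref{def:K} we have $K^\Downarrow=B$, and by Lemma \ref{lem:WDown}(ii) we have $W^\Downarrow=t_d^{-1}W$. Because $W^\Downarrow$ differs from $W$ only by the nonzero scalar $t_d^{-1}$, this scalar cancels in every conjugation; in particular $(W^\Downarrow)^{-1}B\,W^\Downarrow=W^{-1}BW$ and $W^\Downarrow B^{-1}(W^\Downarrow)^{-1}=WB^{-1}W^{-1}$. Substituting $a\mapsto a^{-1}$, $K\mapsto B$, $W\mapsto W^\Downarrow$ into (\ref{eq:WK1}) and (\ref{eq:WK2}) therefore yields
\begin{align*}
a^{-1}W^{-1}BW-qI&=B\bigl(a^{-1}I-qWB^{-1}W^{-1}\bigr),\\
a^{-1}W^{-1}BW-q^{-1}I&=\bigl(a^{-1}I-q^{-1}WB^{-1}W^{-1}\bigr)B.
\end{align*}

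To finish, I would multiply each of these equations through by $a$ and rearrange. The first becomes $W^{-1}BW-aqI=B-aq\,BWB^{-1}W^{-1}$, which upon multiplying by $q^{-1}$ and transposing terms is exactly (\ref{eq:WB2}); the second becomes $W^{-1}BW-aq^{-1}I=B-aq^{-1}WB^{-1}W^{-1}B$, which is exactly (\ref{eq:WB1}) after multiplying by $q$ and transposing. Note the cross-pairing: the $\Downarrow$-image of (\ref{eq:WK1}) produces (\ref{eq:WB2}), while the $\Downarrow$-image of (\ref{eq:WK2}) produces (\ref{eq:WB1}).

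The step I expect to require the most care is the bookkeeping around the parameter flip $a\mapsto a^{-1}$: one must confirm that the ``$a$'' appearing in Proposition \ref{thm:WK} is genuinely the $q$-Racah parameter of the system to which the proposition is applied, and that $W^\Downarrow$ is built from $a^{-1}$, consistent with Lemma \ref{lem:WDown}(i). Since the $\Downarrow$ duality is already exploited this way elsewhere (e.g.\ Lemmas \ref{lem:QDown}, \ref{lem:LamDown}), this is routine. As a fallback, one could instead mirror the proof of Proposition \ref{thm:WK} directly, replacing $K$ by $B$, the first split decomposition by the second, and the substitution $R=A-aK-a^{-1}K^{-1}$ by $R=A-a^{-1}B-aB^{-1}$ (so that $R=A-\theta_{d-i}I$ on $U^\Downarrow_i$, whence $RU^\Downarrow_i\subseteq U^\Downarrow_{i+1}$ and $RB=q^2BR$), and the expansion of $W$ from Proposition \ref{prop:cvP} by the one from Proposition \ref{prop:cvPVar}; an analogous telescoping sum then closes the argument.
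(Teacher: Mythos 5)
Your proposal is correct and is essentially the paper's own proof: the paper likewise obtains Proposition \ref{thm:WB} by applying Proposition \ref{thm:WK} to $\Phi^\Downarrow$, using $K^\Downarrow=B$ and Lemma \ref{lem:WDown}(ii). Your write-up simply makes explicit the details the paper leaves implicit — the parameter flip $a\mapsto a^{-1}$ for $\Phi^\Downarrow$, the cancellation of the scalar $t_d^{-1}$ under conjugation, and the cross-pairing of (\ref{eq:WK1}) with (\ref{eq:WB2}) and (\ref{eq:WK2}) with (\ref{eq:WB1}) — all of which check out.
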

\begin{proof} Apply Proposition \ref{thm:WK} to $\Phi^{\Downarrow}$, and use $K^\Downarrow=B$ along with
Lemma \ref{lem:WDown}(ii).
\end{proof}

\begin{corollary}\label{cor:combine2}
We have
\begin{align}
\frac{q W^{-1} B W B^{-1} - q^{-1}  B^{-1} W^{-1} BW}{q-q^{-1} } &= I,
\label{eq:wbwb}
\\
\frac{q B W B^{-1}W^{-1} - q^{-1}  W B^{-1} W^{-1} B}{q-q^{-1} } &= I.
\label{eq:wbwbalt}
\end{align}
\end{corollary}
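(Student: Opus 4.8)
The plan is to obtain Corollary \ref{cor:combine2} from Corollary \ref{cor:combine} by applying the latter to the TD system $\Phi^\Downarrow$, in exact parallel with the way Proposition \ref{thm:WB} was obtained from Proposition \ref{thm:WK}. The feature to exploit is that the two equations (\ref{eq:wkwk}), (\ref{eq:wkwkalt}) of Corollary \ref{cor:combine} involve only $W^{\pm 1}$, $K^{\pm 1}$, and the scalar $q$; the parameter $a$ does not appear. Consequently, when these equations are reinterpreted for $\Phi^\Downarrow$, the only replacements needed are $K \mapsto K^\Downarrow = B$ (Definition \ref{def:K}) and $W \mapsto W^\Downarrow = t_d^{-1} W$ (Lemma \ref{lem:WDown}(ii)), while $q$ is unchanged.

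First I would record that in each monomial appearing in (\ref{eq:wkwk}) and (\ref{eq:wkwkalt}) there is exactly one factor of $W$ and one factor of $W^{-1}$. Therefore, upon the substitution $W \mapsto t_d^{-1} W$, the prefactor $t_d^{-1}$ coming from $W^\Downarrow$ and the prefactor $t_d$ coming from $(W^\Downarrow)^{-1} = t_d W^{-1}$ cancel in every term. Carrying out the substitution $K \mapsto B$, $W \mapsto t_d^{-1} W$ in (\ref{eq:wkwk}) then produces (\ref{eq:wbwb}) verbatim, and the same substitution in (\ref{eq:wkwkalt}) produces (\ref{eq:wbwbalt}). Since Corollary \ref{cor:combine} holds for every TD system of $q$-Racah type, in particular for $\Phi^\Downarrow$, both identities of Corollary \ref{cor:combine2} follow at once.

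The step requiring the most care is the bookkeeping of the $\Downarrow$ operation: one must confirm that $q$ is unaffected by $\Downarrow$ (this is forced by the consistency of Lemma \ref{lem:WDown}(i)) and that the scalar $t_d$ drops out of each term, as just noted; I expect no genuine obstacle here. I note in passing that a direct derivation from Proposition \ref{thm:WB} is also possible but less clean, which is why I would not pursue it: a naive subtraction of (\ref{eq:WB1}) from (\ref{eq:WB2}) does not immediately cancel the $a$-dependent terms, since the middle orderings $BWB^{-1}W^{-1}$ and $WB^{-1}W^{-1}B$ differ, and one would first have to solve (\ref{eq:WB1}), (\ref{eq:WB2}) for these two conjugated products before substituting. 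Passing to $\Phi^\Downarrow$ circumvents this difficulty entirely, since the relevant identities of Corollary \ref{cor:combine} are free of $a$.
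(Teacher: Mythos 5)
Your proof is correct and follows essentially the same route as the paper: the paper's proof of Corollary \ref{cor:combine2} is precisely to apply Corollary \ref{cor:combine} to $\Phi^\Downarrow$, using $K^\Downarrow=B$ and Lemma \ref{lem:WDown}(ii). Your additional bookkeeping --- that each monomial contains one $W$ and one $W^{-1}$, so the scalar $t_d$ cancels, and that $q$ and the $a$-free form of the identities make the substitution harmless --- is exactly the verification the paper leaves implicit.
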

\begin{proof}  Apply Corollary \ref{cor:combine} to $\Phi^{\Downarrow}$, and use $K^\Downarrow=B$ along with
Lemma \ref{lem:WDown}(ii).
\end{proof}

\section{The algebra $U_q(\mathfrak{sl}_2)$}
\noindent In the previous sections we  discussed a TD system $\Phi$ of $q$-Racah type.
For the next four sections, we turn our attention to some algebras and their modules. In Section 18 we will return our attention to
$\Phi$. From now until the end of Section 17, fix $0 \not=q\in \mathbb F$ such that $q^4 \not=1$.
 In this section we recall the algebra $U_q(\mathfrak{sl}_2)$ in its equitable presentation. For more information on
 this presentation, see \cite{equit, uawEquit, fduq, lusEquit}.

\begin{definition} \label{def:equit}
\rm (See \cite[Section~2]{equit}.) The algebra $U_q(\mathfrak{sl}_2)$ is defined by generators $x,y^{\pm 1}, z$ and relations
$ y y^{-1} = 1 = y^{-1} y$,
\begin{align}
\frac{q xy - q^{-1} y x}{q-q^{-1}} = 1, 
\qquad \quad
\frac{q yz - q^{-1} zy}{q-q^{-1}} = 1, 
\qquad \quad
\frac{q zx - q^{-1} xz}{q-q^{-1}} = 1.
\label{eq:defREL}
\end{align}
\noindent We call $x, y^{\pm 1}, z$ the {\it equitable generators} of
$U_q(\mathfrak{sl}_2)$.
\end{definition}



\begin{lemma} \label{lem:cas6} {\rm (See \cite[Lemma~2.15]{uawEquit}.)}
The following coincide:
\begin{align*}
& q x + q^{-1} y + q z - q xyz, \qquad \qquad q^{-1}x + q y + q^{-1} z - q^{-1} zyx,
\\
& q y + q^{-1} z + q x - q yzx, \qquad \qquad q^{-1}y + q z + q^{-1} x - q^{-1} xzy,
\\
& q z + q^{-1} x + q y - q zxy, \qquad \qquad q^{-1}z + q x + q^{-1} y- q^{-1} yxz.
\end{align*}
\end{lemma}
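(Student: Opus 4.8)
The plan is to reduce the six displayed expressions to one another using only the relations (\ref{eq:defREL}) together with the cyclic symmetry of the equitable presentation. First I would rewrite (\ref{eq:defREL}) in cleared form as $qxy-q^{-1}yx=q-q^{-1}$, and likewise for the pairs $(y,z)$ and $(z,x)$. Label the six expressions $\Omega_1,\Omega_1',\Omega_2,\Omega_2',\Omega_3,\Omega_3'$, reading the display row by row (left entry unprimed, right entry primed), so that $\Omega_1=qx+q^{-1}y+qz-qxyz$ and $\Omega_1'=q^{-1}x+qy+q^{-1}z-q^{-1}zyx$.

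The key structural observation is that the assignment $x\mapsto y$, $y\mapsto z$, $z\mapsto x$ permutes the three relations in (\ref{eq:defREL}) cyclically, hence extends to an automorphism $\rho$ of $U_q(\mathfrak{sl}_2)$; this $\mathbb Z/3$ symmetry is part of the equitable presentation (see \cite{equit}). A direct check gives $\rho(\Omega_1)=\Omega_2$, $\rho(\Omega_2)=\Omega_3$, $\rho(\Omega_3)=\Omega_1$, and similarly $\rho$ cyclically permutes $\Omega_1',\Omega_2',\Omega_3'$. Consequently it suffices to prove the two identities $\Omega_1=\Omega_1'$ and $\Omega_1'=\Omega_2$; applying $\rho$ and $\rho^2$ then propagates them around the orbit to yield the whole chain $\Omega_1=\Omega_1'=\Omega_2=\Omega_2'=\Omega_3=\Omega_3'=\Omega_1$.

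Both remaining identities come down to normal-ordering a degree-three monomial into the reversed order $zyx$. Subtracting, one finds $\Omega_1-\Omega_1'=(q-q^{-1})(x-y+z)-(qxyz-q^{-1}zyx)$ and $\Omega_2-\Omega_1'=(q-q^{-1})x-(qyzx-q^{-1}zyx)$, so it is enough to establish
\begin{align*}
qxyz-q^{-1}zyx=(q-q^{-1})(x-y+z), \qquad\qquad qyzx-q^{-1}zyx=(q-q^{-1})x.
\end{align*}
For the second I would simply use $qyz=(q-q^{-1})+q^{-1}zy$ to get $qyzx=(q-q^{-1})x+q^{-1}zyx$. For the first I would push $x$ rightward: start from $qxy=(q-q^{-1})+q^{-1}yx$, then invoke the $(z,x)$ relation in the form $xz=q^2zx-(q^2-1)$, and finally apply $qyz=(q-q^{-1})+q^{-1}zy$, collecting the leading term as $q^{-1}zyx$ and the scalar corrections as $(q-q^{-1})(x-y+z)$.

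The computation itself is routine $q$-commutator bookkeeping, so the only genuine point to get right is the normal-ordering step in the first identity, namely tracking the scalar corrections accurately as each adjacent pair is swapped. The cyclic automorphism $\rho$ is what removes the redundancy: instead of checking all six expressions against a common normal form, one verifies just two identities and lets the $\mathbb Z/3$ symmetry do the rest.
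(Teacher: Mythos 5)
Your computational core is sound: the row-by-row subtractions correctly reduce the lemma to the two identities $qxyz-q^{-1}zyx=(q-q^{-1})(x-y+z)$ and $qyzx-q^{-1}zyx=(q-q^{-1})x$, and both of your derivations of these from (\ref{eq:defREL}) check out. The gap is the structural claim that the substitution $x\mapsto y$, $y\mapsto z$, $z\mapsto x$ ``extends to an automorphism $\rho$ of $U_q(\mathfrak{sl}_2)$.'' It does not. The presentation in Definition \ref{def:equit} is not cyclically symmetric: besides the relations (\ref{eq:defREL}) it contains $yy^{-1}=1=y^{-1}y$, so $y$ is invertible in $U_q(\mathfrak{sl}_2)$, whereas $x$ and $z$ are not invertible (a known feature of the equitable presentation; see \cite{equit} and the discussion motivating \cite{qinv}). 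Any algebra automorphism carries invertible elements to invertible elements, so no automorphism of $U_q(\mathfrak{sl}_2)$ can send $y\mapsto z$; in particular there is no way to define $\rho$ on the generator $y^{-1}$. As written, the propagation step from $\Omega_1=\Omega_1'$ and $\Omega_2=\Omega_1'$ to the remaining four equalities is therefore unjustified.

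The repair is short but should be made explicit. Let $\mathcal A$ denote the algebra defined by generators $x,y,z$ subject only to the three relations (\ref{eq:defREL}). This set of relations \emph{is} invariant under the cyclic substitution, so $\mathcal A$ admits an automorphism $\rho$ with $\rho(x)=y$, $\rho(y)=z$, $\rho(z)=x$. Your two identities are proved using only (\ref{eq:defREL}), hence they hold in $\mathcal A$, and your orbit argument then shows all six expressions coincide in $\mathcal A$. Since the relations (\ref{eq:defREL}) hold in $U_q(\mathfrak{sl}_2)$, there is an algebra homomorphism $\mathcal A\to U_q(\mathfrak{sl}_2)$ sending $x,y,z$ to $x,y,z$, and homomorphisms preserve equalities, so the six expressions of the lemma coincide in $U_q(\mathfrak{sl}_2)$. (Equivalently, and without introducing $\mathcal A$: your two computations use only the three relations, which are cyclically invariant as a set, so repeating the same computations with the letters permuted proves the remaining four equalities directly.) Note finally that the paper itself does not prove this lemma but cites \cite[Lemma~2.15]{uawEquit}; with the correction above, your argument supplies a self-contained proof.
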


\begin{definition} \label{def:casCom} \rm
Let $\bf \Lambda$ denote the common value of the six expressions in Lemma
\ref{lem:cas6}. We call $\bf \Lambda $ the {\it Casimir element} of $U_q(\mathfrak{sl}_2)$.
\end{definition}

\begin{lemma} \label{lem:casCom} 
The element $\bf \Lambda$ generates the center of $U_q(\mathfrak{sl}_2)$. Moreover 
$\lbrace {\bf \Lambda}^i \rbrace_{i \in \mathbb N}$ forms a basis for this center, provided that
$q$ is not a root of unity.
\end{lemma}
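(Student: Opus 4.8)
The plan is to transport the statement to the Chevalley presentation of $U_q(\mathfrak{sl}_2)$, where the center is classically understood, by means of the isomorphism between the equitable and Chevalley presentations from \cite{equit}. First I would fix that isomorphism, under which the invertible generator $y^{\pm 1}$ corresponds to the group-like element $K^{\pm 1}$ while $x$ and $z$ become specific expressions in the Chevalley generators $E, F, K^{\pm 1}$. Starting from one convenient expression in Lemma \ref{lem:cas6}, say $qx+q^{-1}y+qz-qxyz$, I would then compute the image of ${\bf \Lambda}$ and verify that it has the form $\alpha C + \beta$ for scalars $\alpha, \beta \in \mathbb F$ with $\alpha \neq 0$, where $C = FE + (qK+q^{-1}K^{-1})/(q-q^{-1})^2$ is the standard Casimir element. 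Because an algebra isomorphism carries the center onto the center and $C$ is central for every admissible $q$, this affine relation establishes at once that ${\bf \Lambda}$ lies in the center and that $\mathbb F[{\bf \Lambda}] = \mathbb F[C]$ as subalgebras.

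With this affine relation in hand, I would invoke the classical description of the center of $U_q(\mathfrak{sl}_2)$: when $q$ is not a root of unity the center equals the polynomial algebra $\mathbb F[C]$, and $C$ is transcendental over $\mathbb F$ (see the standard references on quantum groups). Since $\alpha \neq 0$ we have $\mathbb F[{\bf \Lambda}] = \mathbb F[C]$, so ${\bf \Lambda}$ generates the center; and as $\{C^i\}_{i\in\mathbb N}$ is a basis of $\mathbb F[C]$ and $\{{\bf \Lambda}^i\}_{i\in\mathbb N}$ spans the same subalgebra, the powers of ${\bf \Lambda}$ form a basis as well. The hypothesis that $q$ avoids roots of unity enters precisely at this step: it is what guarantees that $C$, hence ${\bf \Lambda}$, satisfies no polynomial relation. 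When $q$ is a root of unity the center is strictly larger than $\mathbb F[C]$ and $C$ is algebraic over $\mathbb F$, so both the generation and the basis conclusions genuinely require the hypothesis, whereas the centrality of ${\bf \Lambda}$ holds unconditionally.

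The main obstacle will be the explicit computation of the image of ${\bf \Lambda}$: one must push the cubic term $xyz$ through the substitution and simplify, via the Chevalley relations, down to the single Casimir monomial $FE$ together with a Laurent expression in $K$, and then confirm that the leading coefficient $\alpha$ is nonzero. As a self-contained alternative that sidesteps the classical citation, I would instead fix a PBW-type basis of the equitable presentation (available from \cite{equit} or \cite{fduq}) and determine the center directly: writing an arbitrary central element in this basis, commutation with the invertible generator $y$ forces a sharp degree constraint, and imposing commutation with $x$ then pins the element down to a polynomial in ${\bf \Lambda}$. This route trades the classical input for a longer bookkeeping computation, which is why I would prefer the isomorphism approach.
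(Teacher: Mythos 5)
Your proposal is correct and takes essentially the same route as the paper: the paper's proof consists of citing \cite[Lemma~2.15]{uawEquit} for the affine relation between ${\bf \Lambda}$ and the standard Casimir element, together with \cite[Lemma~2.7, Proposition~2.18]{jantzen} for the classical description of the center when $q$ is not a root of unity. The only difference is that you propose to carry out the Chevalley-presentation computation explicitly (and note, correctly, that centrality of ${\bf \Lambda}$ holds unconditionally while generation and the basis claim require the root-of-unity hypothesis), where the paper outsources these steps to citations.
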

\begin{proof} By
\cite[Lemma 2.7,~Proposition 2.18]{jantzen} and \cite[Lemma~2.15]{uawEquit}.
\end{proof}

\noindent Next we discuss the elements $\nu_x$, $\nu_y$, $\nu_z$ of $U_q(\mathfrak{sl}_2)$. Rearranging the relations (\ref{eq:defREL})  we obtain
\begin{align*}
q (1-xy) = q^{-1}(1-yx), \qquad
q (1-yz) = q^{-1}(1-zy), \qquad
q (1-zx) = q^{-1}(1-xz).
\end{align*}
\begin{definition}\label{def:nu}
\rm (See \cite[Definition~3.1]{uawEquit}.)
Define
\begin{align*}
\nu_x &= q(1-yz) = q^{-1}(1-zy),   \\
\nu_y &= q(1-zx) = q^{-1}(1-xz),   \\
\nu_z &= q(1-xy) = q^{-1}(1-yx).
\end{align*}
\end{definition}

\noindent By Definition \ref{def:nu},
\begin{align*}
&xy = 1-q^{-1} \nu_z, \qquad \qquad yx = 1-q \nu_z,     \\
&yz = 1-q^{-1} \nu_x, \qquad \qquad zy = 1-q \nu_x,     \\
&zx = 1-q^{-1} \nu_y, \qquad \qquad  xz = 1-q \nu_y.
\end{align*}
\noindent It follows that
\begin{align*}
\frac{\lbrack x,y\rbrack}{q-q^{-1}} = \nu_z, \qquad \qquad 
\frac{\lbrack y,z\rbrack}{q-q^{-1}} = \nu_x, \qquad \qquad
\frac{\lbrack z,x\rbrack}{q-q^{-1}} = \nu_y.
\end{align*}
\noindent By \cite[Lemma~3.5]{uawEquit},
\begin{align*}
&x \nu_y = q^2 \nu_y x, \qquad \qquad 
y \nu_z = q^2 \nu_z y, \qquad \qquad 
z \nu_x = q^2 \nu_x z, 
\\
&  \nu_z x = q^{2} x \nu_z, \qquad \qquad
 \nu_x y= q^{2} y \nu_x, \qquad \qquad
\nu_y z= q^{2} z \nu_y.
\end{align*}
\noindent By \cite[Lemma~3.7]{uawEquit},
\begin{align*}
\frac{\lbrack x,\nu_x\rbrack}{q-q^{-1}} = y-z, \qquad \qquad 
\frac{\lbrack y,\nu_y\rbrack}{q-q^{-1}} = z-x, \qquad \qquad
\frac{\lbrack z,\nu_z \rbrack}{q-q^{-1}} = x-y.
\end{align*}
\noindent By \cite[Lemma~3.10]{uawEquit},
\begin{align*}
\frac{\lbrack \nu_x, \nu_y \rbrack_q }{q-q^{-1} } = 1-z^2, \qquad \quad 
\frac{\lbrack \nu_y, \nu_z \rbrack_q }{q-q^{-1} } = 1-x^2, \qquad \quad
 \frac{\lbrack \nu_z, \nu_x \rbrack_q }{q-q^{-1} } = 1-y^2. 
\end{align*}

\section{The $q$-tetrahedron algebra $\boxtimes_q$}

\noindent In this section we recall the $q$-tetrahedron algebra
$\boxtimes_q$ and review some of its properties. For more information on this algebra, see 
\cite{qtet, qinv, ItoTerwilliger1, evalTetq, miki, pospart, yy}.

\medskip
\noindent 
Let $\mathbb Z_4=\mathbb Z /4\mathbb Z$ denote the cyclic group of order 4.
\begin{definition} 
\label{def:qtet}
\rm 
\cite[Definition~6.1]{qtet}.
Let $\boxtimes_q$ denote the algebra defined by generators
\begin{align}
\lbrace x_{ij}\;|\;i,j \in 
\mathbb Z_4,
\;\;j-i=1 \;\mbox{\rm {or}}\;
j-i=2\rbrace
\label{eq:gen}
\end{align}
and the following relations:
\begin{enumerate}
\item[\rm (i)] For $i,j \in 
\mathbb Z_4$ such that $j-i=2$,
\begin{align}
 x_{ij}x_{ji} =1.
\label{eq:tet1}
\end{align}
\item[\rm (ii)] For $i,j,k \in 
\mathbb Z_4$ such that $(j-i,k-j)$ is one of $(1,1)$, $(1,2)$, 
$(2,1)$,
\begin{align}
\label{eq:tet2}
\frac{qx_{ij}x_{jk} - q^{-1} x_{jk}x_{ij}}{q-q^{-1}}=1.
\end{align}
\item[\rm (iii)] For $i,j,k,\ell \in 
\mathbb Z_4$ such that $j-i=k-j=\ell-k=1$,
\begin{align}
\label{eq:tet3}
x^3_{ij}x_{k\ell} 
- 
\lbrack 3 \rbrack_q
x^2_{ij}x_{k\ell} x_{ij}
+ 
\lbrack 3 \rbrack_q
x_{ij}x_{k\ell} x^2_{ij}
-
x_{k\ell} x^3_{ij} = 0.
\end{align}
\end{enumerate}
We call 
$\boxtimes_q$ the {\it $q$-tetrahedron algebra}.
The elements
(\ref{eq:gen})
 are called the {\it standard generators} of $\boxtimes_q$.
 The relations
 (\ref{eq:tet3}) are called the {\it $q$-Serre relaions}.
\end{definition}


\noindent  We just gave a presentation of $\boxtimes_q$ by generators and relations. We find it illuminating to describe 
this presentation with a diagram. This diagram is  a directed graph with vertex set
$\mathbb Z_4$.  Each standard generator $x_{ij}$ is represented by a directed arc from vertex $i$ to vertex $j$.  The diagram looks as follows:

\begin{center}

\begin{picture}(100,80)

\put(-50,50){\vector(1,0){50}}
\put(0,50){\line(1,0){50}}

\put(50,50){\vector(0,-1){50}}
\put(50,0){\line(0,-1){50}}

\put(50,-50){\vector(-1,0){50}}
\put(0,-50){\line(-1,0){50}}

\put(-50,-50){\vector(0,1){50}}
\put(-50,0){\line(0,1){50}}

\put(0,0){\vector(-1,1){25}}
\put(-25,25){\line(-1,1){25}}
\put(0,0){\vector(1,-1){25}}
\put(25,-25){\line(1,-1){25}}

\put(3,3){\vector(1,1){22}}
\put(25,25){\line(1,1){25}}
\put(-3,-3){\vector(-1,-1){22}}
\put(-25,-25){\line(-1,-1){25}}

\put(55,55){$0$}
\put(55,-60){$1$}
\put(-60,-60){$2$}
\put(-60,55){$3$}

\end{picture}
\end{center}
\bigskip

\vspace{2cm}

\noindent  The defining relations for $\boxtimes_q$ can be read off the diagram as follows.
 For any two arcs with the same endpoints and pointing in the  opposite direction, the corresponding
generators are inverses. For any two arcs that create a directed path of length two, the corresponding generators $r$, $s$ satisfy 
\begin{align*} 
\frac{qrs - q^{-1} sr}{q-q^{-1}} = 1.
\end{align*}
For any two arcs that are distinct and parallel (horizontal or vertical), the corresponding generators satisfy the $q$-Serre relations.


\begin{lemma} 
\label{lem:rho}
There exists an 
automorphism
$\rho $ of $\boxtimes_q$ that sends each
standard generator $x_{ij}$ to
$x_{i+1,j+1}$. Moreover
$\rho^4 = 1$.
\end{lemma}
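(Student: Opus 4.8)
The statement to prove is Lemma~\ref{lem:rho}: there is an automorphism $\rho$ of $\boxtimes_q$ cyclically shifting the indices, $x_{ij} \mapsto x_{i+1,j+1}$, with $\rho^4 = 1$. Since $\boxtimes_q$ is presented by generators and relations (Definition~\ref{def:qtet}), the natural approach is to define $\rho$ on the standard generators by the prescribed shift and then verify that this assignment respects every defining relation. Once the relations are preserved, the universal property of a presentation guarantees that $\rho$ extends (uniquely) to an algebra endomorphism of $\boxtimes_q$. The plan is therefore to organize the verification relation-by-relation and then handle the order.

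\textbf{Well-definedness.} First I would observe that the shift $i \mapsto i+1$ acts on $\mathbb Z_4$, and that it preserves the two conditions $j-i=1$ and $j-i=2$ that select the standard generators in~(\ref{eq:gen}); so the map $x_{ij}\mapsto x_{i+1,j+1}$ sends standard generators to standard generators. Next I would check each family of defining relations. For the inverse relations~(\ref{eq:tet1}), the condition $j-i=2$ is preserved under the shift, so $x_{ij}x_{ji}=1$ maps to $x_{i+1,j+1}x_{j+1,i+1}=1$, which is again a relation of this type. For the $q$-commutator relations~(\ref{eq:tet2}), the condition that $(j-i,k-j)$ lies in $\{(1,1),(1,2),(2,1)\}$ depends only on the differences $j-i$ and $k-j$, which are unchanged by the simultaneous shift of all three indices; hence each such relation maps to another relation of the same form. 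The same reasoning applies to the $q$-Serre relations~(\ref{eq:tet3}): the hypothesis $j-i=k-j=\ell-k=1$ is preserved under the shift, so each $q$-Serre relation is carried to a $q$-Serre relation. Since every defining relation is sent to a defining relation, the prescribed assignment defines an algebra endomorphism $\rho$ of $\boxtimes_q$.

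\textbf{Order and invertibility.} Finally I would compute $\rho^4$. Applying the shift four times sends $x_{ij}\mapsto x_{i+4,j+4}=x_{ij}$, because $i+4=i$ and $j+4=j$ in $\mathbb Z_4$. Thus $\rho^4$ fixes every standard generator, and since these generate $\boxtimes_q$ we conclude $\rho^4=1$ as an endomorphism. In particular $\rho$ is invertible with inverse $\rho^3$, so $\rho$ is an automorphism, as claimed.

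\textbf{Main obstacle.} This argument is almost entirely bookkeeping, and the only real content is the invariance of the three selection/relation conditions under the index shift. I do not anticipate a genuine difficulty, but the one point that must be stated carefully is that the map is well-defined on the \emph{presented} algebra: the verification above shows the relations are permuted among themselves, which is exactly what the universal property requires, so no independent check of consistency is needed. The mild subtlety worth flagging is that the $q$-commutator relation~(\ref{eq:tet2}) is not symmetric in $(i,j,k)$ — it distinguishes the directed path $x_{ij}x_{jk}$ from $x_{jk}x_{ij}$ — so I would note explicitly that the shift preserves the \emph{ordered} difference data, not merely the unordered endpoints, which is what keeps the relation in the allowed list $\{(1,1),(1,2),(2,1)\}$.
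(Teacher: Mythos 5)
Your proposal is correct and takes essentially the same approach as the paper: the paper's entire proof is the single line ``By Definition~\ref{def:qtet},'' which is shorthand for exactly the observation you make explicit, namely that the cyclic index shift permutes the defining relations among themselves, so the universal property of the presentation yields the endomorphism, and $\rho^4$ fixing every generator gives both $\rho^4=1$ and invertibility. Your write-up simply supplies the bookkeeping the paper leaves implicit.
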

\begin{proof} By Definition \ref{def:qtet}.
\end{proof}

\begin{lemma}
\label{lem:miki}
{\rm (See \cite[Proposition~4.3]{miki}.)}
For $i \in \mathbb Z_4$ there exists an
algebra homomorphism 
$\kappa_i: U_q(\mathfrak{sl}_2)\to \boxtimes_q$ that sends
\begin{eqnarray*}
x \mapsto x_{i+2,i+3},
\qquad 
y \mapsto x_{i+3,i+1},
\qquad
y^{-1} \mapsto x_{i+1,i+3},
\qquad 
z \mapsto x_{i+1,i+2}.
\end{eqnarray*}
This homomorphism is injective.
\end{lemma}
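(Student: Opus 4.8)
The plan is to treat existence and injectivity separately. For existence, I would verify that the four images satisfy the defining relations (\ref{eq:defREL}) of $U_q(\mathfrak{sl}_2)$ together with $yy^{-1}=1=y^{-1}y$; by the presentation in Definition \ref{def:equit}, matching these relations is enough to produce the algebra homomorphism $\kappa_i$. The verification is pure index bookkeeping in $\mathbb Z_4$. The image of $yy^{-1}=1$ reads $x_{i+3,i+1}x_{i+1,i+3}=1$, an instance of (\ref{eq:tet1}) since $(i+1)-(i+3)=2$, and similarly for $y^{-1}y=1$. For the three $q$-commutator relations I would trace the directed paths $i+2 \to i+3 \to i+1$, $i+3 \to i+1 \to i+2$, and $i+1 \to i+2 \to i+3$ in the defining diagram; their consecutive index-differences are $(1,2)$, $(2,1)$, and $(1,1)$, each one of the three patterns allowed in (\ref{eq:tet2}). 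Reading off (\ref{eq:tet2}) for these paths reproduces exactly the images of $\frac{qxy-q^{-1}yx}{q-q^{-1}}=1$, $\frac{qyz-q^{-1}zy}{q-q^{-1}}=1$, and $\frac{qzx-q^{-1}xz}{q-q^{-1}}=1$. The $q$-Serre relations (\ref{eq:tet3}) play no role, since the only $\boxtimes_q$-relations one needs to invoke are (\ref{eq:tet1}) and (\ref{eq:tet2}).

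Before addressing injectivity, I would reduce to a single value of $i$. Using the automorphism $\rho$ of Lemma \ref{lem:rho}, which sends $x_{jk}\mapsto x_{j+1,k+1}$, a check on the four generators gives $\rho\circ\kappa_i=\kappa_{i+1}$ for all $i\in\mathbb Z_4$; for instance $\rho(\kappa_i(x))=\rho(x_{i+2,i+3})=x_{i+3,i+4}=\kappa_{i+1}(x)$. Since $\rho$ is an automorphism, $\kappa_i$ is injective if and only if $\kappa_{i+1}$ is, so it suffices to prove injectivity of $\kappa_0$, which sends $x\mapsto x_{23}$, $y\mapsto x_{31}$, $y^{-1}\mapsto x_{13}$, $z\mapsto x_{12}$.

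Injectivity is where I expect the main obstacle to lie: the defining relations of $\boxtimes_q$ do not on their own rule out collapse, so genuine structural input about $\boxtimes_q$ is unavoidable. My preferred route is representation-theoretic. Fixing a PBW-type basis of monomials in $x,y^{\pm1},z$ for $U_q(\mathfrak{sl}_2)$, it suffices to show that a nonzero $u$ cannot satisfy $\kappa_0(u)=0$. I would exhibit a separating family of $\boxtimes_q$-modules whose pullbacks along $\kappa_0$ realize enough $U_q(\mathfrak{sl}_2)$-modules to detect $u$: concretely, every finite-dimensional highest-weight $U_q(\mathfrak{sl}_2)$-module, and (to separate points) the relevant Verma modules, should arise as the restriction via $\kappa_0$ of a $\boxtimes_q$-module built from evaluation modules for the quantum-loop-algebra realization of $\boxtimes_q$; then $\kappa_0(u)=0$ forces $u$ to annihilate all of them, whence $u=0$. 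An equivalent, more combinatorial route would invoke a PBW basis for $\boxtimes_q$ in which the monomials $x_{23}^{\,a}x_{31}^{\,b}x_{12}^{\,c}$ (with $x_{13}=x_{31}^{-1}$) occur among the basis elements, so that the images of the PBW basis of $U_q(\mathfrak{sl}_2)$ are visibly linearly independent. Either way the crux is the single structural fact that $\kappa_0$ carries a PBW basis of $U_q(\mathfrak{sl}_2)$ to a linearly independent subset of $\boxtimes_q$, and establishing this is the technical heart of the matter.
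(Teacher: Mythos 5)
Your existence argument is correct, and it is essentially the only verification one could give: the images of $yy^{-1}=1=y^{-1}y$ are instances of (\ref{eq:tet1}), and the three $q$-commutator relations of Definition \ref{def:equit} map onto instances of (\ref{eq:tet2}) along the directed paths with index-differences $(1,2)$, $(2,1)$, $(1,1)$, exactly as you say; the reduction to $i=0$ via the automorphism $\rho$ of Lemma \ref{lem:rho} is also sound. Be aware, however, that the paper itself offers no proof of this lemma at all: both the existence and, crucially, the injectivity are quoted from Miki \cite[Proposition~4.3]{miki}. So your proposal must stand on its own for injectivity, and there it has a genuine gap.

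Neither of your two routes to injectivity is carried out, and neither can be completed by elementary means. For the PBW route, you would need a basis of $\boxtimes_q$ containing the monomials $x_{23}^{\,a}x_{31}^{\,b}x_{12}^{\,c}$; no such basis is exhibited in your argument, and none is available from the defining relations alone --- producing one is comparable in difficulty to the injectivity statement itself. For the representation-theoretic route, you would need, for each nonzero $u\in U_q(\mathfrak{sl}_2)$, a $\boxtimes_q$-module whose pullback along $\kappa_0$ does not annihilate $u$; constructing such a separating family is precisely the hard content, and your sketch has an internal inconsistency: evaluation modules for $\boxtimes_q$ are finite-dimensional, so Verma modules of $U_q(\mathfrak{sl}_2)$ cannot arise as their restrictions. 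Moreover, separating points of $U_q(\mathfrak{sl}_2)$ by finite-dimensional modules requires hypotheses on $q$ and $\mathbb F$ (e.g.\ $q$ not a root of unity) that go beyond the standing assumption $q^4\neq 1$ made in Sections 14--17. The only known proof of injectivity, Miki's, proceeds through the relationship between $\boxtimes_q$ and the quantum loop algebra $U_q(L\mathfrak{sl}_2)$, which is a substantial theorem. In short: you have correctly located the difficulty, but what you have actually proved is existence only; injectivity should either be cited (as the paper does) or requires the quantum-loop-algebra machinery you only allude to.
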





\noindent Recall the Casimir element $\bf \Lambda $
of $U_q(\mathfrak{sl}_2)$, from
Definition \ref{def:casCom}.

\begin{definition} 
\label{def:casi}
\rm For $i \in \mathbb Z_4$ let $\Upsilon_i$ denote
the image of $\bf \Lambda$ under the injection $\kappa_i$ 
from Lemma
\ref{lem:miki}.
\end{definition}

\noindent The elements $\Upsilon_i$ from Definition
\ref{def:casi} are not central in $\boxtimes_q$. However,
we do have the following.

\begin{lemma} \label{lem:CasP}
For $i \in \mathbb Z_4$ the element $\Upsilon_i$ commutes with
each of
\begin{eqnarray*}
x_{i+2,i+3},\qquad
x_{i+3,i+1},\qquad
x_{i+1,i+3},\qquad
x_{i+1,i+2}.
\end{eqnarray*}
\end{lemma}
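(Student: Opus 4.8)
The plan is to leverage the injectivity and homomorphism property of $\kappa_i$ from Lemma \ref{lem:miki}, together with the fact that $\mathbf{\Lambda}$ is central in $U_q(\mathfrak{sl}_2)$. By Lemma \ref{lem:casCom}, the Casimir element $\mathbf{\Lambda}$ generates the center of $U_q(\mathfrak{sl}_2)$; in particular $\mathbf{\Lambda}$ commutes with each of the equitable generators $x, y^{\pm 1}, z$. Since $\kappa_i$ is an algebra homomorphism, it carries this commutation relation forward: $\Upsilon_i = \kappa_i(\mathbf{\Lambda})$ must commute with the images $\kappa_i(x), \kappa_i(y), \kappa_i(y^{-1}), \kappa_i(z)$.

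First I would recall from Lemma \ref{lem:miki} the explicit images of the generators under $\kappa_i$, namely
\begin{align*}
\kappa_i(x) = x_{i+2,i+3}, \qquad \kappa_i(y) = x_{i+3,i+1}, \qquad \kappa_i(y^{-1}) = x_{i+1,i+3}, \qquad \kappa_i(z) = x_{i+1,i+2}.
\end{align*}
These four images are precisely the four standard generators listed in the statement of Lemma \ref{lem:CasP}. Next I would observe that because $\mathbf{\Lambda}$ is central (Lemma \ref{lem:casCom}), we have $\mathbf{\Lambda} g = g \mathbf{\Lambda}$ for each $g \in \lbrace x, y, y^{-1}, z \rbrace$ in $U_q(\mathfrak{sl}_2)$.

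Applying the homomorphism $\kappa_i$ to each such relation $\mathbf{\Lambda} g = g \mathbf{\Lambda}$ and using that $\kappa_i$ respects multiplication, I obtain $\kappa_i(\mathbf{\Lambda}) \kappa_i(g) = \kappa_i(g) \kappa_i(\mathbf{\Lambda})$, i.e.\ $\Upsilon_i$ commutes with $\kappa_i(g)$ for each of the four generators. This gives exactly the desired conclusion.

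There is essentially no serious obstacle here, since this is a direct transport of a known centrality fact through an algebra homomorphism. The only point requiring care is confirming that Lemma \ref{lem:casCom} indeed yields that $\mathbf{\Lambda}$ commutes with each equitable generator (not merely that it generates the center abstractly); this follows immediately from centrality, since every element of the center commutes with everything, in particular with $x, y^{\pm 1}, z$. I would note explicitly that $y$ commuting with $\Upsilon_i$ forces $y^{-1}$ to commute as well, so listing $\kappa_i(y^{-1})$ among the four generators is consistent. No further computation is needed.
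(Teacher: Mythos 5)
Your proposal is correct and follows exactly the paper's own argument: the paper proves Lemma \ref{lem:CasP} in one line by citing Lemma \ref{lem:miki} (the homomorphism $\kappa_i$) together with the centrality of $\bf \Lambda$ in $U_q(\mathfrak{sl}_2)$, which is precisely the transport-through-a-homomorphism argument you spell out. Your additional details (the explicit images of $x, y^{\pm 1}, z$ under $\kappa_i$ and the remark about $y^{-1}$) are just an expanded version of the same proof.
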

\begin{proof}
By Lemma
\ref{lem:miki} and since 
$\bf \Lambda$ is central in
 $U_q(\mathfrak{sl}_2)$.
\end{proof}

\section{ The $t$-segregated $\boxtimes_q$-modules}
\noindent We continue to discuss the algebra $\boxtimes_q$ from Definition \ref{def:qtet}.
 In \cite{evalTetq} we introduced a type of $\boxtimes_q$-module, called an evaluation module. 
An evaluation module comes with a nonzero scalar parameter $t$, called the evaluation parameter.
In \cite[Lemmas~9.4,~9.5]{evalTetq} we showed that on a $t$-evaluation $\boxtimes_q$-module the standard generators satisfy
ten  attractive equations involving the commutator map and $t$; these equations are
given in Definition
\ref{def:tseg} 
below.
It turns out that there exist nonevaluation $\boxtimes_q$-modules on which  the ten equations are satisfied;
this fact motivates the following definition.

\begin{definition}
\label{def:tseg} 
\rm
For $0 \not= t\in \mathbb F$, a $\boxtimes_q$-module is called {\it $t$-segregated} whenever it is nonzero, finite-dimensional, and
the following equations hold on the module:
\begin{align}
t(x_{01}-x_{23}) =
\frac{\lbrack x_{30}, x_{12}\rbrack}{q-q^{-1}},
\qquad \qquad
t^{-1}(x_{12}-x_{30}) =
\frac{\lbrack x_{01}, x_{23}\rbrack}{q-q^{-1}}
\label{eq:0123}
\end{align}
and
\begin{align}
&
t (x_{01}-x_{02}) = \frac{\lbrack x_{30}, x_{02}\rbrack}{q-q^{-1}},
\qquad \qquad 
t^{-1} (x_{12}-x_{13}) = 
\frac{\lbrack x_{01}, x_{13}\rbrack}{q-q^{-1}},
\label{eq:four1}
\\
&
t (x_{23}-x_{20}) = \frac{\lbrack x_{12}, x_{20}\rbrack}{q-q^{-1}},
\qquad \qquad 
t^{-1} (x_{30}-x_{31}) = \frac{\lbrack x_{23}, x_{31}\rbrack}{q-q^{-1}}
\label{eq:four2}
\end{align}
and 
\begin{align}
&
t^{-1}(x_{30}-x_{20}) = \frac{\lbrack x_{20},x_{01}\rbrack}{q-q^{-1}},
\qquad \qquad 
t(x_{01}-x_{31}) = \frac{\lbrack x_{31},x_{12}\rbrack}{q-q^{-1}},
\label{eq:four3}
\\
&
t^{-1}(x_{12}-x_{02}) = \frac{\lbrack x_{02},x_{23}\rbrack}{q-q^{-1}},
\qquad \qquad 
t(x_{23}-x_{13}) = \frac{\lbrack x_{13},x_{30}\rbrack}{q-q^{-1}}.
\label{eq:four4}
\end{align}
\end{definition}

\noindent Recall from Definition \ref{def:casi}
the elements $\lbrace \Upsilon_i \rbrace_{i \in \mathbb Z_4}$ in $\boxtimes_q$. We next
consider how these elements act on a $t$-segregated $\boxtimes_q$-module.  Our results
on this topic are given in Lemmas \ref{lem:upsfour}--\ref{lem:aw} below.
These lemmas are proven in \cite[Lemmas~9.11,~9.12,~9.14]{evalTetq} for a $t$-evaluation  $\boxtimes_q$-module;
however the proofs essentially use only the ten equations in
Definition \ref{def:tseg} and consequently apply to every $t$-segregated $\boxtimes_q$-module.

\begin{lemma} 
\label{lem:upsfour} Let $V$ denote a $t$-segregated  $\boxtimes_q$-module.
 Then the action of $\Upsilon_i$ on $V$ is independent of $i \in \mathbb Z_4$.
Denote this common action by $\Upsilon$. Then on $V$,
\begin{align*}
&
\Upsilon = t(x_{01}x_{23}-1)+qx_{30}+q^{-1}x_{12},
 \qquad 
\Upsilon = t^{-1}(x_{12}x_{30}-1)+qx_{01}+q^{-1}x_{23},
\\
&
\Upsilon = t(x_{23}x_{01}-1)+qx_{12}+q^{-1}x_{30},
 \qquad 
\Upsilon = t^{-1}(x_{30}x_{12}-1)+qx_{23}+q^{-1}x_{01}.
\end{align*}
\end{lemma}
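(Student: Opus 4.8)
The plan is to compute each $\Upsilon_i$ explicitly as an operator on $V$ and then simplify it using the defining relations of $\boxtimes_q$ (Definition \ref{def:qtet}) together with the ten $t$-segregated relations of Definition \ref{def:tseg}. Recall $\Upsilon_i=\kappa_i({\bf\Lambda})$ and that ${\bf\Lambda}$ admits the six forms in Lemma \ref{lem:cas6}. First I would treat $i=0$: applying $\kappa_0$ (with $x\mapsto x_{23}$, $y\mapsto x_{31}$, $z\mapsto x_{12}$) to the form ${\bf\Lambda}=qx+q^{-1}y+qz-qxyz$ gives
\begin{align*}
\Upsilon_0=qx_{23}+q^{-1}x_{31}+qx_{12}-q\,x_{23}x_{31}x_{12}.
\end{align*}
The target is to rewrite this using only the edge generators $x_{01},x_{12},x_{23},x_{30}$, so that $\Upsilon_0$ takes the first displayed form $t(x_{01}x_{23}-1)+qx_{30}+q^{-1}x_{12}$.

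The heart of the computation is to eliminate the diagonal generator $x_{31}$ and to collapse the cubic term $x_{23}x_{31}x_{12}$ into a single quadratic product plus linear corrections. For this I would use the inversion relation (\ref{eq:tet1}) $x_{13}x_{31}=1$, the $q$-commutator relations (\ref{eq:tet2}) available among the consecutive pairs in $x_{23},x_{31},x_{12}$, and the two segregated relations (\ref{eq:four2}), (\ref{eq:four3}) that tie $x_{31}$ to the edges $x_{30}$ and $x_{01}$. Sequencing these correctly should move every diagonal out and produce the product $x_{01}x_{23}$ with coefficient $t$, yielding the first form. I expect this reduction to be the main obstacle: one must order the $\boxtimes_q$-relations and the segregated relations carefully and track exactly the powers of $q$ and the factors $t^{\pm1}$, since the product $x_{01}x_{23}$ of opposite edges is governed by no single relation and only emerges from the cubic term.

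Next I would show that the four displayed right-hand sides, written $E_0,E_1,E_2,E_3$ in order, coincide on $V$. The equality $E_0=E_2$ is immediate from the second relation in (\ref{eq:0123}): subtracting gives $E_0-E_2=t\lbrack x_{01},x_{23}\rbrack-(q-q^{-1})(x_{12}-x_{30})$, which vanishes by that relation; likewise $E_1-E_3=t^{-1}\lbrack x_{12},x_{30}\rbrack+(q-q^{-1})(x_{01}-x_{23})$ vanishes by the first relation in (\ref{eq:0123}). To link the even and odd pairs I would recompute $\Upsilon_0$ from a form in Lemma \ref{lem:cas6} with the reversed triple product (e.g. $q^{-1}x+qy+q^{-1}z-q^{-1}zyx$); its reduction introduces a factor $t^{-1}$ and lands on one of the odd expressions, giving the missing cross-equality, so that $E_0=E_1=E_2=E_3$.

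Finally, for the independence of $\Upsilon_i$ from $i$, I would use the automorphism $\rho$ of Lemma \ref{lem:rho}. Since $\rho\kappa_i=\kappa_{i+1}$ we have $\rho(\Upsilon_i)=\Upsilon_{i+1}$; while $\rho$ by itself converts the ten $t$-segregated relations into their $t\mapsto t^{-1}$ analogues, the square $\rho^2$ restores the parameter $t$ and so preserves the $t$-segregated setup, with $\rho^2(\Upsilon_i)=\Upsilon_{i+2}$ and $\rho^2(E_i)=E_{i+2}$. Hence the identity $\Upsilon_0=E_0$ forces $\Upsilon_2=E_2$, and a parallel direct reduction of $\Upsilon_1$ (now eliminating the diagonal $x_{02}$ via (\ref{eq:four1}), (\ref{eq:four4})) giving $\Upsilon_1=E_1$ forces $\Upsilon_3=E_3$. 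Combined with $E_0=E_1=E_2=E_3$ from the previous step, this yields $\Upsilon_0=\Upsilon_1=\Upsilon_2=\Upsilon_3$; their common value is the $\Upsilon$ of the statement, exhibited in all four displayed forms.
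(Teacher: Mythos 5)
Your proposal addresses a statement that the paper never proves internally: the paper's ``proof'' is the remark above Lemma \ref{lem:upsfour}, which cites \cite[Lemmas~9.11,~9.12,~9.14]{evalTetq} and observes that those proofs use only the ten equations of Definition \ref{def:tseg}. Your self-contained derivation is therefore a genuinely different route, and its core is sound; it also serves to verify the paper's claim that only the ten equations are needed. Writing, as you do, $E_0,E_1,E_2,E_3$ for the four displayed right-hand sides, the whole reduction can be organized around two identities that follow from exactly the relations you cite: on $V$, the $q$-commutators (\ref{eq:tet2}) for the pairs $(x_{23},x_{31})$, $(x_{31},x_{12})$ combined with the segregated relations (\ref{eq:four2}), (\ref{eq:four3}) give
\begin{align*}
x_{23}x_{31} = 1 - q^{-1}t^{-1}(x_{30}-x_{31}),
\qquad \qquad
x_{31}x_{12} = 1 - q^{-1}t(x_{01}-x_{31}),
\end{align*}
equivalently $\kappa_0(\nu_z)=t^{-1}(x_{30}-x_{31})$ and $\kappa_0(\nu_x)=t(x_{01}-x_{31})$ on $V$; substituting one identity into the cubic term $q\,x_{23}x_{31}x_{12}$ and then applying the other cancels every occurrence of the diagonal generator $x_{31}$ and lands on one of the $E_j$ in a few lines. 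Your verification that $E_0=E_2$ and $E_1=E_3$ follow from (\ref{eq:0123}) is correct, and your $\rho^2$-twisting argument is legitimate: since the identity for $\Upsilon_0$ is established for every $t$-segregated module, it applies to the module twisted by $\rho^2$ (again $t$-segregated, because $\rho^2$ permutes the ten relations with parameter $t$), yielding $\Upsilon_2=E_2$, and similarly for the other indices.

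The one detail to repair is your forecast of where the reductions land, and it matters precisely for the even--odd cross-equality on which the lemma hinges. With the natural reductions, $\kappa_0$ applied to $qx+q^{-1}y+qz-qxyz$ does \emph{not} give $E_0$: clearing the left pair $x_{23}x_{31}$ of the cubic first gives $E_3=t^{-1}(x_{30}x_{12}-1)+qx_{23}+q^{-1}x_{01}$, while clearing the right pair $x_{31}x_{12}$ first gives $E_2=t(x_{23}x_{01}-1)+qx_{12}+q^{-1}x_{30}$. Likewise the reversed form $q^{-1}x+qy+q^{-1}z-q^{-1}zyx$ gives $E_1$ or $E_0$ depending on which pair is cleared first. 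So if you execute the plan literally (form one, then form two), you may obtain two expressions of the \emph{same} parity, in which case the even and odd families remain unlinked. The fix is simpler than switching Casimir forms: reduce the same cubic in its two possible orders. As just noted, the two orders always produce expressions of opposite parity (the factor $t^{\pm 1}$ from the segregated relation attaches to different quadratic products), so a single form already yields $E_3=\Upsilon_0=E_2$; combined with your two equalities from (\ref{eq:0123}) this forces $E_0=E_1=E_2=E_3$, and the twisting argument then completes the lemma.
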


\noindent By Lemmas  \ref{lem:CasP}, \ref{lem:upsfour} we find that on a $t$-segregated $\boxtimes_q$-module,
the element $\Upsilon$ commutes with everything in $\boxtimes_q$.

\begin{lemma} 
\label{lem:ups2}
On a $t$-segregated  $\boxtimes_q$-module,
\begin{align*}
&
\Upsilon = (q+q^{-1})x_{30} + t
\biggl(\frac{q x_{01} x_{23} -q^{-1} x_{23} x_{01}}{q-q^{-1}} -1 \biggr),
\\
&
\Upsilon = (q+q^{-1})x_{01} + t^{-1}
\biggl(\frac{q x_{12} x_{30} -q^{-1} x_{30} x_{12}}{q-q^{-1}} -1\biggr),
\\
&
\Upsilon
= (q+q^{-1})x_{12} + t
\biggl(\frac{q x_{23} x_{01} -q^{-1} x_{01} x_{23}}{q-q^{-1}} -1 \biggr),
\\
&
\Upsilon= (q+q^{-1})x_{23} + t^{-1}
\biggl(\frac{q x_{30} x_{12} -q^{-1} x_{12} x_{30}}{q-q^{-1}} -1 \biggr).
\end{align*}
\end{lemma}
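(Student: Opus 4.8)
The plan is to derive each of the four displayed formulas directly from the corresponding expression for $\Upsilon$ in Lemma \ref{lem:upsfour}, by converting the $q$-commutators that appear here into the ordinary commutators recorded in the relations (\ref{eq:0123}). The one algebraic fact doing all the work is the identity
\[
\frac{q X Y - q^{-1} Y X}{q - q^{-1}} = q^{-1}\,\frac{[X,Y]}{q-q^{-1}} + X Y ,
\]
valid for any two elements $X,Y$ of an algebra; I would record it first, with the one-line verification that $qXY-q^{-1}YX = q^{-1}(XY-YX)+(q-q^{-1})XY$.

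First I would treat the first formula. Applying the identity above with $X=x_{01}$, $Y=x_{23}$ rewrites the bracketed $q$-commutator as $q^{-1}\frac{[x_{01},x_{23}]}{q-q^{-1}}+x_{01}x_{23}$. The second relation in (\ref{eq:0123}), namely $\frac{[x_{01},x_{23}]}{q-q^{-1}}=t^{-1}(x_{12}-x_{30})$, then eliminates the commutator, giving
\[
t\Bigl(\frac{q x_{01}x_{23} - q^{-1} x_{23}x_{01}}{q-q^{-1}} - 1\Bigr) = q^{-1}(x_{12} - x_{30}) + t\bigl(x_{01}x_{23} - 1\bigr).
\]
Adding $(q+q^{-1})x_{30}$ and combining the two multiples of $x_{30}$ (the coefficient collapses to $q$) yields $t(x_{01}x_{23}-1)+qx_{30}+q^{-1}x_{12}$, which is precisely the first expression for $\Upsilon$ in Lemma \ref{lem:upsfour}. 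This settles the first line and serves as the template for the rest.

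The remaining three formulas are handled in exactly the same way: split off an ordinary commutator via the displayed identity, substitute one of the two relations in (\ref{eq:0123}), and watch the single coefficient collapse so that the result matches a line of Lemma \ref{lem:upsfour}. For the formulas whose $q$-commutator is built from the pair $(x_{12},x_{30})$ or $(x_{30},x_{12})$ one uses the first relation $\frac{[x_{30},x_{12}]}{q-q^{-1}}=t(x_{01}-x_{23})$, while for the pairs built from $(x_{01},x_{23})$ and $(x_{23},x_{01})$ one uses the second. There is no genuine obstacle here; the only thing to watch is the antisymmetry of the commutator when the pair appears in the order opposite to the one in (\ref{eq:0123}) (so that, for instance, $\frac{[x_{23},x_{01}]}{q-q^{-1}}=t^{-1}(x_{30}-x_{12})$ and $\frac{[x_{12},x_{30}]}{q-q^{-1}}=t(x_{23}-x_{01})$), together with keeping track of which power of $t$ multiplies the bracket. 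Since $t$ is invertible this is routine bookkeeping, and the lemma follows.
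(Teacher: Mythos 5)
Your proposal is correct: all four identities do follow from Lemma \ref{lem:upsfour} combined with the relations (\ref{eq:0123}), and your coefficient bookkeeping (including the antisymmetry signs $\frac{[x_{23},x_{01}]}{q-q^{-1}}=t^{-1}(x_{30}-x_{12})$ and $\frac{[x_{12},x_{30}]}{q-q^{-1}}=t(x_{23}-x_{01})$) checks out in every case. Note that the paper itself gives no in-text proof of this lemma — it defers to \cite[Lemma~9.12]{evalTetq}, remarking that the argument uses only the ten equations of Definition \ref{def:tseg} — and your computation is precisely the routine derivation that citation stands for, so you have simply supplied the details the paper outsources.
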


\begin{lemma} 
\label{lem:aw}
Let $V$ denote a $t$-segregated $\boxtimes_q$-module. 
Then $x_{01}, x_{23}$
satisfy the following on $V$:
\begin{eqnarray*}
&&
x_{01}^2 x_{23} 
-
(q^2+q^{-2})x_{01} x_{23} x_{01}
+
x_{23} x_{01}^2 \\
&& \qquad \qquad 
=\; -(q-q^{-1})^2 (1+ t^{-1} \Upsilon)x_{01}
+
(q-q^{-1})(q^2-q^{-2})t^{-1},
\\
&&
x_{23}^2 x_{01} 
-
(q^2+q^{-2})x_{23} x_{01} x_{23} 
+
x_{01} x_{23}^2 \\
&& \qquad \qquad 
=\; -(q-q^{-1})^2 (1+ t^{-1} \Upsilon)x_{23}
+
(q-q^{-1})(q^2-q^{-2})t^{-1}.
\end{eqnarray*}
Moreover $x_{12}, x_{30}$
satisfy the following on $V$:
\begin{eqnarray*}
&&
x_{12}^2 x_{30} 
-
(q^2+q^{-2})x_{12} x_{30} x_{12}
+
x_{30} x_{12}^2 \\
&& \qquad \qquad 
=\; -(q-q^{-1})^2 (1+ t \Upsilon)x_{12}
+
(q-q^{-1})(q^2-q^{-2})t,
\\
&&
x_{30}^2 x_{12} 
-
(q^2+q^{-2})x_{30} x_{12} x_{30} 
+
x_{12} x_{30}^2 \\
&& \qquad \qquad 
=\; -(q-q^{-1})^2 (1+ t \Upsilon)x_{30}
+
(q-q^{-1})(q^2-q^{-2})t.
\end{eqnarray*}
\end{lemma}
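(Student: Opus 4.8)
The plan is to recognize the left-hand side of each of the four displayed equations as the iterated bracket $\lbrack x_{ij},\lbrack x_{ij},x_{k\ell}\rbrack_q\rbrack_{q^{-1}}$ and then to linearize the inner bracket using Lemma \ref{lem:ups2}. By the first identity displayed at the start of Section 3,
\[
\lbrack X,\lbrack X,Y\rbrack_q\rbrack_{q^{-1}} = X^2Y - (q^2+q^{-2})XYX + YX^2,
\]
so the left-hand side of the first relation is exactly $\lbrack x_{01},\lbrack x_{01},x_{23}\rbrack_q\rbrack_{q^{-1}}$, and similarly for the other three (with the two parallel generators, and hence the inner arguments, permuted appropriately).

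First I would treat the pair $x_{01},x_{23}$. Solving the first equation of Lemma \ref{lem:ups2} for the $q$-commutator gives
\[
\lbrack x_{01},x_{23}\rbrack_q = (q-q^{-1})\bigl(1 + t^{-1}\Upsilon - (q+q^{-1})t^{-1}x_{30}\bigr).
\]
I would then apply the $\mathbb F$-linear map $\lbrack x_{01},-\rbrack_{q^{-1}}$ to both sides, using three inputs. Since $\Upsilon$ is central on the module (by Lemmas \ref{lem:CasP}, \ref{lem:upsfour}), one has $\lbrack x_{01},\Upsilon\rbrack_{q^{-1}} = -(q-q^{-1})x_{01}\Upsilon$ and $\lbrack x_{01},1\rbrack_{q^{-1}} = -(q-q^{-1})x_{01}$. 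The directed path $3\to 0\to 1$ in the defining diagram gives, via the relation of Definition \ref{def:qtet}(ii), the identity $qx_{30}x_{01}-q^{-1}x_{01}x_{30}=q-q^{-1}$, whence $\lbrack x_{01},x_{30}\rbrack_{q^{-1}} = -(q-q^{-1})$. Substituting and collecting terms yields
\[
\lbrack x_{01},\lbrack x_{01},x_{23}\rbrack_q\rbrack_{q^{-1}} = -(q-q^{-1})^2(1+t^{-1}\Upsilon)x_{01} + (q+q^{-1})(q-q^{-1})^2 t^{-1},
\]
and the constant coefficient collapses via $(q+q^{-1})(q-q^{-1})^2=(q-q^{-1})(q^2-q^{-2})$ to precisely the stated right-hand side.

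The remaining three relations follow by the identical two-step mechanism, each time selecting the correct instance of Lemma \ref{lem:ups2} and the correct directed path. For the $x_{23},x_{01}$ relation I would use the third equation of Lemma \ref{lem:ups2}, whose adjacent generator is $x_{12}$ and whose path $1\to2\to3$ gives $\lbrack x_{23},x_{12}\rbrack_{q^{-1}}=-(q-q^{-1})$; for the $x_{12},x_{30}$ relation the second equation (adjacent generator $x_{01}$, path $0\to1\to2$); and for the $x_{30},x_{12}$ relation the fourth equation (adjacent generator $x_{23}$, path $2\to3\to0$). In each case the scalar multiplying $\Upsilon$ is the reciprocal of the scalar appearing in the chosen Lemma \ref{lem:ups2} equation, which produces $t^{-1}$ for the $x_{01},x_{23}$ pair and $t$ for the $x_{12},x_{30}$ pair, exactly as the statement requires.

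I expect no conceptual obstacle: the entire argument reduces to Lemma \ref{lem:ups2}, the centrality of $\Upsilon$, and the length-two path relations of $\boxtimes_q$. The one place demanding genuine care is the bookkeeping, namely matching each of the four target relations to the correct equation of Lemma \ref{lem:ups2} (so the correct power of $t$ appears) and to the correct directed path in the diagram (so the sign in $\lbrack x_{ij},x_a\rbrack_{q^{-1}}=-(q-q^{-1})$ comes out right), together with the sign and scalar tracking in the final simplification.
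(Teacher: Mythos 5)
Your proof is correct; I checked all four computations and they go through exactly as you describe. The identity $\lbrack X,\lbrack X,Y\rbrack_q\rbrack_{q^{-1}}=X^2Y-(q^2+q^{-2})XYX+YX^2$ identifies each left-hand side; solving the appropriate equation of Lemma \ref{lem:ups2} for the $q$-commutator and applying $\lbrack x_{ij},-\rbrack_{q^{-1}}$ produces the right-hand side, with the three ingredients you name: $\lbrack x_{ij},1\rbrack_{q^{-1}}=-(q-q^{-1})x_{ij}$, the centrality of $\Upsilon$ on the module (which the paper records immediately after Lemma \ref{lem:upsfour}), and the length-two path relation (\ref{eq:tet2}) giving $\lbrack x_{ij},x_a\rbrack_{q^{-1}}=-(q-q^{-1})$ for the adjacent generator $x_a$. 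Your matching of the four target relations to equations $1,3,2,4$ of Lemma \ref{lem:ups2} and to the paths $3\to0\to1$, $1\to2\to3$, $0\to1\to2$, $2\to3\to0$ is the correct bookkeeping, and the scalar simplification $(q+q^{-1})(q-q^{-1})^2=(q-q^{-1})(q^2-q^{-2})$ closes each case. The comparison with the paper is slightly unusual: the paper gives no proof of this lemma at all, instead citing \cite[Lemma~9.14]{evalTetq} and remarking that the proof there uses only the ten equations of Definition \ref{def:tseg} and hence carries over to $t$-segregated modules. So your argument is a genuinely self-contained derivation that replaces an external citation; it uses only results stated earlier in the paper (Lemmas \ref{lem:CasP}, \ref{lem:upsfour}, \ref{lem:ups2} and Definition \ref{def:qtet}(ii)), so there is no circularity within the paper's logical structure, and it makes Section 16 self-contained modulo the two preceding cited lemmas. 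This mechanism --- linearizing the $q$-commutator and bracketing once more --- is almost certainly what the cited reference does as well, but relative to this paper your proposal adds content rather than duplicating it.
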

\noindent We remark that the relations in Lemma \ref{lem:aw} are the Askey-Wilson relations \cite{vidter, zhed}.

\section{How to construct a $t$-segregated $\boxtimes_q$-module}

\noindent We continue to discuss the algebra $\boxtimes_q$ from Definition \ref{def:qtet}. In the previous section, we introduced the concept of a
$t$-segregated $\boxtimes_q$-module. In this section, we show how to construct a $t$-segregated $\boxtimes_q$-module, starting with
a nonzero finite-dimensional $U_q(\mathfrak{sl}_2)$-module and a bit more.
\medskip

\noindent Throughout this section, $V$ denotes a nonzero finite-dimensional $U_q(\mathfrak{sl}_2)$-module.
\begin{assumption} 
\label{prop:extend} Let $0 \not=t \in \mathbb F$. Assume that
there exists an invertible $w \in {\rm End}(V)$ such that on $V$,
\begin{align}
tz-q = w (t-qx), \qquad \qquad 
tz-q^{-1} = (t - q^{-1} x) w.
\label{eq:twocond}
\end{align}
\end{assumption} 
\noindent Under Assumption
\ref{prop:extend}, we will turn $V$ into a $t$-segregated $\boxtimes_q$-module.

\begin{lemma} \label{lem:xwred}
Under Assumption \ref{prop:extend}, the following holds on
 the $U_q(\mathfrak{sl}_2)$-module $V$:
\begin{align*}
x w &= 1 - q t z + q t w,\\
w x &= 1 - q^{-1} t z + q^{-1} t w,
\\
w^{-1} z  &= 1 - q t^{-1} x + q t^{-1} w^{-1},
\\
z w^{-1}  &= 1-q^{-1} t^{-1} x + q^{-1} t^{-1} w^{-1}.
\end{align*}
\end{lemma}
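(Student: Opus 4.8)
The plan is to derive all four identities directly from the two equations in Assumption~\ref{prop:extend}, since each target equation is merely a rearrangement of one of them; no use of the defining relations of $U_q(\mathfrak{sl}_2)$ is required. First I would take the left equation $tz-q = w(t-qx)$ in (\ref{eq:twocond}) and expand its right-hand side as $tw-qwx$. Solving the resulting identity $tz-q = tw-qwx$ for $wx$ and dividing by $q$ yields $wx = 1 - q^{-1}tz + q^{-1}tw$, which is the second displayed identity. Symmetrically, expanding the right equation $tz-q^{-1} = (t-q^{-1}x)w = tw - q^{-1}xw$ and solving for $xw$ gives $xw = 1 - qtz + qtw$, the first displayed identity.

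For the two identities involving $w^{-1}$, I would multiply the equations in (\ref{eq:twocond}) by $w^{-1}$ on the appropriate side, using only that $w$ is invertible. Multiplying the left equation $tz-q = w(t-qx)$ on the left by $w^{-1}$ produces $tw^{-1}z - qw^{-1} = t - qx$; solving for $w^{-1}z$ gives $w^{-1}z = 1 - qt^{-1}x + qt^{-1}w^{-1}$, the third identity. Multiplying the right equation $tz-q^{-1} = (t-q^{-1}x)w$ on the right by $w^{-1}$ produces $tzw^{-1} - q^{-1}w^{-1} = t - q^{-1}x$; solving for $zw^{-1}$ gives $zw^{-1} = 1 - q^{-1}t^{-1}x + q^{-1}t^{-1}w^{-1}$, the fourth identity.

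The argument is entirely formal, so there is no real obstacle beyond careful bookkeeping of the scalars $q^{\pm 1}$ and $t^{\pm 1}$ and making sure each rearrangement isolates the intended product, namely $wx$, $xw$, $w^{-1}z$, or $zw^{-1}$. The one structural point worth emphasizing is that the invertibility of $w$ guaranteed by Assumption~\ref{prop:extend} is exactly what licenses the left and right multiplication by $w^{-1}$ used to obtain the last two identities.
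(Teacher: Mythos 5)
Your proposal is correct and matches the paper's proof, which simply says ``Use (\ref{eq:twocond})'': each of the four identities is exactly the formal rearrangement of one of the two equations in Assumption \ref{prop:extend}, using invertibility of $w$ (and $t\neq 0$) for the last two. All four of your algebraic manipulations check out.
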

\begin{proof} Use
(\ref{eq:twocond}).
\end{proof}

\noindent Recall the elements $\nu_x$, $\nu_y$, $\nu_z$ of $U_q(\mathfrak{sl}_2)$ from Definition
\ref{def:nu}.
\begin{lemma} \label{lem:nuWred}
Under Assumption \ref{prop:extend}, the following holds on
 the $U_q(\mathfrak{sl}_2)$-module $V$:
 \begin{align*}
 w \nu_z &= q w -qy+t zy - t wy,
 \\
 \nu_z w &= q^{-1} w - q^{-1} y + t yz-t y w,
 \\
 \nu_x w^{-1} &= q w^{-1} - q y + t^{-1} yx - t^{-1} y w^{-1},
 \\
 w^{-1} \nu_x &= q^{-1} w^{-1} -q^{-1} y + t^{-1} xy - t^{-1} w^{-1} y.
 \end{align*}
 \end{lemma}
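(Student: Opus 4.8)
The plan is to verify each of the four identities directly, by expanding the relevant $\nu$-element via Definition \ref{def:nu} and then eliminating the resulting product with $w^{\pm 1}$ using Lemma \ref{lem:xwred}. The key observation is that Definition \ref{def:nu} presents each of $\nu_x,\nu_z$ in two forms, one with a leading factor $q$ and one with $q^{-1}$. For each identity I would select the form that, upon multiplying by $w^{\pm 1}$ on the appropriate side, produces exactly one of the four products $wx$, $xw$, $zw^{-1}$, $w^{-1}z$ that already have closed-form reductions in Lemma \ref{lem:xwred}.

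Concretely, for the first identity I would write $\nu_z = q(1-xy)$, so that $w\nu_z = qw - q(wx)y$, and then substitute $wx = 1 - q^{-1}tz + q^{-1}tw$; distributing gives $qw - qy + tzy - twy$, as claimed. For the second I would instead use $\nu_z = q^{-1}(1-yx)$, so $\nu_z w = q^{-1}w - q^{-1}y(xw)$, and substitute $xw = 1 - qtz + qtw$. For the third I would use $\nu_x = q(1-yz)$, giving $\nu_x w^{-1} = qw^{-1} - qy(zw^{-1})$, and substitute $zw^{-1} = 1 - q^{-1}t^{-1}x + q^{-1}t^{-1}w^{-1}$. For the fourth I would use $\nu_x = q^{-1}(1-zy)$, giving $w^{-1}\nu_x = q^{-1}w^{-1} - q^{-1}(w^{-1}z)y$, and substitute $w^{-1}z = 1 - qt^{-1}x + qt^{-1}w^{-1}$.

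I do not expect any real obstacle here; each identity reduces to a single substitution followed by elementary distribution, and in every case the powers of $q$ and $t$ cancel and recombine to yield precisely the stated right-hand side. The only point requiring care is the bookkeeping of which of the two forms of $\nu_x$ (resp.\ $\nu_z$) to invoke, and on which side to place $w^{\pm 1}$, so that the emerging product matches one of the four available formulas in Lemma \ref{lem:xwred} rather than an unreduced product. Once the correct pairing is made, each line collapses to the desired expression.
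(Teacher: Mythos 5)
Your proof is correct, and it is exactly the paper's argument: the paper's proof of this lemma simply says ``Use Definition \ref{def:nu} and Lemma \ref{lem:xwred},'' and your write-up spells out precisely which form of $\nu_z$ (resp.\ $\nu_x$) and which of the four reductions in Lemma \ref{lem:xwred} to use for each of the four identities. All four substitutions check out as you describe.
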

 \begin{proof} Use 
 Definition \ref{def:nu} and Lemma \ref{lem:xwred}.
 \end{proof}

\begin{proposition} \label{prop:support}
Under Assumption
\ref{prop:extend},  $V$ becomes a $t$-segregated $\boxtimes_q$-module on which the $\boxtimes_q$-generators act as follows:
\bigskip

\centerline{
\begin{tabular}[t]{c|cccccccc}
   {\rm generator} & $x_{01}$ & $x_{12}$ & $x_{23}$ & $x_{30}$ & $x_{02}$ & $x_{13}$ & $x_{20}$ & $x_{31}$
\\
\hline
{\rm action on $V$}  & 
$z$ & $x$ & $y+t^{-1} \nu_z$
&$y+ t \nu_x$ & $y^{-1}$ & $w^{-1}$ & $y$ & $w$
\\
\end{tabular}}
\bigskip
\noindent Moreover $\Upsilon={\bf \Lambda}$ on $V$.
\end{proposition}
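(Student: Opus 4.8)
The plan is to check, one family at a time, that the eight displayed operators satisfy the defining relations of $\boxtimes_q$ in Definition \ref{def:qtet}, then to verify the ten equations of Definition \ref{def:tseg}, and finally to identify $\Upsilon$ with ${\bf \Lambda}$. The relations (\ref{eq:tet1}) are immediate, since $x_{02}x_{20}=y^{-1}y=1$ and $x_{13}x_{31}=w^{-1}w=1$ together with the invertibility of $y$ and $w$. For the $q$-commutator relations (\ref{eq:tet2}), I would organize the twelve instances according to the four injections $\kappa_i$ of Lemma \ref{lem:miki}: each $\kappa_i$ carries the three equitable relations (\ref{eq:defREL}) to three distinct instances of (\ref{eq:tet2}), and the four families of three cover all twelve, so it suffices to check that for each $i$ the triple $(\kappa_i(x),\kappa_i(y),\kappa_i(z))$ realized by the displayed operators satisfies (\ref{eq:defREL}). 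For $i=3$ this triple is $(x,y,z)$ and the relations hold by hypothesis; for $i=0,1,2$ the triples involve $w^{\pm 1}$, $y$, and the combinations $y+t^{-1}\nu_z$, $y+t\nu_x$, and after substituting the formulas of Lemmas \ref{lem:xwred} and \ref{lem:nuWred} and using (\ref{eq:defREL}) each $q$-commutator collapses to $q-q^{-1}$; in the relevant computations the terms $yw$ and $wy$ cancel, so no further data about $w$ is required.

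The main obstacle is the $q$-Serre relations (\ref{eq:tet3}), which must hold between $x_{01}=z$ and $x_{23}=y+t^{-1}\nu_z$, and between $x_{12}=x$ and $x_{30}=y+t\nu_x$ (together with the reversed orderings, handled symmetrically). Using the identity $\lbrack X,\lbrack X,\lbrack X,Y\rbrack_q\rbrack_{q^{-1}}\rbrack = X^3 Y - \lbrack 3\rbrack_q X^2 Y X + \lbrack 3\rbrack_q X Y X^2 - Y X^3$ from Section 3, each $q$-Serre relation becomes the assertion that $X$ commutes with the double $q$-commutator $\lbrack X,\lbrack X,Y\rbrack_q\rbrack_{q^{-1}}$. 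I would compute this double $q$-commutator directly from the commutation relations among $x,y,z,\nu_x,\nu_z$ collected at the end of Section 14, and show it equals a scalar plus a scalar multiple of $(1+t^{-1}{\bf \Lambda})X$, an Askey--Wilson relation in the spirit of Lemma \ref{lem:aw}. Since ${\bf \Lambda}$ is central in $U_q(\mathfrak{sl}_2)$ and hence commutes with $X$, while $X$ commutes with itself and with scalars, the outer commutator vanishes and the $q$-Serre relation follows. Establishing that the double $q$-commutator is linear in $X$ with central coefficient is the most laborious step; I expect the relations $y\nu_z=q^2\nu_z y$, $\nu_z x=q^2 x\nu_z$ and $\lbrack z,\nu_z\rbrack=(q-q^{-1})(x-y)$ (and their cyclic analogues) to drive the cancellations. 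Note that this argument uses ${\bf \Lambda}$ directly, not the module-level element $\Upsilon$, so there is no circular dependence on Lemma \ref{lem:upsfour}.

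With the $\boxtimes_q$-module structure in hand, I would verify the ten equations of Definition \ref{def:tseg} one at a time. Each reduces, after substituting the displayed actions, to an identity in $U_q(\mathfrak{sl}_2)$ extended by $w$. The equations among (\ref{eq:0123})--(\ref{eq:four4}) that involve only $x,y,z,\nu_x,\nu_z$ follow from Definition \ref{def:nu} and the $\nu$-commutation relations of Section 14 (for instance the first equation of (\ref{eq:four1}), $t(x_{01}-x_{02})=\lbrack x_{30},x_{02}\rbrack/(q-q^{-1})$, reduces via $\nu_x y^{-1}=q^{-2}y^{-1}\nu_x$ and $y^{-1}\nu_x=q(y^{-1}-z)$ to the identity $z-y^{-1}=z-y^{-1}$), while those involving $x_{13}=w^{-1}$ and $x_{31}=w$ follow from Lemmas \ref{lem:xwred} and \ref{lem:nuWred}. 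This part is voluminous but routine.

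Finally, for $\Upsilon={\bf \Lambda}$: once $V$ is shown to be a $t$-segregated $\boxtimes_q$-module, Lemma \ref{lem:upsfour} gives that the action of $\Upsilon_i$ is independent of $i\in\mathbb Z_4$, with common value $\Upsilon$; in particular $\Upsilon=\Upsilon_3$. By Definition \ref{def:casi}, $\Upsilon_3=\kappa_3({\bf \Lambda})$, and $\kappa_3$ sends $x,y,y^{-1},z$ to $x_{12},x_{20},x_{02},x_{01}$, which act on $V$ as $x,y,y^{-1},z$ respectively. Hence the composite of $\kappa_3$ with the $\boxtimes_q$-action coincides with the original $U_q(\mathfrak{sl}_2)$-action on $V$, so $\kappa_3({\bf \Lambda})$ acts as ${\bf \Lambda}$; therefore $\Upsilon={\bf \Lambda}$ on $V$, completing the proof.
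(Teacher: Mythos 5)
Your proposal is correct, and for most steps it matches the paper's proof: relations (\ref{eq:tet1}) are trivial, and relations (\ref{eq:tet2}) together with the ten equations (\ref{eq:0123})--(\ref{eq:four4}) are routine computations from Lemmas \ref{lem:xwred}, \ref{lem:nuWred} and the $\nu$-relations (your sample reductions, e.g.\ the cancellation of $yw$, $wy$ and the check of the first equation in (\ref{eq:four1}), are accurate). Where you genuinely diverge from the paper is in the $q$-Serre relations and the identification $\Upsilon={\bf \Lambda}$. The paper first verifies, by a short computation using Definitions \ref{def:casCom}, \ref{def:nu}, that the four expressions in Lemma \ref{lem:upsfour} all act as ${\bf \Lambda}$ on $V$; it then invokes the proofs of Lemmas \ref{lem:ups2}, \ref{lem:aw} (imported from \cite{evalTetq}, and resting only on those four identities plus the already-verified relations) to obtain the Askey--Wilson relations on $V$, hence the $q$-Serre relations, and reads off $\Upsilon = {\bf \Lambda}$ from the same four identities. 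You instead prove the Askey--Wilson relations by computing the double $q$-commutator $\lbrack X,\lbrack X,Y\rbrack_q\rbrack_{q^{-1}}$ directly inside $U_q(\mathfrak{sl}_2)$, which is legitimate since $x_{01},x_{12},x_{23},x_{30}$ all act by elements of $U_q(\mathfrak{sl}_2)$ and $w$ never enters; your claimed normal form is correct, since for instance
\begin{align*}
z^2y-(q^2+q^{-2})zyz+yz^2=-(q-q^{-1})^2z,
\qquad
{\bf \Lambda}z=\nu_z z^2-q^2\nu_y-q^{-2}\nu_x+q+q^{-1},
\end{align*}
which combine to give exactly the first relation of Lemma \ref{lem:aw} with $\Upsilon={\bf \Lambda}$ (for the pair $x_{12},x_{30}$ the central coefficient is $1+t\Upsilon$ rather than $1+t^{-1}\Upsilon$, as in that lemma). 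Your route is more laborious but more self-contained: it avoids importing the derivations behind Lemmas \ref{lem:ups2}, \ref{lem:aw}, and it sidesteps the subtlety that those lemmas are stated for $t$-segregated $\boxtimes_q$-modules while $V$ is not yet known to be a $\boxtimes_q$-module at that stage (the paper implicitly uses that their proofs need only the ten equations and relations already checked). Your closing argument that $\Upsilon=\Upsilon_3=\kappa_3({\bf \Lambda})$ acts as ${\bf \Lambda}$, because $\kappa_3$ composed with the $\boxtimes_q$-action on generators reproduces the original $U_q(\mathfrak{sl}_2)$-action, is also correct; the paper gets this conclusion for free from the four ${\bf \Lambda}$-identities, whereas you pay for it with the direct Askey--Wilson computation.
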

\begin{proof} It is trivial to check that the relations 
(\ref{eq:tet1}) hold on $V$. Using 
Lemmas \ref{lem:xwred}, \ref{lem:nuWred} 
and the various relations below Definition \ref{def:nu},
one routinely checks that the relations (\ref{eq:tet2}) and (\ref{eq:0123})--(\ref{eq:four4}) hold on $V$.
Next we check that the $q$-Serre relations (\ref{eq:tet3}) hold on $V$.
 Using Definitions
 \ref{def:casCom}, \ref{def:nu}
 one finds that on $V$,
\begin{align*}
&
{\bf \Lambda} = t(x_{01}x_{23}-1)+qx_{30}+q^{-1}x_{12},
 \qquad 
{\bf \Lambda} = t^{-1}(x_{12}x_{30}-1)+qx_{01}+q^{-1}x_{23},
\\
&
{\bf \Lambda} = t(x_{23}x_{01}-1)+qx_{12}+q^{-1}x_{30},
 \qquad 
{\bf \Lambda} = t^{-1}(x_{30}x_{12}-1)+qx_{23}+q^{-1}x_{01}.
\end{align*}
In other words, the four relations in Lemma \ref{lem:upsfour} hold on $V$ with
$\Upsilon= {\bf \Lambda}$. Using this result, one finds that the relations in
Lemmas
 \ref{lem:ups2}, \ref{lem:aw} hold
on  $V$ with $\Upsilon= {\bf \Lambda}$.
By these comments, the relations (\ref{eq:tet3}) hold on $V$.
The last assertion of the proposition statement follows from the construction.
\end{proof}

\section{The main results}
\noindent In Sections 2--13 we discussed a 
 TD system
 $\Phi=(A;\lbrace E_i\rbrace_{i=0}^d;A^*; \lbrace E^*_i\rbrace_{i=0}^d)$ on $V$ that has $q$-Racah type. In this
 section, we return our attention to $\Phi$. Recall 
 the scalar $a$ from Definition \ref{def:qrac}.
 
 \begin{theorem}\label{thm:one}
 For the above TD system $\Phi$, the underlying vector space $V$ becomes an $a$-segregated
 $\boxtimes_q$-module on which the $\boxtimes_q$-generators act as follows:
\bigskip

\centerline{
\begin{tabular}[t]{c|cccc}
   {\rm generator} & $x_{01}$ & $x_{12}$ & $x_{23}$ & $x_{30}$
\\
\hline
{\rm action on $V$}  & 
$W^{-1} K W$ & $W K^{-1} W^{-1} $ & $Q^{-1}+W \psi W^{-1} $
&$Q^{-1}+ W^{-1} \psi W $
\\
\end{tabular}}
\bigskip

\centerline{
\begin{tabular}[t]{c|cccc}
   {\rm generator} & $x_{02}$ & $x_{13}$ & $x_{20}$ & $x_{31}$
\\
\hline
{\rm action on $V$}  & 
 $Q$ & $K^{-1}$ & $Q^{-1}$ & $K$
\\
\end{tabular}}
\bigskip

\noindent Moreover $\Upsilon=\Lambda$ on $V$.
\end{theorem}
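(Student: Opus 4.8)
The plan is to realize Theorem~\ref{thm:one} as a special case of the general construction in Proposition~\ref{prop:support}, so that nearly all of the work is already carried out in the earlier sections. Concretely, I would first turn $V$ into a $U_q(\mathfrak{sl}_2)$-module in the equitable presentation, then exhibit a scalar $t$ and an invertible $w$ satisfying Assumption~\ref{prop:extend}, and finally read off the $\boxtimes_q$-action and the Casimir directly from Proposition~\ref{prop:support}.

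For the module structure I would set $x = WK^{-1}W^{-1}$, $y = Q^{-1}$, $y^{-1} = Q$, and $z = W^{-1}KW$. By Proposition~\ref{prop:twoET}(i) the triple $WK^{-1}W^{-1},\,Q^{-1},\,W^{-1}KW$ is an equitable triple, which is exactly the assertion that these operators satisfy the three defining relations (\ref{eq:defREL}); since $Q$ is invertible, the relations $yy^{-1}=I=y^{-1}y$ hold as well, so $V$ becomes a $U_q(\mathfrak{sl}_2)$-module. Next I would take $t=a$ and $w=K$. The two conditions of Assumption~\ref{prop:extend}, namely $tz-q = w(t-qx)$ and $tz-q^{-1}=(t-q^{-1}x)w$, then read $aW^{-1}KW - qI = K(aI - qWK^{-1}W^{-1})$ and $aW^{-1}KW - q^{-1}I = (aI - q^{-1}WK^{-1}W^{-1})K$, which are precisely equations (\ref{eq:WK1}) and (\ref{eq:WK2}) of Proposition~\ref{thm:WK}. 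Thus Assumption~\ref{prop:extend} holds with no further effort, and Proposition~\ref{prop:support} makes $V$ an $a$-segregated $\boxtimes_q$-module.

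The actions $x_{01}=z$, $x_{12}=x$, $x_{02}=y^{-1}$, $x_{13}=w^{-1}$, $x_{20}=y$, $x_{31}=w$ match the table of Theorem~\ref{thm:one} on sight. For the two remaining generators I must identify the summands $t^{-1}\nu_z$ and $t\nu_x$ of Definition~\ref{def:nu}. Writing $Q^{-1}=WN^{-1}W^{-1}=W^{-1}M^{-1}W$ from Definition~\ref{def:Q}, I would compute $xy = WK^{-1}N^{-1}W^{-1} = W(I-aq^{-1}\psi)W^{-1}$ using $N^{-1}=K(I-aq^{-1}\psi)$ from Lemma~\ref{lem:N4}, so that $a^{-1}\nu_z = a^{-1}q(I-xy)=W\psi W^{-1}$ and hence $x_{23}=Q^{-1}+W\psi W^{-1}$; dually, $yz = W^{-1}M^{-1}KW = W^{-1}(I-a^{-1}q^{-1}\psi)W$ using $M^{-1}=(I-a^{-1}q^{-1}\psi)K^{-1}$ from Lemma~\ref{lem:M4}, so that $a\nu_x = aq(I-yz)=W^{-1}\psi W$ and hence $x_{30}=Q^{-1}+W^{-1}\psi W$. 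This completes the generator table.

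The final assertion $\Upsilon=\Lambda$ is the only genuine computation, and I expect it to be the main obstacle, though it is short once the right form is chosen. Proposition~\ref{prop:support} gives $\Upsilon={\bf \Lambda}$, the image of the $U_q(\mathfrak{sl}_2)$-Casimir, so it remains to evaluate ${\bf \Lambda}$ and recognize $\Lambda$. I would use the form ${\bf \Lambda}=q^{-1}z+qx+q^{-1}y-q^{-1}yxz$ from Lemma~\ref{lem:cas6}. Since $A=az+a^{-1}x$ by Proposition~\ref{prop:wkw}(i) lies in the $U_q(\mathfrak{sl}_2)$-action and $W$ is a polynomial in $A$, the central element ${\bf \Lambda}$ commutes with $W$, so ${\bf \Lambda}=W^{-1}{\bf \Lambda}W$. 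Using $yx=W(I-aq\psi)W^{-1}$, the relation $W^{-2}KW^2=a^{-1}A-a^{-2}K^{-1}$ from Lemma~\ref{lem:KBW}, and $W^{-1}Q^{-1}W=N^{-1}$, the conjugation collapses the outer factors and the terms proportional to $a^{-1}A-a^{-2}K^{-1}$ cancel, leaving $qK^{-1}+q^{-1}N^{-1}+a\psi(a^{-1}A-a^{-2}K^{-1})$. Simplifying with $N^{-1}=K(I-aq^{-1}\psi)$ and the relation $K\psi=q^2\psi K$ then yields ${\bf \Lambda}=\psi(A-aK-a^{-1}K^{-1})+q^{-1}K+qK^{-1}$, which is exactly $\Lambda$ by Definition~\ref{def:cas}. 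Everything else in the argument is bookkeeping against the earlier propositions.
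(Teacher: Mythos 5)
Your proposal is correct and follows essentially the same route as the paper: the same equitable triple from Proposition \ref{prop:twoET}(i), the same choices $t=a$, $w=K$ verified via Proposition \ref{thm:WK}, the same identification of $\nu_z$, $\nu_x$ with $aW\psi W^{-1}$, $a^{-1}W^{-1}\psi W$, and then Proposition \ref{prop:support}. The only (cosmetic) difference is in the final Casimir step, where you conjugate the form $q^{-1}z+qx+q^{-1}y-q^{-1}yxz$ by $W^{-1}$ and land on $\psi(A-aK-a^{-1}K^{-1})+q^{-1}K+qK^{-1}$, whereas the paper conjugates $qx+q^{-1}y+qz-qxyz$ by $W$ and lands on the equivalent form $(A-aK-a^{-1}K^{-1})\psi+qK+q^{-1}K^{-1}$; both computations check out and invoke the same ingredients (Lemma \ref{lem:KBW}, Lemmas \ref{lem:M4}, \ref{lem:N4}, and $K\psi=q^2\psi K$).
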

\begin{proof} 
By Proposition \ref{prop:twoET}(i)  and Definition
\ref{def:equit}, the vector space
$V$ becomes a $U_q(\mathfrak{sl}_2)$-module on which 
\begin{align}
x = W K^{-1} W^{-1}, \qquad \quad y = Q^{-1}, \qquad \quad z = W^{-1} K W.
\label{eq:xyzVal}
\end{align}
Define $w=K$, and note that $w$ is invertible.  Also define $t=a$. The element $w$ satisfies (\ref{eq:twocond}) 
by Proposition \ref{thm:WK}.  Assumption \ref{prop:extend}
is now satisfied, so Proposition \ref{prop:support} applies.
Next we show that on $V$,
\begin{align}
\label{eq:needed}
y+t^{-1} \nu_z = Q^{-1} + W \psi W^{-1} , \qquad \quad y+ t\nu_x = Q^{-1} + W^{-1} \psi W.
\end{align}
Since $y=Q^{-1}$ on $V$ and also $t=a$, it suffices to show that on $V$,
\begin{align}
\label{eq:QQ}
a^{-1} \nu_z = W \psi W^{-1} , \qquad \quad a\nu_x = W^{-1} \psi W.
\end{align}
The equation on the left in (\ref{eq:QQ}) is obtained using $\nu_z = q(1-xy)$ with  $x, y$ from
(\ref{eq:xyzVal}), along with $Q^{-1} = W N^{-1} W^{-1}$
and $K^{-1} N^{-1} = I - a q^{-1} \psi$.
The equation on the right in (\ref{eq:QQ}) is obtained using $\nu_x = q(1-yz)$ with  $y, z$ from
(\ref{eq:xyzVal}), along with $Q^{-1} = W^{-1}  M^{-1} W$
and $M^{-1} K = I - a^{-1} q^{-1} \psi$. We have shown that (\ref{eq:needed}) holds on $V$.
It remains to show that $\Upsilon=\Lambda$ on $V$. To do this, by Proposition
\ref{prop:support} it suffices to show that ${\bf \Lambda} = \Lambda $ on $V$. 
On $V$,
\begin{align*}
{\bf \Lambda}  &= q x + q^{-1} y + q z - q xyz
\\
&= q x_{12} + q^{-1} x_{20} + q x_{01} - qx_{12} x_{20} x_{01}.
\end{align*}
By Proposition \ref{prop:wkw}(i) and the construction,  $A=ax_{01} + a^{-1} x_{12}$ on $V$. The generators $x_{01}, x_{12}$ commute with $\bf \Lambda$ on $V$,
so $A$ commutes with $\bf \Lambda$ on $V$.
The element $W$ is a polynomial in $A$, so $W$ commutes with $\bf \Lambda $ on $V$.
We may now argue that on $V$,
\begin{align*}
{\bf \Lambda} &= W(q x_{12} + q^{-1} x_{20} + q x_{01} - qx_{12} x_{20} x_{01})W^{-1}
\\
&= W(q x_{12} (1-x_{20} x_{01}) +  q^{-1} x_{20}+qx_{01}) W^{-1}
\\
&= q W^2 K^{-1} W^{-2} (I-M^{-1} K) + q^{-1} M^{-1} + q K 
\\
& = q(aA-a^2 K) (I-M^{-1} K)  + q^{-1} M^{-1}+ q K
\\
&= (A-aK) \psi + q^{-1} K^{-1} (1-a^{-1} q \psi)+ qK
\\
&= (A-aK - a^{-1}K^{-1})\psi + q K + q^{-1} K^{-1}
\\
& = \Lambda.
\end{align*}
\end{proof}

 \begin{theorem}\label{thm:two}
 For the above TD system $\Phi$, the underlying vector space $V$ becomes an $a^{-1}$-segregated
 $\boxtimes_q$-module on which the $\boxtimes_q$-generators act as follows:
\bigskip

\centerline{
\begin{tabular}[t]{c|cccc}
   {\rm generator} & $x_{01}$ & $x_{12}$ & $x_{23}$ & $x_{30}$
\\
\hline
{\rm action on $V$}  & 
$W^{-1} B W$ & $W B^{-1} W^{-1} $ & $Q^{-1} + W \psi W^{-1} $
&$Q^{-1} + W^{-1} \psi W $
\\
\end{tabular}}
\bigskip

\centerline{
\begin{tabular}[t]{c|cccc}
   {\rm generator} & $x_{02}$ & $x_{13}$ & $x_{20}$ & $x_{31}$
\\
\hline
{\rm action on $V$}  & 
 $Q$ & $B^{-1}$ & $Q^{-1}$ & $B$
\\
\end{tabular}}
\bigskip

\noindent Moreover $\Upsilon=\Lambda$ on $V$.
\end{theorem}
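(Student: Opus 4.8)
The plan is to deduce Theorem \ref{thm:two} from Theorem \ref{thm:one} by applying the latter to the ``down'' TD system $\Phi^\Downarrow$ introduced in Section 2, rather than repeating the construction of Section 18. This mirrors the strategy already used to obtain Proposition \ref{thm:WB} and Corollary \ref{cor:combine2} from their undecorated counterparts.

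First I would record how the $q$-Racah data transform under $\Downarrow$. Passing from $\Phi$ to $\Phi^\Downarrow$ reverses only the eigenspace ordering of $A$, so the dual eigenvalue sequence is unchanged while the eigenvalue sequence becomes $\lbrace \theta_{d-i}\rbrace_{i=0}^d$. Since $\theta_{d-i} = a^{-1} q^{d-2i} + a q^{2i-d}$, the system $\Phi^\Downarrow$ again has $q$-Racah type, now with parameters $a^{-1}, b, q$ in the sense of Definition \ref{def:qrac}. In particular the $a$-parameter of $\Phi^\Downarrow$ is $a^{-1}$, so Theorem \ref{thm:one} applied to $\Phi^\Downarrow$ produces an $a^{-1}$-segregated $\boxtimes_q$-module, exactly as required by the statement.

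Next I would translate the action table of Theorem \ref{thm:one} for $\Phi^\Downarrow$ into the objects attached to $\Phi$, using the catalogue of $\Downarrow$-relations already established: $K^\Downarrow = B$ (Definition \ref{def:K}), $W^\Downarrow = t_d^{-1} W$ (Lemma \ref{lem:WDown}(ii)), $Q^\Downarrow = Q$ (Lemma \ref{lem:QDown}), and $\psi^\Downarrow = \psi$ (Lemma \ref{lem:psiDown}). The one point needing care is that every generator in the table of Theorem \ref{thm:one} is a $W^\Downarrow$-conjugate, and substituting $W^\Downarrow = t_d^{-1} W$ introduces a scalar factor $t_d^{\pm 1}$; these scalars cancel in each conjugation, so for instance $x_{01}$ acts as $(W^\Downarrow)^{-1} K^\Downarrow W^\Downarrow = W^{-1} B W$ and $x_{23}$ acts as $(Q^\Downarrow)^{-1} + W^\Downarrow \psi^\Downarrow (W^\Downarrow)^{-1} = Q^{-1} + W \psi W^{-1}$. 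Checking the remaining six generators the same way reproduces the two tables in Theorem \ref{thm:two}, and the final assertion follows from $\Upsilon = \Lambda^\Downarrow = \Lambda$ by Lemma \ref{lem:LamDown}.

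The only real obstacle is bookkeeping rather than mathematics: one must confirm that the auxiliary elements $M, N, Q$ built for $\Phi^\Downarrow$ from the parameter $a^{-1}$ genuinely coincide with those attached to $\Phi$ (indeed $M^\Downarrow = M$ and $N^\Downarrow = N$, as noted just after (\ref{eq:MNdef})), and that the reparametrization $a \mapsto a^{-1}$ is applied consistently throughout the table. Once these identifications are in place, no further computation is needed, since the verification that the resulting module is $a^{-1}$-segregated and carries the stated generator actions is entirely inherited from Theorem \ref{thm:one}.
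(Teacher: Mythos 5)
Your proposal is correct and takes essentially the same route as the paper's own proof, which likewise obtains Theorem \ref{thm:two} by applying Theorem \ref{thm:one} to $\Phi^\Downarrow$ and invoking $B=K^\Downarrow$ together with Lemmas \ref{lem:WDown}(ii), \ref{lem:QDown}, \ref{lem:psiDown}, \ref{lem:LamDown}. The extra bookkeeping you supply (the reparametrization $a\mapsto a^{-1}$ of the $q$-Racah data, the cancellation of the scalars $t_d^{\pm 1}$ in each conjugation, and $M^\Downarrow=M$, $N^\Downarrow=N$) simply makes explicit what the paper leaves to the reader.
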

\begin{proof} Apply Theorem
\ref{thm:one} to $\Phi^\Downarrow$, and use $B=K^\Downarrow$ along with
Lemmas \ref{lem:WDown}(ii),  \ref{lem:QDown}, \ref{lem:psiDown}, \ref{lem:LamDown}.
\end{proof}

\begin{note}\rm  
Referring to the tables in Theorems \ref{thm:one}, \ref{thm:two},
for the action of $x_{23}$ and $x_{30}$ an alternative description is given in Proposition \ref{prop:Acom}; see also
Proposition
\ref{prop:Alam}.
\end{note}

\section{Suggestions for future research}

\noindent  In Sections 2--13 and 18 we discussed a TD system 
 $\Phi=(A;\lbrace E_i\rbrace_{i=0}^d;A^*; \lbrace E^*_i\rbrace_{i=0}^d)$ on $V$  that has $q$-Racah type. 
In this section we give some open problems concerning $\Phi$. To motivate
the first problem, we have some comments.
Recall the map $R$ from 
(\ref{eq:R}). Define
\begin{align*}
R^- = W K^{-1} W^{-1} - K^{-1}, \qquad \qquad 
R^+ = W^{-1} K W - K.
\end{align*}
By Proposition \ref{thm:WK},
\begin{align}
R^+ = -a^{-1}  q K R^- = - a^{-1} q^{-1} R^-K.
\label{eq:RPRM}
\end{align}
Therefore
\begin{align}
R^- = -a q^{-1} K^{-1} R^+ = - a q R^+ K^{-1}.
\label{eq:RPRM2}
\end{align}
By (\ref{eq:RPRM}) and (\ref{eq:RPRM2}),
\begin{align*} R^{-} K = q^2 K R^-,
\qquad \qquad
R^+ K = q^2 K R^+.
\end{align*}
Consequently
\begin{align*}
R^{\pm} U_i \subseteq U_{i+1} \qquad \quad (0 \leq i \leq d-1), \quad \qquad R^{\pm} U_d = 0.
\end{align*}
Using Proposition \ref{prop:wkw}(i) and (\ref{eq:R}), 
\begin{align}
\label{eq:RRR}
R=aR^+ + a^{-1} R^-.
\end{align}
Using (\ref{eq:RPRM}) or (\ref{eq:RPRM2}),
\begin{align*} 
R^- R^+ = q^2 R^+ R^-.
\end{align*}
\begin{problem}\rm Investigate the algebraic and combinatorial significance of $R^{\pm}$.
\end{problem}

\noindent On the $\boxtimes_q$-module $V$ in Theorem  \ref{thm:one} 
we have $A = ax_{01}+ a^{-1} x_{12}$, and
on the $\boxtimes_q$-module $V$ in Theorem  \ref{thm:two} 
we have $A =a^{-1}x_{01}+ a x_{12}$. On these modules the value of
$ b^{-1} x_{23} + b x_{30}$ is the same, and it is natural to  guess that this common value is  equal  to $A^*$.  It turns out that this guess is false, but 
it does seem likely that $A^*-b^{-1} x_{23} - b x_{30}$ is important in some way. This motivates the next problem.
\begin{problem}\rm Define
\begin{align*}
\mathcal L = A^*-
b^{-1} (M^{-1} + \psi)
 - b(N^{-1} + \psi) .
\end{align*}
Show that $ \mathcal L K = q^{-2}  K \mathcal L$ and
 $ \mathcal L B = q^{-2}  B\mathcal L$.
 Show that 
 \begin{align*}
 &\mathcal L U_i \subseteq U_{i-1} \qquad \quad (1 \leq i \leq d), \qquad \quad \mathcal L U_0 = 0,
 \\
  &\mathcal L U^\Downarrow_i \subseteq U^\Downarrow_{i-1} \qquad \quad (1 \leq i \leq d), \qquad \quad \mathcal L U^\Downarrow_0 = 0.
 \end{align*}
 Show that $\mathcal L \psi = \psi \mathcal L$. Investigate how $\mathcal L$ is related to $R^\pm$ above.
 \end{problem}

\begin{problem}\label{prob2}
\rm Find a relation involving only $K^{\pm 1}$ and $Q^{\pm 1}$.
\end{problem}

\begin{problem}\label{prob3} \rm
How do $A^*$ and $W A^* W^{-1}$ act on each others eigenspaces? It seems that the pair $A^*$, $W A^* W^{-1}$ is not a TD pair in general.
Find necessary and sufficient conditions on $A, A^*$ for the pair
$A^*$, $W A^* W^{-1}$ to be a TD pair.
\end{problem}


\section{Acknowledgement} The author thanks Kazumasa Nomura for giving this paper a close reading and offering valuable comments.


\bigskip

\noindent Paul Terwilliger \hfil\break
\noindent Department of Mathematics \hfil\break
\noindent University of Wisconsin \hfil\break
\noindent 480 Lincoln Drive \hfil\break
\noindent Madison, WI 53706-1388 USA \hfil\break
\noindent email: {\tt terwilli@math.wisc.edu }\hfil\break

\end{document}